\newcommand{\ignore}[1]{}
\newtheorem{theorem}{Theorem}[section]
\newtheorem{proposition}[theorem]{Proposition}
\newtheorem{lemma}[theorem]{Lemma}
\newtheorem{corollary}[theorem]{Corollary}
\newtheorem*{thmRichC}{Theorem \ref{th:RichC}}
\newtheorem*{thmDDinC2}{Theorem \ref{th:DDinC2}}
\newtheorem*{propositionStructureHypersurfaces}{Proposition \ref{prop:structureHypersurfaces}}
\theoremstyle{remark}
\newtheorem{remark}[theorem]{Remark}
\newcommand{\RR}{\ensuremath{\mathbb R}}
\newcommand{\CC}{\ensuremath{\mathbb C}}
\newcommand{\FF}{\ensuremath{\mathbb F}}
\newcommand{\lines}{\mathcal L}
\newcommand{\curves}{\Gamma}
\newcommand{\pts}{\mathcal P}
\newcommand{\surfs}{\mathcal S}
\newcommand{\vrts}{\mathcal V}
\newcommand{\BI}{\mathbf{I}}
\newcommand{\vb}{Z}
\newcommand{\gr}{\mathrm{Gr}}
\newcommand{\sing}{\operatorname{sing}}
\newcommand{\reg}{\operatorname{reg}}
\newcommand{\hair}{H}
\def\eps{{\varepsilon}}
\newcommand{\parag}[1]{\vspace{2mm}

\noindent{\bf #1} }
\begin{document}
\pagenumbering{arabic}

\title{Distinct distances in the complex plane}

\author{
Adam Sheffer\thanks{Department of Mathematics, Baruch College, City University of New York, NY, USA.
{\sl adamsh@gmail.com}.}
\and
Joshua Zahl\thanks{Department of Mathematics, University of British Columbia, Vancouver, BC, Canada {\sl jzahl@math.ubc.ca}.}
}


\maketitle

\begin{abstract}
We prove that if $P$ is a set of $n$ points in $\mathbb{C}^2$, then either the points in $P$ determine $\Omega(n^{1-\eps})$ complex distances, or $P$ is contained in a line with slope $\pm i$. If the latter occurs then each pair of points in $P$ have complex distance 0. 
\end{abstract}

\section{Introduction}
In 1946, Erd\H os \cite{erd46} posed the question: how few distinct distances can be determined by a set of $n$ points in the plane? The Erd\H os distinct distances problem has become a central question in combinatorial geometry, and an entire book has been dedicated to the question \cite{GIS11}. In 2010, Guth and Katz \cite{GK15} nearly resolved the conjecture by establishing the following lower bound.
\begin{theorem} \label{th:GuthKatz}
Every set of $n$ points in $\RR^2$ determines $\Omega(n/\log n)$ distinct distances.
\end{theorem}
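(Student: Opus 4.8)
The plan is to pass from distances to incidences via the Elekes--Sharir reduction, translate this into a question about lines in $\mathbb{R}^3$, and then bound those incidences by polynomial partitioning. Let $x$ be the number of distinct distances determined by $P$, and let $Q$ be the number of ordered quadruples $(a,b,c,d)\in P^4$ with $|ab|=|cd|\neq 0$. First I would observe, by Cauchy--Schwarz over the $\le x$ distance values, that $Q=\Omega(n^4/x)$; so it suffices to prove $Q=O(n^3\log n)$.

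Next I would set up the correspondence between such quadruples and rigid motions. Each quadruple with $|ab|=|cd|\neq 0$ corresponds to a unique orientation-preserving rigid motion of $\mathbb{R}^2$ sending $a\mapsto c$ and $b\mapsto d$. Parametrizing such motions by a three-dimensional parameter space (translation vector together with the cotangent of the half rotation angle), the set of motions carrying a fixed point $p$ to a fixed point $q$ is an affine line $\ell_{p,q}\subset\mathbb{R}^3$. This produces a multiset $\mathcal L$ of $N=n^2$ lines, and $Q$ equals, up to negligible terms, $\sum_{\text{points }\pi}\binom{\mu(\pi)}{2}$ where $\mu(\pi)$ counts lines of $\mathcal L$ through $\pi$. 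I would then check the two structural facts that make these lines well behaved: (i) no point lies on more than $n$ lines of $\mathcal L$, since a rigid motion is pinned down by the images of two distinct points; and (ii) no plane and no regulus (doubly ruled quadric) contains more than $O(n)$ lines of $\mathcal L$, which follows from elementary analysis of when the families $\{\ell_{p,q}\}$ can be coplanar or co-ruled.

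The technical heart is the incidence theorem for lines in $\mathbb{R}^3$: if $\mathcal L$ is a set of $N$ lines with at most $O(\sqrt N)$ lines in any common plane or regulus, then the number of points meeting at least $r$ lines is $O\!\left(N^{3/2}/r^2 + N/r\right)$ for every $r\ge 2$. To prove this I would take a partitioning polynomial of degree $D$ (a power of $N$) via the polynomial ham-sandwich theorem, whose zero set $Z$ cuts $\mathbb{R}^3$ into $O(D^3)$ cells each meeting only $O(N/D^2)$ lines; inside each cell a planar Szemer\'edi--Trotter-type bound controls the incidences, and these sum favorably. The hard part will be the contribution of $Z$ itself: a line of $\mathcal L$ either crosses $Z$ in at most $D$ points or lies entirely inside $Z$. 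Bounding incidences among lines lying in $Z$ requires the differential geometry of ruled surfaces --- a point of $Z$ through which many such lines pass must be a flat point or a singular point of $Z$, and one controls these using the flecnode (Salmon) polynomial together with B\'ezout-type bounds, plus a second polynomial partitioning carried out within $Z$.

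Finally I would assemble the pieces. Applying the incidence bound with $N=n^2$ and summing $\sum_\pi\binom{\mu(\pi)}{2}$ dyadically over $2\le r\le n$ (using fact (i) to cap $r$ at $n$) gives
\[
Q \;=\; \sum_{\pi}\binom{\mu(\pi)}{2} \;=\; O\!\left(N^{3/2}\log n + Nn\right) \;=\; O(n^3\log n).
\]
Comparing with $Q=\Omega(n^4/x)$ yields $x=\Omega(n/\log n)$, as claimed. The step I expect to be the genuine obstacle is the analysis on the partitioning hypersurface $Z$: ruling out configurations where many lines of $\mathcal L$ collect inside $Z$ and meet at a few points, which is exactly where the plane/regulus hypotheses and the ruled-surface machinery are needed.
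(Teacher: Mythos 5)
The paper you are annotating does not prove this theorem at all: Theorem \ref{th:GuthKatz} is quoted from Guth and Katz \cite{GK15}, so the only meaningful comparison is with their argument, and your outline is precisely that argument --- the Elekes--Sharir reduction from distances to quadruples via Cauchy--Schwarz, the parametrization of orientation-preserving rigid motions by $\RR^3$ so that the motions sending $p$ to $q$ form a line $\ell_{p,q}$, the non-concentration of these lines in planes and reguli, the incidence bound $|\pts_r(\lines)|=O(N^{3/2}r^{-2}+N/r)$, and the dyadic summation giving $Q=O(n^3\log n)$ and hence $x=\Omega(n/\log n)$. The reductions and the final bookkeeping in your write-up are correct, including the cap $r\le n$ coming from the fact that at most $n$ lines pass through a common point and the separate $O(n^3)$ allowance for the degenerate (translation) motions.

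That said, as submitted this is a plan rather than a proof, and the deferred steps carry essentially all of the weight. First, the incidence theorem for lines in $\RR^3$ (the paper's Theorem \ref{th:numberOfRRichPoints}) is itself two distinct theorems in \cite{GK15}: the $r=2$ case needs the Cayley--Salmon/flecnode machinery and the classification of doubly ruled surfaces, while the $r\ge 3$ case needs polynomial partitioning together with a delicate analysis of critical and flat points of the partition hypersurface $Z$; neither is routine, and neither follows from the planar Szemer\'edi--Trotter bound applied cell by cell, which only handles the points off $Z$. Second, your structural facts (i) and (ii) about the specific family $\{\ell_{p,q}\}$ are asserted without argument; the regulus case in particular requires a genuine computation (one shows, via a vector-field/tangency argument of the kind the present paper reproduces in the proof of Lemma \ref{le:LinesRestrictionsComplex}, that the lines $\ell_{p,q}$ with $p$ fixed can rule a given quadric for at most two base points $p$). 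If your intent is a complete proof, both components must be supplied; if your intent is a reduction of Theorem \ref{th:GuthKatz} to the $\RR^3$ incidence theorem, the reduction as written is sound and is exactly the route taken in the literature.
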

This lower bound nearly matches the conjectured lower bound $\Omega(n/\sqrt{\log n})$, which can be achieved by taking points of the form $(j,k)$ with $j$ and $k$ integers between $1$ and $\sqrt n$. Theorem \ref{th:GuthKatz} bookends decades of progress on the Erd\H os distinct distances problem, such as \cite{CST92,SoTo01,Tardos03}. The problem has also been studied in other fields and under different distance norms \cite{BKT04,SdZ17,MPPRS20}. See \cite{Sheffer14} for a survey of recent results.

In this paper we obtain an analogue of Theorem \ref{th:GuthKatz} for sets of points in $\CC^2$. If $p,q\in\CC^2$ we define the \emph{(squared) complex distance} $\Delta(p,q) = (p_x-q_x)^2 + (p_y-q_y)^2$. For $\pts\subset\CC^2$, we define 
$$
\Delta(\pts) = \{ \Delta(p,q) \colon p, q \in\pts,\ p\neq q\}.
$$
In contrast to the situation in $\RR^2$, the set $\Delta(\pts)$ can contain the distance 0. Indeed, it is possible that $\Delta(\pts) =\{0\}$ even when $\pts$ is large. We say that a line $L\subset\CC^2$ is \emph{isotropic} if it has a slope of $\pm i$. If two points $p,q\in\CC^2$ are contained in a common isotropic line, then $\Delta(p,q)=0$. In particular, if all points of $\pts$ are contained in a common isotropic line then $\Delta(P)=\{0\}$. The next theorem says that this is the only obstruction preventing $\Delta(\pts)$ from having large cardinality. 

\begin{theorem}[Distinct distances in $\CC^2$] \label{th:DDinC2}
For every $\eps>0$, there is a positive constant $c$ such that the following holds. Let $\pts$ be a set of $n$ points in $\CC^2$, not all on the same isotropic line. Then 
\begin{equation*}
|\Delta(\mathcal{P})|\geq cn^{1-\eps}.
\end{equation*}
\end{theorem}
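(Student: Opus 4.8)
The plan is to transplant the Elekes--Sharir--Guth--Katz (ESGK) framework from $\RR^2$ to $\CC^2$, preceded by a short reduction isolating the isotropic degeneracy and the distance $0$. First I would dispose of point sets with many points on an isotropic line: parametrizing an isotropic line $\ell$ as $p(t)=p_0+t(1,\pm i)$ and computing, one finds that for any $r\notin\ell$ the map $t\mapsto\Delta(r,p(t))$ is a \emph{non-constant affine function of $t$}; hence if some isotropic line contains $m$ points of $\pts$ while $\pts$ is not contained in it, then $|\Delta(\pts)|\ge m$. So we may assume no isotropic line meets $\pts$ in more than $n^{1-\eps}$ points. Since each point lies on exactly two isotropic lines, this forces the number of ordered pairs at distance $0$ to be $O(n^{2-\eps})$, so at least $n^2/2$ ordered pairs are at nonzero distance. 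By Cauchy--Schwarz, $|\Delta(\pts)|\ge (n^2/2)^2/Q$, where $Q$ counts quadruples $(p,q,r,s)\in\pts^4$ with $p\ne q$, $r\ne s$ and $0\ne\Delta(p,q)=\Delta(r,s)$, so it suffices to prove $Q=O_\eps(n^{3+\eps})$.

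Next the ESGK transformation over $\CC$. The relevant symmetry group is $\mathrm{O}_2(\CC)\ltimes\CC^2$, of complex dimension $3$, and for nonzero non-isotropic vectors $v,w$ with $\Delta(v)=\Delta(w)\ne 0$ there is a \emph{unique} element of $\mathrm{SO}_2(\CC)\ltimes\CC^2$ carrying $v$ to $w$: the stabilizer of a nonzero vector in $\mathrm{SO}_2(\CC)$ is trivial, and $\mathrm{SO}_2(\CC)$ acts transitively on each nondegenerate conic $\{\Delta=c\}$. Consequently every quadruple counted by $Q$ has $p-q$ and $r-s$ nonzero and non-isotropic with equal $\Delta$, hence corresponds to a unique complex rigid motion $g$ with $g(p)=r$, $g(q)=s$; this $g$ lies on both of the ESGK curves $\gamma_{p,r}:=\{h:h(p)=r\}$ and $\gamma_{q,s}$. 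Using a suitable polynomial change of coordinates on the group --- the same one Guth and Katz use over $\RR$, which works over any field because it rests on a polynomial identity --- one coordinatizes the group by $\CC^3$ so that every $\gamma_{p,r}$ becomes a \emph{complex line}, with distinct pairs $(p,r)$ giving distinct lines. Writing $L(g)$ for the number of these $\le n^2$ lines through a point $g\in\CC^3$, each quadruple maps to its $g$, which meets at least two of the lines, and at most $L(g)^2$ quadruples map to a given $g$; hence $Q\le\sum_g L(g)^2$, the sum over points on at least two of the lines, which is controlled by the number of intersecting pairs of lines.

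Finally, the line incidences and the degeneracy analysis. I would invoke a complex analogue of the Guth--Katz line--incidence bound: for $N$ lines in $\CC^3$ with no more than $O(N^{1/2})$ in any common plane or regulus and no more than $O(N^{1/2})$ through any point, the number of points on exactly $k$ lines is $O_\eps(N^{3/2+\eps}/k^2+N^{1+\eps}/k)$, so weighting by $k^2$ and summing gives $O_\eps(N^{3/2+\eps})$ intersecting pairs. Such a statement should follow from constant-degree polynomial partitioning applied to the realification in $\RR^6$; the $\eps$ losses there are what degrade $n/\mathrm{polylog}$ to $n^{1-\eps}$. For the ESGK lines the concurrency hypothesis is automatic, since the lines through a fixed motion $g$ are exactly $\{\gamma_{p,g(p)}:p\in\pts\}$, at most $n$ in number. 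The crux is to show that no more than $O(n)$ ESGK lines lie in a common plane or regulus \emph{unless} $\pts$ lies on an isotropic line: over $\RR$ this is proved unconditionally, and I would revisit that argument over $\CC$, where the one new way it can break down is exactly the isotropic configuration, which the hypothesis excludes (after passing to a positive-proportion subset if needed). Plugging $N=n^2$ into the incidence bound then yields $Q=O_\eps(n^{3+\eps})$ and hence $|\Delta(\pts)|=\Omega_\eps(n^{1-\eps})$. I expect the bulk of the difficulty to lie here: classifying the planes and reguli containing many ESGK lines, translating these back into geometric constraints on $\pts$, and confirming that over $\CC$ the isotropic case is the only new obstruction --- together with securing, if it is not already available in the needed form, a complex line--incidence estimate strong enough to drive the argument.
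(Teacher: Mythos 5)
Your overall route is the same as the paper's: reduce via the isotropic/zero-distance analysis and Cauchy--Schwarz to bounding the quadruple count $Q$, run the ESGK correspondence over $\CC$ to turn $Q$ into a count of coplanar pairs among $n^2$ complex lines in $\CC^3$, and finish with a complex analogue of the Guth--Katz rich-points bound proved by partitioning in $\RR^6$. Your opening reduction is fine (indeed slightly cleaner than the paper's, since $t\mapsto\Delta(r,p(t))$ being a non-constant affine map gives $m$ distances rather than $m/2$). But there is a genuine gap at the step where you apply the incidence bound. You assert that no plane or regulus contains more than $O(N^{1/2})=O(n)$ ESGK lines ``unless $\pts$ lies on an isotropic line,'' so that the hypothesis of the theorem excludes the bad case. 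This is false: if a \emph{single} isotropic line $\ell^*$ contains $m$ points of $\pts$, then the $m^2$ lines $\ell_{a,c}$ with $a,c\in\ell^*$ all lie in one plane (the ``bad plane'' $y=\pm ix+k$ associated to $\ell^*$). After your reduction you only know $m\le n^{1-\eps}$, so a bad plane may contain up to $n^{2-2\eps}\gg n$ lines, and passing to a positive-proportion subset of $\pts$ cannot repair this (consider $n^{0.4}$ parallel isotropic lines with $n^{0.6}$ points each). So the plain incidence bound you invoke is simply not applicable to $\lines(\pts)$, and $Q=O(n^{3+\eps})$ does not follow from it. The fix, which is what the paper does, has two parts: (i) one needs the \emph{structure-theorem} form of the complex incidence result (Theorem \ref{th:RichC}), which permits arbitrarily rich planes but identifies them, rather than a rich-points bound with a flatness hypothesis; and (ii) one must show that every non-bad plane and every irreducible quadric contains $O(n)$ ESGK lines (a nontrivial computation, Lemma \ref{le:LinesRestrictionsComplex}), and that two ESGK lines lying in a common \emph{bad} plane force $a,b,c,d$ onto one isotropic line and hence $\Delta(a,b)=\Delta(c,d)=0$ (Lemma \ref{cl:IsotropicChar}), so such pairs never contribute to $Q$. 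Without this bad-plane analysis the argument does not close.

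Two further points. First, your inequality $Q\le\sum_g L(g)^2$ silently drops the quadruples whose associated rigid motion is a pure translation; under the Guth--Katz coordinatization these $g$ are not points of $\CC^3$ and the corresponding lines $\ell_{a,c},\ell_{b,d}$ are \emph{parallel} rather than concurrent. These quadruples must be counted separately; they number $O(n^3)$ (fixing $a,b,c$ determines $d$), which is harmless but cannot be omitted. Second, the complex incidence input you black-box does not ``follow from constant-degree polynomial partitioning applied to the realification in $\RR^6$'' in any routine way: the boundary $\vb(f)$ of a real partition is not a complex variety, and controlling the complex lines and rich points inside this real hypersurface is the main new content of the paper (Proposition \ref{prop:structureHypersurfaces} and Section \ref{sec:cplxGeom}). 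You correctly flag this as a possible difficulty, but as stated your proposal assumes precisely the hardest part of the argument.
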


Theorem \ref{th:DDinC2} yields several new sum-product type estimates for finite sets of complex numbers. 
\begin{corollary}
For every $\eps>0$, there is a positive constant $c$ such that the following holds. Let $A\subset\CC$. Then 
\begin{equation}\label{eq:SumProductOne}
|\{ (a_1-a_2)^2 \pm (a_3-a_4)^2\colon a_1,a_2,a_3,a_4\in A\}|\geq c|A|^{2-\eps}.
\end{equation}
Similarly,
\begin{equation}\label{eq:SumProductTwo}
|\{ (a_1- a_2)(a_3 - a_4)\colon a_1,a_2,a_3,a_4\in A\}|\geq c|A|^{2-\eps}.
\end{equation}
\end{corollary}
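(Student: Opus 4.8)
The plan is to deduce each inequality by applying Theorem~\ref{th:DDinC2} to a suitable point set $\pts\subset\CC^2$ of the form $A'\times A''$, or an affine-linear image of such a grid, chosen so that $\Delta(\pts)$ is contained in the relevant sum-product set; then $|\Delta(\pts)|$ is a lower bound for the size of that set. Throughout we may assume $|A|\geq 2$, since otherwise all three sets in question equal $\{0\}$ and the claimed bound holds for any $c\le 1$. Given $\eps>0$ we will apply Theorem~\ref{th:DDinC2} with parameter $\eps/2$, obtaining a positive constant $c$ (which we may further shrink below $1$) such that every set of $m$ points in $\CC^2$ not all on one isotropic line determines at least $c\,m^{1-\eps/2}$ complex distances.

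For \eqref{eq:SumProductOne} with the $+$ sign, take $\pts = A\times A$. For $p=(a_1,a_3)$ and $q=(a_2,a_4)$ we have $\Delta(p,q)=(a_1-a_2)^2+(a_3-a_4)^2$, so $\Delta(\pts)$ is contained in the set on the left of \eqref{eq:SumProductOne}. Since $|A|\ge 2$, the grid $A\times A$ is not contained in any line, so Theorem~\ref{th:DDinC2} gives $|\Delta(\pts)|\ge c\,|A\times A|^{1-\eps/2}=c\,|A|^{2-\eps}$. For the $-$ sign, replace $\pts$ by $A\times(iA)$, where $iA=\{ia:a\in A\}$; since the map $a\mapsto ia$ is injective we still have $|\pts|=|A|^2$, and $(ia_3-ia_4)^2=-(a_3-a_4)^2$ shows that $\Delta(\pts)$ lies in the left side of \eqref{eq:SumProductOne} with the minus sign, giving the same bound.

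For \eqref{eq:SumProductTwo} it is cleanest to pass to the isotropic coordinates $u=x+iy$, $v=x-iy$, in which $\Delta(p,q)=(u_p-u_q)(v_p-v_q)$ and the isotropic lines are exactly the lines $\{u=\text{const}\}$ and $\{v=\text{const}\}$. Let $\pts$ be the set whose points have $(u,v)$-coordinates ranging over $A\times A$, that is, $\pts=\{(\tfrac{a+b}{2},\tfrac{a-b}{2i}):a,b\in A\}$; this is an affine-linear image of the grid $A\times A$ under an invertible map, so $|\pts|=|A|^2$. For $p$ with $u$- and $v$-coordinates $a_1,a_3$ and $q$ with $u$- and $v$-coordinates $a_2,a_4$ we get $\Delta(p,q)=(a_1-a_2)(a_3-a_4)$, so $\Delta(\pts)$ is contained in the left side of \eqref{eq:SumProductTwo}, and Theorem~\ref{th:DDinC2} again yields the bound $c\,|A|^{2-\eps}$. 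The only step requiring a moment's care — and the closest thing to an obstacle — is checking that each chosen configuration is not concentrated on an isotropic line; but once $|A|\ge 2$ this is immediate, since each $\pts$ is an invertible affine-linear image of $A\times A$, which is not contained in any line at all.
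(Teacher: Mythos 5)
Your proposal is correct and is essentially the argument the paper gives (in one-sentence form): apply Theorem \ref{th:DDinC2} to $A\times A$, to $A\times iA$, and to an invertible affine image of the grid adapted to isotropic coordinates; your point set for \eqref{eq:SumProductTwo} is just a rescaling of the paper's $\{(a_1+a_2,\,ia_1-ia_2)\}$. The details you supply (the computation of $\Delta$, the count $|\pts|=|A|^2$, and the check that the grid is not contained in a single isotropic line) are all correct.
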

The sum-product estimate \eqref{eq:SumProductOne} follows by applying Theorem \ref{th:DDinC2} to the set $\pts = A\times A$ or $\pts = A\times iA$. The estimate \eqref{eq:SumProductTwo} follows by applying Theorem \ref{th:DDinC2} to the set $\pts = \{(a_1+a_2, ia_1-ia_2)\colon a_1,a_2\in A\}$. When $A\subset\RR$, both of these estimates were previously known; \eqref{eq:SumProductOne} with $+$ sign follows immediately from Theorem \ref{th:GuthKatz}, while the other estimates were proved by Roche-Newton and Rudnev in \cite{RR15}. 

\subsection{From distinct distances to incidence geometry}
To prove Theorem \ref{th:GuthKatz}, Guth and Katz used the so-called Elekes-Sharir-Guth-Katz framework. 
This framework reduces Theorem \ref{th:GuthKatz} to an incidence geometry problem about lines in $\RR^3$. In the result that follows, we say that a point $p$ is $r$-rich with respect to a set of lines $\lines$ if at least $r$ lines from $\lines$ contain $p$. We write $\pts_r(\lines)$ to denote the set of points that are $r$-rich with respect to $\lines$.
\begin{theorem}\label{th:numberOfRRichPoints}
Let $\lines$ be a set of at most $n$ lines in $\RR^3$, and suppose that at most $n^{1/2}$ lines are contained in a common plane or doubly-ruled surface. Then for each $2\leq r\leq n^{1/2}$, 
\begin{equation*}
|\pts_r(\lines)|= O(n^{3/2}r^{-2}).
\end{equation*}
\end{theorem}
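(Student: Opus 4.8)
The plan is to carry out the polynomial partitioning argument of Guth and Katz. Fix $r$ in the stated range, write $\pts=\pts_r(\lines)$, and assume for contradiction that $|\pts|\ge Cn^{3/2}r^{-2}$ for a large absolute constant $C$. After discarding the lines incident to few points of $\pts$ (a standard pruning that loses only a constant fraction of the point--line incidences, of which there are at least $r|\pts|$), apply the polynomial partitioning theorem to $\pts$: for a parameter $D$ to be optimized, of size roughly $n^{1/2}/r$, there is a nonzero $g\in\RR[x,y,z]$ of degree at most $D$ such that $\RR^3\setminus Z(g)$ has $O(D^3)$ connected components, each meeting $\pts$ in at most $|\pts|/D^3$ points. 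The crucial first observation is that a point $x\in\pts$ lying off $Z(g)$ cannot lie on any line of $\lines$ contained in $Z(g)$; hence all $\ge r$ lines through such an $x$ are transverse to $Z(g)$, and each of them passes through at most $D+1$ cells. Combining a double count of cell--line incidences with the elementary fact that a family of $k$ lines supports only $O(k^2/r^2)$ points that are $r$-rich for it (valid since $r\ge 2$), one shows that all but at most $\tfrac12|\pts|$ points of $\pts$ lie on the hypersurface $Z:=Z(g)$.

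It remains to bound $|\pts\cap Z|$ when $Z$ is a real algebraic surface of degree at most $D$, which is where the real difficulty lies. Split $Z$ into irreducible components $Z_1,\dots,Z_m$ with $\sum_i\deg Z_i\le D$. For $x\in\pts\cap Z$, separate the lines through $x$ into those transverse to $Z$ at $x$ and those contained in $Z$. A transverse line meets $Z$ in at most $D$ points, so the total number of transverse incidences is at most $nD$, and for $D$ chosen as above at most $O(n^{3/2}r^{-2})$ points of $\pts$ receive half or more of their lines transversally; these are acceptable, so we may restrict to points $x$ most of whose incident lines lie in $Z$. Pigeonholing over the components, it suffices to bound, for each $i$, the number of points of $Z_i$ through which pass $\gtrsim r/m$ lines of $\lines$ contained in $Z_i$.

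This last count is handled by the classical Cayley--Salmon theory of lines on surfaces together with a ``flat point'' argument. If $Z_i$ is a plane or a doubly-ruled quadric, the hypothesis caps the number of lines of $\lines$ contained in $Z_i$ at $n^{1/2}$, and the $O(k^2/r^2)$ bound applies directly. If $Z_i$ is ruled but not doubly ruled, all but finitely many of its lines belong to a single one-parameter ruling; one shows that at a point $x\in Z_i$ lying on too many surface-lines the second fundamental form of $Z_i$ degenerates along such a line, so these ``flat'' points lie on the intersection of $Z_i$ with an auxiliary hypersurface of degree $O(\deg Z_i)$ built from the partial derivatives of the defining equation of $Z_i$, and a Bézout estimate on this intersection bounds them, while the non-flat ones satisfy a separate Bézout-type incidence bound. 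If $Z_i$ is not ruled, the Cayley--Salmon theorem bounds the number of lines it contains by $O((\deg Z_i)^2)$, so the rich points on it are few. Summing over $i$ and choosing $D$ so that every error term is dominated by $n^{3/2}r^{-2}$ on the range $2\le r\le n^{1/2}$ --- which may require a second polynomial partition, now restricted to $Z$, to make the per-surface counts sharp --- yields $|\pts\cap Z|=O(n^{3/2}r^{-2})$, and hence $|\pts|\le 2|\pts\cap Z|=O(n^{3/2}r^{-2})$, contradicting $|\pts|\ge Cn^{3/2}r^{-2}$ for $C$ large.

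The main obstacle is the surface analysis of the last two paragraphs: correctly quantifying the ruled-surface dichotomy --- especially for components that are ruled but not doubly ruled, where lines can accumulate along the ruling --- via the flat-point polynomial and Bézout, and then threading the partitioning degree $D$ (and, if needed, a second partitioning degree) through all the resulting estimates so that they close simultaneously on the whole range of $r$. The reductions in the first paragraph and the elementary $O(k^2/r^2)$ rich-points bound are routine by comparison.
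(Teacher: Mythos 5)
A preliminary remark: the paper does not prove Theorem \ref{th:numberOfRRichPoints} at all --- it is quoted from Guth--Katz \cite{GK15} --- so the comparison below is with the known proof. Your outline reproduces its architecture (partitioning, a cellular count, then an analysis of lines and rich points on the zero set $Z$ via flat points and ruledness), but two steps fail as written.

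The cellular estimate does not close. With $N_i$ the number of lines meeting cell $i$ and $P_i$ the number of points of $\pts$ there, your two inputs are $P_i=O(|\pts|D^{-3})$ (from the partition) and $P_i=O(N_i^2r^{-2})$ (the elementary bound). The optimal combination is $P_i\le\min$ of the two $\le$ their geometric mean, and summing against $\sum_iN_i=O(nD)$ gives $\sum_iP_i=O\bigl(|\pts|^{1/2}nD^{-1/2}r^{-1}\bigr)$, which for $|\pts|=Cn^{3/2}r^{-2}$ and $D\approx n^{1/2}/r$ is $O\bigl(C^{1/2}n^{3/2}r^{-3/2}\bigr)$ --- larger than $\tfrac12|\pts|$ once $r\gg C$. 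No choice of $D$ rescues this: the cellular count with the elementary bound forces $D\gtrsim n^{1/2}$, while your transverse-incidence count on $Z$ (at most $nD$ incidences, hence $O(nD/r)$ points receiving half their lines transversally) forces $D\lesssim n^{1/2}/r$. Guth--Katz resolve the tension by applying the Szemer\'edi--Trotter bound $O(N_i^2r^{-3}+N_ir^{-1})$ inside each cell; the extra factor of $r^{-1}$ is exactly what your elementary bound is missing.

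The $r=2$ case is not covered, and the surface analysis is incomplete. A regular point of a surface lying on only two surface-lines need not be flat or critical (every point of a doubly ruled quadric is such a point), so for $r=2$ no auxiliary polynomial of controlled degree localizes the rich points on $Z$, and the whole second half of your argument is unavailable; this is why Guth--Katz prove $r=2$ by an entirely different, purely algebraic route (a degree-$O(n^{1/2})$ polynomial vanishing on all the lines, the flecnode polynomial and Cayley--Salmon, and the classification of ruled surfaces), as the paper notes right after the theorem statement. Even for $r\ge3$, your pigeonholing to ``at least $r/m$ lines contained in $Z_i$'' over the $m\le D$ irreducible components can drive the per-component richness below the threshold $3$ needed for the second-fundamental-form argument; Guth--Katz avoid this by defining critical and flat points for the full, possibly reducible, surface $Z$ and only afterwards separating off the planar components (where the $n^{1/2}$ hypothesis enters). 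The ingredients you name are the right ones, but the quantitative thread that makes them close simultaneously over the whole range of $r$ is precisely what is missing.
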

When $r=2$, Guth and Katz's proof of Theorem \ref{th:numberOfRRichPoints} is purely algebraic, and it has since been extended to arbitrary fields \cite{GZ15a, Kollar15}. For larger values of $r$, the only known proof of Theorem \ref{th:numberOfRRichPoints} requires topological arguments that are specific to $\RR$. Specifically, Guth and Katz developed a new tool called the polynomial partitioning theorem.
\begin{theorem}[Polynomial partitioning]\label{th:partitionPts}
Let $\pts$ be a set of $m$ points in $\RR^d$ and let $r\ge 1$.
Then there exists a nonzero polynomial $f\in \RR[x_1,\ldots,x_d]$ of degree at most $r$, such that each connected component of $\RR^d\setminus \vb(f)$ contains $O(mr^{-d})$ points of $\pts$.
\end{theorem}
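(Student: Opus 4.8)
The plan is to prove Theorem~\ref{th:partitionPts} by repeated bisection, with the \emph{polynomial ham-sandwich theorem} as the basic tool: given finitely many finite point sets $S_1,\dots,S_N\subset\RR^d$, there exists a nonzero polynomial $g$ of degree $O(N^{1/d})$ such that for each $i$ one has $|\{x\in S_i\colon g(x)>0\}|\le|S_i|/2$ and $|\{x\in S_i\colon g(x)<0\}|\le|S_i|/2$. I would deduce this from the classical (hyperplane) ham-sandwich theorem, itself a standard consequence of the Borsuk--Ulam theorem, via the Veronese embedding $v_t\colon\RR^d\to\RR^D$ whose coordinates are the $D=\binom{d+t}{d}-1$ monomials of degree between $1$ and $t$: an affine hyperplane in $\RR^D$ pulls back under $v_t$ to the zero set of a polynomial of degree at most $t$ in the original variables, and since $\binom{d+t}{d}$ grows like $t^d$ (with constant depending on $d$), taking $t\asymp N^{1/d}$ gives $D\ge N$, so the $N$ sets $v_t(S_1),\dots,v_t(S_N)$ can be simultaneously bisected by a hyperplane.

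Next I would run a dyadic process. Fix $k=\lfloor d\log_2(c_0 r)\rfloor$ for a small constant $c_0=c_0(d)>0$ to be chosen, assuming $r$ exceeds an absolute constant (otherwise $mr^{-d}=\Omega(m)$ and any degree-$1$ polynomial works). Maintain the invariant that at the start of stage $j\in\{1,\dots,k\}$ we have at most $2^{j-1}$ pairwise disjoint subsets of $\pts$, each of size at most $m/2^{j-1}$. At stage $j$, apply the polynomial ham-sandwich theorem to these $2^{j-1}$ sets to obtain a nonzero polynomial $f_j$ of degree $O\big((2^{j-1})^{1/d}\big)=O\big(2^{(j-1)/d}\big)$; refining each set according to the sign of $f_j$ and discarding the points on $\vb(f_j)$ yields at most $2^{j}$ subsets of size at most $m/2^{j}$ each, restoring the invariant. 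After stage $k$, put $f=f_1f_2\cdots f_k$.

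Finally I would verify the conclusions. For the degree, $\deg f=\sum_{j=1}^{k}\deg f_j=O\big(\sum_{j=1}^{k}2^{(j-1)/d}\big)=O\big(2^{k/d}\big)$, the geometric sum being dominated by its last term; since $2^{k/d}\le c_0 r$, choosing $c_0$ small enough (depending only on $d$) gives $\deg f\le r$. For the point count, observe $\vb(f)=\vb(f_1)\cup\cdots\cup\vb(f_k)$, so on any connected component of $\RR^d\setminus\vb(f)$ every $f_j$ is continuous and nowhere zero, hence of constant sign; such a component therefore lies within one of the at most $2^k$ sign cells, and each sign cell meets $\pts$ in at most $m/2^k=O(mr^{-d})$ points by the invariant (points discarded during the process lie on $\vb(f)$, hence in no component). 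I expect the crux to be the ham-sandwich step: ensuring that the number of sets to be bisected at stage $j$ does not exceed the dimension $\binom{d+t}{d}-1$ of the space of polynomials of the allotted degree $t$, i.e.\ that $t=O\big((2^{j-1})^{1/d}\big)$ already suffices, and then keeping the accumulated degree below the prescribed value $r$ rather than merely $O(r)$.
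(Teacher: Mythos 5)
Your proposal is correct: this is precisely the standard Guth--Katz proof of the polynomial partitioning theorem (polynomial ham-sandwich via the Veronese embedding, followed by dyadic bisection and multiplying the bisecting polynomials), and the paper itself does not reprove the result but simply quotes it from \cite{GK15}, where this argument appears. The degree bookkeeping and the observation that each connected component of $\RR^d\setminus \vb(f)$ lies in a single sign cell are exactly as in the original.
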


Since its introduction in 2010, Theorem \ref{th:partitionPts} has reshaped the field of incidence geometry and has led to striking progress on problems in discrete geometry, theoretical computer science, and harmonic analysis. See \cite{Guth16} for a partial survey of these developments. Many of the incidence geometry problems in Euclidean space that have been solved using Theorem \ref{th:partitionPts} can also be posed in vector spaces over other fields such as $\CC$ or $\mathbb{F}_p$.
Since we do not have an analogue of Theorem \ref{th:partitionPts} in these settings, many of these problems remain open. Up to an $\eps$ loss in the exponent, the following theorem is a complex analogue of Theorem \ref{th:numberOfRRichPoints}.
\begin{theorem}\label{th:numberOfRRichPointsCplx}
For every $\eps>0$, there is a constant $C$ such that the following holds. Let $\lines$ be a set of at most $n$ lines in $\CC^3$, and suppose that at most $n^{1/2}$ lines are contained in a common plane or doubly-ruled surface. Then for each $2\leq r\leq n^{1/2}$,
\begin{equation*}
|\pts_r(\lines)|\leq C n^{3/2+\eps}r^{-2}.
\end{equation*}
\end{theorem}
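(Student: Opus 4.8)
The plan is to prove Theorem~\ref{th:numberOfRRichPointsCplx} by transferring the problem from $\CC^3$ to $\RR^6$ and then applying real polynomial partitioning, paying the $\eps$ loss to control the degree blow-up inherent in such a transfer. First I would identify $\CC^3$ with $\RR^6$ via the standard real-coordinate map. Under this identification, a complex line $\ell\subset\CC^3$ becomes a real $2$-plane in $\RR^6$; more importantly, an $r$-rich point for $\lines$ becomes a point lying on at least $r$ of the corresponding $2$-flats. So the combinatorial quantity we want to bound is the number of $r$-rich points for a family of at most $n$ two-dimensional flats in $\RR^6$, under a non-degeneracy hypothesis inherited from the ``plane or doubly-ruled surface'' condition. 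The natural strategy is an induction on $n$ driven by the polynomial partitioning theorem (Theorem~\ref{th:partitionPts}) applied in $\RR^6$ at an appropriately chosen degree $D$.

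The key steps, in order: (1) Set up the induction on the number of lines $n$, with the target bound $|\pts_r(\lines)|\le Cn^{3/2+\eps}r^{-2}$; the base case is a trivial bound for small $n$. (2) Apply polynomial partitioning in $\RR^6$ to the set of $r$-rich points with a parameter $D = D(\eps)$, obtaining a polynomial $f$ of degree $O(D)$ so that each open cell of $\RR^6\setminus\BZ(f)$ contains $O(|\pts_r(\lines)|D^{-6})$ rich points. (3) Handle the cell contributions: a complex line, viewed as a real $2$-flat, either lies in the zero set $\BZ(f)$ or meets it in a real-algebraic set of dimension at most $1$, hence (after removing a lower-dimensional exceptional part) enters $O(D^2)$ cells; a standard double-counting then shows that only $O(D^2)$ cells can contain a positive fraction of the rich points, each such cell sees $O(n/D)$ of the lines, so the inductive hypothesis applied cell-by-cell yields an acceptable contribution as long as the partitioning gain beats the degree loss. (4) Handle the points on $\BZ(f)$: this is where the non-degeneracy hypothesis must be used. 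Lines contained in $\BZ(f)$ should be analyzed via the structure of $\BZ(f)$ --- its real dimension is at most $5$, and one wants to argue, using degree bounds on the number of lines that can lie in a low-degree variety together with the plane/doubly-ruled exclusion, that the rich points on $\BZ(f)$ are few. Lines not contained in $\BZ(f)$ meet it in few points and contribute a lower-order term.

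Iterating the partitioning (or choosing $D$ as a suitable power of $n$ and recursing a bounded number of times depending on $\eps$) turns the $D^{O(1)}$ losses at each stage into the single cumulative factor $n^{\eps}$, which is exactly the slack in the statement compared to Theorem~\ref{th:numberOfRRichPoints}. One must be slightly careful that the non-degeneracy hypothesis is preserved (or appropriately weakened) when passing to subfamilies inside cells; typically one tracks a hypothesis of the form ``at most $K$ lines in any plane or doubly-ruled surface'' with $K$ allowed to grow mildly, and absorbs this into the $\eps$.

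The hard part will be Step~(4): controlling the rich points lying on the partitioning surface $\BZ(f)$. In the real $\RR^3$ setting of Theorem~\ref{th:numberOfRRichPoints}, Guth and Katz used a delicate second round of partitioning on the surface together with the real-specific facts about ruled surfaces and flecnode polynomials; here, because our $2$-flats live in $\RR^6$ and arise from \emph{complex} lines, we need the analogous structural input adapted to this mixed real/complex setting --- essentially a statement that a real hypersurface in $\RR^6$ of controlled degree cannot contain too many of our complex lines unless they cluster on a complex plane or doubly-ruled complex surface, contradicting the hypothesis. Establishing this clustering dichotomy with effective degree bounds, and making it interact cleanly with the induction, is the crux; the cell analysis and the bookkeeping of the $\eps$-loss are comparatively routine.
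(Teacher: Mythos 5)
Your high-level strategy---identify $\CC^3$ with $\RR^6$, partition with a real polynomial of degree depending on $\eps$, induct, and treat the boundary $\vb(f)$ separately---is indeed the paper's strategy, and you have correctly located the crux in the boundary analysis. But there are two concrete gaps beyond the one you admit to. First, your claim in Step (4) that lines not contained in $\vb(f)$ ``meet it in few points and contribute a lower-order term'' is false and contradicts your own Step (3): a complex line is a real $2$-flat, so its proper intersection with $\vb(f)$ is a real algebraic set of dimension up to one. A point of $\vb(f)$ can be $r$-rich entirely because of lines that properly intersect $\vb(f)$ in curves through that point, and these cannot be dismissed as lower order. The paper handles this by replacing each such line $L$ with the irreducible curves of $L\cap\vb(f)$, invoking an extension of Guth's structure theorem to bounded-degree curves in $\RR^d$ (Section \ref{sec:GuthStructureTheorem} and Lemma \ref{lem:richLinesCurveIntersect}), and then converting the resulting real surfaces back into complex surfaces containing the original lines via Lemma \ref{le:RealInComp}. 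None of this appears in your outline. Second, inducting directly on the rich-points bound does not close: a plane containing at most $n^{1/2}$ lines of $\lines$ may contain far more than $n_j^{1/2}$ lines of the sub-family $\lines_j$ met by a cell, so the non-degeneracy hypothesis fails inside cells, and your proposed fix (letting the threshold grow mildly and absorbing it into $\eps$) is precisely what does not work. The paper instead proves a structure theorem (Theorem \ref{th:RichC}) whose conclusion tolerates rich planes by collecting them into a set $\surfs$, runs the induction on that statement, and only afterwards deduces Theorem \ref{th:numberOfRRichPointsCplx} by observing that its hypothesis forces $\surfs=\emptyset$, since every surface produced contains at least $rn^{1/2+\eps}>n^{1/2}$ lines.

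Finally, the ``clustering dichotomy'' you defer---that a bounded-degree irreducible real variety in $\RR^6$ either is almost ruled by complex planes or contains $O_D(1)$ complex planes and determines few rich points off them---is Proposition \ref{prop:structureHypersurfaces}, and its proof occupies roughly half the paper: it requires a parametrization of complex lines by points of $\RR^8$, the complex tangent space $T_pU\cap J(T_pU)$, the Guth--Zahl doubly-ruled surface theory, non-degenerate projections to $\RR^3$, and a separate point-curve incidence induction modeled on Sharir--Zlydenko. Identifying this as the hard part is correct and to your credit, but as written the proposal contains no argument for it, so the proof is not complete.
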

When $r=2$, Theorem \ref{th:numberOfRRichPointsCplx} follows from the more general results in \cite{GZ15a, Kollar15}. A key difficulty when proving Theorem \ref{th:numberOfRRichPointsCplx} for larger values of $r$ is that the complex analogue of Theorem \ref{th:partitionPts} is false---if $f\in \CC[x_1,\ldots,x_d]$ is a polynomial with $\vb(f)\subset\CC^d$, then $\CC^d\backslash \vb(f)$ is connected, so $\vb(f)$ does not ``cut'' $\CC^d$ into multiple connected components. In the next section we discuss our strategy for overcoming this problem.

\subsection{A structure theorem for lines in three dimensions}
In \cite{Guth15a}, Guth used Theorem \ref{th:partitionPts} to obtain the following structure theorem about sets of lines in $\RR^3$. 
For a set $\lines$ of lines and a variety $W$, we denote by $\lines_W$ the set of curves of $\lines$ that are contained in $W$.

\begin{theorem} \label{th:RichRGuth}
For every $\eps>0$, there are constants $D$ and $C$ such that the following holds. Let $\lines$ be a set of $n$ lines in $\RR^3$, let $2\leq r\leq 2n^{1/2}$, and let $r^\prime = \lceil 9r/10\rceil$. Then there exists a set $\surfs$ of algebraic surfaces in $\RR^3$ with the following properties.
\begin{itemize}[noitemsep]
\item Every surface $W\in \surfs$ is an irreducible surface of degree at most $D$.
\item Each surface contains at least $n^{1/2+\eps}$ lines of $\lines$.
\item $|\surfs|\le 2n^{1/2-\eps}$.
\item $|\pts_r(\lines)\setminus \bigcup_{W\in \surfs}\pts_{r^\prime}(\lines_W)|\leq C n^{3/2+\eps}r^{-2}$.
\end{itemize}
\end{theorem}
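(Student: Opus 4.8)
The plan is to prove the theorem by strong induction on $n$, the engine being polynomial partitioning (Theorem~\ref{th:partitionPts}) used at a \emph{constant} degree $D=D(\eps)$, fixed at the end; the surfaces collected into $\surfs$ will be exactly those irreducible components of the partitioning polynomials that happen to contain at least $n^{1/2+\eps}$ lines of $\lines$. We may assume $0<\eps<1/2$: for $\eps\ge 1/2$ take $\surfs=\emptyset$, since every set of $n$ lines has $|\pts_r(\lines)|=O(n^2r^{-2})$ (an $r$-rich point accounts for $\binom r2$ pairs of concurrent lines and two lines meet in at most one point), and $O(n^2r^{-2})\le Cn^{3/2+\eps}r^{-2}$. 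We may also assume $n\ge n_0(\eps)$, the finitely many smaller cases being absorbed into $C$. A standard line-partitioning variant of Theorem~\ref{th:partitionPts} supplies a nonzero $f$ of degree $\le D$ such that each connected component (``cell'') of $\RR^3\setminus\vb(f)$ is met by at most $O(n/D^2)$ of the lines not contained in $\vb(f)$; there are $O(D^3)$ cells. Write $\pts_r(\lines)=\pts_{\mathrm{cell}}\sqcup\pts_{\vb(f)}$ according to whether a point lies off $\vb(f)$ or on it.

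\emph{Cells.} For a cell $\Omega_i$ let $\lines_i$ be the lines of $\lines$ meeting $\Omega_i$ and not lying in $\vb(f)$, and $n_i=|\lines_i|=O(n/D^2)$; since any line meets $\le D+1$ cells, $\sum_i n_i\le(D+1)n$, and $\pts_r(\lines)\cap\Omega_i\subseteq\pts_r(\lines_i)$. I would apply the induction hypothesis to each $\lines_i$ (legitimate as $n_i<n$; in the range $r>2n_i^{1/2}$, outside the allowed parameter window, I would instead pad $\lines_i$ with generic lines to restore the window, or fall back on the Szemer\'edi--Trotter bound, which suffices there). This yields a family $\surfs_i$ with $|\surfs_i|\le 2n_i^{1/2-\eps}$ and leaves a remainder of size $O(n_i^{3/2+\eps}r^{-2})$. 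Because $\max_i n_i=O(n/D^2)$,
\[
\sum_i n_i^{3/2+\eps}\ \le\ \Big(\max_i n_i\Big)^{1/2+\eps}\sum_i n_i\ =\ O\!\big(n^{3/2+\eps}D^{-2\eps}\big),
\]
so the remainders sum to at most $\tfrac14 Cn^{3/2+\eps}r^{-2}$ once $D$ is large. Each $W\in\surfs_i$ still carries some $r'$-rich points: if $W$ contains $\ge n^{1/2+\eps}$ lines of $\lines$ I add it to the global $\surfs$ (so those points become captured), and otherwise $W$ has fewer than $n^{1/2+\eps}$ lines of $\lines$, hence at most $O(n^{1+2\eps}r^{-2})$ of its $r'$-rich points; summing over the $\le 2n_i^{1/2-\eps}$ surfaces per cell and over all cells gives a power of $D$ times $n^{3/2+\eps}r^{-2}$, absorbed once $C\ge C(D)$.

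\emph{The zero set.} Decompose $\vb(f)$ into its $\le D$ irreducible components and put into $\surfs$ each component $W$ with $\ge n^{1/2+\eps}$ lines of $\lines$; each is irreducible of degree $\le D$. For $p\in\pts_r(\lines)\cap\vb(f)$: if $p$ lies on such a $W$ and $\ge r'$ of the lines through $p$ lie in $W$, then $p\in\pts_{r'}(\lines_W)$ is captured. In every other case $p$ either sees more than $r-r'$ lines not contained in one of these degree-$\le D$ surfaces (and each such line meets the surface in $\le D$ points by B\'ezout, giving $O(Dn/r)$ points per surface), or lies only on components with $<n^{1/2+\eps}$ lines and is $\Omega(r)$-rich for the $<Dn^{1/2+\eps}$ lines in those components (giving $O(D^2n^{1+2\eps}r^{-2})$ points), or is $\Omega(r)$-rich for lines crossing $\vb(f)$ transversally ($O(Dn/r)$ points again). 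Hence $\pts_{\vb(f)}$ contributes $O\!\big(D^2n/r+D^2n^{1+2\eps}r^{-2}\big)=o\!\big(n^{3/2+\eps}r^{-2}\big)$ to the remainder, using $\eps<1/2$ and $r\le 2n^{1/2}$.

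\emph{Size of $\surfs$, and the main obstacle.} Every member of $\surfs$ is irreducible of degree $\le D$ and contains $\ge n^{1/2+\eps}$ lines of $\lines$, and two distinct such surfaces meet in a curve of degree $\le D^2$ and so share at most $D^2$ lines. Ordering $\surfs=\{W_1,W_2,\dots\}$, the surface $W_k$ therefore contributes at least $n^{1/2+\eps}-(k-1)D^2$ lines not seen by $W_1,\dots,W_{k-1}$, which is $\ge\tfrac12 n^{1/2+\eps}$ as long as $k\le n^{1/2+\eps}/(2D^2)$; since $|\bigcup_{W\in\surfs}\lines_W|\le n<n^{1+2\eps}/(4D^2)$ for $n$ large, $|\surfs|<n^{1/2+\eps}/(2D^2)$, and then $n\ge\tfrac12|\surfs|\,n^{1/2+\eps}$ gives $|\surfs|\le 2n^{1/2-\eps}$. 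Choosing $D$ large in terms of $\eps$ and then $C$ large in terms of $D$ closes the induction. The part I expect to be the real work is keeping every piece consistent across the recursion: proving the line-partitioning form of the partitioning theorem with the stated cell bound; treating the boundary regime where $r$ is near $2n^{1/2}$ and each cell meets only a handful of lines, where the inductive estimate must be replaced by a direct incidence bound; handling the $r'$-rich points sitting on surfaces produced deep in the recursion without letting them accumulate; and, above all, verifying that $D$ and $C$ can be frozen once and for all before the induction starts rather than drifting as $n$ grows.
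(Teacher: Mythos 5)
Your proposal is correct and follows essentially the same route as the proof this paper attributes to Guth \cite{Guth15a} (the paper only cites Theorem \ref{th:RichRGuth} and sketches its strategy rather than proving it): induction on $n$, constant-degree polynomial partitioning, applying the induction hypothesis inside the cells, and absorbing the rich irreducible components of $\vb(f)$ into $\surfs$ while disposing of the remaining boundary points via B\'ezout and the trivial $O(m^2r^{-2})$ rich-point bound. The loose ends you flag all close in the standard way — $D$ is fixed first so that the $O(D^{-2\eps})$ gain beats the cellwise sum, and only then is $C$ chosen large relative to $D$ — which is exactly how the paper organizes its own induction for the complex analogue, Theorem \ref{th:RichC}.
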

Guth then showed that Theorem \ref{th:RichRGuth} implies a slightly weaker version of Theorem \ref{th:numberOfRRichPoints}, which in turn implies a slightly weaker version of Theorem \ref{th:GuthKatz}.
Specifically, the bound $\Omega(n/\log n)$ is replaced by $\Omega(n^{1-\eps})$. We use a similar strategy to prove Theorem \ref{th:numberOfRRichPointsCplx} and Theorem \ref{th:DDinC2}. In particular, we prove the following complex analogue of Theorem \ref{th:RichRGuth}. 

\begin{theorem} \label{th:RichC}
For every $\eps>0$, there is a constant $C$ such that the following holds. Let $\lines$ be a set of $n$ lines in $\CC^3$, let $2\leq r\leq 2n^{1/2}$, and let $r^\prime = \max(2,  r/3)$. Then there exists a set $\surfs$ of algebraic surfaces in $\CC^3$ with the following properties.
\begin{itemize}[noitemsep]
\item If $r\geq 3$ then every surface in $\surfs$ is a plane. If $r=2$ then every surface in $\surfs$ is irreducible and has degree at most two.
\item Every plane $W\in \surfs$ contains at least $rn^{1/2+\eps}$ lines of $\lines$. 
\item $|\surfs|\le 2n^{1/2-\eps}r^{-1}$.
\item $|\pts_r(\lines)\setminus \bigcup_{W\in \surfs}\pts_{r^\prime}(\lines_W)|\leq C n^{3/2+\eps}r^{-2}$.
\end{itemize}
\end{theorem}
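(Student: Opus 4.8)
The plan is to adapt Guth's inductive argument from \cite{Guth15a} to the complex setting, replacing the use of polynomial partitioning over $\RR$ (Theorem \ref{th:partitionPts}) with a substitute that survives the passage to $\CC$. The overall scheme is an induction on $n$, the number of lines. We want to show that after removing $O(n^{3/2+\eps}r^{-2})$ of the $r$-rich points, every remaining $r$-rich point lies on one of at most $2n^{1/2-\eps}r^{-1}$ distinguished surfaces (planes, when $r \geq 3$), each heavy with $\gtrsim rn^{1/2+\eps}$ lines of $\lines$, and in fact is $r'$-rich within the lines on that surface. First I would dispose of the base case and the trivial regimes: when $n$ is bounded by a constant (depending on $\eps$) the statement is vacuous after absorbing everything into $C$, and when $r$ is comparable to $n^{1/2}$ the classical point-line incidence bound in $\CC^3$ (which follows from Theorem \ref{th:numberOfRRichPointsCplx} at $r=2$, i.e.\ from \cite{GZ15a,Kollar15}, or from the Szemer\'edi--Trotter bound in $\CC^2$ after projecting) already gives $|\pts_r(\lines)| = O(n^{3/2+\eps}r^{-2})$, so $\surfs = \emptyset$ works.

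For the main induction I would follow the bilinear/degree-reduction strategy rather than genuine partitioning. Fix a large parameter $t$ and consider a polynomial $f$ of controlled degree vanishing on $\lines$; over $\CC$ we cannot cut space into cells, but we can still extract structure from the zero set. The key move is a \emph{dichotomy} on each line $\ell \in \lines$: either $\ell$ meets $\vb(f)$ in finitely many points, or $\ell \subset \vb(f)$. Lines of the first type contribute few rich points total (each contributes at most $\deg f$ points to $\vb(f)$, and an $r$-rich point off $\vb(f)$ is $r$-rich only among such lines, to which a double-counting / Szemer\'edi--Trotter-type argument applies after an appropriate projection $\CC^3 \to \CC^2$). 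Lines of the second type are confined to $\vb(f)$, and now I would decompose $\vb(f)$ into its irreducible components and argue componentwise. A component $W$ carrying few lines of $\lines$ contributes to the ``$r$-rich on a light surface'' count, which one controls using the $r=2$ case applied to the lines on $W$ (again \cite{GZ15a,Kollar15}); a component $W$ carrying many lines of $\lines$ is added to $\surfs$ if it is a plane, and otherwise — this is where the $r \geq 3$ versus $r = 2$ split enters — one must show that a nonplanar irreducible surface of bounded degree cannot carry too many lines that are simultaneously $r$-rich for $r \geq 3$, because the lines on such a surface form at most a one-parameter family (a ruling) or a finite set, so any point has at most $2$ surface-lines through it unless the surface is a plane or a cone-type degree-two surface. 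To set the quantitative thresholds ($rn^{1/2+\eps}$ lines per surface, $2n^{1/2-\eps}r^{-1}$ surfaces, $r' = \max(2, r/3)$) I would choose $\deg f$ on the order of $n^{1/2+\eps}/r$ or so, check that a surface with fewer than $rn^{1/2+\eps}$ lines can be peeled off into the error term, and set up the induction so that the lines remaining on heavy surfaces are handled recursively with the same $\eps$ but smaller $n$; the $9/10$-to-$1/3$ change in $r'$ relative to Theorem \ref{th:RichRGuth} reflects the weaker control we have over how rich a point stays after restricting to a single surface.

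The main obstacle, as flagged in the introduction, is the failure of polynomial partitioning over $\CC$: we get no cell decomposition, so the clean ``most points lie in cells, each cell is small'' step of Guth's proof is unavailable, and we are forced to route everything through the zero set $\vb(f)$ itself. Concretely, the hard part is controlling the rich points that lie \emph{on} $\vb(f)$ but come from lines that are \emph{not} contained in $\vb(f)$ — in the real proof these are handled by the partition, and here we must instead iterate: replace $\lines$ by the sublines lying on $\vb(f)$ together with a bounded-degree refinement, and recurse, paying an $n^\eps$ factor each time we halve the relevant degree so that only $O(\log n / \eps)$ rounds are needed. Making this recursion close cleanly — in particular ensuring the surfaces produced across rounds assemble into a single family $\surfs$ of the advertised size, and that the $\eps$-losses from each round compound to only a single $n^\eps$ in the final bound — is the delicate bookkeeping that the full proof must carry out.
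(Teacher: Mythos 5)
There is a genuine gap, and it sits exactly at the step you wave at. Your plan abandons partitioning entirely and routes everything through the zero set of a complex polynomial $f$ vanishing on (some of) the lines, handling the lines not contained in $\vb(f)$ by ``a double-counting / Szemer\'edi--Trotter-type argument after an appropriate projection $\CC^3\to\CC^2$.'' But the complex Szemer\'edi--Trotter bound (Theorem \ref{th:complexST}) gives $|\pts_r|=O(n^2r^{-3}+nr^{-1})$, and $n^2r^{-3}$ exceeds the target $n^{3/2}r^{-2}$ by a factor of $n^{1/2}/r$ throughout the entire nontrivial range $2\leq r\ll n^{1/2}$. No purely algebraic (partition-free) argument is known that achieves $n^{3/2}r^{-2}$ for intermediate $r$ --- even over $\RR$ this is precisely why Guth and Katz needed polynomial partitioning, and the introduction of the paper flags this: the algebraic machinery (Koll\'ar's theorem, degree reduction, ruled-surface dichotomies) only delivers the $r=2$ and $r=3$ cases, which is how Section \ref{sec:KollarBd} uses it. Your proposed dichotomy on components of $\vb(f)$ carrying few versus many lines, ``controlled using the $r=2$ case,'' cannot recover the $r^{-2}$ gain for the same reason.

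Your premise that ``we are forced to route everything through $\vb(f)$'' is also where you diverge from what actually works. The paper's proof identifies $\CC^3$ with $\RR^6$, where each complex line is a real $2$-flat, and applies the \emph{real} polynomial partitioning theorem for varieties (Theorem \ref{th:partition}) with a constant degree $D=D(\eps)$ --- not degree $\sim n^{1/2+\eps}/r$ as you suggest. The cells of $\RR^6\setminus\vb(f)$ each meet $O(nD^{-4})$ complex lines, and the induction hypothesis is applied inside each cell exactly as in Guth's argument. All of the genuinely new work is in handling the boundary hypersurface $U=\vb(f)\subset\RR^6$, which is a real variety but not a complex one and so cannot simply be added to $\surfs$. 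This requires: (i) for complex lines \emph{contained} in $U$, the structural dichotomy of Proposition \ref{prop:structureHypersurfaces} (either $U$ is almost ruled by complex planes, or it contains few complex planes and satisfies a strong $r$-rich point bound), proved via the parametrization $G(\cdot)$ of complex lines in $\RR^8$ and an incidence argument for points and curves in $\RR^3$ (Lemma \ref{le:PointCurveR3Inc}); and (ii) for complex lines \emph{crossing} $U$, Guth's structure theorem applied to the real curves $L\cap U$ (Lemma \ref{th:RichR}) followed by complexification of the resulting real surfaces (Lemma \ref{le:RealInComp}). None of this machinery appears in your outline, and without a cell decomposition your recursion has no mechanism to beat Szemer\'edi--Trotter.
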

Guth proved Theorem \ref{th:RichRGuth} by induction on $n$ using a divide and conquer approach. Given a set of lines $\lines$, Guth used Theorem \ref{th:partitionPts} to find a polynomial $f$ with the following properties. 
The set $\RR^3\backslash \vb(f)$ is a union of many connected regions, each containing a small fraction of the points from $\pts_r(\lines)$, and most intersecting a small fraction of the lines from $\lines$. He then applied the induction hypothesis (Theorem \ref{th:RichRGuth} with fewer lines) to each of these regions individually. Finally, he combined the sets of algebraic surfaces associated to each region into a single, slightly larger set of algebraic surfaces and thereby closed the induction. 

An important technical difficulty in Guth's proof is that some of the $r$-rich points and some of the lines might be contained in the ``boundary'' $\vb(f)$ of the partition. Luckily, the boundary $\vb(f)$ is itself a variety, and irreducible components of this variety that contain many lines can be added to the set $\surfs$.

We now briefly describe our strategy for proving Theorem \ref{th:RichC}. As noted above, the complex analogue of Theorem \ref{th:partitionPts} is false. This is problematic because Theorem \ref{th:partitionPts} played a critical role in Guth's proof of Theorem \ref{th:RichRGuth}. One strategy for proving incidence geometry problems in complex space is to identify $\CC^d$ with $\RR^{2d}$ and to apply the polynomial partitioning theorem in $\RR^{2d}$. This was the approach used by the authors in \cite{SSZ16} to establish new point-curve incidence results in $\CC^2$, and we use a similar strategy to prove Theorem \ref{th:RichC}. 

To execute this strategy, we identify $\CC^3$ with $\RR^6$, and each complex line becomes a real plane. With a slight abuse of notation, we continue to call these sets complex lines. Some of the steps in Guth's proof of Theorem \ref{th:RichRGuth} can still be used to prove Theorem \ref{th:RichC}: We prove the theorem by induction on $n$ and find a real polynomial $f$ with the following properties. The set $\RR^6\backslash \vb(f)$ is a union of many connected regions, each containing far fewer $r$-rich points and intersecting far fewer complex lines than the original problem. As in the proof of Theorem \ref{th:RichRGuth}, it is possible that many $r$-rich points and many complex lines are contained in the  boundary $\vb(f)$ of the partition. Unfortunately, $\vb(f)$ might \emph{not} be a complex variety, so we are not permitted to add it to $\surfs$. Instead, we study the incidence geometry of points and complex lines contained in a real hypersurface in $\RR^6$. Our main result in this direction is the following incidence theorem, which is proved in Section \ref{sec:cplxGeom}. Before stating our result, we require a definition. We say that a variety $U\subset\RR^6$ is \emph{almost ruled by complex planes} if for each regular point $p\in U_{\operatorname{reg}}$, there is a complex plane $\Pi\subset U$ that contains $p$. If such a plane exists and $L\subset U$ is a complex line incident to $p$, then $L\subset\Pi$. 

\begin{proposition}\label{prop:structureHypersurfaces}
Let $U\subset\RR^6$ be an irreducible variety defined by polynomials of degree at most $D$. Then at least one of the following two statements holds.
\begin{itemize}[noitemsep]
\item $U$ is almost ruled by complex planes. 

\item $U$ contains at most $2D^2(D-1)$ complex planes. If $\lines$ is a set of $n$ complex lines that are contained in $U$ but not contained in any of these planes, then for each $r> D^2$ we have
\begin{equation*}
|U_{\operatorname{reg}}\cap \pts_r(\lines)| = O_D(n^{3/2}r^{-5/2}+nr^{-1}).
\end{equation*}
\end{itemize}
\end{proposition}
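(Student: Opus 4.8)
Identify $\RR^6$ with $\CC^3$ and write $J$ for multiplication by $i$, so a complex line is a real $2$-plane $p+\CC v$ and a complex plane a real $4$-plane. If $U=\CC^3$ the first conclusion is trivial, so assume $\dim_\RR U\le 5$; fix defining polynomials $g_1,\dots,g_m$ of $U$ of degree at most $D$. For $p\in U_{\reg}$ set $W_p=T_pU\cap J\,T_pU$, the largest complex-linear subspace of $T_pU$; every complex line of $U$ through $p$ has direction in $\PP(W_p)$, and $\dim_\CC W_p\le 2$. Substituting the parametrization $t\mapsto p+tv$, $t\in\CC$, into the $g_i$ and collecting coefficients shows that $p+\CC v\subset U$ exactly when $v$ solves a finite system of real-algebraic equations of degree $O_D(1)$ in the real and imaginary parts of $v$; write $V_p\subseteq\PP(W_p)$ for the set of admissible directions. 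Its linear part cuts out $W_p$; when $\dim_\CC W_p=2$ the rest lives on $\PP(W_p)\cong\PP^1(\CC)$, and a B\'ezout-type bound gives $|V_p|\le D^2$ whenever $V_p$ is finite. Consequently, once $r>D^2$, every point of $U_{\reg}\cap\pts_r(\lines)$ lies in $S:=\{p\in U_{\reg}:\dim_\CC W_p=2,\ \dim_\RR V_p\ge1\}$.

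\textbf{The dichotomy.} Consider the incidence variety $\mathcal I=\overline{\{(p,L):p\in U_{\reg},\ L\text{ a complex line},\ p\in L\subset U\}}$ with its projections to $U$ and to the real Grassmannian of complex lines of $\CC^3$. If $\overline S\ne U$, then all $r$-rich points lie in a proper subvariety of $U$, and I argue by induction on $\dim U$ (below). If $\overline S=U$, a generic $p\in U_{\reg}$ carries a positive-dimensional family of complex lines of $U$. Case (a): $V_p=\PP(W_p)$ for generic $p$. Then these lines sweep out the affine complex plane $p+W_p$, forcing $p+W_p\subset U$. Complex planes through a point of $U$ and contained in $U$ are parametrized by a closed subvariety of a compact Grassmannian, so the map from this parameter space to $U$ is proper; its image is closed and dense, hence equals $U$, so every regular point of $U$ lies on a complex plane of $U$ (and any complex line $L\subset U$ through such a point lies in $p+W_p$). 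This is the first conclusion. Case (b): $V_p\subsetneq\PP(W_p)$ for generic $p$, so $\dim_\RR V_p=1$ --- $U$ is singly ruled by complex lines but not by complex planes --- and we must prove the incidence bound. In this case $U$ contains only finitely many complex planes, since an infinite family would sweep out a subvariety of dimension $\ge5$, hence all of $U$, putting us back in case (a); quantitatively, substituting an affine parametrization $z_3=az_1+bz_2+c$ of a complex plane into the $g_i$ and demanding the result vanish turns ``$\Pi\subset U$'' into a polynomial system with at most $2D^2(D-1)$ isolated solutions by a B\'ezout count.

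\textbf{The incidence bound and the induction.} In case (b), with $\lines$ a set of $n$ complex lines contained in $U$ but in none of its complex planes and $r>D^2$: the complex lines of $U$ form a real variety $\mathcal F$ of dimension $\dim_\RR U-1$ and degree $O_D(1)$ in a suitable projective embedding, and for $p\in S$ the set $\mathcal F_p$ of complex lines of $U$ through $p$ is an algebraic curve of degree $O_D(1)$ in $\mathcal F$. An $r$-rich point $p\in S$ is recorded by the curve $\mathcal F_p\subset\mathcal F$ containing at least $r$ of the $n$ points $\lines\subset\mathcal F$; two distinct points of $\mathcal F$ lie on at most one common curve $\mathcal F_p$, and two distinct such curves meet in at most one point, since two distinct points of $U$ are joined by at most one complex line. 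Feeding this into a point-curve incidence theorem and combining with $r\cdot|U_{\reg}\cap\pts_r(\lines)|\le I(\pts_r(\lines),\lines)$ gives the bound $O_D(n^{3/2}r^{-5/2}+nr^{-1})$. When $\overline S\ne U$, one restricts to the irreducible components of $\overline S$: none is a complex plane (lines of $\lines$ avoid those) and none is almost ruled by complex planes (that would re-create infinitely many complex planes in $U$), so each falls under the second conclusion by the inductive hypothesis; the remaining work is to account for the lines of $\lines$ that meet such a component in a set of smaller dimension without being contained in it, and to sum the resulting estimates.

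\textbf{The main obstacle.} The heart of the proof is the point-curve incidence bound in case (b). The standard estimate for curves with two degrees of freedom only yields $O_D(n^2r^{-3}+nr^{-1})$, which is weaker than the required $O_D(n^{3/2}r^{-5/2}+nr^{-1})$ exactly in the relevant range $r\le n^{1/2}$. Gaining the stronger exponent forces one to use the special structure of the curves $\mathcal F_p$ --- that they are Schubert-type sections of a low-dimensional Grassmannian, varying in a family parametrized by the $5$-dimensional $U$ --- most naturally through polynomial partitioning carried out inside $\mathcal F$ (or directly along $U$ in $\RR^6$), with a separate, delicate treatment of the lines and rich points lying on the partition hypersurface; the fact that a complex line meets a real hypersurface of bounded degree in a set that need not be finite is an extra source of difficulty here and in the induction step.
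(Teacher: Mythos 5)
Your high-level architecture matches the paper's: the dichotomy via the complex tangent space $V_p(U)=T_pU\cap J(T_pU)$ (either the plane $V_pU$ lies in $U$ for all of $U$, or only on a proper subvariety), the bound $|V_p|\le D^2$ for finite direction sets so that $r>D^2$ forces rich points onto the positive-dimensional locus, and the dualization of the remaining problem to point--curve incidences in a Grassmannian followed by polynomial partitioning. However, the proposal has a genuine gap exactly where you flag ``the main obstacle'': the incidence bound $O_D(n^{3/2}r^{-5/2}+nr^{-1})$ is asserted, not proved. You correctly observe that two-degrees-of-freedom arguments only give $O(n^2r^{-3}+nr^{-1})$ and that one must partition, but you do not supply the two ingredients that make the partitioning argument close. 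First, inside the cells one needs a \emph{bootstrap} bound $I(\pts',\curves')=O(|\pts'|^{3/2}+|\curves'|)$; in the paper this comes from Koll\'ar's theorem (Theorem \ref{th:Kollar}) applied to the complex lines in $U$, via Lemma \ref{le:RichInHyperFixedR}, a random-sampling upgrade (Corollary \ref{cor:boundRRichPts}), and Corollary \ref{cor:DualBootstrapBound}. Without this input the cellular count cannot beat Szemer\'edi--Trotter, and nothing in your sketch produces it. Second, for points and curves clustering on the partition hypersurface one needs the ``good incidence geometry inside surfaces'' condition; the paper establishes it through the Guth--Zahl ruled-surface theory (exceptional curves, Lemma \ref{lem:tooManyExceptionsImpliesDoublyRuled}) together with a hairbrush analysis in the Grassmannian (Lemmas \ref{lem:eitherSSubsetHairLOrFewCurves} and \ref{lem:incidenceBoundInsideS}). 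Your sketch names this as ``a separate, delicate treatment'' but gives no argument.

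Two smaller issues. The constant $2D^2(D-1)$ does not follow from ``a B\'ezout count'' on the system in the plane parameters $(a,b,c)$ --- that parameter space is six real dimensions, so naive B\'ezout gives $O(D^6)$; the paper instead intersects the locus $U'$ (where $V_pU\subset U$) with a generic complex line transversal to all planes and applies B\'ezout in $\RR^2$. Also, your fallback induction on $\dim U$ when $\overline S\ne U$ is not needed in the paper and, as you note yourself, does not close: the lines of $\lines$ lie in $U$ but generally only meet the components of $\overline S$ in lower-dimensional sets, so the inductive hypothesis does not apply to them. The paper avoids this by working directly with the curves $\beta_p=G(U)\cap G_p$ of dimension at most one for every surviving point $p$, with no case split on $\overline S$.
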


Proposition \ref{prop:structureHypersurfaces} allows us to deal with the situation where many points and complex lines are contained in $\vb(f)$; this is the final missing piece in the proof of Theorem \ref{th:RichC}.

As a final remark, we note that our choice of $r^\prime$ in Theorem \ref{th:RichC} is slightly different than the choice used in Theorem \ref{th:RichRGuth}. This is a minor technical issue, and it does not limit the usefulness of Theorem \ref{th:RichC}. For a theorem of this type to be useful, we require that $r^\prime = 2$ when $r=2$, and that $r^\prime$ grows linearly as a function of $r$. 

\subsection{Structure of the paper}
In Section \ref{sec:prelim} we introduce a number of tools from algebraic geometry and real algebraic geometry that appear frequently in our proof. We also introduce the ruled surface theory developed by Guth and the second author in \cite{GZ15a}. Proposition \ref{prop:structureHypersurfaces} is a statement about complex lines and the theory of surfaces ruled by complex lines is quite classical. 
However, some of the intermediate steps in the proof require structure theorems about surfaces ruled by more general types of curves. 
Thus, the full strength of the ruled surface theory developed in \cite{GZ15a} is required.

In Section \ref{sec:KollarBd} we show how existing algebraic techniques can be used to prove a special case of Theorem \ref{th:RichC} when $r$ is small---as discussed above, Theorem \ref{th:RichC} is only novel when $r$ is large.  This is helpful because Proposition \ref{prop:structureHypersurfaces} is only effective when $r$ is large. In Section \ref{sec:GuthStructureTheorem} we discuss how a slight variant of Guth's Theorem \ref{th:RichRGuth} also applies to curves in $\RR^d$. This result will help us understand complex lines that properly intersect the boundary of the partition.

In Section \ref{sec:cplxGeom} we prove Proposition \ref{prop:structureHypersurfaces} and in Section \ref{sec:LineIncC3} we use this proposition to prove Theorem \ref{th:RichC}. Finally, in Section \ref{sec:DDinC3} we combine Theorem \ref{th:RichC} with the Elekes-Sharir-Guth-Katz framework to prove Theorem \ref{th:DDinC2}

\subsection{Thanks}
The authors would like to thank Thomas Bloom, Jozsef Solymosi, and Endre Szab\'o for helpful discussions. Adam Sheffer was supported by NSF award DMS-1802059. Joshua Zahl was supported by an NSERC Discovery Grant.

\section{Preliminaries}\label{sec:prelim}

\subsection{Varieties} \label{ssec:AlgGeom}
We now briefly recall standard definitions and results involving affine varieties.  For more information, see for example \cite{BCR98,Harris}. Let $\FF$ be a field; in practice we will only be interested in the fields $\RR$ and $\CC$. The \emph{variety} defined by the polynomials $f_1,\ldots, f_k \in \FF[x_1,\ldots,x_d]$ is the set
\[ \vb(f_1,\ldots,f_k) = \left\{p\in \FF^d\ :\ f_1(p)=0,\ldots, f_k(p) = 0 \right\}. \]
We say that a set $U\subset \FF^d$ is a variety if there exist polynomials $f_1,\ldots, f_k \in \FF[x_1,\ldots,x_d]$ such that $U = \vb(f_1,\ldots,f_k)$. If each of these polynomials has degree at most $D$ then we say that $U$ is \emph{is defined by polynomials of degree at most $D$}. In particular, lines, planes, and hyperplanes are defined by polynomials of degree at most one. We call varieties of this type \emph{flats}, or $k$-flats when we wish to emphasize the dimension. Note that if $U$ is defined by polynomials of degree at most $D$, then it is also defined by polynomials of degree at most $D^\prime$ for every $D^\prime\geq D$. 

If $U$ is a variety, a \emph{proper subvariety} of $U$ is a proper subset of $U$ that is also a variety. 
A variety $U$ is \emph{reducible} if it can be expressed as the union of two proper subvarieties of $U$.
Otherwise, $U$ is \emph{irreducible}.
Every variety $U$ can be uniquely expressed as a union of irreducible varieties, none of which is contained in another.
These subvarieties are the \emph{irreducible components} of $U$.

If $X\subset \FF^d$, the \emph{Zariski closure} of $X$, denoted $\overline{X}$, is the smallest variety in $\FF^d$ that contains $X$. In particular, every variety in $\FF^d$ that contains $X$ must also contain $\overline{X}$.

\parag{Dimension.}
If $U\subset\FF^d$ is an irreducible variety, we define the dimension of $U$ to be the smallest integer $k$ for which there exists a sequence 
\[ U_0 \subset U_1 \subset U_2 \subset \cdots \subset U_k = U. \]
Here, all the containments are proper and all the $U_j$ are irreducible. If $U$ is reducible, we define its dimension to be the maximum dimension of its irreducible components. We will write $\dim U$ to denote the dimension of $U$, or sometimes $\dim_{\FF} U$ if we wish to emphasize the underlying field.

We say that a variety is \emph{equidimensional} if each irreducible component has the same dimension. 
We define a \emph{curve} to be an equidimensional variety of dimension one, a \emph{surface} to be an equidimensional variety of dimension two, and a \emph{hypersurface} to be an equidimensional variety of co-dimension one. 

\parag{Regular and singular points.}
Let $U \subset \FF^d$ be an equidimensional variety of dimension $d^\prime$.
Let $\BI(U)$ be the ideal of polynomials in $\FF[x_1,\ldots,x_d]$ that vanish on every point of $U$.
Let $f_1,\ldots,f_\ell$ be polynomials that generate $\BI(U)$.
We say that $p\in U$ is a \emph{regular} point of $U$ if
\begin{equation}
\operatorname{rank}\left[\begin{array}{c}\nabla f_1(p)\\ \vdots \\ \nabla f_\ell(p)\end{array}\right]=d-d^\prime.
\end{equation}

We define $U_{\reg}$ be the set of regular points of $U$. If $U\subset\RR^d$ and $p\in U_{\reg}$, we define the tangent space $T_pU$ to be the span of $\nabla f_1(p),\ldots,\nabla f_{\ell}(p)$. 

If $p\in U$ is not a regular point of $U$, then $p$ is a \emph{singular} point of $U$. We denote this set by $U_{\sing}$. The following lemma says that most points of $U$ are regular points. 

\begin{lemma} \label{le:singular}
Let $U\subset\FF^d$ be a variety defined by polynomials of degree at most $D$.
Then $U_{\sing}$ is a variety of dimension strictly smaller than $\dim U$ that is defined by polynomials of degree $O_{d,D}(1)$.
\end{lemma}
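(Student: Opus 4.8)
The plan is to combine the Jacobian criterion with standard effective bounds on the complexity of algebraic sets (see \cite{BCR98,Harris}). The argument itself is routine; the real content is the effective control of degrees.

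First I would reduce to the case that $U$ is irreducible. Write $U=U_1\cup\cdots\cup U_s$ as the union of its irreducible components. By standard effective bounds for the irreducible decomposition of a variety defined by polynomials of degree at most $D$ in $\FF^d$, we have $s=O_{d,D}(1)$ and each $U_i$ is defined by polynomials of degree $O_{d,D}(1)$. A point of $U$ fails to be regular exactly when it lies on two or more components, or when it lies on a single component $U_i$ and is a singular point of $U_i$; thus
\[
U_{\sing}=\bigcup_{i=1}^{s}(U_i)_{\sing}\ \cup\ \bigcup_{1\le i<j\le s}\bigl(U_i\cap U_j\bigr).
\]
Since a finite union of varieties defined by polynomials of degree $O_{d,D}(1)$ is again such a variety (take all products of the defining polynomials of the two pieces, and iterate over the $O_{d,D}(1)$ pieces), it is enough to control each term on the right. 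For $i\neq j$ the set $U_i\cap U_j$ is a proper subvariety of the irreducible variety $U_i$, so $\dim(U_i\cap U_j)<\dim U_i\le\dim U$ and its degree is $O_{d,D}(1)$; these terms are harmless. So it remains to prove the lemma when $U$ is irreducible.

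Now suppose $U$ is irreducible of dimension $d^\prime$. By an effective Nullstellensatz (equivalently, effective Gr\"obner basis and radical computation), the vanishing ideal $\BI(U)$ is generated by polynomials $f_1,\ldots,f_\ell$ of degree $O_{d,D}(1)$. By definition, $U_{\sing}$ is the set of $p\in U$ at which the $\ell\times d$ matrix with rows $\nabla f_1(p),\ldots,\nabla f_\ell(p)$ fails to have rank $d-d^\prime$; since this rank is everywhere at most $d-d^\prime$, that is the same as the vanishing of every $(d-d^\prime)\times(d-d^\prime)$ minor of the matrix. Each such minor is a polynomial of degree $O_{d,D}(1)$, so, adjoined to $f_1,\ldots,f_\ell$, these minors realize $U_{\sing}$ as a variety defined by polynomials of degree $O_{d,D}(1)$. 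For the dimension bound I would invoke the classical fact that the regular locus of an irreducible variety is a nonempty Zariski-open, hence dense, subset; since $U$ is irreducible and $U_{\sing}$ is a proper subvariety, $\dim U_{\sing}<\dim U$.

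The hard part is not the reasoning but locating its inputs in the form needed: one must know that the number of irreducible components of $U$, their degrees, and the degrees of a generating set of $\BI(U)$ are all bounded purely in terms of $d$ and $D$. Over $\CC$ this is supplied by degree bounds for Gr\"obner bases together with effective radical computation; over $\RR$ one needs the corresponding statements for real algebraic sets and the real vanishing ideal, which follow from quantifier-elimination complexity bounds. I would cite these effective results rather than reprove them, as everything else is bookkeeping with the Jacobian criterion.
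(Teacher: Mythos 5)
Your argument is correct and is essentially the standard proof: the paper does not prove this lemma itself but defers to Section 2.2 of \cite{SSZ16} (see also Proposition 4.4 of \cite{ST11}), where the same route is taken --- effective bounds on the irreducible decomposition and on generators of $\BI(U)$, the vanishing of the $(d-d')\times(d-d')$ minors of the Jacobian to cut out $U_{\sing}$, and density of the regular locus for the dimension drop. The only caveats are exactly the ones you flag (effective degree bounds for the real vanishing ideal and nonemptiness of $U_{\reg}$ over $\RR$), and these are precisely what the cited references supply.
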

A proof of Lemma \ref{le:singular} can be found in Section 2.2 of \cite{SSZ16}. See also Proposition 4.4 of \cite{ST11}.

\subsection{Polynomial partitioning and real algebraic geometry}
As discussed in the introduction, the polynomial partitioning theorem plays an important role in the proof of Theorem \ref{th:RichC}. In addition to Theorem \ref{th:partitionPts}, we also need the following generalization that was proved by Guth in \cite{Guth15b}.

\begin{theorem}[Polynomial partitioning for varieties]\label{th:partition}
Let $\vrts$ be a set of $n$ varieties in $\RR^d$, each of dimension $k$ and defined by polynomials of degree at most $E$. Then for each integer $D\ge 1$ there exists a nonzero polynomial $f\in \RR[x_1,\ldots,x_d]$ of degree at most $D$, such that each connected component of $\RR^d\setminus \vb(f)$ intersects $O_E(n/D^{d-k})$ varieties from $\vrts$.
\end{theorem}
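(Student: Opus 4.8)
The plan is to follow Guth's argument from \cite{Guth15b}, which transplants the iterated polynomial ham-sandwich proof of Theorem~\ref{th:partitionPts} (the point case, $k=0$) to the setting of $k$-dimensional varieties. The single new geometric ingredient, and the source of the exponent $d-k$ in place of $d$, is a bound on the number of connected components into which a hypersurface cuts a variety of bounded degree.

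First I would establish that component bound: if $\Gamma\subset\RR^d$ is a variety of dimension $k$ and degree at most $E$, and $g\in\RR[x_1,\dots,x_d]$ is a nonzero polynomial of degree at most $\delta$ with $\Gamma\not\subset\vb(g)$, then $\Gamma\setminus\vb(g)$ has $O_{d,E}(\delta^{k})$ connected components. One way to see this is by slicing: after a generic linear change of coordinates, the projection $\pi\colon\RR^d\to\RR^k$ restricts on $\Gamma$ to a finite map of degree $O_{d,E}(1)$; over the complement in $\RR^k$ of a certain hypersurface of degree $O_{d,E}(\delta)$ (the locus where the cardinality of $\pi^{-1}(y)\cap(\Gamma\setminus\vb(g))$ changes) the fibres split into $O_{d,E}(1)$ continuous sheets, and by the Milnor--Thom bound this complement has $O_{d,E}(\delta^{k})$ connected components. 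Consequently, when $\deg g\le\delta$, any $k$-dimensional variety of degree at most $E$ meets at most $O_{d,E}(\delta^{k})$ of the connected components of $\RR^d\setminus\vb(g)$; since $\RR^d\setminus\vb(g)$ has $O_d(\delta^{d})$ components in all, such a variety ``visits'' only an $O(\delta^{k-d})$ fraction of the cells, which is precisely the mechanism producing the bound $n/D^{d-k}$.

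Next I would run the iteration. Set $J=\lceil(d-k)\log_2 D\rceil$, so $2^{J}\asymp D^{d-k}$, and build nonzero polynomials $g_0,\dots,g_J$ with $\deg g_j=O_{d,E}(2^{j/(d-k)})$, taking $f:=g_0g_1\cdots g_J$. Given $g_0,\dots,g_{j-1}$, I would look at the cells of $\RR^d\setminus\vb(g_0\cdots g_{j-1})$ and, inside each cell $C$ and for each variety of $\vrts$ meeting $C$, select one representative point in each connected component of that variety inside $C$; let $T$ be the finite set of all these representatives. Each variety meeting $C$ contributes at least one point to $T\cap C$, so bounding $|T\cap C|$ bounds the number of varieties meeting $C$, and by the component bound $|T|=O_{d,E}\!\bigl(n\,(\deg(g_0\cdots g_{j-1}))^{k}\bigr)$. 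Applying the polynomial ham-sandwich (Stone--Tukey) theorem to the $O_{d,E}(2^{jd/(d-k)})$ point sets $\{T\cap C\}_{C}$ simultaneously yields a nonzero $g_j$ of degree $O_{d,E}(2^{j/(d-k)})$ that bisects each of them, so that the representative points, hence the varieties, are equidistributed among the refined cells. Telescoping, and summing the geometric series $\sum_{j\le J}\deg g_j=O_{d,E}(2^{J/(d-k)})=O_{d,E}(D)$, the polynomial $f$ has degree $O_{d,E}(D)$ and every connected component of $\RR^d\setminus\vb(f)$ meets $O_{d,E}(n/2^{J})=O_{d,E}(n/D^{d-k})$ varieties; replacing $D$ by $D/C_{d,E}$ at the outset gives degree at most $D$, as required.

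The main obstacle is the bookkeeping in this iterative step. The representative set $T$ must be re-chosen after each refinement, since passing from $g_0\cdots g_{j-1}$ to $g_0\cdots g_j$ splits components and also lets a variety straddle $\vb(g_j)$; one has to verify carefully, using the component bound applied to the updated polynomial rather than built up recursively, that $|T|$ grows at a controlled rate, that straddling contributes only lower-order terms, and that the constants remain bounded as $j$ ranges up to $J\sim\log D$. A few routine technical points also need attention: reducing to varieties meeting a fixed large ball so the representative sets are finite (components running to infinity contribute only $O_{d,E}(1)$ each, by degree considerations); handling varieties, or components of varieties, that happen to lie inside $\vb(g_0\cdots g_{j-1})$, which meet no cell and so are harmless after a generic perturbation of the $g_j$; and tracking that every constant depends only on $d$, $E$, and the number of partitioning steps, hence only on $d$ and $E$.
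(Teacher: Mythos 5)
The statement you are proving is not proved in the paper at all: Theorem \ref{th:partition} is quoted from Guth \cite{Guth15b}, so the only comparison available is with that cited argument, and your reconstruction of it has a genuine gap at its central step. What you defer as ``bookkeeping'' --- pieces of varieties straddling $\vb(g_j)$ --- is in fact the whole difficulty, and bisecting representative points does nothing to control it. The ham-sandwich step applied to the sets $T\cap C$ only equidistributes the chosen representatives; it says nothing about which refined cells the \emph{varieties} meet, because a connected component of $\gamma\cap C$ that crosses $\vb(g_j)$ enters cells in which it has no representative. So the invariant you need to propagate (each cell of the current partition meets at most $\sim n2^{-j}$ varieties) is simply not delivered by the construction. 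A concrete failure: take $d=3$, $k=1$ and $n$ lines through a single point $p$. Your representatives are one point per line and may all lie far from $p$; the bisecting polynomial $g_0$ they produce need not pass anywhere near $p$, and then the cell containing $p$ contains a ball around $p$ and still meets all $n$ lines --- no reduction at this step. Worse, \emph{any} low-degree surface through $p$ is crossed by every one of the lines, so both sides can meet essentially all of them no matter how the representatives are split; an even split of the lines can only be arranged by choosing $\vb(g_j)$ according to how their directions distribute at $p$, information that is invisible to one representative point per piece.

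The source of the exponent $d-k$ is the global fact (your first paragraph, which is exactly the Barone--Basu bound the paper quotes as Lemma \ref{th:VarietyPartComponents}) that each variety meets only $O_E(D^{k})$ of the roughly $D^{d}$ cells; the entire content of the theorem is upgrading the resulting \emph{average} of $nD^{k-d}$ varieties per cell to a \emph{maximum}, and a per-step halving scheme is exactly borderline even in the concurrent-lines example (after total degree $D_j$ only $O(D_j^{2})$ cells touch $p$, so the best possible maximum is already $\Omega(nD_j^{-2})$). There is therefore no room for the straddling contribution to be ``lower order''; it must be handled exactly, which is what the argument in \cite{Guth15b} has to do and your outline does not. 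As written the proposal does not prove Theorem \ref{th:partition}; either attribute the statement to \cite{Guth15b} as the authors do, or replace the iterative step by one that controls the number of varieties, rather than the number of representative points, entering each cell.
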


In the arguments that follow, we will study incidence problems involving configurations of points and varieties. By multiplying the partitioning polynomials from Theorem \ref{th:partitionPts} and \ref{th:partition}, we obtain a partitioning polynomial that is simultaneously adapted to both sets.

\begin{corollary} \label{co:partitionCombined}
Let $\pts$ be a set of $m$ points in $\RR^d$ and let $\vrts$ be a set of $n$ varieties in $\RR^d$, each of dimension at most $k$ and defined by polynomials of degree at most $E$. 
Then for every $D\ge 1$ there exists a nonzero polynomial $f\in \RR[x_1,\ldots,x_d]$ of degree at most $D$, such that each connected component of $\RR^d\setminus \vb(f)$ intersects $O_E(n/D^{d-k})$ varieties from $\vrts$ and contains $O(mr^{-d})$ points from $\pts$.
\end{corollary}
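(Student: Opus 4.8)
The plan is to take $f$ to be the product of two partitioning polynomials: one produced by Theorem~\ref{th:partitionPts} and adapted to $\pts$, the other produced by Theorem~\ref{th:partition} and adapted to $\vrts$. The only extra ingredient is the elementary fact that a connected component of $\RR^d\setminus\vb(f_1f_2)$ is contained in a single connected component of $\RR^d\setminus\vb(f_1)$ and in a single connected component of $\RR^d\setminus\vb(f_2)$, since $\vb(f_1f_2)=\vb(f_1)\cup\vb(f_2)$.

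First I would apply Theorem~\ref{th:partitionPts} to $\pts$ with degree parameter $\lceil D/2\rceil$, obtaining a nonzero $f_1\in\RR[x_1,\dots,x_d]$ of degree at most $\lceil D/2\rceil$ such that each connected component of $\RR^d\setminus\vb(f_1)$ contains $O\bigl(m(D/2)^{-d}\bigr)=O(mD^{-d})$ points of $\pts$; the factor $2^d$ is absorbed into the implicit constant because $d$ is fixed. Next I would apply Theorem~\ref{th:partition} to $\vrts$ with degree parameter $\lfloor D/2\rfloor$ --- suppose for the moment that $\vrts$ is equidimensional of dimension $k$ --- obtaining a nonzero $f_2$ of degree at most $\lfloor D/2\rfloor$ such that each connected component of $\RR^d\setminus\vb(f_2)$ meets $O_E\bigl(n(D/2)^{-(d-k)}\bigr)=O_E\bigl(nD^{-(d-k)}\bigr)$ varieties of $\vrts$. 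Then I set $f=f_1f_2$. This polynomial is nonzero and has degree at most $\lceil D/2\rceil+\lfloor D/2\rfloor=D$. Any connected component $\comp$ of $\RR^d\setminus\vb(f)$ is connected and disjoint from $\vb(f_1)$, so it is contained in a single connected component of $\RR^d\setminus\vb(f_1)$ and therefore contains $O(mD^{-d})$ points of $\pts$; likewise $\comp$ is contained in a single connected component of $\RR^d\setminus\vb(f_2)$ and therefore meets $O_E\bigl(nD^{-(d-k)}\bigr)$ varieties of $\vrts$. This is the conclusion of the corollary.

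The only loose end is that the varieties in $\vrts$ are merely assumed to have dimension \emph{at most} $k$, while Theorem~\ref{th:partition} is stated for varieties of dimension exactly $k$. I would deal with this by first replacing each variety in $\vrts$ by its irreducible components, which increases the cardinality by a factor $O_{d,E}(1)$, then sorting the components into families $\vrts_0,\dots,\vrts_k$ according to dimension, applying Theorem~\ref{th:partition} to each $\vrts_j$ with a degree parameter of order $D$ chosen so that these $k+1$ degrees together with $\deg f_1$ sum to at most $D$, and taking $f_2$ to be the product of the resulting $k+1$ polynomials. A component of $\vrts_j$ has codimension $d-j\ge d-k$, so the per-cell count for $\vrts_j$ is $O_{d,E}\bigl(|\vrts_j|D^{-(d-k)}\bigr)$, and summing over the $k+1\le d+1$ families recovers $O_{d,E}\bigl(nD^{-(d-k)}\bigr)$; a variety of $\vrts$ meets a given cell exactly when one of its components does. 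I do not expect a genuine obstacle here: this is the standard ``multiply two partitioning polynomials'' trick, and the only care needed is the routine bookkeeping with the degree parameters and this dimension-by-dimension count. In every application of the corollary in this paper the family $\vrts$ is in fact already equidimensional, so even the last step can be skipped.
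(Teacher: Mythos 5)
Your proposal is correct and is exactly the argument the paper intends: the text preceding the corollary says only "by multiplying the partitioning polynomials from Theorem \ref{th:partitionPts} and \ref{th:partition}, we obtain a partitioning polynomial that is simultaneously adapted to both sets," which is precisely your construction, including the observation that each cell of $\RR^d\setminus \vb(f_1f_2)$ lies in a single cell of each factor. Your extra care with varieties of dimension strictly less than $k$ and with splitting the degree budget is routine bookkeeping the paper leaves implicit (and you correctly read the statement's $O(mr^{-d})$ as a typo for $O(mD^{-d})$).
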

\begin{remark}\label{rem:coDimOne}
One technical annoyance when working with real varieties is that if $f\in\RR[x_1,\ldots,x_d]$ is a non-zero polynomial, we need not have $\dim(\vb(f))=d-1$. Indeed, $\vb(f)$ can be empty, or it can have any dimension between $0$ and $d-1$. Luckily, this issue need not be a problem when performing polynomial partitioning. As discussed in \cite[Section A.3]{Zahl13}, we can always replace a polynomial $f\in\RR[x_1,\ldots,x_d]$ with a new polynomial $g$ (possibly of lower degree) so that $\vb(f)\subset \vb(g)$ and $\vb(g)$ is equidimensional and has codimension one. 
\end{remark}

To make use of Theorems \ref{th:partitionPts} and \ref{th:partition}, we need to control the size of a number of quantities related to the partitioning polynomials described above. In particular, we need to bound the number of connected components in $U\backslash \vb(f)$, where $U$ is a (not necessarily proper) subvariety of $\RR^d$. 

\begin{lemma}[Warren's theorem on a variety \cite{BB12}] \label{th:VarietyPartComponents}
Let $U \subset \RR^d$ be a variety of dimension $d'$ defined by polynomials of degree at most $E$.
Let $f\in \RR[x_1,\ldots,x_d]$ be a polynomial of degree $D$.
Then $U \setminus \vb(f)$ has $O_{d}(D^{d'}E^{d-d'})$ connected components.
\end{lemma}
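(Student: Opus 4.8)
The plan is to obtain the bound from the classical Milnor--Thom method for counting connected components of real semialgebraic sets, in the refined form in which the ambient variety $U$ contributes only through its codimension $d-d'$. This refinement is exactly the content of the main theorem of Barone and Basu \cite{BB12}, so in the write-up I would invoke that result after checking that the hypotheses match; below I indicate why such a bound should hold. First, $U\setminus\vb(f)$ is the disjoint union of the realizations on $U$ of the two sign conditions $f>0$ and $f<0$, so it suffices to bound the number of connected components of each of these. Since each is merely an open subset of $U$, I would replace it by a compact smooth substitute amenable to Morse theory: writing $U=\vb(g_1,\dots,g_k)$ with $\deg g_i\le E$ and $g=g_1^2+\cdots+g_k^2$ (so $\deg g\le 2E$ and $U=\{g=0\}$), I would consider, for a large radius $R$ and small generic $\zeta,\delta>0$, the compact set
\[
 W=\{\,x\in\RR^d:\ g(x)\le\zeta,\ \|x\|^2\le R^2,\ f(x)\ge\sqrt\delta\,\}
\]
and its analogue with $f(x)\le-\sqrt\delta$. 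Standard perturbation and transversality arguments (as in \cite{BB12}; see also \cite{BCR98}) show that for a suitable ordering of the parameters $W$ is a compact smooth manifold with corners, and that its number of connected components is at least that of the corresponding sign condition on $U$.

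Next I would bound the number of connected components of $W$ by counting critical points of a generic linear functional $\ell$. As $\ell$ has no critical points in the interior of $W$, every component of $W$ attains its minimum of $\ell$ at a critical point of the restriction of $\ell$ to one of the finitely many smooth boundary strata, each obtained by fixing a subset of the active constraints among $g=\zeta$, $\|x\|^2=R^2$, and $f=\pm\sqrt\delta$. On such a stratum the critical-point condition is the vanishing of the maximal minors of the matrix whose rows are $\nabla\ell$ and the gradients of the active constraints; the key point is that this system can be arranged to consist of $d-d'$ equations of degree $O_d(E)$ (expressing that the point lies on $U$, cut out transversally) together with at most $d'$ further equations of degree $O_d(D+E)$ (expressing criticality and the remaining constraints, into which $f$ and $\ell$ enter). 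B\'ezout's theorem bounds the number of solutions, hence the number of components contributed by that stratum, by $O_d(D^{d'}E^{d-d'})$ in the regime $D\gtrsim E$ relevant here; summing over the finitely many strata and over the two sign conditions yields the stated bound.

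The hard part is the previous point: extracting the \emph{refined} exponent $D^{d'}E^{d-d'}$ rather than the crude $O_d((D+E)^d)$ that the naive argument gives. Working with the single polynomial $g$ only sees $U$ as a hypersurface and is too wasteful; one must stratify $U$ so that on each piece a fixed choice of $d-d'$ of the $g_i$ cuts out $U$ transversally, thereby making its codimension visible to B\'ezout, and one must track how $\vb(f)$ and the auxiliary constraints interact with this stratification so that only $d'$ of the equations in each critical system carry the degree of $f$. This bookkeeping is precisely what is carried out in \cite{BB12}, so in the final write-up I would simply cite their theorem applied to the variety $U$, the single polynomial $f$, and the sign condition $f\ne 0$, rather than reproduce the argument.
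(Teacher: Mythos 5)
Your proposal ends up doing exactly what the paper does: the paper offers no proof of this lemma and simply cites the Barone--Basu theorem \cite{BB12}, and your argument likewise reduces to invoking that theorem, with your Morse-theoretic/B\'ezout sketch serving as a fair outline of their proof and correctly identifying that the refined exponent $D^{d'}E^{d-d'}$ (rather than $(D+E)^d$) is precisely the bookkeeping carried out there. So the proposal is correct and takes essentially the same approach as the paper.
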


\begin{lemma}[irreducible components of a variety] \label{lem:comps}
Let $U\subset \RR^d$ be a variety defined by polynomials of degree at most $D$. Then $U$ has $O_{d}(D^d)$ irreducible components. Each of these components is defined by polynomials of degree $O_{D,d}(1)$. 
\end{lemma}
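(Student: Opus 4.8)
The plan is to complexify: in $\CC^d$, Bézout-type inequalities give clean bounds on the number and degree of the irreducible components of the variety cut out by $f_1,\dots,f_k$, and this information transfers back to $\RR$ with only a bounded loss.

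First I would trim the defining polynomials. The $\RR$-span of $f_1,\dots,f_k$ is a subspace of the $\binom{d+D}{d}$-dimensional space of polynomials of degree at most $D$, so replacing $f_1,\dots,f_k$ by a basis of their span — which changes neither $U$ nor the degree bound — lets us assume $k = O_d(D^d)$. Next, consider the complex variety $V\subset\CC^d$ cut out by $f_1,\dots,f_k$, so that $U = V\cap\RR^d$. By the Bézout inequality for affine varieties (see e.g.\ \cite{Harris}), the sum of the degrees of the irreducible components of $V$ is at most $D^d$; in particular $V$ has at most $D^d$ complex-irreducible components, and each such component $W$ has $\deg W \le D^d$. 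Since a complex variety of degree $\delta$ is cut out set-theoretically by polynomials of degree at most $\delta$, each component of $V$ is defined over $\CC$ by polynomials of degree $O_d(D^d)$.

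It remains to pass back to $\RR$. Writing $W_1,\dots,W_m$ (with $m\le D^d$) for the complex-irreducible components of $V$, we have $U = \bigcup_j (W_j\cap\RR^d)$, and each $W_j\cap\RR^d$ is the real variety cut out by the real and imaginary parts of the polynomials defining $W_j$, hence by real polynomials of degree $O_d(D^d)$. The only real content is then to bound the number of real-irreducible components of each $W_j\cap\RR^d$ and the degrees of the polynomials defining them. The basic mechanism is that a real-irreducible component $Y$ of $W_j\cap\RR^d$ has a real prime defining ideal $I_\RR(Y)$, whose extension to $\CC[x_1,\dots,x_d]$ is again prime; hence the complexification $Y^\CC$ is an irreducible complex subvariety of $W_j$ with $Y^\CC\cap\RR^d = Y$, so distinct real components of $U$ produce distinct complex-irreducible subvarieties of $V$. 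Combining this with Bézout bounds stratified by codimension and a polynomial bound on the number of real-irreducible components contributed by an irreducible complex variety of a given degree, one sums over the components of $V$ to get $O_d(D^d)$ real-irreducible components of $U$, each defined by polynomials of degree $O_{d,D}(1)$. More cheaply, one can simply quote the corresponding effective statement for real varieties, which is the sort of result recorded in \cite{BCR98} and \cite[Section 2]{SSZ16} (compare \cite[Proposition 4.4]{ST11}). A self-contained but weaker argument, giving only the bound $O_{d,D}(1)$ on the number of components, proceeds by induction on $\dim U$: the top-dimensional components of $U$ number at most the connected components of $U\setminus U_{\sing}$ (controlled by Lemma \ref{th:VarietyPartComponents} and Lemma \ref{le:singular}), and the remaining components of $U$ are themselves components of the lower-dimensional variety $U_{\sing}$, to which the inductive hypothesis applies.

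I expect the main obstacle to be this real-to-complex dictionary: the real point set of an irreducible complex variety can be reducible, and can even fail to be Zariski dense in it, so one must take care to count the real components and bound their complexity while keeping the dependence on $D$ under control. Everything else — trimming the polynomial list, the Bézout bound in $\CC^d$, and splitting complex polynomials into real and imaginary parts — is routine.
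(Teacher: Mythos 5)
The paper does not actually prove this lemma: it is quoted as a standard fact of effective real algebraic geometry, of the same kind as Lemma \ref{le:singular} (for which the authors point to Section 2.2 of \cite{SSZ16} and \cite{ST11}). So there is no in-paper argument to compare against, and your proposal has to stand on its own.

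Your main complexification route has one genuinely circular step. After passing to the complex variety $V$ and its irreducible components $W_1,\dots,W_m$, you reduce to bounding the number of real-irreducible components of each $W_j\cap\RR^d$ --- but $W_j\cap\RR^d$ is just another real variety cut out by real polynomials of controlled degree (the real and imaginary parts of the equations for $W_j$), so this is the original problem again, not a smaller instance of it. The observation that distinct real components $Y$ of $U$ give distinct irreducible complex subvarieties $Y^{\CC}\subset V$ is correct, but these $Y^{\CC}$ are in general \emph{not} irreducible components of $V$: for $V=\vb(x^2+y^2)\subset\CC^2$ the only real point is the origin, a proper subvariety of each of the two components of $V$. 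Hence Bézout applied to $V$ does not count them, and the phrase ``a polynomial bound on the number of real-irreducible components contributed by an irreducible complex variety of a given degree'' is exactly the assertion under proof. (A smaller quibble: trimming $f_1,\dots,f_k$ to a basis of their span still leaves $O_d(D^d)$ equations, and iterating $\deg(A\cap B)\le\deg A\cdot\deg B$ over that many equations gives $D^{O_d(D^d)}$, not $D^d$; the clean $D^d$ bound requires the generic-linear-combination refinement of Bézout, which you are in effect citing as a black box.) By contrast, your final fallback --- induction on $\dim U$, bounding the top-dimensional components by the number of connected components of $U\setminus U_{\sing}$ via Lemmas \ref{le:singular} and \ref{th:VarietyPartComponents}, and recursing into $U_{\sing}$ for the remaining components --- is sound, and although it yields only $O_{D,d}(1)$ rather than $O_d(D^d)$, that weaker form is all the paper ever uses (the quantitative form appears in Lemma \ref{lem:exceptionalPtsInCPlane}, but there too $D$ is ultimately a constant). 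So: your primary argument has a gap at its key counting step, your option of quoting the real-algebraic-geometry literature matches what the paper itself does, and your inductive fallback is a correct self-contained proof of the version of the lemma the paper actually needs.
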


When $d = 2$ the expression $O_{d}(D^2)$ from Lemma \ref{lem:comps} can be sharpened somewhat. First, B\'ezout's inequality controls the number of intersection points between two plane curves that do not share a common component.
\begin{lemma}[B\'ezout's inequality]\label{lem:Bezout}
Let $f$ and $g$ be bivariate polynomials that do not share a common factor. Then $\vb(f,g)$ contains at most $(\deg f)(\deg g)$ points.
\end{lemma}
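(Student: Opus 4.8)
The plan is to prove this by means of resultants. As a preliminary reduction, we may assume the ground field $\FF$ is algebraically closed (hence infinite), since coprimality of $f$ and $g$ is preserved on passing to the algebraic closure, the degrees are unchanged, and $|\vb(f,g)|$ can only increase; we may also assume $\deg f,\deg g\ge 1$, as otherwise one of the two varieties is empty or all of $\FF^2$ and the bound is immediate. Finally, note that $\vb(f,g)$ is finite: every irreducible component of the curve $\vb(f)$ is $\vb(p)$ for an irreducible factor $p$ of $f$, and since $f$ and $g$ share no factor, no such component is contained in $\vb(g)$; hence $\vb(f,g)=\vb(f)\cap\vb(g)$ meets each of the finitely many components of $\vb(f)$ in a proper — and therefore finite — subvariety.

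First I would apply a generic linear change of coordinates. Because $\FF$ is infinite, a shear $(x,y)\mapsto(x+sy,y)$ for all but finitely many $s$ accomplishes three things at once: it preserves $\deg f$ and $\deg g$; after rescaling by a nonzero constant it makes $f$ and $g$ monic in $y$, of degrees $d:=\deg f$ and $e:=\deg g$ respectively, when these are viewed as polynomials in $\FF[x][y]$; and it makes the projection $\pi\colon(x,y)\mapsto x$ injective on the finite set $\vb(f,g)$. Each of the three requirements excludes only finitely many values of $s$: the first always holds, the second holds whenever $s$ avoids the roots of two fixed nonzero univariate polynomials built from the leading homogeneous parts of $f$ and $g$, and the third holds as soon as $s\ne(a_2-a_1)/(b_1-b_2)$ for every pair of distinct points $(a_1,b_1),(a_2,b_2)\in\vb(f,g)$.

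With this normalization in place, set $R(x):=\operatorname{Res}_y(f,g)\in\FF[x]$, the determinant of the $(d+e)\times(d+e)$ Sylvester matrix of $f$ and $g$ with respect to $y$. Two facts drive the proof. First, $R\not\equiv 0$: since $f$ and $g$ are coprime in $\FF[x,y]$ and both have positive $y$-degree, Gauss's lemma shows they remain coprime in $\FF(x)[y]$, and the resultant of two coprime polynomials over a field is nonzero. Second, $\deg R\le de$: in the expansion of the determinant, the $x$-degree of the Sylvester entry in row $i$ and column $j$ is at most $j-i$ after a suitable labelling of the rows, because $f$ and $g$ have total degrees $d$ and $e$; summing these bounds over any term of the expansion telescopes to $\binom{d+e+1}{2}-\binom{d+1}{2}-\binom{e+1}{2}=de$. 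Finally, since $f$ and $g$ are monic in $y$, specializing $x\mapsto a$ commutes with taking the resultant, so for every $(a,b)\in\vb(f,g)$ we have $R(a)=\operatorname{Res}_y\!\bigl(f(a,y),g(a,y)\bigr)=0$, the last equality because $f(a,y)$ and $g(a,y)$ share the root $b$. Thus $\pi(\vb(f,g))$ lies in the zero set of the nonzero degree-$\le de$ polynomial $R$, and since $\pi$ is injective on $\vb(f,g)$ we conclude $|\vb(f,g)|\le de=(\deg f)(\deg g)$.

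The step I expect to be the main obstacle is the degree bound $\deg R\le(\deg f)(\deg g)$ — it is exactly here that one uses that $\deg f$ and $\deg g$ are \emph{total} degrees rather than degrees in $y$ alone, and the telescoping computation must be set up carefully. Everything else is bookkeeping, the only mild subtleties being to confirm that a single generic shear realizes all three normalization properties simultaneously and, beforehand, that coprimality genuinely persists over the algebraic closure.
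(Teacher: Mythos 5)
Your proof is correct. Note that the paper does not actually prove Lemma \ref{lem:Bezout}: it is quoted as a classical fact and used as a black box (e.g.\ in Corollary \ref{isolatedPtsInR2}), so there is no in-paper argument to compare against. Your resultant proof is the standard self-contained route, and the details check out: the reduction to an algebraically closed field is legitimate (coprimality, degrees, and an upper bound on $|\vb(f,g)|$ all behave correctly under extension); a single generic shear does simultaneously preserve total degrees, make both polynomials monic in $y$ of $y$-degrees equal to their total degrees, and separate the finitely many intersection points under $\pi(x,y)=x$, since each requirement excludes only finitely many $s$; and the key quantitative step, $\deg_x \operatorname{Res}_y(f,g)\le de$, is exactly right --- with entry $(i,j)$ of the Sylvester matrix having $x$-degree at most $j-i$ under the row labelling you describe, every permutation term has degree at most $\binom{d+e+1}{2}-\binom{d+1}{2}-\binom{e+1}{2}=de$. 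The nonvanishing of the resultant via Gauss's lemma and the compatibility of specialization $x\mapsto a$ with the resultant (valid because the leading coefficients in $y$ are nonzero constants) complete the argument. The only blemishes are cosmetic: the excluded value of $s$ for injectivity should be $(a_2-a_1)/(b_2-b_1)$ rather than $(a_2-a_1)/(b_1-b_2)$ (immaterial, as either way it is a finite exclusion set), and the degenerate case $\deg f=0$ gives $\vb(f)=\emptyset$ rather than possibly all of $\FF^2$ (the latter would force a common factor). Neither affects correctness.
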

Second, Harnack's inequality control the number of connected components of a plane curve.
\begin{lemma}[Harnack's theorem]\label{lem:Harnack}
Let $f$ be a bivariate polynomial of degree $D$. Then $\vb(f)$ has at most $(D-1)(D-2)/2 + 1\leq D^2$ connected components. 
\end{lemma}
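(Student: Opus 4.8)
This is the classical \emph{Harnack curve theorem}; see \cite{BCR98} for background on real algebraic curves. The weak estimate that $\vb(f)$ has $O(D^2)$ connected components --- which is in any case all that is needed in the sequel --- is immediate from the Oleinik--Petrovskii--Milnor--Thom bound on the number of connected components of a real algebraic subset of $\RR^2$ cut out by a polynomial of degree $D$. So the plan is to justify the sharp constant $(D-1)(D-2)/2+1$, which I would do by reducing to the case of a smooth projective curve. First, replacing $f$ by its squarefree part changes neither $\vb(f)$ nor increases the degree, so assume $f$ is squarefree. Next homogenize to get $\bar f\in\RR[x,y,z]$ and the projective curve $\bar C=\vb(\bar f)\subset\PP^2(\RR)$; since $\vb(f)$ is $\bar C$ with at most $D$ points (those on the line at infinity) deleted, the number of components of $\vb(f)$ exceeds that of $\bar C$ by at most $D$, which stays within the slack in the bound $\le D^2$, so it suffices to bound the number $s$ of connected components of $\bar C$. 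Finally, by Bertini's theorem a generic member of the (base-point-free) linear system of plane curves of degree $D$ is smooth, and --- this is the one delicate point, discussed below --- the perturbation can be arranged so that no component is destroyed; a smooth plane curve of degree $D\ge 1$ is automatically irreducible, since two components would meet in a singular point. Thus we may assume $\bar C$ is smooth and irreducible.

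With this reduction in hand I would run Harnack's Bézout argument. Since $\bar C(\RR)$ is homologous mod $2$ to $D$ lines in $\PP^2(\RR)$ and two noncontractible simple closed curves in $\PP^2(\RR)$ must intersect (whereas components of the smooth curve $\bar C$ are disjoint), $\bar C$ consists of ovals --- null-homotopic components, each bounding a disk --- together with exactly one noncontractible component when $D$ is odd. Suppose for contradiction that $s\ge (D-1)(D-2)/2+2$, so that $\bar C$ has at least $(D-1)(D-2)/2+1$ ovals. Fix an oval $O_0$ and an odd integer $t\in\{D-3,D-2\}$ (take $t=D-3$ when $D$ is even and $t=D-2$ when $D$ is odd; the cases $D\le 2$ are trivial). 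A curve of degree $D-2$ has $\binom D2-1$ degrees of freedom, so one can choose a nonzero curve $E$ of degree $D-2$ through $t$ generic points of $O_0$ and one generic point of each of $\binom D2-1-t$ further ovals; a short count shows that the total number of ovals so used does not exceed the number available, so this is possible. For generic choices $E$ meets $\bar C$ transversally at the chosen points, and since $E$ is a closed curve while each oval bounds a disk, $E$ crosses every oval an even number of times. Hence $E$ crosses each selected oval other than $O_0$ at least twice, and it crosses $O_0$ at least $t+1$ times (one more than imposed, because $t$ is odd). Adding up, $E$ meets $\bar C$ in strictly more than $D(D-2)$ real points; but $E$ and $\bar C$ share no common component ($\bar C$ being irreducible of larger degree), so Bézout's inequality (Lemma~\ref{lem:Bezout}, applied in a generic affine chart) allows at most $D(D-2)$ intersection points. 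This contradiction gives $s\le (D-1)(D-2)/2+1$.

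\textbf{The main obstacle.} The genuinely delicate step is the reduction to the smooth case: a careless deformation can eliminate a connected component of the real locus --- an isolated real point (an acnode), which is itself a component, can simply vanish under a small perturbation. This is handled by perturbing in the correct direction near each singular point (locally, where $f$ has a positive definite leading form, one subtracts a small positive quantity, turning the isolated point into a small oval) or, more cleanly, by appealing directly to the statement that the real locus of an \emph{arbitrary} reduced real projective plane curve of degree $D$ has at most (arithmetic genus)$\,+\,1=(D-1)(D-2)/2+1$ components, which one deduces from the smooth case via the normalization. As an alternative to the Bézout count, the smooth case also follows from Riemann surface topology: the complexification $\bar C(\CC)$ is a connected surface of genus $g=(D-1)(D-2)/2$ on which complex conjugation acts with fixed locus a disjoint union of $s$ circles, and comparing Euler characteristics of the pieces of $\bar C(\CC)\setminus\bar C(\RR)$ forces $s\le g+1$. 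The remaining bookkeeping --- the degrees-of-freedom count for $E$, the genericity of the interpolation points, and the affine-to-projective passage --- is routine.
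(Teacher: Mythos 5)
The paper offers no proof of this lemma---it is invoked as a classical fact---so the only question is whether your reconstruction of Harnack's argument is sound, and in essence it is. The degrees-of-freedom count checks out: a curve $E$ of degree $D-2$ through $t\in\{D-3,D-2\}$ points of one oval and one point of each of $\binom{D}{2}-1-t$ further ovals meets $\overline{C}$ with total intersection number at least $D^2-2D+1>D(D-2)$, contradicting B\'ezout because $\overline{C}$ is irreducible of degree $D>D-2$; the Riemann-surface alternative you mention is equally standard. One point deserves emphasis beyond what you say: your caution about the affine--projective passage is not merely stylistic, because the sharp constant in the lemma as printed is actually \emph{false} for the affine set $\vb(f)$. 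Take $f=x(x-1)(x-2)$, of degree $3$: its zero set is three parallel lines, hence has $3$ components, while $(D-1)(D-2)/2+1=2$. The sharp bound belongs to the projective closure; deleting the points at infinity costs an additive $O(D)$, so what survives in the affine chart is only the inequality $\le D^2$---which, fortunately, is the only part of the lemma the paper ever uses (e.g.\ in Corollary \ref{isolatedPtsInR2}). Your write-up proves exactly this, which is the correct reading of the lemma.

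The genuine gap is the one you flag yourself: reducing to a smooth projective curve without losing components. A generic perturbation kills acnodes and, more generally, real branches across which $f$ does not change sign, so one must either perturb with a controlled sign near each singularity or quote the bound $(\text{arithmetic genus})+1$ for reduced singular curves via normalization; both routes are standard but neither is a one-liner, and your proposal leaves this at the level of a plan. A smaller wrinkle: Lemma \ref{lem:Bezout} counts distinct points, so to conclude ``strictly more than $D(D-2)$ real points'' you must either justify transversality of $E$ and $\overline{C}$ at the interpolation points or, more cleanly, run the count with intersection multiplicities (an oval cannot be crossed transversally an odd number of times, and a tangency already contributes multiplicity two). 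None of these issues affects the paper, which needs only the $O(D^2)$ bound that already follows from the Oleinik--Petrovskii--Milnor--Thom estimate you cite at the outset.
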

\begin{corollary}\label{isolatedPtsInR2}
Let $U\subset\RR^2$ be a zero-dimensional variety defined by polynomials of degree at most $D$. Then $|U|\leq D^2$.
\end{corollary}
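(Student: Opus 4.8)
The plan is to apply B\'ezout's inequality (Lemma~\ref{lem:Bezout}) to two well-chosen bivariate polynomials of controlled degree, using Harnack's theorem (Lemma~\ref{lem:Harnack}) to account for the points of $U$ that are swallowed by their common factor.

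Write $U=\vb(f_1,\ldots,f_k)$ with $\deg f_i\le D$ for every $i$. A zero-dimensional variety is a finite union of points, so $U$ is finite; and since $U\neq\RR^2$, not all $f_i$ vanish identically, so $q:=\gcd(f_1,\ldots,f_k)$ is a nonzero polynomial of degree at most $D$. Writing $f_i=q\,g_i$, we have $\gcd(g_1,\ldots,g_k)=1$ and $\deg g_i\le D-\deg q$. Since $q$ divides each $f_i$, we get $\vb(q)\subseteq\vb(f_i)$ for all $i$, hence $\vb(q)\subseteq U$; in particular $\vb(q)$ is finite, so its number of points equals its number of connected components, which by Harnack's theorem is at most $(\deg q)^2$. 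Next, I would choose generic vectors $\lambda,\mu\in\RR^k$ and set $p_1=\sum_i\lambda_i g_i$ and $p_2=\sum_i\mu_i g_i$; these are nonzero polynomials of degree at most $D-\deg q$, and for generic $\lambda,\mu$ one has $\gcd(p_1,p_2)=1$ (see the next paragraph). Any $x\in U$ satisfies $q(x)\,g_i(x)=f_i(x)=0$ for all $i$, so either $q(x)=0$, i.e.\ $x\in\vb(q)$, or $g_i(x)=0$ for all $i$, forcing $x\in\vb(p_1)\cap\vb(p_2)$. By B\'ezout's inequality, $|\vb(p_1)\cap\vb(p_2)|\le(\deg p_1)(\deg p_2)\le(D-\deg q)^2$. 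Combining the two bounds,
\[
|U|\le(\deg q)^2+(D-\deg q)^2\le D^2,
\]
since $t\mapsto t^2+(D-t)^2$ is maximized over $t\in[0,D]$ at the endpoints $t\in\{0,D\}$.

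The step I expect to need the most care is the claim that generic linear combinations of polynomials with trivial gcd again have trivial gcd. This is standard; one argument is as follows. Because $\gcd(g_1,\ldots,g_k)=1$, no irreducible plane curve lies in every $\vb(g_i)$, so the base locus $\vb(g_1,\ldots,g_k)$ is finite. If $\gcd(p_1,p_2)$ were nonconstant for all $(\lambda,\mu)$ in a set of positive measure, then for such parameters $\vb(p_1)\cap\vb(p_2)$ would contain an irreducible curve, and a point of that curve lying outside the finite base locus would impose a nontrivial linear constraint on $\lambda$ (and on $\mu$); a dimension count then shows this cannot happen generically. Alternatively, one may invoke an effective Bertini theorem. (When $k=1$ there is nothing to prove: $U=\vb(f_1)$ is finite, so $|U|$ is at most the number of connected components of $\vb(f_1)$, which by Harnack is at most $(\deg f_1)^2\le D^2$.)
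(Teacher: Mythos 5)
Your proof is correct and follows the same basic route as the paper's: control the points absorbed by a common factor via Harnack's theorem and the remaining points via B\'ezout's inequality applied to a coprime pair. In fact your version is more careful than the paper's, which simply asserts ``without loss of generality \dots\ no two polynomials share a common factor''; that reduction cannot be achieved by merely discarding or reordering the $f_i$ (e.g.\ $\vb\bigl(x(x^2+y^2),\,xy,\,y(x^2+y^2)\bigr)$ is the single point at the origin, yet every pair of these polynomials shares a factor), and your device of factoring out $q=\gcd(f_1,\ldots,f_k)$, bounding $\vb(q)\subseteq U$ by Harnack, and passing to generic linear combinations of the cofactors is exactly what is needed to justify it, with the convexity bound $(\deg q)^2+(D-\deg q)^2\le D^2$ closing the argument. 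One small caveat in your genericity sketch: over $\RR$ a nonconstant common factor of $p_1,p_2$ need not cut out a real curve (it may have no real points), so the step ``$\vb(p_1)\cap\vb(p_2)$ would contain an irreducible curve'' should be run over $\CC$ --- coprimality in $\RR[x,y]$ and $\CC[x,y]$ coincide for real polynomials, the complex base locus is finite because $\gcd(g_1,\ldots,g_k)=1$, and the same hyperplane/dimension count then applies --- after which your argument goes through.
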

\begin{proof}
Write $U = \vb(f_1,\ldots,f_k)$, where each of $f_1,\ldots,f_k$ has degree at most $D$. Without loss of generality we can suppose that each polynomial is squarefree, and no two polynomials share a common factor. If $k=1$ then the result follows by applying Lemma \ref{lem:Harnack} to $f_1$. If $k\geq 2$ then $U \subset \vb(f_1,f_2)$, and the result follows by applying Lemma \ref{lem:Bezout} to $f_1$ and $f_2$. 
\end{proof}

We also require the following corollary of B\'ezout's inequality (for example, see \cite{GK10}).
\begin{corollary}\label{co:BezoutLines}
Let $f,g\in \CC[x_1,x_2,x_3]$ have degrees $k$ and $m$, respectively. If the intersection $\vb(f)\cap \vb(g)$ contains more than $km$ lines then $f$ and $g$ have a common factor.
\end{corollary}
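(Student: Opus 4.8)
The plan is to prove the contrapositive by slicing with a generic plane, which reduces the statement to the planar B\'ezout inequality of Lemma~\ref{lem:Bezout}. So suppose $f$ and $g$ have no common factor and, for contradiction, that $X:=\vb(f)\cap\vb(g)$ contains at least $km+1$ lines; fix $km+1$ distinct ones, $\ell_1,\dots,\ell_{km+1}$. We may assume $f$ and $g$ are non-constant, as otherwise the corollary is trivial. Note first that $X$ contains no surface: since $f$ and $g$ share no factor, $\vb(f)$ and $\vb(g)$ have no common irreducible component, and a surface contained in both would contain such a common component.

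Next I would choose an affine plane $H\subset\CC^3$ subject to four requirements: (i) $H$ is contained in neither $\vb(f)$ nor $\vb(g)$; (ii) $X\cap H$ is finite; (iii) $H$ is parallel to no $\ell_i$; (iv) $H$ contains no point of $\ell_i\cap\ell_j$ for $i\ne j$. Requirement (i) excludes only finitely many planes, since a plane contained in $\vb(f)\cup\vb(g)$ must be the zero set of a linear factor of $fg$. Requirement (ii) holds for a Zariski-generic plane: each one-dimensional component of $X$ meets a generic plane in a finite set, and the rest of $X$ is finite to begin with. Requirement (iii) excludes a proper closed set of planes for each $i$, and (iv) excludes at most one hyperplane's worth of planes for each pair $i\ne j$. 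As the space of affine planes in $\CC^3$ is an irreducible parameter space, a plane $H$ meeting all four requirements exists.

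Now identify $H$ with $\CC^2$ by an affine-linear bijection $\phi\colon\CC^2\to H$ and set $\tilde f=f\circ\phi$, $\tilde g=g\circ\phi$. By (i) both are nonzero, and restriction to a plane does not increase degree, so $\deg\tilde f\le k$ and $\deg\tilde g\le m$. Moreover $\vb(\tilde f)\cap\vb(\tilde g)=\phi^{-1}(X\cap H)$ is finite by (ii), and since a nonconstant bivariate polynomial over $\CC$ has infinitely many zeros, $\tilde f$ and $\tilde g$ can share no common factor of positive degree. Hence Lemma~\ref{lem:Bezout} gives $|X\cap H|\le km$. On the other hand, by (iii) each $\ell_i$ meets $H$ in a single point $p_i$, and by (iv) the $km+1$ points $p_1,\dots,p_{km+1}$ are distinct; since they all lie in $X\cap H$ we obtain $|X\cap H|\ge km+1$, a contradiction.

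The one step requiring genuine (if routine) work is the genericity bookkeeping of the second paragraph: checking that conditions (i)--(iv) each carve out a nonempty Zariski-open --- or merely cofinite --- subset of the space of affine planes, so that a single plane satisfies all of them simultaneously. The underlying facts (a curve meets a generic plane in finitely many points; a plane lying inside a hypersurface is cut out by a linear factor of its defining polynomial) are standard, but this is where the actual dimension count takes place.
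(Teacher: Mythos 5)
Your argument is correct. Note that the paper does not actually prove this corollary---it only cites \cite{GK10}---and your generic-plane-slicing reduction to the planar B\'ezout inequality (Lemma \ref{lem:Bezout}) is precisely the standard argument given in that reference, with the genericity conditions (i)--(iv) correctly identified and each excluding only a proper closed (or finite) family of planes.
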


\subsection{Orthogonal projections of varieties} \label{ssec:Projections}
For integers $0<e<d$, the Grassmannian $\gr(e, \FF^d)$ is the set of all $e$-dimensional linear subspaces of $\FF^d$. When $e = d-1$, we can identify each non-zero vector $v\in\FF^d$ with the orthogonal subspace $v^\perp\in \gr(d-1, \FF^d)$. In particular, if $V\subset\FF^d$ is a proper linear subspace of $\FF^d$, then $\{v^\perp\colon v\in V\}$ is a proper subset of $\gr(d-1, \FF^d)$ (indeed, if we give  $\gr(d-1, \FF^d)$ the structure of a variety then the above set is a proper subvariety of $\gr(d-1, \FF^d)$).

We associate every element $V\in \gr(e, \FF^d)$ with the orthogonal projection of $\FF^d$ onto the corresponding $e$-dimensional space. We denote this projection as $\pi_V:\FF^d \to \FF^e$. If $U\subset\FF^d$ is a variety and $V\in \gr(e, \FF^d)$, then $\pi(U)$ need not be a variety. However, the next lemma shows that $\pi(U)$ is contained in a variety whose complexity is not too much larger than that of $U$.

\begin{lemma} \label{le:projDeg}
Let $\FF$ be the field $\RR$ or $\CC$. Let $0 < e < d$, let $V\in \gr(e, \FF^d)$, and let $U\subset \FF^d$ be a variety defined by polynomials of degree at most $D$.
Then $\overline{\pi_V(U)}$ is a variety of dimension at most $\dim(U)$ that is defined by polynomials of degree $O_{d,D}(1)$. 
\end{lemma}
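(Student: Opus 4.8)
The plan is to reduce the statement to a standard coordinate projection, prove the dimension bound via elementary dimension theory, and then get the degree bound from effective elimination theory over $\CC$ (respectively, effective quantifier elimination over $\RR$).

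First I would clean up the hypothesis. The space of polynomials of degree at most $D$ in $d$ variables has dimension $N:=\binom{d+D}{d}=O_{d,D}(1)$, so any finite set of such polynomials defining $U$ has a linear-span basis of size at most $N$, and we may therefore assume $U=\vb(f_1,\ldots,f_N)$ with $\deg f_i\le D$. It also does no harm to assume $U$ is irreducible: by Lemma~\ref{lem:comps}, $U$ has $O_d(D^d)$ irreducible components, each defined by polynomials of degree $O_{D,d}(1)$, and $\overline{\pi_V(U)}=\bigcup_i\overline{\pi_V(U_i)}$; a union of $O_{d,D}(1)$ varieties, each of dimension at most $\dim U$ and defined by polynomials of degree $O_{d,D}(1)$, is again of this form (the defining ideal of the union is generated by products of one generator from each factor). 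Finally, since $\pi_V$ is linear, composing with linear isomorphisms of $\FF^d$ and of $\FF^e$ --- operations that preserve the maximum degree of a set of defining polynomials --- lets me assume $\pi_V\colon(x_1,\ldots,x_d)\mapsto(x_1,\ldots,x_e)$.

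For the dimension bound I would use that $\pi_V|_U$ is a polynomial map, and so does not increase dimension, together with the fact that passing to the Zariski closure does not change the dimension of a constructible set (over $\CC$) or a semialgebraic set (over $\RR$), and that semialgebraic and algebraic dimension agree for a real variety. This gives $\dim\overline{\pi_V(U)}\le\dim U$. One should resist arguing over $\RR$ by complexifying $U$, since the complex dimension of the complexification can strictly exceed $\dim_\RR U$, as for $\vb(x_1^2+x_2^2)\subset\RR^2$.

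The heart of the matter is the degree bound. Over $\CC$, the closure theorem of elimination theory identifies $\overline{\pi_V(U)}$ with the zero set of the elimination ideal $I_e:=\langle f_1,\ldots,f_N\rangle\cap\CC[x_1,\ldots,x_e]$, and effective elimination theory bounds the degrees of a generating set of $I_e$ by a quantity depending only on $d$ and $D$ --- for instance via degree bounds for Gr\"obner bases with respect to an elimination order, or, more hands-on, by eliminating one coordinate at a time: writing $\pi_V$ as a composition of $d-e$ maps $\FF^{j}\to\FF^{j-1}$ that forget the last coordinate, each step replaces the defining polynomials by finitely many resultant-type coefficients (subresultant coefficients of the given polynomials, or of a generic linear combination of them), of degrees growing by only a polynomial factor, whose common zero set is exactly the Zariski closure of the one-step image. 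Over $\RR$, I would instead apply effective Tarski--Seidenberg to write $\pi_V(U)$ as a semialgebraic set cut out by finitely many sign conditions on polynomials of degree $O_{d,D}(1)$ in $x_1,\ldots,x_e$, and then use the standard fact that the Zariski closure of such a semialgebraic set is a real variety defined by polynomials of degree $O_{d,D}(1)$ (see \cite{BCR98}). A word of caution: complexifying $U$, projecting, and intersecting the result with $\RR^e$ only exhibits a low-complexity variety \emph{containing} $\pi_V(U)$, and the Zariski closure of $\pi_V(U)$ can be a proper subvariety of this, so one really does need the description of $\pi_V(U)$ from quantifier elimination to conclude that the closure itself has bounded complexity.

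The main obstacle is precisely this effective degree bound --- the fact that the projection of a bounded-complexity variety is a bounded-complexity constructible (resp.\ semialgebraic) set, and that its Zariski closure remains of bounded complexity. These are classical statements; the worst-case bounds are doubly exponential in $d$, but that is harmless here since $d$ and $D$ are fixed and everything is absorbed into $O_{d,D}(1)$.
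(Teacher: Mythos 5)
Your proposal is correct and follows essentially the same route as the paper's proof sketch: effective elimination theory (constructibility of the image) over $\CC$, and effective Tarski--Seidenberg over $\RR$, with the dimension bound coming from the fact that polynomial images and Zariski closures do not increase dimension. The extra details you supply (coordinate reduction, one-variable-at-a-time elimination, and the caution that complexifying over $\RR$ only yields a variety containing the image) are sound elaborations of the same argument.
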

\begin{proof}[Proof sketch]
When $\FF=\CC$, then this is proved in \cite[Theorem 3.16]{Harris} or \cite[Chapter 2.6, Theorem 6]{Matsu80}.
These two theorems state that $\pi_V(U)$ is a \emph{constructible set} (a set defined by a Boolean combination of algebraic equalities and non-equalities) whose Zariski closure has dimension at most $\dim(U)$. 
The proof is constructive and thus provides an upper bound on the degree of the polynomials that define the constructible set. 

When $\FF=\RR$, this is a consequence of the (effective) Tarski-Seidenberg theorem. See for example \cite[Section 2]{BCR98}. The Tarski-Seidenberg theorem states that $\pi_V(U)$ is a \emph{semi-algebraic set} of complexity $O_{d,D}(1)$ whose Zariski closure has dimension at most $\dim(U)$. 
\end{proof}

\subsection{Containment degree} \label{ssec:contDegree}

Let $0< e<d$ and let $X\subset\RR^d$.  
We define the \emph{containment degree} $\operatorname{ContDeg}(X,e)$ to be the smallest integer $D$ such that there exists a variety in $\RR^d$ of dimension $e$ defined by polynomials of degree at most $D$ that contains $X$. If $e\ge \dim \overline{X}$ then $\operatorname{ContDeg}(\Gamma,e)$ is well-defined and finite. If $e< \dim \overline{X}$ then no variety of dimension $e$ can contain $X$, and we set $\operatorname{ContDeg}(X,e)=\infty.$ Note that $\operatorname{ContDeg}(X,e)=\operatorname{ContDeg}(\overline{X},e)$ for every set $X\subset\RR^d$ and every $0<e<d$.

We say that a projection $\pi_V:\RR^d \to \RR^e$ is \emph{degenerate with respect to} $X$ if there is an index $1\leq t< e$ such that
\begin{equation}\label{eq:degenProj}
\operatorname{ContDeg}(X,t)> \operatorname{ContDeg}(\overline{\pi_V(X)},t).
\end{equation}
If $\pi_V$ is not degenerate then we call it non-degenerate. If $\pi_{V}\in \gr(d',\RR^{d})$ is non-degenerate with respect to $X$ and $\pi_{V'}\in\gr(d'',\RR^{d'})$ is non-degenerate with respect to $\pi_V(X)$, then $\pi_{V'}\circ \pi_V\in\gr(d'',\RR^d)$ is non-degenerate with respect to $X$. 
This allows us to construct a non-degenerate projection from $\RR^d\to\RR^e$ by composing a sequence of projections from $\RR^{d}\to\RR^{d-1}$,\ $\RR^{d-1}\to\RR^{d-2}$, and so on. 
First, we show that most projections do not decrease the containment degree.
For $v\in \RR^d$ that is not the origin, we denote by $\pi_v\colon\RR^d\to\RR^{d-1}$ the orthogonal projection in direction $v$.
 
\begin{lemma}\label{lem:goodProjectionDirectionCodim1}
Let $X\subset\RR^d$ be a set whose Zariski closure has dimension at most $d-2$. Then there are $d-2$ proper linear subspaces $V_1,\ldots,V_{d-2}\subset \RR^d$ such that for all $v\in\RR^d\backslash (V_1\cup \cdots \cup V_{d-2})$ and $0< t<d-1$, 
\begin{equation}\label{eq:nonDegenProj}
\operatorname{ContDeg}(X,t)\le \operatorname{ContDeg}(\pi_v(X),t).
\end{equation}
\end{lemma}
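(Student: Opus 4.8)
The plan is to show that for each fixed $t$ with $0 < t < d-1$, the set of ``bad'' directions $v$ (those for which the containment degree drops after projecting in direction $v$) is contained in a proper linear subspace of $\RR^d$; taking the union over the finitely many relevant values of $t$ then yields the claimed $V_1,\dots,V_{d-2}$. So fix $t$ and write $E = \operatorname{ContDeg}(X,t)$. If $E = \infty$ then $t < \dim\overline X$, and since $\overline{\pi_v(X)}$ always has dimension at most $\dim\overline X$ but could in principle have smaller dimension, we must be slightly careful; however, the generic projection $\pi_v$ restricted to the variety $\overline X$ is finite-to-one onto its image as long as $v$ avoids the (finitely many) ``secant/tangent'' directions of $\overline X$, so $\dim\overline{\pi_v(X)} = \dim\overline X > t$ for generic $v$, whence $\operatorname{ContDeg}(\pi_v(X),t) = \infty$ as well and \eqref{eq:nonDegenProj} holds trivially. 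The substantive case is $E < \infty$, i.e.\ $X$ is contained in some $t$-dimensional variety $W$ defined by polynomials of degree at most $E$.

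The key step is the following lifting observation. Suppose, toward a contradiction, that $\pi_v(X)$ is contained in a $t$-dimensional variety $W' = \vb(g_1,\dots,g_k) \subset \RR^{d-1}$ with $\deg g_j \le E - 1 =: E'$. Identify $\RR^{d-1}$ with $v^\perp$, so that $\pi_v$ is the linear map $y \mapsto y - \langle y,v\rangle v / |v|^2$. Pulling back each $g_j$ along this linear map produces polynomials $\tilde g_j := g_j \circ \pi_v \in \RR[x_1,\dots,x_d]$ of degree at most $E'$, and $X \subset \vb(\tilde g_1,\dots,\tilde g_k)$. This pulled-back variety is a ``cylinder'' over $W'$ with $1$-dimensional fibers in the direction $v$, so it has dimension $t + 1$, not $t$ — this is why a naive pullback does not immediately contradict $\operatorname{ContDeg}(X,t) = E$. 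To fix this, I would intersect with one more equation cutting down the fiber direction. The idea: because $X$ already lies in the $t$-dimensional variety $W$, and $W$ does not contain any line in direction $v$ (for generic $v$, since $W$ has only finitely many ruling directions per component, or more simply since a $t$-dimensional variety with $t < d-1$ cannot contain lines in all directions), we can find a polynomial $h$ of degree $O_{d,E}(1)$ vanishing on $W$ but not identically zero on any line in direction $v$; then $\vb(\tilde g_1,\dots,\tilde g_k, h)$ is a variety of dimension at most $t$ containing $X$, and we want its defining degree to be $\le E - 1$. The degree of $h$ is the difficulty: a priori $\deg h$ could exceed $E$.

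The cleaner route, which I expect to be the one that works, avoids producing $h$ by hand and instead argues directly about the minimal containing variety. Let $W_0$ be the union of those irreducible components of $W' \times_{\pi_v}$-cylinder (equivalently, of $\vb(\tilde g_1,\dots,\tilde g_k)$) that actually meet $X$; intersect $W_0$ with $\overline{X}$'s ambient containing variety. Actually the slickest formulation: $\overline{X} \subseteq \vb(\tilde g_1,\ldots,\tilde g_k)$, and $\dim \overline X \le t < t+1 = \dim \vb(\tilde g_1,\ldots,\tilde g_k)$, so $\overline X$ is contained in a \emph{proper} union of components, and in fact by taking the projection $\pi_v$ restricted to $\overline X$ to be finite (generic $v$) we get that $\pi_v|_{\overline X}$ is a finite surjection onto $\overline{\pi_v(X)}$, so $\dim\overline X = \dim\overline{\pi_v(X)} \le t$, and moreover the \emph{degree} of $\overline X$ (in the sense of the effective projection bound, Lemma~\ref{le:projDeg} read backwards) is controlled by the degree of $\overline{\pi_v(X)}$ together with the generic finiteness. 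Here I would invoke that a generic projection of a variety of degree $\delta$ is again of degree $\delta$ (this is standard: generic projection preserves degree), hence if $\operatorname{ContDeg}(\pi_v(X),t) = E' < E$, one can take the $E'$-degree variety $W'$ containing $\pi_v(X)$, note $\overline{\pi_v(\overline X)} \subseteq W'$, and lift: the minimal-degree variety containing $\overline X$ has degree at most $\deg W' \le E' < E$ for generic $v$ — contradicting $\operatorname{ContDeg}(X,t) = E$.

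So the main obstacle, and where I would spend the most care, is making the phrase ``generic projection preserves the containment degree'' fully rigorous with an \emph{effective, $v$-uniform} bad set: I need to exhibit an explicit proper linear subspace $V^{(t)} \subset \RR^d$ such that for $v \notin V^{(t)}$ the projection $\pi_v|_{\overline X}$ is finite-to-one and degree-preserving. This is exactly the kind of statement proved by choosing $v$ outside the ``cone of secant directions'' of $\overline X$, which by Lemma~\ref{le:projDeg} (applied to the join/secant variety) is contained in a variety of dimension $< d$, hence — after the usual observation that a variety of dimension $< d$ in $\RR^d$ is contained in the zero set of one nonzero polynomial, whose zero set is in turn contained in a proper linear subspace only after a further argument — I would instead phrase the conclusion as: the bad directions lie in a variety $Z^{(t)} \subsetneq \RR^d$, and since the statement of the lemma allows $d-2$ \emph{proper linear subspaces}, I take $V_1,\dots,V_{d-2}$ to be $d-2$ hyperplanes (one per value $t = 1,\dots,d-2$) each containing the corresponding $Z^{(t)}$ — which exists because any variety of dimension $<d$ is contained in a hyperplane after a generic linear change of coordinates, or more directly because we only need the \emph{complement} $\RR^d \setminus (V_1 \cup \cdots \cup V_{d-2})$ to be nonempty and Zariski-dense, which holds as soon as each $V_i$ is proper. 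If $Z^{(t)}$ is not already linear, replace it by any hyperplane through a generic point of it extended to contain a spanning set — the cleanest fix is to observe $Z^{(t)}$ is a cone (invariant under scaling, since $\pi_v = \pi_{\lambda v}$), so it is a union of lines through the origin, and being a proper subvariety it misses some direction; pick a hyperplane $V^{(t)}$ through the origin containing $Z^{(t)}$'s generic fiber... in fact the simplest correct statement is just: $Z^{(t)}$, being a proper subvariety that is a cone, is contained in $\vb(P)$ for a nonzero homogeneous $P$, and one more generic projection argument lets us absorb everything; I would ultimately just take each $V_i$ to be an arbitrary hyperplane not contained in $Z^{(t)}$'s complement — the precise bookkeeping of ``proper subvariety $\Rightarrow$ contained in finitely many hyperplanes whose union is proper'' is the routine-but-fiddly part I would write out carefully but am flagging here as the locus of the real work.
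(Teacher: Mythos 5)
There is a genuine gap, and you have correctly located it but not closed it: from the assumption $\operatorname{ContDeg}(\pi_v(X),t)\le E-1$ you only obtain the cylinder $\vb(g_1\circ\pi_v,\ldots,g_k\circ\pi_v)$, which has dimension $t+1$, and neither of your proposed repairs produces a $t$-dimensional variety containing $X$ defined by polynomials of degree at most $E-1$. Your fix via an auxiliary polynomial $h$ cutting the fiber direction founders exactly where you flag it: $h$ must come from the ideal of some variety containing $X$, and there is no reason its degree is at most $E-1$. Your ``cleaner route'' invokes the fact that a generic projection preserves degree, but that statement concerns the degree of $\overline X$ as a variety, not the containment degree (the minimal degree of a system of polynomials cutting out some, possibly much larger, $t$-dimensional variety containing $X$), and even granting it, it still does not supply the missing equation in $\RR^d$ that would collapse the cylinder from dimension $t+1$ to $t$. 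Separately, your closing reduction from ``the bad directions lie in a proper subvariety $Z^{(t)}$'' to ``they lie in a proper linear subspace'' is false as stated: a proper cone in $\RR^d$, such as $\vb(x_1^2+x_2^2-x_3^2)$ in $\RR^3$, is not contained in any hyperplane. (For the downstream use in Lemma \ref{lem:goodProjectionSet} a proper subvariety of directions would in fact suffice, but your argument does not establish even that.)

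The paper's proof avoids both problems with one idea that is absent from your proposal: it argues by contradiction that \eqref{eq:nonDegenProj} fails for $d$ \emph{linearly independent} directions $v_1,\ldots,v_d$ simultaneously. After a linear change of coordinates these become the coordinate directions, so for each $j$ one obtains a family $\mathcal F_j$ of polynomials of degree at most $D-1$, each independent of $x_j$, whose common zero set (a cylinder in direction $e_j$) contains $X$ and has dimension at most $t+1$. Intersecting all $d$ of these cylinders forces the dimension down to $t$: if some polynomial in $\mathcal F_j$ depends on $x_1$, then adjoining it to the system $\mathcal F_1$ already cuts the $e_1$-cylinder down to dimension $t$; otherwise one iterates over $x_2,\ldots,x_d$, and if no drop ever occurs then every polynomial in $\bigcup_j\mathcal F_j$ is constant, which is absurd. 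The resulting variety contains $X$, has dimension at most $t$, and is defined by polynomials of degree at most $D-1$, contradicting $\operatorname{ContDeg}(X,t)=D$. This argument shows that the bad set for each fixed $t$ cannot contain $d$ linearly independent vectors, hence is contained in a proper \emph{linear} subspace --- exactly the form of conclusion the lemma requires, with no need for any secant-variety or generic-finiteness machinery.
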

\begin{proof}
We will show that for each $0<t<d-1$, the set of vectors $v\in\RR^d$ for which \eqref{eq:nonDegenProj} fails must be contained in a proper linear subspace of $\RR^d$ (note that the subspaces for different choices of $t$ might be different). We will prove this by contradiction. Suppose that there exist an index $0< t<d-1$ and $d$ linearly independent directions $v_1,\ldots,v_d$ such that \eqref{eq:nonDegenProj} fails with $v = v_j$ for each $j$. Applying an invertible linear transformation (such a transformation leaves the containment degree unchanged), we may assume that $v_j$ is the $j$--th basis vector. 

Let $D = \operatorname{ContDeg}(X,t)$. Since \eqref{eq:nonDegenProj} fails for each vector $v_j$, for each index $j$ there is a collection of polynomials $\mathcal{F}_j$, each of degree at most $D-1$ and independent of the variable $x_j$, such that $X\subset \bigcap_{f\in\mathcal{F}_j}\vb(f)$. We claim that $U = \bigcap_j \bigcap_{f\in\mathcal{F}_j}\vb(f)$ has dimension at most $t$. Since $X\subset U$, this would imply that $\operatorname{ContDeg}(X,t)\le D-1$, which would contradict the definition of $D$ and complete the proof.

Indeed, for each $j$, the dimension of $\bigcap_{f\in\mathcal{F}_j}\vb(f)$ is at most $t+1$. If there exists $1<j<d-1$ such that one of the polynomials in $\mathcal{F}_j$ includes $x_1$, then $\dim(U)\leq \dim(\bigcap_{f\in\mathcal{F}_1\cup\mathcal{F}_j}\vb(f))\leq t$. Thus, in this case we are done. Otherwise, we repeat the above argument for $x_2,x_3,\ldots,x_d$. If none of these events occur, then each polynomial in $\bigcup_j\mathcal{F}_j$ must be constant, which is impossible. 
\end{proof}

Let $\mathcal{X}$ be a family of subsets of $\RR^d$. We say that a projection $\pi:\RR^d \to \RR^e$ is non-degenerate with respect to $\mathcal{X}$ if it is non-degenerate with respect to each set $X\in\mathcal{X}$.

\begin{lemma}\label{lem:goodProjectionSet} $\quad$\\
(a) Let $0< e< d$. Let $\mathcal{X}$ be a finite family of subsets of $\RR^d$, where each $X\in \mathcal{X}$ satsifies $\dim(\overline{X})<e$. Then there exists a projection $\pi\in\gr(e,\RR^d)$ that is non-degenerate with respect to $\mathcal{X}$. \\
(b) When all sets in $\mathcal{X}$ satisfy $\dim(\overline{X})<e/2$, we may assume that $\pi$ also has the following property. For every pair of sets $X,X^\prime\in\mathcal{X}$, we have $|\overline{X}\cap \overline{X^\prime}|=|\overline{\pi(X)}\cap \overline{\pi(X^\prime)}|$. 
\end{lemma}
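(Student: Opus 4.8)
The plan is to build the projection $\RR^d\to\RR^e$ as a composition of $d-e$ one-step projections $\RR^k\to\RR^{k-1}$, each chosen so that it avoids a finite union of proper linear subspaces coming from Lemma \ref{lem:goodProjectionDirectionCodim1}, applied to every set in the (finite) family currently under consideration. For part (a), I would argue by downward induction on the target dimension. At a generic stage we have a projection $\pi$ of $\RR^d$ onto some $\RR^k$ with $k>e$, non-degenerate with respect to $\mathcal X$; the images $\pi(X)$ still satisfy $\dim\overline{\pi(X)}\le\dim\overline X<e\le k-1$, since orthogonal projection does not raise dimension (Lemma \ref{le:projDeg}). Lemma \ref{lem:goodProjectionDirectionCodim1} then supplies, for each $X\in\mathcal X$, finitely many proper linear subspaces of $\RR^k$ off of which the direction of the next one-step projection must be chosen to guarantee $\operatorname{ContDeg}(\pi(X),t)\le\operatorname{ContDeg}(\pi_v\pi(X),t)$ for all $0<t<k-1$. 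Taking the union of these finitely many proper subspaces over the finitely many $X\in\mathcal X$ still gives a proper subset of $\RR^k$ (a finite union of proper subspaces cannot cover $\RR^k$, which is infinite since $\FF\in\{\RR,\CC\}$ is infinite), so a valid direction $v$ exists. Composing, and using the transitivity of non-degeneracy noted in the text (if $\pi_V$ is non-degenerate w.r.t.\ $X$ and $\pi_{V'}$ is non-degenerate w.r.t.\ $\pi_V(X)$ then $\pi_{V'}\circ\pi_V$ is non-degenerate w.r.t.\ $X$), we reach $\RR^e$ with a projection non-degenerate with respect to all of $\mathcal X$ simultaneously.

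For part (b) I would additionally arrange, at each one-step projection, that no intersection point of $\overline{\pi(X)}$ and $\overline{\pi(X')}$ is ``destroyed'' and no new one is ``created'' (for each of the finitely many pairs $X,X'\in\mathcal X$). Under the hypothesis $\dim\overline X<e/2$, throughout the process the images satisfy $\dim\overline{\pi(X)}<e/2\le k/2$, so for any two of them $\dim\overline{\pi(X)}+\dim\overline{\pi(X')}<k$, and hence $W:=\overline{\pi(X)}\cap\overline{\pi(X')}$ has dimension $\le\dim\overline{\pi(X)}+\dim\overline{\pi(X')}-k<0$ in $\RR^k$ away from where the two varieties share a positive-dimensional piece; more to the point, once we are in $\RR^k$ with $k\ge 2\cdot\dim$, the closures $\overline{\pi(X)}$ and $\overline{\pi(X')}$ meet in a zero-dimensional set, i.e.\ finitely many points (by Corollary \ref{isolatedPtsInR2} in the eventual plane, or a dimension count in intermediate spaces). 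To preserve this finite intersection set under a further one-step projection $\pi_v:\RR^k\to\RR^{k-1}$: (i) to avoid merging two distinct intersection points, require that $v$ not be parallel to any of the finitely many lines joining two points of the (finite) set $\bigcup_{X,X'}\overline{\pi(X)}\cap\overline{\pi(X')}$; (ii) to avoid creating a new intersection, note that $\overline{\pi_v(\pi(X))}\cap\overline{\pi_v(\pi(X'))}$ can only gain a point if some line in direction $v$ meets both $\overline{\pi(X)}$ and $\overline{\pi(X')}$ in points that were not already matched — the set of bad directions $v$ for this is the projectivization of the image of $(\overline{\pi(X)}\times\overline{\pi(X')})\setminus(\text{diagonal-type locus})$ under the difference map, which is an algebraic set of dimension $\le\dim\overline{\pi(X)}+\dim\overline{\pi(X')}<k$, hence contained in a proper subvariety of the direction space $\RR^k$ (equivalently of $\PP^{k-1}$). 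Again we exclude finitely many proper subvarieties and a good $v$ remains.

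The main obstacle I expect is item (ii) of part (b): making precise and bounding the locus of ``bad'' projection directions that would create a spurious intersection point between $\overline{\pi_v(\pi(X))}$ and $\overline{\pi_v(\pi(X'))}$, and in particular justifying that, because $\dim\overline{\pi(X)}+\dim\overline{\pi(X')}<k$ (which is exactly where the hypothesis $\dim\overline X<e/2$ is used), this bad locus is a proper subvariety of the direction space rather than everything. The key inequality is that the map sending a pair of points $(p,p')\in\overline{\pi(X)}\times\overline{\pi(X')}$ with $p\ne p'$ to the direction of $p-p'$ has image of dimension at most $\dim\overline{\pi(X)}+\dim\overline{\pi(X')}\le k-1<k$, so it cannot be dominant onto the $k$-dimensional cone of directions; a careful statement here (using Lemma \ref{le:projDeg} to control the Zariski closures and their degrees, so that the exclusions stay quantitatively bounded, matching the implicit ``$O_{d,D}(1)$'' spirit of the paper) completes the argument. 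Everything else — the dimension monotonicity, the finiteness of intersections once $k\ge 2\dim$, and the transitivity of non-degeneracy — is routine given the lemmas already assembled in this section.
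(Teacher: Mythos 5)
Your proposal is correct and follows essentially the same route as the paper: build $\pi$ as a composition of one-step projections, at each stage excluding the finite union of proper linear subspaces supplied by Lemma \ref{lem:goodProjectionDirectionCodim1} (and, for part (b), the additional proper subvariety of bad directions), then invoke transitivity of non-degeneracy. In fact, for part (b) you supply more detail than the paper does --- the paper merely asserts that the set of directions altering $|\overline{X}\cap\overline{X^\prime}|$ lies in a proper subvariety, whereas you correctly identify this locus via the difference map and the dimension count $\dim\overline{X}+\dim\overline{X^\prime}<e\le k$.
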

\begin{proof}
(a) For each $X\in\mathcal{X}$, let $V_{1,X},\ldots,V_{d-2,X}$ be the proper linear subspaces described in Lemma \ref{lem:goodProjectionDirectionCodim1}, and select $v_d\in \RR^d\backslash\bigcup_{X\in\mathcal{X}}\{V_{1,X},\ldots,V_{d-2,X}\}$. By Lemma \ref{lem:goodProjectionDirectionCodim1}, we have
$$
\operatorname{ContDeg}(X,t)\le \operatorname{ContDeg}(\pi_{v_d}(X),t)
$$
for each $0< t<d-1$ and each $X\in\mathcal{X}$. Define $\mathcal{X}_{d-1} = \{ \pi_{v_d}(X)\colon X\in\mathcal{X}\}$; this is a family of subsets of $\RR^{d-1}$, each of which satisfies $\dim(\overline{X})<e$. 

Repeat this process to select a vector $v_{d-1}\in\RR^{d-1}$ so that
$$
\operatorname{ContDeg}(X,t)\le \operatorname{ContDeg}(\pi_{v_{d-1}}(X),t)
$$
for each $0< t<d-2$ and each $X\in\mathcal{X}_{d-1}$. Continuing this process, we obtain a sequence of vectors $v_d\in\RR^d, v_{d-1}\in\RR^{d-1},\ldots, v_{e+1}\in\RR^{e+1}$ and a sequence of families $\mathcal{X}_{d-1},\ldots,\mathcal{X}_{e}$. Define $\pi$ to be the composition $\pi_{v_{e+1}}\circ\pi_{v_{e+2}}\circ\cdots\circ v_d$.



(b) We repeat the proof of part (a) with a small addition. At each step when we select a vector $v_{d-j}\in\RR^{d-j}$, we choose the vector so that for every pair of sets $X,X^\prime\in\mathcal{X}_{d}$, we have $|\overline{X}\cap \overline{X^\prime}|=|\overline{\pi_{v_{d-j}}(X)}\cap \overline{\pi_{v_{d-j}}(X^\prime)}|$. Since $\dim(\overline{X})<e/2$ and $\dim(\overline{X^\prime})<e/2,$ the set of vectors $v\in\RR^{d-j}$ for which the above inequality fails is contained in a proper sub-variety of $\RR^d$.
\end{proof}

\subsection{Ruled surface theory}

Let $\FF$ be a field of characteristic 0, let $S\subset\FF^3$ be an irreducible surface, and let $D\geq 1$ be an integer. We say that $S$ is \emph{doubly ruled by curves} defined by polynomials of degree at most $D$ if the following holds. 
There is a proper subvariety $T\subset S$ such that for all $p\in S\backslash T$, at least two irreducible curves $\gamma,\gamma^\prime\subset S$ contain $p$ (and are defined by polynomials of degree at most $D$).

Let $f\in \FF[x,y,z]$ have degree at most $E$ and let $S=\vb(f)$. Let $\gamma\subset S$ be an irreducible curve defined by polynomials of degree at most $D$. We say that $\gamma$ is an \emph{exceptional curve} if at least $C_DE$ points $p\in\gamma$ satisfy the following. There is an irreducible curve $\gamma^\prime\neq \gamma$ defined by polynomials of degree at most $D$ such that $p\in\gamma^\prime\subset S$. 
Note that the definition of exceptional curves depends on the choice of $C_D$. 
We set this constant to be as in the following lemma (see \cite{GZ15a}).

\begin{lemma}\label{lem:tooManyExceptionsImpliesDoublyRuled}
For each $D\geq 1$, there is a constant $C_D$ such that the following holds. Let $f\in \FF[x,y,z]$ have degree $E$ and let $S=\vb(f)$. If $S$ contains more than $C_DE^2$ exceptional curves defined by polynomials of degree at most $D$, then $S$ is doubly ruled by curves defined by polynomials of degree at most $D$. 
In this case $E = O_D(1)$. Furthermore, if $D = 1$ then $E\leq 2$.  
\end{lemma}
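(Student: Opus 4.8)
The plan is to follow the strategy of the classical Cayley--Salmon theorem on ruled surfaces --- in which the flecnode polynomial cuts out the locus of points of a surface lying on a line contained in it --- adapted both to degree-$\leq D$ curves and to the stronger ``doubly ruled'' conclusion. Throughout, $S=\vb(f)$ is an irreducible surface of degree $E$, as in the rest of this subsection. \emph{Step 1 (a flecnode-type polynomial).} For each $D$ I would construct an auxiliary polynomial $g=g_D(f)\in\FF[x,y,z]$ with two properties: (i) $\deg g=O_D(E)$; and (ii) $S_{\sing}\subseteq\vb(g)$, while a regular point $p\in S_{\reg}$ lies in $\vb(g)$ if and only if at least two distinct irreducible degree-$\leq D$ curves contained in $S$ pass through $p$ --- equivalently, after the usual order-of-contact bootstrap, at least two distinct degree-$\leq D$ curves through $p$ making contact with $S$ to some order $k=k(D)$. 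The construction is by elimination: ``$\gamma$ is a degree-$\leq D$ curve through $p$ making contact with $S$ to order $k$'' is a polynomial system in the $O_D(1)$ leading Taylor coefficients of $\gamma$ at $p$, whose coefficients are polynomials of degree $O_D(1)$ in $p$ and in the partial derivatives of $f$; eliminating the Taylor coefficients while imposing that two solutions exist yields a polynomial of degree $O_D(E)$, into which one multiplies the degree-$\leq E$ equations defining $S_{\sing}$. This $g$ is the analogue, for pairs of degree-$D$ curves, of Salmon's flecnode polynomial (whose degree is $11E-24$); its construction is the content of the corresponding result in \cite{GZ15a}.

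\emph{Step 2 (a B\'ezout argument).} Suppose $S$ contains more than $C_D E^2$ exceptional curves, where $C_D$ is a large constant to be chosen. Fix such a curve $\gamma$. Each of its more than $C_D E$ witnessing points lies on a second irreducible degree-$\leq D$ curve of $S$, hence lies in $S_{\sing}\subseteq\vb(g)$ or in $S_{\reg}\cap\vb(g)$ by Step 1; in either case it lies in $\vb(g)$. An irreducible curve defined by polynomials of degree $\leq D$ and not contained in $\vb(g)$ meets $\vb(g)$ in only $O_D(\deg g)=O_D(E)$ points by the standard B\'ezout bound, so taking $C_D$ larger than the implied constant forces $\gamma\subseteq\vb(g)$. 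Thus all of the more than $C_D E^2$ exceptional curves lie in $\vb(f)\cap\vb(g)$, and since $\deg f\cdot\deg g=O_D(E^2)$, taking $C_D$ large enough and applying B\'ezout's inequality exactly as in Corollary \ref{co:BezoutLines} shows that $f$ and $g$ share a factor. As $f$ is irreducible this gives $f\mid g$, so $S\subseteq\vb(g)$; by Step 1, through every regular point of $S$ pass two irreducible degree-$\leq D$ curves contained in $S$, i.e.\ $S$ is doubly ruled by curves defined by polynomials of degree at most $D$.

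\emph{Step 3 (bounding $E$).} Once $S$ is doubly ruled by degree-$\leq D$ curves, it is covered by a family of such curves with at least two members through a generic point. A dimension count then forces the parameter space of degree-$\leq D$ curves contained in $S$ to be either $\geq 2$-dimensional, or $1$-dimensional with an incidence correspondence that is at least two-to-one over $S$, or to have two $1$-dimensional components each dominating $S$. In each case $S$ is dominated, in two essentially independent ways, by bundles of bounded-degree curves, and the classical structure theory of ruled surfaces in characteristic $0$ forces $\deg S=O_D(1)$; this is carried out in \cite{GZ15a}. When $D=1$ this specializes to the classical fact that the only irreducible surfaces doubly ruled by lines are the plane and the smooth quadric, so that $E\leq 2$.

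The main obstacle is Step 1: constructing the degree-$O_D(E)$ polynomial $g$ that detects points lying on two degree-$\leq D$ curves of $S$, and justifying the characteristic-$0$ passage from the infinitesimal condition (two high-contact curves through each regular point) to the existence of two honest curves of $S$ through each regular point --- precisely the place where, as in Cayley--Salmon, the hypothesis on the characteristic is essential. Given $g$, Step 2 is a short B\'ezout argument and Step 3 is an appeal to the classification of doubly-ruled surfaces.
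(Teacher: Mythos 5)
The paper does not prove this lemma at all: it is quoted verbatim from \cite{GZ15a}, with the constant $C_D$ defined by that reference. Your outline is a faithful sketch of the Cayley--Salmon-style argument actually used there (a flecnodal-type contact condition of degree $O_D(E)$, a B\'ezout step forcing the whole surface into its zero set, and the structure theorem for doubly ruled surfaces to bound $E$), and you correctly flag the genuinely hard steps---the construction of the contact polynomial and the characteristic-zero passage from infinitesimal to honest curves---as the content of \cite{GZ15a}, which is exactly how the paper treats them.
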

\begin{remark}
Lemma \ref{lem:tooManyExceptionsImpliesDoublyRuled} also holds in characteristic $p$, provided $E$ is not too large compared to $D$ and $p$.
\end{remark}

\section{The $r=2$ and $r=3$ case: Koll\'ar's bound}\label{sec:KollarBd}
As discussed in the introduction, the proof of Theorem \ref{th:numberOfRRichPoints} when $r=2$ is purely algebraic, and it extends to other fields. The main tool is the following theorem due to Koll\'ar \cite{Kollar15}. See also \cite{GZ15a}.

\begin{theorem}\label{th:Kollar}
Let $\lines$ be a set of $n$ lines in $\CC^3$, such that every plane and degree two surface contains at most $n^{1/2}$ lines from $\lines$.
Then 
\[ |\pts_2(\lines)|=O(n^{3/2}). \]
\end{theorem}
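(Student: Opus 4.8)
The statement is a known theorem of Kollár, so the job here is to recall the structure of its proof. The plan is to follow the standard algebraic approach used for the $r=2$ Guth--Katz problem, adapted to $\CC$. First I would find a nonzero polynomial $f\in\CC[x,y,z]$ of controlled degree that vanishes at every point of $\pts_2(\lines)$; the relevant fact is that $N$ points in $\CC^3$ lie on a surface of degree $O(N^{1/3})$, by a dimension count on the space of polynomials. If $|\pts_2(\lines)|$ were much larger than $n^{3/2}$, this would give a surface $\vb(f)$ of degree $E = o(n^{1/2})$ passing through all the $2$-rich points. The key bootstrapping step: a line of $\lines$ that contains at least three $2$-rich points must be contained in $\vb(f)$ (since $f$ restricted to the line is a univariate polynomial of degree $E$ vanishing at more than $E$ points — here one needs $E$ smaller than the number of such rich points on the line, which a double-counting argument supplies after removing lines with few rich points). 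Then most of the lines of $\lines$ lie on $\vb(f)$.

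Next I would pass to the irreducible components of $\vb(f)$ and analyze how lines distribute among them. Decompose $\vb(f)$ into irreducible surfaces $S_1,\dots,S_k$ with $\sum \deg S_i = E$. For each component $S_i$, I would invoke the classical trichotomy for lines on an irreducible surface in $\CC^3$: either $S_i$ is a plane, or $S_i$ is a quadric (degree two, doubly ruled), or $S_i$ contains at most $O(\deg S_i)^2$ lines (this is the line-content bound following from Corollary \ref{co:BezoutLines} applied to $f$ and appropriate derivatives — a line through many points of $\vb(f)$ forces it into $\vb$ of a derivative, and two such derivatives can share at most $(\deg)^2$ common lines unless they share a factor). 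Under the hypothesis of the theorem, planes and quadrics each carry at most $n^{1/2}$ lines, so the total number of lines lying on $\vb(f)$ is bounded by (number of planar/quadric components)$\times n^{1/2}$ plus $\sum O(\deg S_i)^2 \le O(E^2)$, and the number of components is $O(E)$. With $E = o(n^{1/2})$ this yields $o(n)$ lines on $\vb(f)$, contradicting that most of $\lines$ lies there.

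Finally, I would count $\pts_2(\lines)$ directly once the structure is pinned down. The points not lying on $\vb(f)$ number $O(n^{3/2})$ by construction; the points on $\vb(f)$ that are intersections of two lines both lying on $\vb(f)$ are controlled by the per-component bounds (on a plane or quadric, $m$ lines determine $O(m^2)$ intersection points, and $\sum m_i^2$ is controlled using $m_i\le n^{1/2}$ and $\sum m_i \le n$, giving $O(n^{3/2})$); the remaining $2$-rich points on $\vb(f)$ involve at least one line transverse to $\vb(f)$, and each such line meets $\vb(f)$ in at most $E$ points, contributing $O(nE)=o(n^{3/2})$ in total. Summing these cases gives $|\pts_2(\lines)| = O(n^{3/2})$.

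The main obstacle is the middle step: correctly setting up the degree bookkeeping so that the threshold $E$ (degree of the partitioning/vanishing polynomial) is simultaneously (i) large enough that $f$ can vanish on all sufficiently-rich points, and (ii) small enough that "a line with more than $E$ rich points lies on $\vb(f)$" applies to essentially all lines after discarding the low-richness lines, and (iii) small enough that the $O(E^2)$ line-content term is $o(n)$. Making these constraints compatible is exactly where the exponent $3/2$ is forced, and it requires the preliminary pruning (removing lines on few rich points, and iterating if necessary) to be carried out carefully; this is the classical heart of the argument and the place where one must be most attentive.
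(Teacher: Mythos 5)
First, a point of reference: the paper does not prove Theorem \ref{th:Kollar} at all --- it is quoted as a black box from Koll\'ar \cite{Kollar15} (see also \cite{GZ15a}), so there is no internal proof to compare against. Judged on its own merits, your outline has the right general flavor (interpolation/degree reduction, passing to components of $\vb(f)$, per-component counting) but it contains a fatal gap at exactly the point where the $r=2$ theorem is genuinely hard. The trichotomy you invoke --- ``either $S_i$ is a plane, or a quadric, or contains $O((\deg S_i)^2)$ lines'' --- is false. The correct classical statement is that an irreducible surface of degree $d$ in $\CC^3$ with more than $\sim d^2$ lines must be \emph{ruled} by lines; but ruled surfaces of every degree $d\geq 3$ exist (cones over plane curves of degree $\geq 3$, tangent developables, generic singly-ruled surfaces), contain infinitely many lines, and are neither planes nor degree-two surfaces. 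The hypothesis of the theorem therefore places no restriction on how many lines of $\lines$ lie in such a component, and your contradiction ``$\vb(f)$ can only contain $o(n)$ lines'' collapses. The entire content of the $r=2$ theorem is showing that $m$ lines clustered on an irreducible \emph{singly}-ruled surface of degree $d$ determine only $O(md + d^3)$ mutual intersection points (after accounting for the at most one cone point and at most two exceptional lines); this requires either the ruled-surface machinery of \cite{GZ15a} (or Guth--Katz's flecnode analysis) or Koll\'ar's quite different argument via arithmetic-genus bounds for space curves. Your sketch silently conflates ``ruled'' with ``doubly ruled,'' and so skips the heart of the proof.

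A secondary but real problem is the first step. If $N=|\pts_2(\lines)|\gg n^{3/2}$, then naive interpolation gives a polynomial of degree $O(N^{1/3})\gg n^{1/2}$ --- larger, not smaller, than $n^{1/2}$ --- so you do not get a surface of degree $o(n^{1/2})$ through all the $2$-rich points; the inequality runs the wrong way, and making the assumed constant in $N\geq Cn^{3/2}$ larger only makes the degree worse. The standard repair is the Guth--Katz random-sampling degree reduction: after pruning so that every surviving line contains $\gtrsim N/n$ of the rich points, one samples the \emph{lines} to produce a polynomial of degree $O(n^2/N)=o(n^{1/2})$ containing all surviving lines. You flag the ``degree bookkeeping'' as the main obstacle, which is fair, but as written the step you describe does not supply the low-degree polynomial the rest of the argument needs.
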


To use Theorem \ref{th:Kollar} we will also need a simple result that controls the number of planes and degree two varieties that contain many lines.
\begin{lemma}\label{lem:FewRichSurfaces}
Let $\lines$ be a set of $n$ curves in $\FF^3$ and let $A\geq 2E^2n^{1/2}$. Let $\surfs$ be a set of irreducible surfaces in $\FF^3$, each defined by polynomials of degree at most $E$ and contains at least $A$ curves from $\lines$. Then $|\surfs|\leq 2nA^{-1}$.
\end{lemma}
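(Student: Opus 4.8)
The plan is a straightforward double-counting argument, of the kind used to bound the number of "rich" objects in incidence geometry. First I would record the only algebraic input needed: by Corollary \ref{co:BezoutLines} (B\'ezout's inequality for lines), if $S_1, S_2$ are two distinct irreducible surfaces, each defined by polynomials of degree at most $E$, then $S_1 \cap S_2$ contains at most $E^2$ curves of $\lines$ — if it contained more, the defining polynomials of $S_1$ and $S_2$ would share a common factor, and irreducibility would force $S_1 = S_2$, a contradiction. (Strictly speaking Corollary \ref{co:BezoutLines} is stated for lines in $\CC^3$; for the statement over a general field $\FF$ one uses the analogous B\'ezout bound for curves, which is exactly why the hypothesis is phrased with the degree-$E$ bound $E^2$ rather than $1$.)

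Next I would set up the count. Suppose for contradiction that $|\surfs| > 2nA^{-1}$; pick a subfamily $\surfs'\subseteq\surfs$ with $m := |\surfs'| = \lceil 2nA^{-1}\rceil + 1 > 2nA^{-1}$, so that $mA > 2n$. Consider the incidence count $N = \sum_{W\in\surfs'} |\lines_W|$, the number of pairs $(\ell, W)$ with $\ell\in\lines$, $W\in\surfs'$, and $\ell\subset W$. On one hand, each $W\in\surfs'$ contains at least $A$ curves of $\lines$, so $N \geq mA$. On the other hand, I would bound $N$ from above by grouping according to how many surfaces of $\surfs'$ a given line lies in. A line $\ell$ lying in at least two surfaces $W\ne W'$ of $\surfs'$ is, by the previous paragraph, one of at most $E^2$ curves of $\lines$ in $W\cap W'$; summing over the at most $\binom{m}{2}$ pairs shows the number of such pairs $(\ell,W)$ is at most $2E^2\binom{m}{2}\le E^2 m^2$ (each such $\ell$ double-counted appropriately). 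Lines lying in exactly one surface of $\surfs'$ contribute at most $n$. Hence $mA \le N \le n + E^2 m^2$.

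Now I would extract the contradiction. Since $mA > 2n$, we have $n < mA/2$, so $mA < mA/2 + E^2 m^2$, i.e. $mA/2 < E^2 m^2$, i.e. $A < 2E^2 m$. But $m = \lceil 2nA^{-1}\rceil + 1$; using $A \ge 2E^2 n^{1/2}$ one gets $2nA^{-1} \le n^{1/2}/E^2$, so $m \le n^{1/2}/E^2 + 2$, and then $A < 2E^2 m \le 2n^{1/2} + 4E^2$, which contradicts $A \ge 2E^2 n^{1/2}$ once $n$ is larger than an absolute constant (and the small-$n$ cases are trivial since then the hypothesis $A\ge 2E^2n^{1/2}$ together with each surface containing $\ge A$ lines already forces $|\surfs|$ to be small by the pairwise-intersection bound). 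I would clean up the constants so the inequalities go through cleanly; the only place requiring a little care is making the arithmetic tight enough that the bound is exactly $|\surfs|\le 2nA^{-1}$ rather than a larger multiple, which may mean phrasing the pairwise bound as: every line in two or more surfaces of $\surfs$ accounts for at most $E^2$ entries per pair, hence $\sum_{W}|\lines_W| \le n + E^2|\surfs|^2/2$ is not quite sharp — better to argue directly that $(|\surfs|-1)$ times (min richness $A$) is at most the total pair count and iterate. The main (minor) obstacle is just bookkeeping the constants to land exactly on the factor $2$; there is no conceptual difficulty.
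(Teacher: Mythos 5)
Your approach is the same as the paper's: the only algebraic input is that two distinct irreducible surfaces of degree at most $E$ share at most $E^2$ curves of $\lines$, followed by a counting argument over a subfamily of size roughly $2nA^{-1}$. The paper phrases the count as a Bonferroni lower bound on $\bigl|\bigcup_{\ell\le j}\lines_{S_\ell}\bigr|$, which is equivalent to your double count of incidences. The one point that needs fixing is that your constants, as written, do not produce a contradiction when $E=1$: from $mA\le n+E^2m^2$ and $m\le n^{1/2}/E^2+2$ you deduce $A<2n^{1/2}+4E^2$, which for $E=1$ is perfectly consistent with the hypothesis $A\ge 2E^2n^{1/2}=2n^{1/2}$ for \emph{every} $n$, so this is not merely a small-$n$ edge case to be dismissed (and $E=1$ is the case that matters most, since the lemma is repeatedly applied to planes). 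The slack enters in two places: replacing $2E^2\binom{m}{2}=E^2m(m-1)$ by $E^2m^2$, and taking $m=\lceil 2nA^{-1}\rceil+1$ instead of $j=\lceil 2nA^{-1}\rceil$. The paper's bookkeeping avoids both: with $j=\lceil 2nA^{-1}\rceil$ one has $E^2(j-1)/2\le E^2nA^{-1}\le n^{1/2}/2\le A/4$, hence $n\ge Aj-E^2j(j-1)/2\ge (3/4)Aj\ge (3/2)n$, a genuine contradiction for all $E\ge 1$. This is exactly the tightening you anticipate at the end of your write-up, so the proof is correct in substance once that arithmetic is carried out.
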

\begin{proof}
Let $\surfs=\{S_1,\ldots,S_k\}$. 
The intersection of two surfaces defined by polynomials of degree $E$ and sharing no common components contains at most $E^2$ curves (for example, see \cite[Theorem 5.7]{GZ15a}).
Since the surfaces in $\surfs$ are irreducible, each pair of distinct surfaces can contain at most $E^2$ common curves from $\lines$. Thus for each $j\geq 1$ we have
\begin{equation*}
\begin{split}
\Big|\bigcup_{\ell=1}^j \lines_{S_\ell}\Big|& \geq \sum_{\ell=1}^j |\lines_{S_\ell}| - \sum_{1\leq \ell < m \leq j}|\lines_{S_\ell} \cap \lines_{S_m}| \geq A\cdot j - E^2 \cdot j(j-1)/2.
\end{split}
\end{equation*} 

Assume for contradiction that $|\surfs|> 2n/A$. Set $j = \lceil 2nA^{-1}\rceil$ and note that 
\begin{equation*}
E^2\cdot (j-1)/2 \leq E^2nA^{-1} \leq n^{1/2}/2 \leq A/4.
\end{equation*}
Thus 
\begin{equation*}
\Big|\bigcup_{\ell=1}^j \lines_{S_\ell}\Big|\geq (A - E^2(j+1)/2 )j \geq (3/4)A \cdot j > n,
\end{equation*}
which is a contradiction.
\end{proof}

To control the number of $r$-rich points determined by a set of lines in $\CC^3$, we use the complex variant of the Szemer\'edi--Trotter theorem \cite{Toth15, Zahl14, ST11}.

\begin{theorem} \label{th:complexST}
Let $\lines$ be a set of $n$ lines in $\CC^2$.
Then for every $r\ge 2$, we have
\[|\pts_r(\lines)|=O\left(\frac{n^{2}}{r^3}+\frac{n}{r}\right). \]
\end{theorem}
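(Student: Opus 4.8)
The plan is to deduce the theorem from the complex Szemer\'edi--Trotter \emph{incidence} estimate: a set of $m$ points and a set of $n$ complex lines in $\CC^2$ determine at most $O(m^{2/3}n^{2/3}+m+n)$ incident pairs. This incidence bound is the substantive input and is exactly what is established in \cite{Toth15,Zahl14,ST11}; I would cite it and then obtain the statement about $r$-rich points by the standard reduction, and only if a self-contained treatment were wanted would I reprove the incidence bound by polynomial partitioning in $\RR^4$.

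For the reduction, put $P=\pts_r(\lines)$ and let $C$ be the constant in the incidence bound applied to $P$ and $\lines$. Every point of $P$ lies on at least $r$ lines of $\lines$, so $r|P|\le I(P,\lines)\le C\bigl(|P|^{2/3}n^{2/3}+|P|+n\bigr)$, and therefore $r|P|$ is at most $3C$ times the largest of the three terms on the right. If that term is $|P|$ then $r\le 3C$, and since $\pts_r(\lines)\subseteq\pts_2(\lines)$ and two distinct complex lines meet in at most one point we get $|P|\le\binom n2=O(n^2)=O(n^2r^{-3})$. If the largest term is $|P|^{2/3}n^{2/3}$ then $r|P|^{1/3}\le 3Cn^{2/3}$, so $|P|=O(n^2r^{-3})$. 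If it is $n$ then $|P|=O(nr^{-1})$. In every case $|\pts_r(\lines)|=O(n^2r^{-3}+nr^{-1})$.

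To prove the incidence bound I would identify $\CC^2$ with $\RR^4$, so that each complex line becomes a real affine $2$-plane and any two distinct such planes still meet in at most one point, and dispose of the extreme ranges $m\le n^{1/2}$ and $m\ge n^2$ using the elementary bounds $I\le m+n\sqrt m$ and $I\le n+m\sqrt n$ (Cauchy--Schwarz, using that two complex lines, resp.\ two points, determine at most one common point, resp.\ line). In the main range $n^{1/2}<m<n^2$ I would apply polynomial partitioning in $\RR^4$ (Theorem~\ref{th:partitionPts}) with degree $D\asymp m^{1/3}n^{-1/6}$: this produces $f$ of degree at most $D$ whose complement has $O(D^4)$ cells, each containing $O(mD^{-4})$ of the points, while by Lemma~\ref{th:VarietyPartComponents} each $2$-plane is cut by $\vb(f)$ into $O(D^2)$ pieces, so $\sum_i n_i=O(nD^2)$, where $n_i$ and $m_i$ denote the numbers of lines meeting and points lying in the $i$-th cell. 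Summing the elementary per-cell estimate $O(n_i+m_i\sqrt{n_i})$ and using $m_i=O(mD^{-4})$ together with the two displayed sums, the cell contribution is $O\bigl(nD^2+mn^{1/2}D^{-1}\bigr)=O(m^{2/3}n^{2/3})$ for the stated $D$.

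The hard part will be bounding the incidences at the points lying on the partitioning hypersurface $\vb(f)$. In the planar version this is essentially free, because the partitioning curve has degree at most $D$ and hence contains at most $D$ lines; here, however, $\vb(f)$ is a three-dimensional real hypersurface in $\RR^4$ that may contain a large number of complex lines, and a complex line not contained in $\vb(f)$ still meets it in an algebraic curve rather than in finitely many points. I would handle this by running a second, lower-dimensional polynomial partition inside each irreducible component of $\vb(f)$ (these have degree $O_D(1)$ by Lemma~\ref{lem:comps}) and iterating down through dimensions $3$, $2$, $1$, at each stage separating the complex lines contained in the current component from those meeting it in a proper subvariety, and using at the bottom that a $2$-plane cannot be contained in a curve. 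Controlling the degrees of the auxiliary varieties and the complex lines they contain through this dimensional induction is the technical core of the argument, and it is precisely what is carried out in \cite{Toth15,Zahl14,ST11}.
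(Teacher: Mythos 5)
Your proposal is correct and consistent with the paper's treatment: the paper states Theorem \ref{th:complexST} as a known result, citing \cite{Toth15, Zahl14, ST11}, and gives no proof, so the substantive input in both cases is the complex incidence bound $I(\pts,\lines)=O(m^{2/3}n^{2/3}+m+n)$ from those references. Your three-case reduction from the incidence form to the $r$-rich-point form is the standard argument and is carried out correctly, and your sketch of the incidence bound itself rightly identifies the points on the partitioning hypersurface in $\RR^4$ as the technical core that the cited works are needed for.
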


With these tools, we can now prove the ``algebraic'' part of Theorem \ref{th:RichC}.

\begin{proof}[Proof of Theorem \ref{th:RichC}, $r=2$ and $r=3$ case]
Let $\surfs_1$ be the set of all planes that contain more than $2n^{1/2}$ lines from $\lines$, and let $\surfs_2$ be the set of all irreducible degree two surfaces that contain more than $8n^{1/2}$ lines from $\lines$. Lemma \ref{lem:FewRichSurfaces} implies that $|\surfs_1|,|\surfs_2| =O(n^{1/2})$. 

Let $\lines^\prime=\lines\backslash\bigcup_{S\in\surfs_1\cup \surfs_2}\lines_S$.
By Theorem \ref{th:Kollar},
$$
|\pts_2(\lines^\prime)|=O(n^{3/2}).
$$
(Theorem \ref{th:Kollar} requires at most $n^{1/2}$ lines in a surface, while we have at most $8n^{1/2}$. To address this we slightly increase $n$ by adding generic lines to $\lines'$.)
A line $L\in\lines$ intersects each surface $S\in\surfs_1\cup\surfs_2$ that does not contain $L$ in at most two points.
Thus, at most $2|\lines| |\surfs_1\cup\surfs_2|=O(n^{3/2})$ points $p\in\CC^3$ are incident to a line $L\in \lines$ and a surface $S\in\surfs_1\cup\surfs_2$ satisfying $L\not\in \lines_S$. We conclude that
$$
\big|\pts_2(\lines)\ \backslash\bigcup_{S\in\surfs_1\cup\surfs_2}\pts_2(\lines_S)\big|=O(n^{3/2}).
$$
Setting $\surfs=\surfs_1\cup\surfs_2$ concludes the proof of Theorem \ref{th:RichC} when $r=2$. 

We now consider the case of $r=3$. 
By considering every type of irreducible quadratic surface in $\RR^3$, we note that at most one point in such a quadratic $S$ is incident to three lines that are contained in $S$. 
This implies that $\sum_{S\in\surfs_2}|\pts_3(\lines_S)| = O(n^{1/2})$.
There are three other ways for a point $p$ to be in $\pts_3(\lines)$: (i) $p\in\pts_3(\lines^\prime)$, (ii) $p\in\pts_3(\lines_S)$ for some $S\in\surfs_1$, and (iii) $p$ is the intersection of a line $L \in\lines$ and a surface $S\in\surfs_1\cup\surfs_2$ satisfying $L\not\in\lines_S$.
Since $\pts_3(\lines^\prime) \subset \pts_2(\lines^\prime)$ and line--surface intersections were bounded above, we conclude that 
$$
\big|\pts_3(\lines)\ \backslash\bigcup_{S\in\surfs_1}\pts_3(\lines_S)\big|=O(n^{3/2}).
$$
Setting $\surfs=\surfs_1$ concludes the proof of Theorem \ref{th:RichC} when $r=3$. 
\end{proof}

One of the hypotheses of Theorem \ref{th:RichC} is that $r\leq 2n^{1/2}$. When $r>2n^{1/2}$, a much simpler argument gives a tight bound on the number of $r$-rich points. This bound has little to do with the geometry of lines in $\CC^3$. It only relies on the fact that each pair of lines intersect in at most one point. We record this observation below.
\begin{lemma}\label{lem:rRichPtsBigR}
Let $\mathcal{X}$ be a family of subsets of a ground set $X$, and suppose each pair of sets in $\mathcal{X}$ intersect in at most one element. Let $r\geq 2|\mathcal{X}|$ and let $\pts\subset X$ be the set of elements contained in at least $r$ sets from $\mathcal{X}$. Then $|\pts|\leq 2|\mathcal{X}|r^{-1}$. 
\end{lemma}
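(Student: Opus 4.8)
The plan is to use a straightforward double-counting argument on incidences between the points of $\pts$ and the sets of $\mathcal{X}$. Let $N = |\mathcal{X}|$ and suppose $r \ge 2N$. For each set $S \in \mathcal{X}$ and each point $p \in \pts$, call $(p,S)$ an incidence if $p \in S$; let $I$ denote the total number of incidences between $\pts$ and $\mathcal{X}$. First I would bound $I$ from below: every point of $\pts$ lies in at least $r$ sets of $\mathcal{X}$ by definition, so $I \ge r|\pts|$. Next I would bound $I$ from above using the hypothesis that any two sets of $\mathcal{X}$ meet in at most one element. The cleanest way to exploit this is to count, for each unordered pair of distinct sets $\{S, S'\}$, the number of points of $\pts$ lying in both; this is at most $1$ per pair, giving at most $\binom{N}{2}$ such ``shared'' incidence-pairs. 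On the other hand, a point $p \in \pts$ that lies in $d_p \ge r$ sets contributes $\binom{d_p}{2}$ to this count, so $\sum_{p \in \pts}\binom{d_p}{2} \le \binom{N}{2}$.

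From here the argument is elementary. Since $d_p \ge r$ for every $p \in \pts$, we have $\binom{d_p}{2} = \frac{d_p(d_p-1)}{2} \ge \frac{r(r-1)}{2} \ge \frac{r \cdot d_p}{2} \cdot \frac{r-1}{r} \ge \frac{d_p}{4}\cdot r$ once $r \ge 2$; a cleaner route is to note $\binom{d_p}{2} \ge \frac{1}{2} d_p \cdot (r-1) \ge \frac{1}{4} d_p r$ for $r\ge 2$. Summing over $p$ gives $\frac{r}{4}\, I \le \frac{r}{4}\sum_p d_p \le \sum_p \binom{d_p}{2} \le \binom{N}{2} \le \frac{N^2}{2}$, hence $I \le \frac{2N^2}{r}$. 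Combining with $I \ge r|\pts|$ yields $|\pts| \le 2N^2 r^{-2}$, which is even stronger than claimed; and in fact since $r \ge 2N$ forces each individual $d_p \le N \le r/2 < r$ unless $\pts$ is empty, one sees directly that $\pts = \emptyset$ and the bound $|\pts| \le 2Nr^{-1}$ holds trivially. To state the argument in the form actually needed, I would simply observe: if $p \in \pts$ then $p$ lies in at least $r \ge 2N \ge 2|\mathcal{X}|$ sets, but there are only $|\mathcal{X}|$ sets in total, so $r \le |\mathcal{X}|$, contradicting $r \ge 2|\mathcal{X}|$ unless no such $p$ exists; thus $|\pts| = 0 \le 2|\mathcal{X}|r^{-1}$.

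Given that the sharp statement is nearly vacuous, I suspect the intended reading allows $r$ in a slightly wider range or the authors simply want the robust double-counting bound $|\pts| \le 2|\mathcal{X}|^2 r^{-2} \le |\mathcal{X}|r^{-1} \le 2|\mathcal{X}|r^{-1}$ (using $r \ge 2|\mathcal{X}|$ in the last step but one). Either way, I would present the double-counting version as the main line of argument since it is the one that generalizes and is genuinely used elsewhere: bound incidences below by $r|\pts|$, bound the number of co-incident pairs above by $\binom{|\mathcal{X}|}{2}$ using pairwise intersections of size $\le 1$, and combine via the convexity inequality $\sum_p \binom{d_p}{2} \ge \binom{r}{2}|\pts| \ge \frac{r}{4}\cdot r|\pts|$.

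**Expected main obstacle.** There is essentially no obstacle here — the only subtlety is bookkeeping the constant and deciding whether to phrase things so that the hypothesis $r \ge 2|\mathcal{X}|$ is used to kill the degenerate cases or merely to absorb constants. The one genuine care-point is making sure the pairwise-intersection hypothesis is applied to pairs of \emph{sets} (each pair contributing at most one point) rather than pairs of points, since it is the former that the double count needs.
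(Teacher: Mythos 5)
You are right to be suspicious of the hypothesis: as printed, $r\geq 2|\mathcal{X}|$ makes the lemma vacuous, since no element can lie in more than $|\mathcal{X}|$ sets. This is a typo; the intended hypothesis is $r\geq 2|\mathcal{X}|^{1/2}$. That is what the dual statement, Lemma \ref{lem:FewRichSurfaces} with $E=1$ (to whose proof the paper explicitly defers), requires, and it is how the lemma is actually invoked in the proof of Theorem \ref{th:RichC}, where it is applied to $\lines_j$ under the condition $r>2n_j^{1/2}$ to conclude $|\pts_r(\lines_j)|\leq 2n_jr^{-1}$. Your observation that the literal statement holds because $\pts=\emptyset$ is correct but only proves the vacuous reading.

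The genuine gap is in your fallback argument, which is the one that would have to carry the intended statement. Writing $N=|\mathcal{X}|$, the double count $\sum_p\binom{d_p}{2}\leq\binom{N}{2}$ with $d_p\geq r$ gives $|\pts|\leq N(N-1)/(r(r-1))\leq 2N^2r^{-2}$, and $2N^2r^{-2}\leq 2Nr^{-1}$ only when $N\leq r$. Under the intended hypothesis one only knows $N\leq r^2/4$, so $N$ can greatly exceed $r$: for $N=r^2/4$ the double count yields $|\pts|\leq N/2$, while the lemma claims $|\pts|\leq 2Nr^{-1}=\sqrt{N}$. The paper's proof avoids this quadratic loss by the truncated inclusion--exclusion of Lemma \ref{lem:FewRichSurfaces}: since two distinct elements of $\pts$ lie in at most one common set of $\mathcal{X}$ (otherwise two sets would share two elements), any $j$ elements of $\pts$ together lie in at least $rj-\binom{j}{2}$ distinct sets. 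If $|\pts|>2Nr^{-1}$, take $j=\lceil 2Nr^{-1}\rceil$; then $(j-1)/2\leq Nr^{-1}\leq N^{1/2}/2\leq r/4$, so these $j$ elements lie in at least $\bigl(r-(j-1)/2\bigr)j\geq (3/4)rj\geq (3/2)N>N$ sets, a contradiction. The point is to stop counting after only $j\approx 2N/r$ elements, while the pairwise corrections $\binom{j}{2}$ are still negligible compared to $rj$; summing $\binom{d_p}{2}$ over all of $\pts$ discards exactly this advantage.
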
 
The proof of Lemma \ref{lem:rRichPtsBigR} is very similar to the proof of Lemma \ref{lem:FewRichSurfaces} when $E=1$. We do not repeat this proof.

\section{Guth's structure theorem: curves in $\RR^d$}\label{sec:GuthStructureTheorem}

In \cite{Guth15a}, Guth proved Theorem \ref{th:RichRGuth}, which is a structure theorem for sets of lines in $\RR^3$. As discussed in the introduction, Guth proves his result by induction on the number of lines.
He used polynomial partitioning (Theorem \ref{th:partitionPts}) to break the collection of lines into several significantly smaller sub-collections. 

Guth stated his result for lines, which are the primary objects of interest when studying the distinct distances problem in the plane. However, his proof relies only on few properties that are specific to lines:
\begin{enumerate}[noitemsep,topsep=1pt]
\item A special case of Lemma \ref{lem:FewRichSurfaces} when $K=\RR$ and $E=1$.
\item Lemma \ref{lem:rRichPtsBigR}.
\item A variant of B\'ezout's theorem: Let $f$ be a polynomial of degree $D$. Then a line not contained in $\vb(f)$ intersects $\vb(f)$ in at most $D$ points.
\item The Szem\'eredi-Trotter theorem: Any set of $n$ lines in $\RR^2$ determines $O(n^2r^{-3}+ nr^{-1})$ $r$-rich points, for each $r\geq 2$. 
\end{enumerate}

We formulated Lemmas \ref{lem:FewRichSurfaces} and \ref{lem:rRichPtsBigR} in a way that holds for arbitrary curves in $\RR^3$. 
The third item could be replaced with Lemma \ref{th:VarietyPartComponents}: If $\gamma\subset\RR^3$ is an irreducible curve defined by polynomials of degree at most $E$ and $f\in\RR[x,y,z]$ of degree $D$ satisfies $\gamma\not\subset \vb(f)$, then $|\gamma\cap\vb(f)|=O_E(D)$. Finally, the Szem\'eredi-Trotter theorem has the following generalization \cite{CEGSW90}.
\begin{lemma}\label{lem:CEGSW}
Let $\curves$ be a set of $n$ irreducible curves in $\RR^2$, each defined by polynomials of degree at most $E$.
For every pair of distinct points $p,q\in \RR^2$, at most $M$ curves from $\curves$ are incident to both $p$ and $q$. Then for each $r\geq 2$, the number of $r$-rich points determined by $\curves$ is $O_{E,M}(n^2r^{-3}+nr^{-1})$.
\end{lemma}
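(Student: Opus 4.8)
The plan is to reduce the general polynomial-curve case to the classical Szemer\'edi--Trotter bound for lines via a lifting/linearization argument, combined with a dyadic pigeonholing on the number of rich points of each multiplicity. First I would recall that each curve $\gamma \in \curves$ is a component of $\vb(g)$ for some polynomial $g$ of degree $\leq E$, so $\gamma$ lies in the common zero set of the monomials of degree $\leq E$; there are $N = \binom{E+2}{2} = O_E(1)$ such monomials. Consider the Veronese-type map $\varphi\colon \RR^2 \to \RR^{N}$ sending $p$ to the vector of its monomial evaluations. Each curve $\gamma$ of degree $\leq E$ is contained in an affine subspace of $\RR^N$ (the zero set of the linear forms obtained from the coefficients of $g$), and the condition ``$p$ lies on $\gamma$'' becomes ``$\varphi(p)$ lies on that subspace''. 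The hypothesis that at most $M$ curves pass through any two points translates, after $\varphi$, into a bounded-multiplicity condition on the associated flats. One then applies a point--flat incidence bound (e.g. the Szemer\'edi--Trotter-type estimate for points and $k$-flats in $\RR^N$, or iterated projection back to $\RR^2$ as in Lemma \ref{lem:goodProjectionSet}) to obtain the claimed $O_{E,M}(n^2 r^{-3} + n r^{-1})$.

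An alternative, and probably cleaner, route is to run Guth's own partitioning-based argument directly in $\RR^2$: use Corollary \ref{co:partitionCombined} with a degree-$D$ partitioning polynomial $f$ adapted both to the set of $r$-rich points $\pts_r(\curves)$ and to $\curves$, so that each cell of $\RR^2 \setminus \vb(f)$ meets $O_E(n/D)$ curves and contains a $D^{-2}$ fraction of the rich points. Inside each cell one bounds the rich points by the trivial estimate (each $r$-rich point needs $\binom{r}{2}$ pairs of curves, and curves in a cell intersect in $O_E(1)$ points, so a cell with $k$ curves has $O_E(k^2/r^2)$ rich points when $r$ is not too large, and $O(k/r)$ rich points by Lemma \ref{lem:rRichPtsBigR} when $r$ is large). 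Summing over the $O(D^2)$ cells and optimizing $D$ recovers the $n^2 r^{-3}$ term. The points lying on $\vb(f)$ itself are handled by Harnack's theorem (Lemma \ref{lem:Harnack}) and B\'ezout's inequality (Lemma \ref{lem:Bezout}): either a curve $\gamma \in \curves$ is a component of $\vb(f)$ — and there are only $O_E(D)$ such components, each containing $O_E(n E)$ of the remaining curves by B\'ezout, contributing acceptably — or $\gamma$ meets $\vb(f)$ in $O_E(D)$ points, so the rich points on $\vb(f)$ coming from transversal curves are controlled by an incidence count on the curve $\vb(f)$, which one bounds by induction or by reducing to a one-dimensional (interval-incidence) problem. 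The bounded-multiplicity hypothesis (at most $M$ curves through two points) is exactly what rules out the degenerate situation of many curves overlapping, and it enters when converting a ``many rich points'' count into a ``many curve pairs'' count.

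The main obstacle I expect is bookkeeping at the boundary $\vb(f)$: components of the partitioning variety that contain a positive fraction of the curves must be peeled off and argued about separately, and one must make sure the recursion on these lower-complexity pieces actually terminates with the right constant dependence on $E$ and $M$. This is precisely the ``boundary of the partition'' difficulty that recurs throughout the paper, but in the planar setting it is mild: $\vb(f)$ is a plane curve, so Harnack and B\'ezout give clean bounds, and the recursion has depth $O(1)$ since the complexity strictly drops. A secondary technical point is that the trivial within-cell bound $O_E(k^2/r^2)$ is only valid for $r$ below a threshold depending on $E$; for larger $r$ one invokes Lemma \ref{lem:rRichPtsBigR} instead, and the two regimes must be glued together — this is routine and parallels the $n/r$ term already present in the statement. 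I would present the second (partitioning) proof as the main argument, since it keeps everything inside $\RR^2$ and reuses machinery already set up in Section \ref{sec:prelim}, and I would note the Veronese-lifting approach only as a remark.
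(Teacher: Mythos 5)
First, for context: the paper does not prove Lemma \ref{lem:CEGSW} at all --- it is quoted from \cite{CEGSW90} and is the classical Clarkson--Edelsbrunner--Guibas--Sharir--Welzl (equivalently, Pach--Sharir) bound for curves with two degrees of freedom. So the only question is whether your argument is correct, and your main (partitioning) argument has a quantitative gap that costs a full power of $r$. The within-cell estimate you propose --- each $r$-rich point is witnessed by $\binom{r}{2}$ pairs of curves, two curves meet in $O_E(1)$ points, hence $O_E(k^2/r^2)$ rich points in a cell meeting $k$ curves --- uses only B\'ezout, not the hypothesis that at most $M$ curves pass through two points. Summing $O_E((n/D)^2/r^2)$ over the $O(D^2)$ cells gives $O_E(n^2r^{-2})$ for \emph{every} choice of $D$ (there is nothing to optimize: $\sum_i k_i^2 \le (\max_i k_i)(\sum_i k_i) = O_E(n^2)$), and intersecting with the $O(mD^{-2})$ per-cell point count via $\min(a,b)\le\sqrt{ab}$ still only yields $m = O_E(m^{1/2}nr^{-1})$, i.e.\ $m = O_E(n^2r^{-2})$. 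This is no accident: $O_E(n^2r^{-2})$ is exactly what the B\'ezout condition alone can give, and the exponent $-3$ is specific to families in which two points lie on boundedly many curves. Your remark that the two-point hypothesis only ``rules out the degenerate situation of many curves overlapping'' misidentifies its role; it must enter quantitatively in every cell. The standard repair is to pass to incidences: with $m = |\pts_r(\curves)|$ one has $rm \le I(\pts_r(\curves),\curves)$; inside each cell the Cauchy--Schwarz/K\H{o}v\'ari--S\'os--Tur\'an count based on ``at most $M$ curves through two points'' gives $I(\pts_i,\curves_i) = O_M\big(|\pts_i|\,|\curves_i|^{1/2}+|\curves_i|\big)$; summing with $|\curves_i| = O_E(n/D)$ yields $I = O_{E,M}\big(m(n/D)^{1/2}+nD\big)$ plus boundary terms, and choosing $D\approx m^{2/3}n^{-1/3}$ gives $I=O_{E,M}(m^{2/3}n^{2/3}+m+n)$, hence the claim for $r$ above a constant threshold (small $r$ being handled by the trivial $O_E(n^2)$ bound on intersection points). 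Your treatment of the boundary $\vb(f)$ via Harnack and B\'ezout is fine once this is in place; alternatively, an induction on $n$ works only if the inductive hypothesis (not the trivial $k^2/r^2$ count) is applied in the cells, and even then the constants do not close with a constant-degree partition.

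The Veronese remark should be dropped or clearly flagged as heuristic: after lifting, one has a point--hyperplane incidence problem in $\RR^N$ whose incidence graph is $K_{2,M+1}$-free, but such configurations do not satisfy a Szemer\'edi--Trotter-type bound with exponent $2/3$ --- K\H{o}v\'ari--S\'os--Tur\'an gives only $I = O_M(mn^{1/2}+n)$, which is vacuous for $r \ll n^{1/2}$, and the sharper point--flat incidence theorems require non-degeneracy hypotheses that are not supplied here.
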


Keeping these minor changes in mind, Guth's theorem can be restated as a structure theorem for curves in $\RR^3$. 
\begin{lemma}\label{lem:guthStructureR3}
For every $\eps>0$ and $E,M\geq 1$, there is a constant $C$ such that the following holds. Let $\curves$ be a set of $n$ irreducible curves in $\RR^3$, each defined by polynomials of degree at most $E$. For every pair of distinct points $p,q\in \RR^3$, at most $M$ curves from $\curves$ are incident to both $p$ and $q$. Let $2\leq r\leq 2n^{1/2}$ and let $r^\prime = \lceil 9r/10\rceil$. Then there exists a set $\surfs$ of surfaces in $\RR^3$ with the following properties.
\begin{itemize}[noitemsep]
\item Every surface in $\surfs$ is defined by polynomials of degree at most $C$. 
\item Every plane $W\in \surfs$ contains at least $rn^{1/2+\eps}$ curves from $\curves$. 
\item $|\surfs|\le 2n^{1/2-\eps}r^{-1}$.
\item $|\pts_r(\curves)\setminus \bigcup_{W\in \surfs}\pts_{r^\prime}(\curves_W)|\leq C n^{3/2+\eps}r^{-2}$.
\end{itemize}
\end{lemma}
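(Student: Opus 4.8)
The plan is to reprove Guth's argument for Theorem \ref{th:RichRGuth}, replacing each use of a line-specific fact with the corresponding curve-based substitute listed just above the statement, and verifying that the degree bounds remain under control along the induction. I would argue by induction on $n$ (the number of curves), the base case $n = O_{\eps,E,M}(1)$ being trivial because one can put every curve into its own surface or absorb everything into the $Cn^{3/2+\eps}r^{-2}$ term by taking $C$ large. For the inductive step, first dispose of two easy regimes: if $r > 2n^{1/2}$ then Lemma \ref{lem:rRichPtsBigR} (applied to $\curves$, using that any two distinct irreducible curves of bounded degree meet in $O_E(1)$ points — here one needs a trivial weighting argument or to note $M$ is absorbed) already bounds $|\pts_r(\curves)|$ by what we want with $\surfs = \emptyset$; and if some plane contains at least $rn^{1/2+\eps}$ curves of $\curves$, we extract a maximal family of such planes, bound its size by $2n^{1/2-\eps}r^{-1}$ via Lemma \ref{lem:FewRichSurfaces} (with $E=1$, noting $A = rn^{1/2+\eps} \geq 2n^{1/2}$), remove all curves contained in those planes, and recurse on the remainder; the recursion terminates because each such plane removes at least $rn^{1/2+\eps}$ curves, so it happens at most $O(n^{1/2-\eps}r^{-1})$ times, and the accumulated error and surface counts only increase by constant factors. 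So we may assume no plane is that rich.

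The heart of the argument is the partitioning step. Apply Corollary \ref{co:partitionCombined} in $\RR^3$ (with $k = 1$, $d = 3$) to the point set $\pts_r(\curves)$ and the curve set $\curves$, with a degree parameter $D_0 = D_0(\eps)$ chosen as a large constant power of a small power of $n$ — following Guth, one takes $D_0 \approx n^{\delta}$ for a suitable $\delta = \delta(\eps)$. Using Remark \ref{rem:coDimOne} we may assume $\vb(f)$ is a hypersurface. This yields $O(D_0^3)$ cells, each containing $O(|\pts_r(\curves)| D_0^{-3})$ rich points and meeting $O_E(n D_0^{-2})$ curves. A curve $\gamma$ that is not contained in $\vb(f)$ meets the boundary in $O_E(D_0)$ points (the curve analogue of Bézout, item 3 above), so each such curve contributes $O_E(D_0)$ boundary-incidences; points lying on $\vb(f)$ but still $r$-rich via curves crossing transversally are handled exactly as in Guth by summing a Szemerédi–Trotter type bound — here Lemma \ref{lem:CEGSW} on a generic plane section of $\vb(f)$, or directly on $\vb(f)$ after projecting — over the $O_E(D_0^2)$ components of $\vb(f)$, which contributes an acceptable error. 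The curves genuinely contained in $\vb(f)$: split $\vb(f)$ into its $O_E(D_0^2)$ irreducible components (Lemma \ref{lem:comps}); a component of degree $\geq 2$ contains curves that we feed into a separate sub-induction on surfaces, or, more simply, we note the standard trick that curves confined to a bounded-degree surface already determine few rich points off a bounded set of special (e.g.\ ruling) curves, with the leftover handled by $\surfs$ — this matches Guth's treatment of the boundary and is where bookkeeping is heaviest. In each cell we apply the induction hypothesis with the reduced curve count $n_i = O_E(nD_0^{-2})$ and the \emph{same} $r$ but the relaxed richness $r' = \lceil 9r/10\rceil$ (the slack between $r$ and $r'$ is never used up because each cell only ``loses'' the $O_E(D_0)$ crossing points per curve, a lower-order effect once $D_0$ is a small power of $n$; this is exactly Guth's mechanism), obtaining in each cell a family $\surfs_i$ of surfaces of degree $\leq C(\eps,E,M)$. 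We then set $\surfs = \bigcup_i \surfs_i$ together with the rich planes extracted earlier and the bounded-degree components of $\vb(f)$ that carry many curves.

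To close the induction we must check the four bulleted conclusions survive aggregation. The degree bound on surfaces in $\surfs$ is immediate: the surfaces coming from cells have degree $\leq C$ by the induction hypothesis, the extracted planes have degree $1$, and the components of $\vb(f)$ have degree $O_E(1)$ by Lemma \ref{lem:comps} — so we simply take the final $C$ to be the max of these constants (this is a fixed-point-style choice of $C$, legitimate because the recursion has bounded depth in the degree parameter, as in Guth). The ``rich plane'' condition ($\geq rn^{1/2+\eps}$ curves) is maintained because in each cell the relevant threshold is $r n_i^{1/2+\eps}$, and $n_i \leq n$, so a plane rich relative to a cell need not be rich relative to $\curves$ — this is the one subtle point, and Guth's resolution (which I would follow) is that we only add a plane to the global $\surfs$ when it is globally rich, and planes that are only locally rich are instead re-expanded: their curves are counted as ``cell curves'' and handled by the cell's own error term, which is affordable precisely because $r \leq 2n^{1/2}$ forces $rn^{1/2+\eps}$ not to be too large. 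The count $|\surfs| \leq 2n^{1/2-\eps}r^{-1}$ follows by summing $|\surfs_i| \leq 2n_i^{1/2-\eps}r^{-1}$ over $O(D_0^3)$ cells with $n_i = O_E(nD_0^{-2})$, giving $O_E(D_0^3 \cdot (nD_0^{-2})^{1/2-\eps} r^{-1}) = O_E(D_0^{1+2\eps} n^{1/2-\eps} r^{-1})$, and choosing $\delta$ small enough that $D_0^{1+2\eps} = n^{\delta(1+2\eps)}$ is absorbed into the $n^\eps$ slack — this is Guth's parameter-balancing, and it is identical here. The error term $|\pts_r(\curves) \setminus \bigcup_W \pts_{r'}(\curves_W)|$ is bounded by summing the cell errors $Cn_i^{3/2+\eps}r^{-2}$ over cells (again $O(D_0^3) \cdot (nD_0^{-2})^{3/2+\eps}r^{-2} = O(D_0^{-2\eps}n^{3/2+\eps})r^{-2}$, which beats $n^{3/2+\eps}r^{-2}$ for $D_0$ a positive power of $n$), plus the boundary-incidence term $O_E(nD_0)$ which is $o(n^{3/2})$ for small $\delta$, plus the Szemerédi–Trotter contribution from points on $\vb(f)$, bounded via Lemma \ref{lem:CEGSW} summed over $O_E(D_0^2)$ components. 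The main obstacle — and the only place real care is needed — is the joint choice of the constants $\delta(\eps)$ and $C(\eps,E,M)$ so that all four inequalities close simultaneously; since Guth already carried this out for $E = M = 1$ and the $E, M$ dependence enters only through the constants in items 1–4 (Lemmas \ref{lem:FewRichSurfaces}, \ref{lem:rRichPtsBigR}, the curve-Bézout bound, and \ref{lem:CEGSW}), the verification is a matter of tracking $O_{E,M}$ constants through an argument whose structure is unchanged.
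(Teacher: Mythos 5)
Your overall plan is exactly the paper's: Section \ref{sec:GuthStructureTheorem} does not reprove Guth's argument but simply observes that the proof of Theorem \ref{th:RichRGuth} uses only the four listed line-specific facts, each of which has the curve analogue you invoke (Lemmas \ref{lem:FewRichSurfaces} and \ref{lem:rRichPtsBigR}, the $O_E(D)$ curve--hypersurface intersection bound, and Lemma \ref{lem:CEGSW}). So the strategy is right. However, your execution of the partitioning step contains a concrete error that would break the proof as written: you take the partitioning degree $D_0\approx n^{\delta}$. Guth's structure theorem is specifically a \emph{constant-degree} (depending only on $\eps$) partitioning argument, and this is essential here for two reasons. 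First, the irreducible components of $\vb(f)$ that carry many curves must be placed into $\surfs$, and the first bullet requires every surface in $\surfs$ to have degree at most a constant $C$; with $D_0=n^{\delta}$ these components have degree up to $n^{\delta}$, and your assertion that they ``have degree $O_E(1)$ by Lemma \ref{lem:comps}'' is false in that regime. Second, the treatment of curves trapped in a component of $\vb(f)$ (the exceptional-curve/doubly-ruled dichotomy of Lemma \ref{lem:tooManyExceptionsImpliesDoublyRuled}, or Guth's planar analogue) only yields information when the number of curves in the component is large compared to a power of its degree; with degree $n^{\delta}$ and only $n$ curves in total, nothing is gained, and one is back to the hard Guth--Katz regime that the structure theorem is designed to avoid. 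A large constant $D_0(\eps)$ already wins the factor $D_0^{-2\eps}$ needed to close the induction on the error term, so nothing is gained by taking $D_0$ to grow with $n$.

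A second, smaller bookkeeping issue: the bound $|\surfs|\le 2n^{1/2-\eps}r^{-1}$ cannot be obtained by summing $|\surfs_i|\le 2n_i^{1/2-\eps}r^{-1}$ over the cells. That sum is $O\big(D_0^{3}\cdot(nD_0^{-2})^{1/2-\eps}r^{-1}\big)=O\big(D_0^{2+2\eps}n^{1/2-\eps}r^{-1}\big)$, which exceeds the target by the factor $D_0^{2+2\eps}>1$, and there is no ``$n^{\eps}$ slack'' in that bullet to absorb it. The correct mechanism --- which you in fact describe in your discussion of locally versus globally rich planes but then do not use --- is to retain in $\surfs$ only surfaces containing at least $rn^{1/2+\eps}$ curves of the full set $\curves$ and to bound their number directly by Lemma \ref{lem:FewRichSurfaces} with $A=rn^{1/2+\eps}$, absorbing the discarded locally-rich surfaces into the $Cn^{3/2+\eps}r^{-2}$ error term. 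With these two corrections (constant $D_0(\eps)$ and the pruning-based count) your outline coincides with the argument the paper is implicitly relying on.
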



Using Lemma \ref{lem:goodProjectionSet}, we can extend Lemma \ref{lem:guthStructureR3} to curves in $\RR^d$. 

\begin{lemma}\label{th:RichR}
For every $\eps>0$, $d\geq 3$ and $E,M\geq 1$, there is a constant $C$ such that the following holds. Let $\curves$ be a set of $n$ irreducible curves in $\RR^d$, each defined by polynomials of degree at most $E$. For every pair of distinct points $p,q\in \RR^d$, at most $M$ curves from $\curves$ are incident to both $p$ and $q$. Let $2\leq r\leq 2n^{1/2}$ and let $r^\prime = \lceil 9r/10\rceil$. Then there exists a set $\surfs$ of surfaces in $\RR^d$ with the following properties.
\begin{itemize}[noitemsep]
\item Every surface in $\surfs$ is defined by polynomials of degree at most $C$. 
\item Every plane $W\in \surfs$ contains at least $rn^{1/2+\eps}$ curves from $\curves$. 
\item $|\surfs|\le 2n^{1/2-\eps}r^{-1}$.
\item $|\pts_r(\curves)\setminus \bigcup_{W\in \surfs}\pts_{r^\prime}(\curves_W)|\leq C n^{3/2+\eps}r^{-2}$.
\end{itemize}
\end{lemma}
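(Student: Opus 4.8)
The plan is to deduce Lemma~\ref{th:RichR} from Lemma~\ref{lem:guthStructureR3} by projecting the configuration in $\RR^d$ down to $\RR^3$ via a generic orthogonal projection $\pi\in\gr(3,\RR^d)$ and running the three-dimensional structure theorem there. First I would choose the projection carefully so that no essential information is lost: apply Lemma~\ref{lem:goodProjectionSet} to the family $\mathcal X$ consisting of the Zariski closures of all pairwise intersections $\overline{\gamma\cap\gamma'}$ (each of dimension $0$ since distinct irreducible curves meet in finitely many points, using the bounded-degree hypothesis and B\'ezout), together with any other low-dimensional sets we need to track. Part~(a) of that lemma gives a projection $\pi$ that is non-degenerate with respect to each set, and part~(b)---applicable because the relevant sets have dimension $<3/2$---ensures that for every pair of curves $\gamma,\gamma'$ the number of intersection points is preserved, i.e. $|\overline{\pi(\gamma)}\cap\overline{\pi(\gamma')}|=|\overline{\gamma}\cap\overline{\gamma'}|$. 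Thus $\{\overline{\pi(\gamma)}\colon\gamma\in\curves\}$ is a set of at most $n$ irreducible curves in $\RR^3$ of degree $O_E(1)$ (by Lemma~\ref{le:projDeg}), and the condition ``at most $M$ curves through any two points'' is preserved (up to a constant depending only on $E$), so Lemma~\ref{lem:guthStructureR3} applies to the projected family with constants depending only on $\eps, E, M, d$.

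Next I would transfer the incidence structure. A point $p\in\pts_r(\curves)$ projects to a point $\pi(p)$ that is $r$-rich with respect to the projected curves---here is exactly where we need that distinct curves through $p$ stay distinct after projection and that no spurious coincidences of projected curves are created at $\pi(p)$, which is guaranteed by the generic choice (a point lying on two projected curves but not the projections of curves through $p$ would force an intersection point of two curves in $\RR^d$ not visible in the pair; the bad directions form a proper subvariety, so we may add finitely many more exceptional subspaces to $\mathcal X$ and still invoke Lemma~\ref{lem:goodProjectionSet}). Running Lemma~\ref{lem:guthStructureR3} on the projected configuration produces a set $\surfs_0$ of surfaces in $\RR^3$, each of bounded degree, with $|\surfs_0|\le 2n^{1/2-\eps}r^{-1}$, each plane in $\surfs_0$ containing $\ge rn^{1/2+\eps}$ projected curves, and the exceptional point set of size $O(n^{3/2+\eps}r^{-2})$.

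Finally I would pull $\surfs_0$ back to $\RR^d$. For each $W_0\in\surfs_0$, let $W=\overline{\pi^{-1}(W_0)}$, equivalently the Zariski closure of the union of all curves $\gamma\in\curves$ with $\pi(\gamma)\subset W_0$; alternatively, take the irreducible components of $\pi^{-1}(W_0)$ that contain $\gtrsim$ many curves of $\curves$. Each such $W$ is defined by polynomials of bounded degree (pre-images of the defining polynomials of $W_0$ under the linear map $\pi$, possibly passing to components via Lemma~\ref{lem:comps}), and since a plane $W_0$ in $\RR^3$ pulls back to a flat in $\RR^d$ of dimension $d-1$, I would argue that the component of $\curves$-curves lying over it is in fact contained in a $2$-plane: the curves projecting into $W_0$ lie in $\pi^{-1}(W_0)$, and counting shows that if $\ge rn^{1/2+\eps}\ge 2$ of them passed through a common point with this many in a $(d-1)$-flat, the structure would collapse---more cleanly, one invokes that a $1$-rich family of many curves all of whose projections are coplanar forces coplanarity in $\RR^d$ after composing with one further generic projection to $\RR^3$, and the $2$-plane in $\RR^3$ lifts back. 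Curves not in $\surfs$ but whose projection is $r'$-rich on some $W_0$ correspond to $\curves$-curves whose image is on $W_0$, and the points of $\pts_r(\curves)\setminus\bigcup_W\pts_{r'}(\curves_W)$ project into the exceptional set of $\surfs_0$, whose size is already $O(n^{3/2+\eps}r^{-2})$. The main obstacle is this last bookkeeping: ensuring that the planes produced downstairs genuinely lift to planes (and not merely to higher-dimensional flats) in $\RR^d$ and that the $r'$-richness relation is preserved under $\pi$ in both directions; this is handled by building enough generic-position hypotheses into the choice of $\pi$ via repeated applications of Lemma~\ref{lem:goodProjectionSet} and Lemma~\ref{lem:goodProjectionDirectionCodim1}, each of which only excludes a proper subvariety of directions.
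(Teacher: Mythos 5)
Your overall strategy---choose a generic projection $\pi:\RR^d\to\RR^3$ via Lemma \ref{lem:goodProjectionSet}, apply Lemma \ref{lem:guthStructureR3} to the projected curves, and lift the resulting surfaces back to $\RR^d$---is exactly the paper's. But the lifting step, which is the only nontrivial point, has a genuine gap. You make $\pi$ non-degenerate with respect to the family of pairwise intersections $\overline{\gamma\cap\gamma'}$; this controls spurious incidences but says nothing about how surfaces pull back. The paper instead takes $\mathcal{X}=\{\bigcup_{\gamma\in\curves'}\gamma\colon \curves'\subset\curves\}$, the unions of \emph{arbitrary subcollections} of curves. Non-degeneracy for this family means precisely that $\operatorname{ContDeg}(\bigcup\curves',2)\le\operatorname{ContDeg}(\pi(\bigcup\curves'),2)$: if the projections of a subcollection lie in a two-dimensional variety of degree at most $C$ in $\RR^3$, then the subcollection itself lies in a two-dimensional variety of degree at most $C$ in $\RR^d$. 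That is the entire lifting mechanism, and your choice of $\mathcal{X}$ does not supply it.

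Your substitute constructions do not work. The set $\overline{\pi^{-1}(W_0)}$ is a $(d-3)$-parameter cylinder over $W_0$, hence has dimension $d-1$, not $2$; taking its irreducible components does not reduce the dimension. And the claim that the curves lying over a plane $W_0$ must sit inside a $2$-plane of $\RR^d$ is false in general (many pairwise skew lines inside a $3$-flat of $\RR^4$ can project onto a single plane under a projection that is degenerate for their union---exactly the situation the correct choice of $\mathcal{X}$ rules out); your ``counting shows the structure would collapse'' argument is not a proof, and the coplanarity claim is also unnecessary, since the lemma only requires the members of $\surfs$ to be bounded-degree surfaces, with the richness condition imposed only on those that happen to be planes. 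So the fix is not more generic-position bookkeeping but replacing your $\mathcal{X}$ by the family of unions of subcollections and invoking the containment-degree inequality at $t=2$ to define $\surfs$ as the resulting degree-$\le C$ surfaces in $\RR^d$. The remaining verifications (cardinality of $\surfs$, preservation of $r$- and $r'$-richness, the exceptional set bound) then transfer as you describe.
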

\begin{proof}
Use Lemma \ref{lem:goodProjectionSet} to find a projection $\pi:\RR^d\to\RR^3$ that is non-degenerate with respect to the set $\mathcal{X}=\{\bigcup_{\gamma\in\curves^\prime}\gamma\colon\curves^\prime\subset\curves\}$. Set $\curves_{\RR^3} = \{\overline{\pi(\gamma)}\ :\ \gamma\in \curves\}$. By part (b) of Lemma \ref{lem:goodProjectionSet}, for every pair of distinct points $p,q\in \RR^r$, at most $M$ curves from $\curves_{\RR^3}$ are incident to both $p$ and $q$. We apply Lemma \ref{lem:guthStructureR3} with $\curves_{\RR^3}$, to obtain a set $\mathcal{S}_{\RR^3}$ of surfaces, each defined by polynomials of degree at most $C = C(\eps,E)$. Since $\pi$ is non-degenerate, for each $S_{\RR^3}\in\surfs_{\RR^3}$, there is a corresponding surface $S\subset\RR^d$ defined by polynomials of degree at most $E$ with the property: For each $\gamma\in\curves$ with $\overline{\pi(\gamma)}\subset S_{\RR^3}$ we have that $\gamma\subset S$. Let $\surfs$ be the set of surfaces in $\RR^d$ that correspond to the surfaces of $\surfs_{\RR^3}$. We can verify that this set of surfaces satisfies the requirements of the lemma.
\end{proof}

\section{Complex incidence geometry inside a real hypersurface}\label{sec:cplxGeom}
\subsection{Complex lines in a real variety}
In this section we study the set of complex lines that can be contained in a real variety in $\RR^6$. Throughout this section, we identify $\CC$ with $\RR^2$ using the map $(x+iy)\mapsto (x,y)$. We similarly identify $\CC^3$ with $\RR^6$. We call a subset of $\RR^6$ a \emph{complex line} if it is the image of a complex line in $\CC^3$ under this identification. We call a subset of $\RR^6$ a \emph{complex plane} if it is the image of a complex plane under this identification. We often abuse notation and refer to a complex line as a subset of $\CC^3$ or of $\RR^6$. 

The set of all complex lines in $\CC^3$ can be identified with an algebraic structure called a quasi-projective variety. For our purposes, however, it will be simpler to restrict attention to a large subset of the set of complex lines. We say that a line $L \subset\CC^3$ is \emph{standard} if it is not parallel to the complex $z_2z_3$ plane. Every standard line  $L$ can be expressed in the form $(0, a,b)+t\cdot (1,c,d)$ with fixed $a,b,c,d\in\CC$ and a parameter $t\in \CC$. We define 
$$
G(L) = (\operatorname{Re}(a),\operatorname{Im}(a),\operatorname{Re}(b),\operatorname{Im}(b),\operatorname{Re}(c),\operatorname{Im}(c),\operatorname{Re}(d),\operatorname{Im}(d)).
$$
Note that $G$ is bijection from the set of standard complex lines to $\RR^8$. 

When working with standard lines, it will be useful to define the map
\begin{align*}
\phi(a_1,a_2,b_1,b_2,&c_1,c_2,d_1,d_2,s,t)\\
&= (s, t, a_1 + sc_1 -tc_2, a_2 + sc_2 + tc_2, b_1 + sd_1 - td_2, b_2 + sd_2 + td_1).
\end{align*}
That is, $\phi(a_1,a_2,b_1,b_2,c_1,c_2,d_1,d_2,s,t)$ is the image of the point $(0,a_1+ia_2,b_1+ib_2)+(s+it)(1,c_1+ic_2,d_1+id_2)$ under the identification of $\CC^3$ with $\RR^6$.

For a variety $U\subset\RR^6$, we define $L(U)$ to be the set of standard complex lines contained in $U$. Abusing notation slightly, we define $G(U) = G(L(U))\subset\RR^8$.

The following observation plays a crucial role in the arguments that follow. If $U\subset\RR^6$ is a variety, $p\in U_{\reg}$, and $H\subset U$ is a plane that contains $p$, then $H$ must be contained in the tangent space $T_pU$. If $L\subset U$ is a complex line that contains $p$ and is contained in $U$, then more is true. In addition to $L$ being contained in $T_pU$, it must also be contained in a certain subspace of $T_pU$ that is compatible with the complex structure of $L$. To make this precise we define the operator $J\colon \RR^6\to\RR^6$ as 
\begin{equation}
J(x_1,y_1,x_2,y_2,x_3,x_3) = (-y_1,x_1,-y_2,x_2,-y_3,x_3).
\end{equation}
If we identify $\RR^6$ with $\CC^3$, then $J$ corresponds to multiplication by $i$. For $p\in U_{\reg}$ we define the \emph{complex tangent space}
\begin{equation}
V_p(U) = T_p(U) \cap J(T_pU). 
\end{equation}
This is (a translate of) the largest complex linear subspace of $\CC^3$ that is contained in $T_pU$. Observe that $V_pU$ must have an even dimension. In particular, if $U$ is a proper subvariety of $\RR^6$ then $V_pU$ has dimension at most 4. 

With these definitions, we can begin to study the set of complex lines contained in a real variety.
\begin{lemma}\label{lem:varietyOfLinesInZ}
Let $U\subset\RR^6$ be a variety defined by polynomials of degree at most $D$. Then $G(U)$ is a variety defined by polynomials of degree at most $D$. 
\end{lemma}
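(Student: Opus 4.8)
The plan is to translate ``the standard complex line with parameter $w$ is contained in $U$'' into a system of polynomial equations in $w$, each of degree at most $D$. Write $U=\vb(f_1,\ldots,f_k)$ with $\deg f_j\le D$ for every $j$. Recall that $G$ is a bijection from the standard complex lines onto $\RR^8$, and that for $w=(a_1,a_2,b_1,b_2,c_1,c_2,d_1,d_2)\in\RR^8$ the map $(s,t)\mapsto\phi(w,s,t)$ parametrizes (by the complex parameter $s+it$) the image in $\RR^6$ of the standard line $L_w$ with $G(L_w)=w$. Since $s+it$ ranges over all of $\CC$ as $(s,t)$ ranges over $\RR^2$, we have $w\in G(U)$ if and only if $f_j(\phi(w,s,t))=0$ for every index $j$ and every $(s,t)\in\RR^2$.

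First I would fix $j$ and regard $g_j(w,s,t):=f_j(\phi(w,s,t))$ as a polynomial in the ten variables $(w,s,t)$. The key structural point is that $\phi$ is simultaneously affine-linear in $(s,t)$ (for fixed $w$) and affine-linear in $w$ (for fixed $(s,t)$): each of the six coordinates of $\phi$ is a sum of monomials each having degree at most $1$ in the $w$-variables and degree at most $1$ in $(s,t)$. Substituting into $f_j$, whose degree is at most $D$, it follows that every monomial of $g_j$ has degree at most $D$ in $w$ and, jointly, degree at most $D$ in $(s,t)$. Hence we may expand
\[
g_j(w,s,t)=\sum_{p+q\le D} h_{j,p,q}(w)\, s^p t^q,
\qquad \deg h_{j,p,q}\le D .
\]
Because $\RR$ is infinite, the bivariate polynomial $(s,t)\mapsto g_j(w,s,t)$ vanishes on all of $\RR^2$ precisely when each coefficient $h_{j,p,q}(w)$ vanishes. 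Combining over all $j$,
\[
G(U)=\bigcap_{j=1}^{k}\ \bigcap_{p+q\le D}\vb\bigl(h_{j,p,q}\bigr),
\]
which exhibits $G(U)$ as a variety in $\RR^8$ defined by polynomials of degree at most $D$, as claimed.

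The only real content is the degree bookkeeping in the middle step, and that is also where the main obstacle lies: a priori $f_j\circ\phi$ could have degree up to $2D$, since $\phi$ has genuinely quadratic coordinates such as $a_1+sc_1-tc_2$. What saves the bound $D$ is that those quadratic monomials are ``bilinear'' — linear in the line parameters $w$ and linear in $(s,t)$ — so composing with a polynomial of degree $D$ produces monomials of $w$-degree at most $D$ and of $(s,t)$-degree at most $D$ separately, rather than a single lump of total degree $2D$. I would make this precise by tracking the bidegree $(\deg_w,\deg_{s,t})$ of each monomial through the substitution $X_i\mapsto\phi_i(w,s,t)$ and multiplying through a monomial $X_1^{e_1}\cdots X_6^{e_6}$ of $f_j$; everything else in the argument is immediate.
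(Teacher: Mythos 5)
Your proposal is correct and follows essentially the same route as the paper: substitute the parametrization $\phi$ into each defining polynomial $f_j$, expand in powers of $s,t$, and observe that since each coordinate of $\phi$ is bilinear (degree at most one in the line parameters and at most one in $(s,t)$), the coefficient polynomials have degree at most $D$, so $G(U)$ is the common zero set of these coefficients. The paper's proof is exactly this computation, with your $h_{j,p,q}$ playing the role of its $Q_{j,u,v}$.
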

\begin{proof}
Let $f_1,\ldots,f_k$ be polynomials of degree at most $D$ such that $U= \vb(f_1,\ldots,f_k)$. For each index $j$, consider the polynomial
\begin{equation}
\begin{split}
(a_1,a_2,b_1,b_2,c_1,c_2,d_1,d_2,s,t)&\mapsto f_j( \phi(a_1,a_2,b_1,b_2,c_1,c_2,d_1,d_2,s,t))\\
&=\sum_{0\leq u\leq v\leq D} Q_{j,u,v}(a_1,a_2,b_2,b_2,c_1,c_2,d_1,d_2)s^u t^v.
\end{split}
\end{equation}

A standard line $L$ with $G(L) = (a_1,a_2,b_1,b_2,c_1,c_2,d_1,d_2)$ vanishes identically on $\vb(f_j)$ if and only if $Q_{j,u,v}(a_1,a_2,b_1,b_2,c_1,c_2,d_1,d_2)=0$ for each $0\leq u\leq v\leq D$. We conclude that 
\begin{equation}
G(U)=\bigcap_{j=1}^k \bigcap_{0\leq u\leq v\leq D} \vb(Q_{j,u,v}).
\end{equation}
Each of these polynomials has degree at most $D$.
\end{proof}

If $\Pi\subset\RR^6$ is a complex plane, then by Lemma \ref{lem:varietyOfLinesInZ}, the variety $G(\Pi)\subset\RR^8$ is defined by polynomials of degree one. In fact, if $\Pi$ is not parallel to the $z_2z_3$ plane then $G(\Pi)$ is a four-dimensional linear variety in $\RR^8$. Under the standard identification of $\RR^8$ with $\CC^4$, the variety $G(\Pi)$ is a complex plane. Furthermore, if $\Pi$ and $\Pi^\prime$ are complex planes in $\RR^6$, then $G(\Pi)$ and $G(\Pi^\prime)$ are either disjoint (when $\Pi$ and $\Pi^\prime$ are parallel), or they intersect at a single point (corresponding to the complex line $\Pi\cap \Pi^\prime$).

For $p\in\RR^6$, we define $G_p\subset\RR^8$ to be the (image of the) set of standard complex lines that contain $p$. Again, $G_p$ is a four-dimensional linear variety in $\RR^8$. Under the standard identification of $\RR^8$ with $\CC^4$, the set $G_p$ is a complex plane. If $p$ and $p^\prime$ are distinct, then either $G_p$ and $G_{p^\prime}$ are disjoint (when the complex line containing $p$ and $p^\prime$ is parallel to the complex $z_2z_3$ plane), or they intersect at a single point (corresponding to the complex line containing $p$ and $p^\prime$). 

For a standard complex line $L\subset\RR^6$, we define $\hair(L)$ to be the set of (images of the) standard complex lines that intersect $H$. We refer to $\hair(L)$ as the \emph{hairbrush} of $L$. A hairbrush is a six-dimensional variety in $\RR^8$ defined by polynomials of degree two.  If $L$ and $L^\prime$ are standard complex lines that intersect at a point $p$ and span the complex plane $\Pi$, then $H(L) \cap H(L^\prime) = G_p \cup G(\Pi)$. 

\begin{lemma}\label{lem:maxFiniteLinesThroughPt}
Let $U$ be a proper subvariety of $\RR^6$ defined by polynomials of degree at most $D$ and let $p\in U_{\operatorname{reg}}$. If $G_p \cap G(U)$ is finite, then it has cardinality at most $D^2$. If it is infinite, then it has dimension one or two. If $G_p \cap G(U)$ has dimension two then there is a complex plane $\Pi\subset U$ that contains $p$.
\end{lemma}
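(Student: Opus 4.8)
The plan is to analyze $G_p \cap G(U)$ as an intersection of two varieties in $\RR^8$. By Lemma \ref{lem:varietyOfLinesInZ}, $G(U)$ is a variety defined by polynomials of degree at most $D$, and by the discussion preceding the lemma, $G_p$ is a four-dimensional linear variety in $\RR^8$ which, under the identification $\RR^8 \cong \CC^4$, is a complex plane. A standard complex line $L$ through $p$ contained in $U$ must, at the regular point $p$, lie inside the tangent space $T_pU$; since $L$ is a complex line, it must in fact lie inside the complex tangent space $V_p(U) = T_pU \cap J(T_pU)$. So the first step is to restrict attention to $G_p \cap G(V_p(U))$: every point of $G_p \cap G(U)$ corresponds to a complex line through $p$ inside $V_p(U)$, hence $G_p \cap G(U) \subseteq G_p \cap G(V_p(U))$. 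Since $U$ is a proper subvariety of $\RR^6$, $\dim V_p(U) \in \{0, 2, 4\}$ (it is an even-dimensional complex subspace of dimension at most $4$). If $\dim V_p(U) \le 2$, there is at most one complex line through $p$ in $V_p(U)$, so the intersection is finite of size $\le 1 \le D^2$; moreover $G_p \cap G(U)$ cannot be two-dimensional in this subcase.

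The substantive case is $\dim V_p(U) = 4$, i.e. $V_p(U)$ is (a translate of) a complex plane $\Pi_0 \subset \CC^3$ through $p$. Here the complex lines through $p$ inside $\Pi_0$ form a one-parameter family — under $G$ they trace out a curve in $\RR^8$ which, viewed in $\CC^4$, is a line (this is the statement, from the text, that $G_p$ and $G(\Pi_0)$ intersect in the copy of $\PP^1$ parametrizing lines through $p$ in $\Pi_0$; equivalently $G_p \cap G(\Pi_0)$ is a complex line $\ell \subset \CC^4$). Now intersect $\ell$ with $G(U)$. If $\Pi_0 \subset U$, then $\ell \subset G(U)$, so $G_p \cap G(U) = \ell$ is two-dimensional (as a real variety), and we are in the last case of the lemma — this is exactly where the complex plane $\Pi \subset U$ through $p$ comes from. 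If $\Pi_0 \not\subset U$, then $\ell$ is not contained in $G(U)$, so $\ell \cap G(U)$ is a proper subvariety of the line $\ell \cong \CC$, hence finite. To bound its cardinality by $D^2$: pick one defining polynomial $Q$ of $G(U)$ (degree $\le D$) that does not vanish identically on $\ell$; restricting $Q$ to the complex line $\ell$ gives a nonzero univariate polynomial of degree $\le D$ over $\CC$, which has at most $D$ roots — in fact since $\ell$ is real two-dimensional and $Q$ is one real polynomial of degree $\le D$, Harnack/B\'ezout-type bounds (Lemma \ref{lem:Harnack} or Corollary \ref{isolatedPtsInR2} applied in the plane $\ell \cong \RR^2$) give at most $D^2$ points. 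This also shows that when $G_p \cap G(U)$ is infinite it must equal all of $\ell$, hence has dimension exactly two, which rules out dimension-one infinite intersections in this subcase — combined with the $\dim V_p(U) \le 2$ analysis, the only way to get an infinite intersection at all is $\dim V_p(U) = 4$ together with $\Pi_0 \subset U$, giving dimension two. Wait — I should double-check whether a dimension-one infinite intersection is genuinely impossible; the cleanest route is: if infinite, it is an infinite subvariety of $\ell \cong \CC$, and the only such subvariety is $\ell$ itself, which is two-dimensional, so dimension one never occurs as a pure case but the statement "dimension one or two" is still literally correct (dimension one is vacuously allowed but not attained, or one can leave the statement as a safe over-estimate matching how the lemma will be invoked).

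The main obstacle I anticipate is making the identification "$G_p \cap G(\Pi_0)$ is a complex line in $\CC^4$" fully rigorous, including the bookkeeping about standard-ness: one must ensure $p$, $\Pi_0$, and the lines in question all fall in the "standard" chart (not parallel to the $z_2z_3$-plane), or else handle the non-standard lines separately by a change of coordinates — the text has set up $G$ only on standard lines. A secondary technical point is getting the sharp constant $D^2$ rather than $O_D(1)$: this forces us to use the two-real-dimensional plane structure of $\ell$ and apply Harnack's theorem (Lemma \ref{lem:Harnack}) or Corollary \ref{isolatedPtsInR2} directly, rather than a generic B\'ezout-in-$\RR^8$ estimate. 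Everything else — the dimension parity of $V_p(U)$, the case split on $\dim V_p(U)$, and the "$\Pi_0 \subset U \iff \ell \subset G(U)$" equivalence — follows directly from Lemma \ref{lem:varietyOfLinesInZ}, Lemma \ref{le:projDeg}-style reasoning, and the elementary linear algebra of complex subspaces already recorded in the excerpt.
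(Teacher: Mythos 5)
Your proposal follows the paper's proof essentially step for step: both reduce to the observation that every complex line through $p$ inside $U$ lies in the complex tangent plane $\Pi=V_pU$, identify the relevant set with $G(U)\cap G_p\cap G(\Pi)$ viewed as a variety in the real plane $G_p\cap G(\Pi)\cong\RR^2$ defined by polynomials of degree at most $D$ (Lemma \ref{lem:varietyOfLinesInZ}), apply Corollary \ref{isolatedPtsInR2} for the cardinality bound, and note that dimension two forces $G_p\cap G(\Pi)\subset G(U)$, hence $\Pi\subset U$.

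One correction: your side-claim that $\Pi_0\not\subset U$ implies $\ell\cap G(U)$ is \emph{finite} (and hence that dimension one never occurs) is false, because it treats $G(U)$ as a complex variety. The polynomials $Q_{j,u,v}$ cutting out $G(U)$ are real polynomials on $\RR^8$, not holomorphic ones, so their restrictions to the real $2$-plane $\ell$ are real bivariate polynomials whose common zero set can be a one-dimensional real curve: $U$ can contain a one-real-parameter family of complex lines through $p$ (a $3$-real-dimensional subset of the hypersurface $U$) without containing $\Pi_0$. This is exactly why the lemma is stated with ``dimension one or two.'' The error is harmless for the lemma itself --- all you need, and all your contrapositive actually delivers, is that $\ell\not\subset G(U)$ forces $\dim(\ell\cap G(U))\le 1$ --- but be careful with the $D^2$ bound in the finite case: a single restricted polynomial $Q$ together with Harnack does not suffice, since the finite set $G(U)\cap\ell$ could sit inside curve components of $\vb(Q)\cap\ell$; one must apply Corollary \ref{isolatedPtsInR2} to the full system of restricted polynomials, as the paper does. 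The standardness bookkeeping you flag is a non-issue here, since $G_p$ and $G(U)$ are by definition sets of standard lines only.
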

\begin{proof}
If $L$ is a complex line with $p\in L\subset U$, then $L$ must be contained in the complex plane $\Pi = V_pU$. This means that the set of all such complex lines is given by $G(U)\cap G_p\cap G(\Pi).$ This variety has dimension at most two. The dimension is two if and only if $G_p\cap G(\Pi) \subset G(U)$, in which case $\Pi\subset U$. Since $G_p\cap G(\Pi)$ is a real plane, we can think of $G(U)\cap G_p\cap G(\Pi)$ as a variety in $\RR^2$. 
By Lemma \ref{lem:varietyOfLinesInZ} this variety is defined by polynomials of degree at most $D$. By Corollary \ref{isolatedPtsInR2}, if $G(U)\cap G_p\cap G(\Pi)$ is finite then it has cardinality at most $D^2$.
\end{proof}

\begin{lemma}\label{lem:DenseInPlane}
Let $\Pi\subset\RR^6$ be a complex plane and let $U\subset G(\Pi)$ be a variety of dimension at least two. Then there does not exist a real proper subvariety $X\subset\Pi$ such that every complex line corresponding to a point of $U$ is contained in $X$.
\end{lemma}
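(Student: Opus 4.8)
The plan is to argue by contradiction. Suppose $\Pi\cong\CC^2$ is a complex plane, $U\subset G(\Pi)$ has dimension at least two, and there is a real proper subvariety $X\subsetneq\Pi$ such that every complex line $L$ corresponding to a point of $U$ satisfies $L\subset X$. We may shrink $U$ to a two-dimensional irreducible component, and we may pass from $X$ to its Zariski closure, so we assume $X$ is a variety of dimension at most one in $\RR^4$ (the real dimension of $\Pi$); more precisely $\dim_\RR X\le 3$ since $X$ is a \emph{proper} subvariety of the four-real-dimensional space $\Pi$, but in fact the union of all lines through points of $U$ is what must lie in $X$, so we need to control that union.

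The key observation is that the union $\bigcup_{L\in U} L$ of the complex lines parametrized by a two-(real-)dimensional family $U$ inside the Grassmannian-type space $G(\Pi)$ is Zariski dense in $\Pi$ — or at least has Zariski closure of full dimension $4$. To see this I would work inside $\Pi\cong\CC^2$ directly: the complex lines in $\CC^2$ are parametrized by their slope and intercept, i.e. by a point of (a chart of) $\CC^2$ again, so $G(\Pi)$ is a two-complex-dimensional variety and $U$ is a subvariety of it with $\dim_\RR U\ge 2$. If the union of the lines in $U$ were contained in a proper real subvariety $Y=\overline{\bigcup_{L\in U}L}\subsetneq\Pi$, then $Y$ has $\dim_\RR Y\le 3$. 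Now count dimensions of the incidence variety $I=\{(L,q):L\in U,\ q\in L,\ q\in Y\}$. The fibers of the projection $I\to U$ are generically the full lines $L$ (real dimension $2$), so $\dim_\RR I\ge \dim_\RR U + 2\ge 4$. On the other hand the projection $I\to Y$ has image in $Y$, and each fiber over a point $q\in Y$ is the set of lines in $U$ through $q$; since two distinct lines in $\CC^2$ meet in at most one point, the map from this fiber back into $\Pi$ via (say) a second point of each line is injective, so the fiber has real dimension at most $\dim_\RR U - 1$ unless all of $U$ passes through $q$ — but a pencil of lines through a fixed point is only one complex dimension, hence if $\dim_\RR U\ge 2$ at least two real dimensions worth of lines in $U$ avoid any fixed point, and a short argument (using that through a generic point of $\bigcup_{L\in U}L$ pass only finitely many, indeed $O_D(1)$, lines of $U$, which is exactly the content of Lemma~\ref{lem:maxFiniteLinesThroughPt} applied with $U$ there equal to $X$) shows the generic fiber of $I\to Y$ has real dimension $\le \dim_\RR U$. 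Combining, $4\le\dim_\RR I\le \dim_\RR Y\le 3$, a contradiction.

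Concretely, the cleanest route avoids the abstract fiber-dimension bookkeeping: pick a regular point $q$ of $X$, and note that by Lemma~\ref{lem:maxFiniteLinesThroughPt} (with the ambient variety taken to be $X$, which has dimension $<4$, so it is a proper subvariety of $\RR^6$ once we include $\Pi$ into $\RR^6$) the set of complex lines through $q$ contained in $X$ is either finite of size $\le D^2$ or forces a complex plane inside $X$; since $X\subsetneq\Pi$ is a proper subvariety of the complex plane $\Pi$, it contains no complex plane, so only $O_D(1)$ lines of $U$ pass through any regular point of $X$. Hence the map $U\to X$ sending $L$ to, say, its intersection with a fixed generic affine line in $\Pi$ is finite-to-one onto its image, giving $\dim_\RR X\ge \dim_\RR U\ge 2$; but we need the stronger conclusion that $X$, being forced to contain all the lines, has dimension $\ge 4$. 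For that, observe that $U$ has dimension $2$ and a one-parameter subfamily of lines in $U$ (a real arc) already sweeps out a set of real dimension $3$, and the transverse direction in $U$ moves these swept-out sets, so $\overline{\bigcup_{L\in U}L}$ has real dimension $4$, i.e. equals $\Pi$; this contradicts $X\subsetneq\Pi$. I would make the sweeping-out step precise by taking a real-analytic $2$-disc $U_0\subset U$ of regular points, parametrizing $\bigcup_{L\in U_0}L$ by the smooth map $(L,t)\mapsto$ the point at parameter $t$ on $L$, from a real $4$-manifold, and checking its differential is generically full rank — which fails only if every line in $U_0$ passed through a common point, again excluded by Lemma~\ref{lem:maxFiniteLinesThroughPt} since $\dim_\RR U_0=2>1$.

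The main obstacle is the last step: turning ``a two-real-dimensional family of complex lines sweeps out a full-dimensional set'' into a rigorous statement. The subtlety is that $U$ might be a family of lines all tangent to, or otherwise degenerately arranged around, the subvariety $X$, so one cannot simply quote a generic smoothness result; the essential input that breaks any such degeneracy is precisely Lemma~\ref{lem:maxFiniteLinesThroughPt}, which caps the number of lines of $U$ through a point, ruling out pencils and thereby guaranteeing the sweeping map has full-rank differential at a generic point. Once that rank statement is in hand, the dimension count $\dim_\RR\overline{\bigcup_{L\in U}L}=4$ is immediate and contradicts the properness of $X$.
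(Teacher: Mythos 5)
Your high-level idea---that a two-real-dimensional family of complex lines in $\Pi$ must sweep out a set of full dimension, which is incompatible with all of them lying in a proper subvariety $X$---is the right one and is essentially what the paper proves, but both of your attempts to make the key step rigorous have gaps. First, you quote Lemma \ref{lem:maxFiniteLinesThroughPt} as saying that at a regular point $q$ of $X$ the set of complex lines of $X$ through $q$ is ``either finite of size $\le D^2$ or forces a complex plane inside $X$.'' The lemma has a third case, in which $G_q\cap G(X)$ is one-dimensional, and you never exclude it. It can be excluded here, but by a different argument: since $X$ is a proper subvariety of $\Pi\cong\RR^4$ we have $\dim_\RR T_qX\le 3$, so the complex tangent space $V_qX=T_qX\cap J(T_qX)$ has even real dimension at most $2$, i.e.\ is at most a single complex line, and every complex line of $X$ through $q$ must lie in $V_qX$; hence there is at most \emph{one} such line. (This is exactly the step the paper uses.) Second, and more seriously, your criterion for the sweeping map $(L,t)\mapsto\gamma_L(t)$ to have generically full-rank differential---``which fails only if every line in $U_0$ passed through a common point''---is false in both directions. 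A full pencil of lines through a common point sweeps out all of $\Pi$, so its differential is generically of full rank; and conversely the differential can drop rank for reasons having nothing to do with concurrency: its image always contains the $2$-plane tangent to $L$, and the question is whether varying the two family parameters adds two more independent real directions at the point, which is a condition on the derivatives of the slope and intercept of $L$, not on the lines meeting at a point. Ruling out this degeneracy on a Zariski-dense set is precisely the content of the lemma being proved, so as written your ``cleanest route'' is circular at its crucial step.

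Your incidence-variety version can be repaired, but not with ``generic'' fibers: to get $\dim_\RR I\le 3$ from the projection $I\to Y$ you must control \emph{every} fiber, stratifying $Y$. Over $Y_{\reg}$ (dimension at most $3$) the fiber is at most one line by the tangent-space argument above; over $Y_{\sing}$ (dimension at most $2$) the fiber $G_q\cap U$ is a variety contained in the pencil $G_q\cap G(\Pi)\cong\RR^2$ and cannot equal the whole pencil (whose union is all of $\Pi$), so it is a proper subvariety of $\RR^2$ and has dimension at most $1$. Hence $\dim_\RR I\le\max(3+0,\ 2+1)=3<4=\dim_\RR U+2\le\dim_\RR I$, the desired contradiction. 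The paper carries out this same two-case count after first intersecting with the hairbrush of a generic line $L_0\subset\Pi$, which replaces $Y$ by the curve $L_0\cap X$ (with finite singular locus) and makes the bookkeeping one dimension lighter; either way, the regular/singular stratification and the complex-tangent-space bound are the ingredients your write-up is missing.
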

\begin{proof}
Assume for contradiction that there exists $X\subset\Pi$ as stated in the lemma. 
Since $\dim X_{\sing}\le 2$, the set of complex lines from $G(\Pi)$ that have an infinite intersection with $X_{\sing}$ is of dimension at most one.
Similarly, the set of such lines that are contained in $X$ is of dimension at most one. 
Recall that two generic lines from $G(\Pi)$ intersect.
Combining the above, we conclude that a generic line $L_0\in G(\Pi)$ satisfies $\dim(L_0\cap X_{\reg})\leq 1,$ $\dim(L_0\cap X_{\sing})\leq 0$, and $\dim(H(L_0)\cap U)\geq 2$. 
Fix a line $L_0$ that satisfies these three properties.

We claim that every point $p\in L_0\cap X_{\sing}$ satisfies $\dim(G_p\cap U)\leq 1$. 
Indeed, if $\dim(G_p\cap U)= 2$ then $G_p\cap G(\Pi) \subset U$, which implies that the union of the lines in $U$ is $\Pi$. This contradicts the assumption about $X$ being a proper sub-variety of $\Pi$. 

For a point $p\in L_0\cap X_{\reg}$ we have that $\dim V_pX\le 2$ (since this dimension must be even). In this case, at most one complex line $L\subset X$ satisfies $p\in L$. This implies that the set of lines $L\subset X$ with $L\cap L_0\subset X_{\reg}$ is contained in a subvariety of $U$ of dimension at most one. We conclude that $\dim(H(L_0)\cap U)\leq 1$, which contradicts the definition of $L_0$.
\end{proof}

\begin{corollary}\label{cor:oneDimIntersectionWithCPlane}
Let $U\subset\RR^6$ be a variety defined by polynomials of degree at most $D$. Let $\Pi\subset\RR^6$ be a complex plane that is not contained in $U$. Then $\dim(G(U)\cap G(\Pi))\leq 1$
\end{corollary}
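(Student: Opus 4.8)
The plan is to argue by contradiction, reducing the statement directly to Lemma \ref{lem:DenseInPlane}. Suppose $\dim\big(G(U)\cap G(\Pi)\big)\ge 2$. By Lemma \ref{lem:varietyOfLinesInZ}, $G(U)$ is a variety in $\RR^8$, and $G(\Pi)$ is a variety as well: it is a four-dimensional linear variety when $\Pi$ is not parallel to the complex $z_2z_3$ plane, and it is empty when $\Pi$ is parallel to that plane (in which case $\Pi$ contains no standard complex lines at all and the corollary is trivial). So $W := G(U)\cap G(\Pi)$ is a subvariety of $G(\Pi)$ of dimension at least two.

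Next I would set $X := U\cap\Pi$. This is a subvariety of $\Pi$, and since $\Pi\not\subset U$ we have $X\neq\Pi$, so $X$ is a proper subvariety of $\Pi$. Every point of $W$ is the image of a standard complex line $L$ with $L\subset U$ and $L\subset\Pi$, hence $L\subset U\cap\Pi = X$. Thus $X$ is a real proper subvariety of $\Pi$ with the property that every complex line corresponding to a point of $W$ is contained in $X$, where $W\subset G(\Pi)$ has dimension at least two. This is precisely the configuration forbidden by Lemma \ref{lem:DenseInPlane} (applied with $W$ playing the role of the variety ``$U$'' in that lemma), so we obtain a contradiction and conclude $\dim\big(G(U)\cap G(\Pi)\big)\le 1$.

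I do not anticipate a genuine obstacle here: essentially all of the content of the corollary is carried by Lemma \ref{lem:DenseInPlane}, and the only thing to verify is that $U\cap\Pi$ really is a proper subvariety of $\Pi$, which is immediate from the hypothesis $\Pi\not\subset U$. The only mild bookkeeping point is keeping the two meanings of the symbol $U$ straight — the ambient real variety in $\RR^6$ appearing in the corollary, versus the variety ``$U$'' sitting inside $G(\Pi)\subset\RR^8$ in the statement of Lemma \ref{lem:DenseInPlane}.
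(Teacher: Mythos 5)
Your proposal is correct and follows essentially the same route as the paper: assume $\dim(G(U)\cap G(\Pi))\ge 2$, observe that every complex line corresponding to a point of $G(U)\cap G(\Pi)$ lies in $U\cap\Pi$, which is a proper subvariety of $\Pi$ since $\Pi\not\subset U$, and invoke Lemma \ref{lem:DenseInPlane} for the contradiction. The extra remark about $\Pi$ parallel to the $z_2z_3$ plane is harmless bookkeeping that the paper leaves implicit.
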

\begin{proof}
Suppose to the contrary that $\dim(G(U)\cap G(\Pi))\geq 2$. Then for each $w\in G(U)\cap G(\Pi)$, the line $L_w$ is contained in $U\cap \Pi$, which is a proper subvariety of $\Pi$. This contradicts Lemma \ref{lem:DenseInPlane}.
\end{proof}

Combining Lemma \ref{lem:maxFiniteLinesThroughPt} and Corollary \ref{cor:oneDimIntersectionWithCPlane}, we obtain the following.
\begin{lemma}\label{lem:exceptionalPtsInCPlane}
Let $U\subset\RR^6$ be a variety defined by polynomials of degree at most $D$. Let $\Pi\subset\RR^6$ be a complex plane that is not contained in $U$. Then there are $O(D^4)$ \emph{exceptional} points in $U\cap \Pi$. If $p\in U\cap \Pi$ is not an exceptional point, then there are at most $D^2$ complex lines $L\subset U\cap \Pi$ with $p\in L$. 
\end{lemma}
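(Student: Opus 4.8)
The plan is to combine the two preceding results, Lemma~\ref{lem:maxFiniteLinesThroughPt} and Corollary~\ref{cor:oneDimIntersectionWithCPlane}, in a straightforward counting argument. Since $\Pi$ is not contained in $U$, the set $W = U\cap\Pi$ is a proper subvariety of $\Pi$, and all complex lines of $U$ lying inside $\Pi$ are in fact contained in $W$. We want to declare a point $p\in W$ \emph{exceptional} precisely when the set of complex lines through $p$ inside $U$ is infinite, and show there are only $O(D^4)$ such points. For all other points of $W$, the bound of $D^2$ lines through the point will come directly from Lemma~\ref{lem:maxFiniteLinesThroughPt}, since a point of $W\subset U_{\operatorname{reg}}$\,---\,or more carefully, a point of $W$ in $U_{\operatorname{reg}}$\,---\,with only finitely many complex lines of $U$ through it has at most $D^2$ of them. (One must be a little careful about singular points of $U$ lying on $\Pi$; these lie in $U_{\sing}\cap\Pi$, a variety of dimension at most one, but the lines through them still lie in $W$, so one should absorb the finitely many ``bad directions'' into the exceptional set or argue separately.)

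Here are the steps. First, by Corollary~\ref{cor:oneDimIntersectionWithCPlane}, $\dim(G(U)\cap G(\Pi))\le 1$, so the total set $\Lambda$ of complex lines inside $U\cap\Pi$ forms a variety of dimension at most one in the $4$-dimensional linear variety $G(\Pi)\subset\RR^8$. Second, I would identify the exceptional points as those $p$ for which $G_p\cap G(U)$ is infinite; by Lemma~\ref{lem:maxFiniteLinesThroughPt} this forces $\dim(G_p\cap G(U))\in\{1,2\}$, and the dimension-two case forces a complex plane through $p$ inside $U$, which (since $\Pi\not\subset U$ and two distinct complex planes meet in a line) can only happen for lines, not a positive-dimensional set of points\,---\,so in fact the dimension-two case contributes no exceptional points beyond what is already counted. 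Thus an exceptional point $p$ satisfies $\dim(G_p\cap G(U)\cap G(\Pi))=1$, meaning a one-parameter family of lines of $\Lambda$ passes through $p$. Third, I would bound the number of such $p$: each exceptional point corresponds to a component of $\Lambda$ (a one-dimensional subvariety of $G(\Pi)$) all of whose lines pass through a common point, i.e.\ a ``pencil''; since $G(U)\cap G(\Pi)$ is defined by polynomials of degree at most $D$ (Lemma~\ref{lem:varietyOfLinesInZ}) inside the $4$-space $G(\Pi)$, it has $O(D^4)$ irreducible components by Lemma~\ref{lem:comps}, and each can be a pencil through at most one point, so there are $O(D^4)$ exceptional points.

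For the non-exceptional conclusion: if $p\in U\cap\Pi$ is not exceptional, then $G_p\cap G(U)$ is finite, and since $p\in U$, applying Lemma~\ref{lem:maxFiniteLinesThroughPt} (after checking $p\in U_{\operatorname{reg}}$, or after including the at most one-dimensional set $U_{\sing}\cap\Pi$\,---\,hence finitely many relevant pencils of lines, hence finitely many exceptional points\,---\,in the exceptional set) gives $|G_p\cap G(U)|\le D^2$, which is exactly the statement that at most $D^2$ complex lines $L\subset U\cap\Pi$ pass through $p$.

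The main obstacle I expect is the bookkeeping around singular points: Lemma~\ref{lem:maxFiniteLinesThroughPt} is stated only for $p\in U_{\operatorname{reg}}$, so I need to argue that the points of $U_{\sing}\cap\Pi$ can be safely folded into the exceptional set without blowing up the count past $O(D^4)$. Since $U_{\sing}$ is a variety of dimension $<\dim U$ defined by polynomials of degree $O_D(1)$ (Lemma~\ref{le:singular}), and $U_{\sing}\cap\Pi$ is a real variety of bounded degree and dimension at most one, the complex lines of $\Lambda$ incident to it again form at most a one-dimensional subvariety, and\,---\,arguing as in the proof of Lemma~\ref{lem:DenseInPlane}\,---\,only $O_D(1)$ of them can fail to meet $\Pi\setminus U_{\sing}$ generically; I would simply add these $O_D(1)$ points to the exceptional list. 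A secondary, purely cosmetic point is making precise what ``exceptional'' means in the statement; I would define it to be exactly the $O(D^4)$ points produced above, so that the dichotomy in the conclusion is tautological once the counting is done.
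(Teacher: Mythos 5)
Your counting of exceptional points is exactly the paper's argument: by Corollary~\ref{cor:oneDimIntersectionWithCPlane} the set $G(U)\cap G(\Pi)$ has dimension at most one inside $G(\Pi)\cong\RR^4$, distinct points $p\neq p'$ satisfy $|G_p\cap G_{p'}|\le 1$ so no two exceptional points can share a one-dimensional component of $G(U)\cap G(\Pi)$, and Lemma~\ref{lem:comps} in $\RR^4$ gives $O(D^4)$ components, hence $O(D^4)$ exceptional points. That part is fine.

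The gap is in the non-exceptional bound, and it is exactly the obstacle you flagged but your proposed repair does not close it. You want to invoke Lemma~\ref{lem:maxFiniteLinesThroughPt}, which is stated only for $p\in U_{\reg}$, and you propose to ``fold'' the points of $U_{\sing}\cap\Pi$ into the exceptional set. But $U_{\sing}\cap\Pi$ can be a one-dimensional real variety, i.e.\ it can contain infinitely many points of $U\cap\Pi$; you cannot absorb infinitely many points into an exceptional set that must have size $O(D^4)$, and the lemma still demands the $D^2$ bound at every non-exceptional point, including singular ones. The correct fix --- and what the paper actually does --- is to bypass Lemma~\ref{lem:maxFiniteLinesThroughPt} entirely: for \emph{any} $p\in U\cap\Pi$, the lines of $U\cap\Pi$ through $p$ are parametrized by $G_p\cap G(U)\cap G(\Pi)$, which lives in the real plane $G_p\cap G(\Pi)\cong\RR^2$ and is cut out there by polynomials of degree at most $D$ (Lemma~\ref{lem:varietyOfLinesInZ}). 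If this set is finite, Corollary~\ref{isolatedPtsInR2} gives at most $D^2$ points, with no regularity hypothesis on $p$ whatsoever. Replacing your appeal to Lemma~\ref{lem:maxFiniteLinesThroughPt} by this direct application of Corollary~\ref{isolatedPtsInR2} removes the singular-point bookkeeping entirely and completes the proof.
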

\begin{proof}
By Corollary \ref{cor:oneDimIntersectionWithCPlane}, $G(U)\cap G(\Pi)$ is of dimension at most one. We Identify $\Pi$ with $\RR^4$, considering the intersection $G(U)\cap G(\Pi)$ as a variety in $\RR^4$ defined by polynomials of degree at most $D$.  If $p\in U\cap \Pi$ satisfies $\dim (G_p\cap G(U\cap \Pi))= 1$, then we call it an exceptional point. Since $|G_p\cap G_{p^\prime}|\leq 1$ whenever $p$ and $p^\prime$ are distinct, the number of exceptional points is at most the number of irreducible components of  $G(U)\cap G(\Pi)$. By applying Lemma \ref{lem:comps} in $\RR^4$, we get that this number is $O(D^4)$. If $p$ is not an exceptional point then $G_p\cap G(U\cap \Pi)$ is finite. Since $G_p\cap G(\Pi)$ can be identified with $\RR^2$, we can use Corollary \ref{isolatedPtsInR2} to conclude that $|G_p\cap G(U\cap \Pi)|\leq D^2$. 
\end{proof}

\subsection{Real varieties ruled by complex planes}
Recall that a variety $U\subset\RR^6$ is almost ruled by complex planes if for each regular point $p\in U_{\operatorname{reg}}$, there is a complex plane $\Pi\subset U$ that contains $p$. If $U=\RR^6$, then $U$ is almost ruled by complex planes; this situation is not very interesting. If $U$ is a non-empty proper subvariety of $\RR^6$ that is almost ruled by complex planes, then $\dim(U) \geq 4$. If $\dim(U)=4$ and $U$ is ruled by complex planes then $U$ must be a finite union of complex planes. Finally, if $\dim U=5$ and $U$ is ruled by complex planes, then for each $p\in U_{\reg}$, the complex plane $V_pU$ is the unique complex plane satisfying $p\in \Pi\subset U$. 

\begin{lemma}\label{lem:varietyRuledPlanesOrFewPlanes}
Let $U=\vb(f)\subset\RR^6$ be an irreducible hypersurface with $\deg f\leq D$. Then either $U$ is almost ruled by complex planes or $U$ contains at most $2D^2(D-1)$ complex planes. 
\end{lemma}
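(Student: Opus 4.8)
The dichotomy is between $U$ being almost ruled by complex planes and $U$ containing very few complex planes, so the plan is to assume $U$ is \emph{not} almost ruled by complex planes and extract the quantitative bound $2D^2(D-1)$. First I would translate "$U$ is not almost ruled by complex planes" into a statement about the variety $G(U) \subset \RR^8$ from Lemma \ref{lem:varietyOfLinesInZ}. By Lemma \ref{lem:maxFiniteLinesThroughPt}, a regular point $p \in U_{\reg}$ lies on a complex plane contained in $U$ precisely when $\dim(G_p \cap G(U)) = 2$. So the failure of almost-ruledness means: for a generic (hence for all points in a Zariski-dense open subset of) $p \in U_{\reg}$, we have $\dim(G_p \cap G(U)) \le 1$. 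In fact, since $U = \vb(f)$ is an irreducible hypersurface, $V_pU$ has dimension at most $4$ for regular $p$, and the set of $p$ for which a complex plane $\Pi \subset U$ passes through $p$ is a constructible subset of $U$; if this set is not all of $U_{\reg}$ it is contained in a proper subvariety.

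Next I would bound the complex planes directly via incidence geometry in $\RR^8$. Each complex plane $\Pi \subset U$ that is standard (not parallel to the $z_2z_3$ plane) contributes a $4$-dimensional linear variety $G(\Pi) \subset G(U)$; two such contribute linear subspaces that are disjoint or meet in a point. If $U$ contained more than $2D^2(D-1)$ complex planes, then since each complex plane through a given regular point $p$ would have to be $V_pU$ (for the ones with $\dim V_pU = 4$, or split into planes when $\dim V_pU = $ smaller — this is where the bound on singular/degenerate loci is used), I would derive a contradiction by a counting/double-counting argument. The cleanest route: the union of all complex planes in $U$, if there are more than finitely many, sweeps out a subvariety $W \subseteq U$; using that $U$ is irreducible of dimension $5$ and each complex plane is $4$-dimensional, a one-parameter family of complex planes already forces $W = U$, which would force $U$ to be almost ruled by complex planes — contradiction. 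So the complex planes in $U$ form a \emph{zero-dimensional} subvariety of the relevant piece of $G(U)$, and I bound its cardinality by a Bézout-type estimate. Concretely, the complex planes correspond to components of the locus in $G(U)$ where the incidence structure degenerates; applying the degree bound from Lemma \ref{lem:varietyOfLinesInZ} ($G(U)$ is cut out by polynomials of degree at most $D$) together with a Bézout argument (Corollary \ref{co:BezoutLines} or Lemma \ref{lem:Bezout}) on an appropriate two-dimensional slice gives the count $O(D^2(D-1))$, which I would track carefully to land on exactly $2D^2(D-1)$.

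The main obstacle I anticipate is making the case analysis on $\dim V_pU$ rigorous and getting the constant exactly right. When $\dim V_pU = 4$, the plane $V_pU$ is the \emph{unique} candidate complex plane through $p$, and whether it lies in $U$ is a single algebraic condition on $p$; the planes in $U$ then correspond to components of the subvariety $\{p \in U_{\reg} : V_pU \subset U\}$, and I must argue this is a finite union of complex planes whose number is controlled by the degree $D$ of $f$ and the degree of the Gauss-type map $p \mapsto V_pU$. When $\dim V_pU < 4$ on a subvariety, $p$ could in principle lie on several complex $2$-planes, but such $p$ form a proper subvariety (the "second-order degeneracy locus"), and complex planes meeting $U$ only inside this thin set are again finite in number and bounded by Lemma \ref{lem:exceptionalPtsInCPlane}-type reasoning. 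The factor $D-1$ strongly suggests the argument ultimately rests on differentiating $f$ once (the gradient/second fundamental form has entries of degree $D-1$) and intersecting with $\vb(f)$ of degree $D$, with an extra factor $D$ from a second derivative or from the dimension count, and a factor $2$ from passing between real and complex or from the two ways a plane can degenerate; I would organize the proof so these three sources of the constant are isolated and verified one at a time.
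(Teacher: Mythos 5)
There is a genuine gap: your plan never actually constructs the object that produces the bound. The paper's proof hinges on writing the complex tangent space $V_pU=T_pU\cap J(T_pU)$ \emph{polynomially} in $p$: after clearing denominators, the projections of the basis vectors $e_j$ onto $V_pU$ are polynomial maps $E_{j,f}$ of degree at most $2(D-1)$ built from $\nabla f$ and $J(\nabla f)$. Substituting the resulting parametrization of $V_pU$ into $f$ and extracting coefficients yields polynomials $Q_I(p)$ of degree at most $2D(D-1)$ whose common zero locus inside $U$ is exactly $U_{\sing}$ together with $\{p\in U:V_pU\subset U\}$. This delivers two things at once: (i) the condition that some complex plane of $U$ passes through $p$ is Zariski closed away from $\vb(\nabla f)$, so if it fails anywhere then every complex plane of $U$ is confined to a proper subvariety $U'$ of controlled degree; and (ii) slicing $U'$ by a generic complex line (which meets distinct complex planes in distinct points) and applying planar B\'ezout to the restriction of $f$ (degree $D$) and a defining polynomial of $U'$ (degree $2D(D-1)$) gives exactly $D\cdot 2D(D-1)=2D^2(D-1)$. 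Your primary route through $G(U)\subset\RR^8$ cannot reproduce this: Lemma \ref{lem:varietyOfLinesInZ} only provides degree $\leq D$ for $G(U)$, so no B\'ezout argument on a slice of $G(U)$ will manufacture the factor $D-1$ that you correctly sense must come from differentiating $f$ --- but your plan never differentiates $f$. (A weaker $O_D(1)$ bound via Lemma \ref{lem:comps} would suffice for the applications, but not for the stated constant, and even that requires first producing the variety $U'$.)

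Two further soft spots. First, the step ``a one-parameter family of complex planes sweeps out $U$, hence $U$ is almost ruled'' only shows that a Zariski-dense set of points lies on complex planes; almost-ruledness as defined requires \emph{every} regular point to lie on one, and bridging that gap is precisely what the closedness of $\bigcap_I\vb(Q_I)$ provides. Second, your worry about the case $\dim V_pU<4$ points in the wrong direction: a complex plane has real dimension $4$, so no complex plane contained in $U$ can pass through a regular point with $\dim V_pU\leq 2$ at all; the only case requiring analysis is $\dim V_pU=4$, where $V_pU$ is the unique candidate plane through $p$.
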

\begin{proof}
Without loss of generality we can assume that $\nabla f\neq 0$ on $U_{\reg}$. Let $p\in U$ and suppose that $\nabla f\neq 0$. For each index $j=1,\ldots,6$, let $e_j$ be the $j$--th unit basis vector of $\RR^6$. 
The vector obtained by projecting $e_j$ onto the complex tangent plane $V_p(U)$ is
\begin{align*}
e_j  - \frac{e_j\cdot \nabla f(p)}{\|\nabla f(p)\|^2}\cdot \nabla f(p)& - \frac{e_j\cdot J( \nabla f(p))}{\|J(\nabla f(p))\|^2}\cdot J( \nabla f(p)) \\
&= e_j  - \frac{e_j\cdot \nabla f(p)}{\|\nabla f(p)\|^2}\cdot \nabla f(p) - \frac{e_j\cdot J( \nabla f(p))}{\|\nabla f(p)\|^2}\cdot J( \nabla f(p)).
\end{align*}
Motivated by this observation, for each $p\in\RR^6$ and each index $j=1,\ldots,6$, define
$$
E_{j,f}(p) = e_j \cdot \|\nabla f(p)\|^2 - \big(e_j\cdot\nabla f(p)\big)\nabla f(p) - \big(e_j\cdot J(\nabla f(p))\big)J(\nabla f(p)).
$$
For each index $j$, if $\nabla f(p)$ is zero then $E_{j,f}(p)=0$.
Otherwise, the vector $E_{j,f}(p)$ has the direction obtained by projecting the $e_j$ onto the complex tangent plane $V_p(U)$. This vector is 0 when $e_j$ is orthogonal to $V_p(U)$.
Note that $E_{j,f}(p)\colon\RR^6\to\RR^6$ is a tuple of polynomials of degree at most $2(D-1)$.

Set $\mathbf{E}_{f}(p) = (E_{1,f}(p),\ldots, E_{6,f}(p))$.
Let $W\colon\RR^6\times\RR^6\to\RR$ be defined as
\begin{equation*}
W(p,v)=  f(p + v\cdot \mathbf{E}_{f}(p)).
\end{equation*}
For $p\in U$ define $W_p:\RR^6\to \RR$ as $W_p(v) = W(p,v)$. If $\nabla f(p) = 0$ then $W_p(v)$ is the zero polynomial. Otherwise, $W_p(v)$ is the zero polynomial if and only if the complex plane $V_p(U)$ is contained in $U$. 
Note that $W(p,v)$ has degree at most $D$ in the variables $v_1,\ldots,v_6$. Thus, we may write
$$
W_p(v)=\sum_{I} Q_I(p)v^I,
$$
where the sum is over all multi-indices $I = (j_1,\ldots,j_6)$ of weight at most $D$, and $Q_I(p)$ is a polynomial in $p$ of degree at most $2D(D-1)$. Define
$$
U^\prime = U\ \cap\ \bigcap_{I}\vb(Q_I).
$$
Then $U^\prime\subset\RR^6$ is the union of $U_{\operatorname{sing}}$ with the set of points $p\in U$ for which the complex tangent plane $V_p(U)$ is contained in $U$. If $U^\prime = U$ then $U$ is almost ruled by complex planes. 

If $U^\prime$ is a proper sub-variety of $U$, then it has dimension at most four and is defined by polynomials of degree at most $2D(D-1)$. By Lemma \ref{lem:comps}, $U$ contains $O(D^{12})$ complex planes. In the following paragraph we obtain a stronger bound. 

A generic complex line in $\RR^6$ intersects each complex plane in $U$ at a distinct point. 
We consider such a line $L$ that is not contained in $U$. 
Then $L\cap U(f)$ has dimension at most one. We identify $L$ with $\RR^2$ and define $\tilde f\in\RR[x,y]$ as the polynomial obtained by restricting $f$ to $L$. Similarly, there are $f_1,\ldots,f_k\in \RR[x,y]$ of degree at most $2D(D-1)$ such that the finite set $U^\prime\cap L$ corresponds to $\vb(\tilde f,f_1,\ldots,f_k)$. Without loss of generality, we can suppose that $\tilde f,f_1,\ldots,f_k$ do not share any common factors. If $k=0$ then Lemma \ref{lem:Harnack} implies $|U^\prime\cap L|\leq D$. If $k\geq 1$ then applying Lemma \ref{lem:Bezout} to $\tilde f$ and $f_1$ leads to $|U^\prime\cap L|\leq 2D^2(D-1)$. We conclude that $U$ contains at most $2D^2(D-1)$ complex planes. 
\end{proof}

\subsection{Point-line incidences inside a real hypersurface: rich surfaces}
\begin{lemma}\label{lem:eitherSSubsetHairLOrFewCurves}
Let $S\subset\RR^8$ be an irreducible surface defined by polynomials of degree at most $D$. Let $\pts\subset\RR^6$ and suppose $\dim(G_p\cap S)=1$ for all $p\in\pts$. Let $L\subset\RR^6$ be a complex line with $G(L)\in S$. Then either $S\subset \hair(L)$ or $|\pts\cap L|=O_D(1)$. 
\end{lemma}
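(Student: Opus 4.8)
The plan is to translate the geometric hypothesis into a statement about the curve $S$ inside $\RR^8$ and then apply B\'ezout-type counting. First I would set up the dictionary. For a point $p\in\RR^6$, the set $G_p\subset\RR^8$ of standard complex lines through $p$ is a four-dimensional linear variety (a complex plane under the identification $\RR^8\cong\CC^4$). For the fixed complex line $L$, the hairbrush $\hair(L)\subset\RR^8$ is a six-dimensional variety cut out by polynomials of degree two, and for any point $p\in L$ we have $G_p\subset\hair(L)$, since every line through a point of $L$ meets $L$. Thus if $p\in\pts\cap L$, the one-dimensional set $G_p\cap S$ lies in $\hair(L)\cap S$.

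Next I would dichotomize on whether $S\subset\hair(L)$. If not, then since $S$ is irreducible and not contained in $\hair(L)$, the intersection $S\cap\hair(L)$ is a proper subvariety of $S$, hence of dimension at most one, and it is defined by polynomials of degree $O_D(1)$ (intersecting the degree-$\le D$ equations of $S$ with the degree-$2$ equations of $\hair(L)$). Now the key observation: for distinct points $p,q\in\RR^6$ we have $|G_p\cap G_q|\le 1$ (two distinct points lie on at most one common standard complex line, namely the line through them, provided it is standard; see the discussion of $G_p\cap G_{p^\prime}$ in the text). Therefore the one-dimensional varieties $\{G_p\cap S : p\in\pts\cap L\}$ are subsets of the at-most-one-dimensional variety $S\cap\hair(L)$, and any two of them share at most one point. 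Hence each such $G_p\cap S$ must contain an entire irreducible component of $S\cap\hair(L)$ that is shared by no other $G_q\cap S$ — so $|\pts\cap L|$ is at most the number of irreducible components of $S\cap\hair(L)$. By Lemma \ref{lem:comps} applied in $\RR^8$ (or a B\'ezout bound, since $S\cap\hair(L)$ is a curve of controlled degree), this number is $O_D(1)$, which is the desired bound.

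The one subtlety I would want to nail down carefully is the claim ``$|G_p\cap G_q|\le 1$ forces each $G_p\cap S$ to engulf a private component.'' The argument is: $G_p\cap S$ is one-dimensional, so it contains some irreducible curve $\gamma_p$; if $\gamma_p$ were also contained in $G_q\cap S$ for some $q\ne p$ in $\pts\cap L$, then $\gamma_p\subset G_p\cap G_q$, contradicting $\dim(G_p\cap G_q)\le 0$. So the map $p\mapsto\gamma_p$ (picking one component of $G_p\cap S$) is injective into the finite set of irreducible components of the curve $S\cap\hair(L)$. I expect this bookkeeping step, together with verifying that $S\cap\hair(L)$ is genuinely at most one-dimensional (which uses irreducibility of $S$ and $S\not\subset\hair(L)$), to be the main point requiring care; everything else is a direct application of the degree bounds already established. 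One should also note the degenerate possibility that $S$ has dimension less than two is excluded by hypothesis, and that if $G_p\cap S$ happened to be two-dimensional this would contradict $\dim(G_p\cap S)=1$, so the stated hypothesis is exactly what makes the counting work.
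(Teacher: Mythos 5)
Your proposal is correct and follows essentially the same route as the paper: embed each $G_p\cap S$ (for $p\in\pts\cap L$) into $\hair(L)\cap S$, use $|G_p\cap G_q|\leq 1$ to assign each point of $\pts\cap L$ a private irreducible curve of $\hair(L)\cap S$, and bound the number of such components via Lemma \ref{lem:comps} when $S\not\subset\hair(L)$. The ``private component'' bookkeeping that you flag as the main point requiring care is exactly the step the paper's one-line proof leaves implicit, and your justification of it is sound.
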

\begin{proof}
For every $p\in\pts\cap L$ we have $G_p\subset H(L)$, and thus $G_p\cap S \subset \hair(L)\cap S$. If $p,q\in \pts$ are distinct then $G_p\cap G_q$ has cardinality at most one, so $|\pts\cap L|$ is bounded by the number of irreducible curves in $\hair(L)\cap S$. If $S\not\subset\hair(L)$, then by Lemma \ref{lem:comps} this quantity is $O_D(1)$. 
\end{proof}

\begin{lemma} \label{lem:incidenceBoundInsideS}
Let $U$ be a proper subvariety of $\RR^6$ defined by polynomials of degree at most $E$.  Let $S\subset\RR^8$ be an irreducible surface defined by polynomials of degree at most $D$. Let $\pts\subset U_{\reg}$ be a set of $m$ points. Let $\mathcal{L}$ be a set of $n$ complex lines contained in $U$ but not in any complex plane that is contained in $U$. Furthermore, suppose $G(L)\in S$ for each $L\in\mathcal{L}$. Then
\begin{equation}\label{eq:incidenceBoundInsideS}
I(\pts,\mathcal{L})=O_{D,E}(m+n).
\end{equation} 
\end{lemma}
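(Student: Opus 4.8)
The goal is a Szemerédi--Trotter-type bound $I(\pts,\mathcal{L})=O_{D,E}(m+n)$ for complex lines lying inside a real hypersurface $U\subset\RR^6$, where the lines avoid every complex plane contained in $U$. The natural strategy is to transport the incidence problem up to $\RR^8$ via the map $G$ and use the point-line duality implicit in the definitions of $G_p$ and $\hair(L)$. Each point $p\in\pts$ gives a curve $G_p\cap G(U)\subset\RR^8$, and each line $L\in\mathcal{L}$ gives a point $G(L)\in G(U)$; an incidence $p\in L$ corresponds exactly to $G(L)\in G_p$. Since each $L\in\mathcal{L}$ satisfies $G(L)\in S$ for a fixed irreducible surface $S$, and (by Corollary \ref{cor:oneDimIntersectionWithCPlane} applied with the complex planes through $p$, or directly by Lemma \ref{lem:maxFiniteLinesThroughPt}) each $G_p\cap G(U)$ is a curve of dimension one on which we should work, we are reduced to bounding incidences between the $m$ curves $\{G_p\cap S : p\in\pts\}$ and the $n$ points $\{G(L):L\in\mathcal{L}\}$ inside the two-dimensional variety $S\subset\RR^8$.

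\textbf{Key steps.} First I would verify that for every $p\in\pts$ the set $G_p\cap G(U)$ has dimension one: since $\pts\subset U_{\reg}$, Lemma \ref{lem:maxFiniteLinesThroughPt} says this intersection is finite (cardinality $\le E^2$), one-dimensional, or two-dimensional, and the two-dimensional case would force a complex plane $\Pi\subset U$ through $p$ containing every line of $\mathcal{L}$ through $p$, contradicting the hypothesis that no $L\in\mathcal{L}$ lies in such a plane. So each $p$ contributing incidences has $G_p\cap S$ of dimension $\le 1$ (and if it is zero-dimensional it contributes $O(E^2)$ incidences, absorbed into $O(m)$); restrict to those $p$ with $\dim(G_p\cap S)=1$. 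Second, decompose each $G_p\cap S$ into $O_{D,E}(1)$ irreducible curves (Lemma \ref{lem:comps}), getting a family of $O_{D,E}(m)$ irreducible curves in $S$, each of bounded degree. Third, bound the multiplicity: two distinct points $p,q\in\pts$ give $|G_p\cap G_q|\le 1$, so at most one curve $G_p\cap S$ passes through two prescribed points of $S$ — wait, more carefully, we need that few curves $G_p\cap S$ pass through two fixed points $G(L),G(L')\in S$; but a curve $G_p$ contains $G(L)$ and $G(L')$ iff $p\in L\cap L'$, and two distinct standard lines meet in at most one point, so at most one $p$ gives a curve through both, hence multiplicity $M=O_{D,E}(1)$. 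Fourth, the ambient variety is $S$, an irreducible surface, not a plane, so I would either project $S$ generically to $\RR^2$ using Lemma \ref{lem:goodProjectionSet} (preserving degrees and pairwise intersection numbers of the relevant curves, since the curves have dimension $<1<2=\dim(\text{target})$... actually dimension $1$, so one needs $\dim<e/2$ fails — instead use that $S$ itself is two-dimensional and apply a generic linear projection $\RR^8\to\RR^3$ or $\RR^2$ that is injective on a Zariski-dense subset of $S$, or invoke the point-curve incidence theorem on a variety), then apply Lemma \ref{lem:CEGSW} to get $I=O_{D,E,M}(m^2 r^{-3}+mr^{-1}\ \text{summed})$ — more directly, Lemma \ref{lem:CEGSW} for curves in $\RR^2$ with bounded degree and bounded pairwise multiplicity gives $I(\text{pts},\text{curves})=O(m^{2/3}n^{2/3}+m+n)$ after Kővári--Sós--Turán... but here we want the \emph{linear} bound $O(m+n)$. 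The linear bound must come from the extra structure: the point set $\{G(L):L\in\mathcal{L}\}$ lies in $S$ and every curve $G_p\cap S$ is a component of $G_p\cap G(U)$, and crucially \emph{two points} $G(L),G(L')$ determine a unique $p$, which is exactly the condition under which the incidence graph is $K_{2,2}$-free in one direction — a $K_{2,2}$-free bipartite graph on $m+n$ vertices with one side in a curve... no. Let me reconsider: the clean route is that each curve $G_p\cap S$ is one-dimensional, $S$ is two-dimensional irreducible, so a \emph{generic} point $G(L')\in S$ lies on at most a bounded number of the curves $G_p\cap S$ (those $p\in L'$), namely this count is $|\{p\in\pts : p\in L'\}|$, which can be large. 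Hence the bound is genuinely Szemerédi--Trotter and the claimed $O(m+n)$ must use that \emph{both} $\pts\subset U_{\reg}$ and $\mathcal{L}$ lie in the surface $S$ — I would use Lemma \ref{lem:eitherSSubsetHairLOrFewCurves}: for each $L\in\mathcal{L}$, either $S\subset\hair(L)$ or $|\pts\cap L|=O_D(1)$. If $S\not\subset\hair(L)$ for all $L\in\mathcal{L}$, then every line is incident to $O_D(1)$ points of $\pts$, giving $I(\pts,\mathcal{L})=O_D(n)$ immediately. The remaining case is that $S\subset\hair(L)$ for some (hence, by irreducibility arguments, controlled) set of lines $L$; but $S\subset\hair(L)$ and $S\subset\hair(L')$ for $L\ne L'$ meeting at $p$ spanning $\Pi$ forces $S\subset G_p\cup G(\Pi)$, and since $S$ is irreducible two-dimensional this forces $S=G_p$ or $S\subset G(\Pi)$; in the former case all lines of $\mathcal{L}$ pass through $p$ and $I(\pts,\mathcal{L})\le n + $ (incidences at the single special point), and pairs of lines through $p$ meet only at $p$ so $I=O(m+n)$; in the latter case $S\subset G(\Pi)$ means every $L\in\mathcal{L}$ lies in the complex plane $\Pi$, and $\Pi\subset U$ would be contradicted unless $\Pi\not\subset U$, but then by Lemma \ref{lem:exceptionalPtsInCPlane} the incidences inside $U\cap\Pi$ are $O(m+n)$.

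\textbf{Main obstacle.} The crux is the case analysis around $\hair(L)$: getting from the dichotomy of Lemma \ref{lem:eitherSSubsetHairLOrFewCurves} to a clean $O_{D,E}(m+n)$ bound requires carefully handling the lines $L$ with $S\subset\hair(L)$ — showing that either there is essentially one such line (a "center" $p$ through which all of $\mathcal{L}$ passes), or $S$ is forced into a single complex plane's parameter space $G(\Pi)$ with $\Pi\not\subset U$, where Lemma \ref{lem:exceptionalPtsInCPlane} finishes the job. Organizing this so that the two sub-cases are genuinely exhaustive, and verifying that in the "center point" case the $\le E^2$ extra lines through each $\pts$-point and the geometry at the single special point $p$ don't blow up the count, is where the real care is needed; everything else is a routine application of Lemmas \ref{lem:comps}, \ref{lem:maxFiniteLinesThroughPt}, and the bound $|G_p\cap G_q|\le 1$.
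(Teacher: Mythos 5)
Your final argument is essentially the paper's proof: after discarding points $p$ with $\dim(G_p\cap S)\le 0$ (which contribute $O_{D,E}(m)$ incidences), you invoke the hairbrush dichotomy of Lemma \ref{lem:eitherSSubsetHairLOrFewCurves} and analyze the lines $L$ with $S\subset\hair(L)$, forcing $S\subset G_p$ (all lines concurrent, $\pts_1\subseteq\{p\}$) or $S\subset G(\Pi)$ (handled by the hypothesis on $\mathcal{L}$ together with Lemma \ref{lem:exceptionalPtsInCPlane}), exactly as in the paper; the only organizational difference is that the paper first disposes of the case $S\not\subset G(U)$ by bounding $|\pts_1|$ by the number of one-dimensional components of $S\cap G(U)$. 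The Szemer\'edi--Trotter/projection detour in the middle of your writeup is a dead end that you correctly abandon, so the proposal as it stands is correct but should be pruned to its final case analysis.
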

\begin{proof}
If $p\in U_{\reg}$ is contained in a complex plane $\Pi\subset U$, then $\Pi = V_pU$ and any complex line $L\subset U$ that is incident to $p$ must be contained in $\Pi$. In particular we can assume that no points from $\pts$ are contained in a complex plane in $U$, since such points cannot contribute any incidences. 

For each $p\in\pts$, define $\gamma_p = G_p\cap G(U)$. If $\dim(\gamma_p\cap S) \le 0$ then Lemma \ref{lem:comps} implies $|\gamma_p\cap S|=O_{D,E}(1)$. The number of incidences formed by points of this type is $O_{D,E}(m)$. 

Set $\pts_1 =\{p\in\pts\colon \dim(\gamma_p\cap S)=1\}$. 
Assume that $S\not\subset G(U)$.
In this case, $|\pts_1|$ is at most the number of irreducible one-dimensional components of $S\cap G(U))$. Lemma \ref{lem:comps} implies that $|\pts_1|=O_{D,E}(1)$, which in turn leads to $I(\pts_1,\mathcal{L})=O_{D,E}(n)$. This establishes \eqref{eq:incidenceBoundInsideS}.

Next, assume that $S \subset G(U)$.  Consider two lines $L,L^\prime$ such that $G(L),G(L^\prime)\in S$ and $S\subset \hair(L)\cap \hair(L^\prime)$. Since $G(L^\prime)\in S\subset\hair(L)$, the lines $L$ and $L^\prime$ intersect at some point $p\in\RR^6$ and span a complex plane $\Pi$. This implies that $S\subset \hair(L)\cap \hair(L^\prime) = G_p\cap G(\Pi)$. Recalling that for every $p\neq q$ we have $|G_{p}\cap G_q|\leq 1$. Thus, $S\subset G_p,$ leads to $\pts_1\subseteq\{p\}$. In this case $I(\pts_1,\mathcal{L})\leq n$, which establishes \eqref{eq:incidenceBoundInsideS}. 

Finally, suppose that at most one line $L$ satisfies $G(L)\in S$ and $S\subset\hair(L)$. This line contributes at most $m$ incidences. By Lemma \ref{lem:eitherSSubsetHairLOrFewCurves}, every $L^\prime\in\mathcal{L}\setminus \{L\}$ is incident to $O_D(1)$ points from $\pts_1$. Once again, \eqref{eq:incidenceBoundInsideS} holds.
\end{proof}

\subsection{Point-line incidences inside a real hypersurface: preliminary bounds}

In this section we prove several preliminary bounds on the number of point-line incidences inside a real hypersurfaces. In the next section we use these results to prove Proposition \ref{prop:structureHypersurfaces}.

\begin{lemma} \label{le:RichInHyperFixedR}
Let $U\subset \RR^6$ be an irreducible variety defined by polynomials of degree at most $D$. 
Let $\lines$ be a set of $n$ complex lines that are contained in $U$ but are not in any complex plane contained in $U$. Let $r_0 = D^2+1$. Then
 
\[ |\pts_{r_0}(\lines)| = O_D(n^{3/2}). \]
\end{lemma}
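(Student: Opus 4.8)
The plan is to transfer the problem from $\RR^6$ to the line-space $\RR^8$, where we can exploit the Szemerédi–Trotter-type bound for curves in $\RR^2$ (Lemma \ref{lem:CEGSW}). First I would apply Lemma \ref{lem:maxFiniteLinesThroughPt}: for each $p\in\pts_{r_0}(\lines)$, since by hypothesis $p$ lies on at least $r_0 = D^2+1$ lines of $\lines$, and none of these lines lies in a complex plane contained in $U$, the set $G_p\cap G(U)$ cannot be finite (a finite such set has cardinality at most $D^2<r_0$), and it cannot have dimension two (that would force a complex plane $\Pi\subset U$ through $p$, contradicting the hypothesis that no line of $\lines$ lies in such a plane). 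Hence $\gamma_p := G_p\cap G(U)$ has dimension exactly one for every $r_0$-rich point $p$. By Lemma \ref{lem:varietyOfLinesInZ}, $G(U)\subset\RR^8$ is defined by polynomials of degree at most $D$, and each $\gamma_p$ is an algebraic curve of degree $O_D(1)$ (e.g.\ by Lemma \ref{lem:comps}).

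Next I would set up the incidence count in $\RR^8$ between the curves $\{\gamma_p : p\in\pts_{r_0}(\lines)\}$ and the points $\{G(L): L\in\lines\}$. The key identity is that $p\in L$ (in $\RR^6$) if and only if $G(L)\in G_p$; and since $L\subset U$ gives $G(L)\in G(U)$, this is equivalent to $G(L)\in\gamma_p$. Thus the number of incidences between the $\gamma_p$'s and the points $G(L)$ is exactly $\sum_{p\in\pts_{r_0}(\lines)} |\{L\in\lines : p\in L\}| \ge r_0\,|\pts_{r_0}(\lines)|$. To bound this from above, I want to use the curve version of Szemerédi–Trotter. Two distinct points $p,q\in\RR^6$ satisfy $|G_p\cap G_q|\le 1$, so any two of the curves $\gamma_p,\gamma_q$ meet in at most one point; equivalently, through any two points of $\RR^8$ pass at most $M=1$ of the curves $\gamma_p$. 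The curves need not be irreducible, but they have bounded degree, so I would break each $\gamma_p$ into $O_D(1)$ irreducible components to apply Lemma \ref{lem:CEGSW} in the plane — which first requires projecting $\RR^8\to\RR^2$ non-degenerately using Lemma \ref{lem:goodProjectionSet} (as in the proof of Lemma \ref{th:RichR}), preserving the relevant incidence and double-point structure. Writing $N = |\pts_{r_0}(\lines)|$, after this projection we have $O_D(N)$ irreducible plane curves of bounded degree, with $M = O_D(1)$ curves through any two points, and we are counting incidences with the $n$ points coming from $\lines$. Here I'd want to think of it the other way: the points are $G(L)$ and the curves are the $\gamma_p$'s, and I want the number of points lying on $\gtrsim r_0$ curves — but since each such point contributes many curve-incidences, the cleaner route is to bound $I\big(\{G(L)\}, \{\gamma_p\}\big)$ directly.

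The cleanest way to extract the bound: every point $G(L)$ for $L\in\lines$ that is incident to at least one $\gamma_p$ contributes, and $I(\{G(L)\}_{L\in\lines}, \{\gamma_p\}) \ge r_0 N$. Meanwhile, by the point-curve incidence bound (Lemma \ref{lem:CEGSW}, applied after the non-degenerate projection, with the roles arranged so that the $O_D(N)$ curves of bounded degree and the $n$ points give $I = O_D(N^{1/2} n + N)$ by the dual Szemerédi–Trotter count — or, running it in the standard direction, $O_D(n^{2} \cdot (\text{something}) + \dots)$; I would be careful to set it up as an incidence bound between $N$ bounded-degree curves and $n$ points, which is $O_D(N n^{1/2} + n)$ by the curve analogue). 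Combining $r_0 N \le O_D(N n^{1/2} + n)$ and using $r_0 = D^2+1 = O_D(1)$ yields $N = O_D(n)$ after absorbing, but to get the stated $O_D(n^{3/2})$ I would instead use the weaker, always-valid bound $I = O_D(N n^{1/2}+n)$ together with the trivial bound $N \le \binom{n}{2}$ is too lossy — so more carefully: the number of $r_0$-rich points is, by the curve Szemerédi–Trotter bound applied with the $n$ points $G(L)$ and counting $r$-rich points among them with $r = r_0$ curves...

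The main obstacle I anticipate is bookkeeping the direction of the incidence bound correctly: one must decide whether to count $r_0$-rich \emph{points} of a curve arrangement or to bound total incidences, and then feed the correct quantity into Lemma \ref{lem:CEGSW}. The honest statement to use is: $N$ bounded-degree irreducible plane curves, with $O_D(1)$ curves through any two points, determine $O_D(N^{2}r^{-3}+Nr^{-1})$ points that lie on $r$ of the curves; applying this with the $n$ points $G(L)$ playing the role of $r_0$-rich points of the curve family $\{\gamma_p\}$ (so $r = r_0$) gives $n \ge |\{G(L)\}| $... this is circular. The correct move is to dualize: the $N$ curves $\gamma_p$ and $n$ points $G(L)$ have incidence count $\ge r_0 N$ on one side and $\le O_D(n^{3/2} + N)$ on the other via the Kővári–Sós–Turán / Szemerédi–Trotter bound for bounded-degree curves, $I(P, \Gamma) = O_D(|P|\,|\Gamma|^{1/2} + |\Gamma|)$ when two points determine $O_D(1)$ curves; with $|P| = n$, $|\Gamma| = N$ this reads $I = O_D(n N^{1/2} + N)$, and $r_0 N \le O_D(nN^{1/2}+N)$ forces $N^{1/2} = O_D(n)$, i.e.\ $N = O_D(n^2)$ — still too weak. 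The resolution, which I'd commit to in the writeup, is to use the genuinely two-variable Szemerédi–Trotter incidence bound $I(P,\Gamma)=O_D\big(|P|^{2/3}|\Gamma|^{2/3}+|P|+|\Gamma|\big)$ for bounded-degree curves under the two-points-$O_D(1)$-curves hypothesis, so that $r_0 N \le O_D(n^{2/3}N^{2/3} + n + N)$, which (since $r_0 > 1$) gives $N^{1/3} = O_D(n^{2/3})$, hence $N = O_D(n^{2})$ — I must instead note $r_0 N - cN \ge (r_0 - c)N \gtrsim_D N$ and compare with $n^{2/3}N^{2/3}$, yielding $N^{1/3} = O_D(n^{2/3})$ and $N = O_D(n^2)$; to reach $n^{3/2}$ one balances using that $n$ lines give at most $O(n^{3/2})$ $r_0$-rich points only after also invoking Lemma \ref{lem:FewRichSurfaces}-type pruning. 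I would present the argument via the standard Szemerédi–Trotter count giving $|\pts_{r_0}(\lines)| = O_D(n^2 r_0^{-3} + n r_0^{-1}) = O_D(n^2)$ as an intermediate, then refine: since each point of $\pts_{r_0}$ corresponds to a $1$-dimensional $\gamma_p\subset G(U)$ and these have pairwise intersections of size $\le 1$, a second application — now bounding how many $\gamma_p$ can pass through a single $G(L)$, which is $O_D(1)$ by Lemma \ref{lem:eitherSSubsetHairLOrFewCurves} — gives $r_0 N \le O_D(1)\cdot n$, whence $N = O_D(n)$, which is stronger than $O_D(n^{3/2})$ and hence suffices; the anticipated difficulty is simply making sure the "$O_D(1)$ curves $\gamma_p$ through a fixed point $G(L)$" claim is clean, which it is because $G(L)\in\gamma_p\cap\gamma_q$ with $p\ne q$ forces $G(L)\in G_p\cap G_q$, a single point, so distinct $\gamma_p$ through $G(L)$ correspond to distinct $p$ on the line $L$, and Lemma \ref{lem:eitherSSubsetHairLOrFewCurves} (or a direct application of Lemma \ref{lem:maxFiniteLinesThroughPt} reversed) bounds these by $O_D(1)$. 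I would therefore structure the final writeup around this two-step incidence-reversal, with Lemma \ref{lem:CEGSW} invoked once for the generic bound and the bounded-multiplicity observation invoked to close the gap, arriving comfortably at $O_D(n^{3/2})$ (indeed $O_D(n)$).
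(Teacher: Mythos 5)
Your proposal has a genuine gap, and it is concentrated in the step you ultimately commit to. The claim that only $O_D(1)$ of the curves $\gamma_p$ pass through a fixed point $G(L)$ is exactly the claim that each line $L\in\lines$ contains $O_D(1)$ points of $\pts_{r_0}(\lines)$, and nothing in the paper (or in your argument) justifies this. Lemma \ref{lem:eitherSSubsetHairLOrFewCurves} does not apply here: it requires all the relevant curves to lie in a single \emph{irreducible two-dimensional} surface $S\subset\RR^8$, whereas $G(U)$ is in general of higher dimension and not irreducible in the needed sense; and even in that restricted setting the lemma carries the unavoidable alternative $S\subset \hair(L)$, in which the $O_D(1)$ bound fails (this is precisely why the paper then needs Lemma \ref{lem:incidenceBoundInsideS}, Lemma \ref{le:PointCurveR3Inc} and its polynomial-partitioning induction). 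If your multiplicity claim were true, the bound $O_D(n)$ would follow in one line and most of Section \ref{sec:cplxGeom} would be unnecessary; the honest incidence bounds you compute along the way ($O_D(n^2)$, or circular) are symptoms that the line-space route cannot be closed this cheaply. A second, independent issue is your opening step: Lemma \ref{lem:maxFiniteLinesThroughPt} is stated only for $p\in U_{\reg}$ (its proof uses the complex tangent plane $V_pU$), but the lemma you are proving counts all $r_0$-rich points, including those in $U_{\sing}$; for singular $p$ the cardinality bound $D^2$ and the implication ``$\dim(G_p\cap G(U))=2$ forces a complex plane through $p$'' both break down, so you cannot conclude that $\gamma_p$ is one-dimensional for every rich point.

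For comparison, the paper's proof avoids line space entirely: it removes the complex planes containing more than $2n^{1/2}$ lines and the irreducible quadrics containing more than $8n^{1/2}$ lines (Lemma \ref{lem:FewRichSurfaces} shows there are $O(n^{1/2})$ of each), bounds the $r_0$-rich points inside each such plane by $O_D(1)$ via Lemma \ref{lem:exceptionalPtsInCPlane} (using that no plane of $\mathcal H$ is contained in $U$, so $H\cap U$ is a proper subvariety of $H$) and inside each quadric by $1$, accounts for proper line--surface intersections ($O(n^{3/2})$ in total), and applies Koll\'ar's Theorem \ref{th:Kollar} to the surviving lines. You would do better to rebuild your argument along those lines.
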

\begin{proof}
Let $\mathcal{H}$ be the set of complex planes $H\subset\CC^3$ that contain at least $2n^{1/2}$ lines from $\lines$. By Lemma \ref{lem:FewRichSurfaces} we have $|\mathcal{H}|\leq n^{1/2}$. Since no line from $\lines$ is contained in a complex plane $H\subset U$, for each $H\in\mathcal{H}$ the intersection $H\cap U$ is a variety of dimension at most 3 defined by polynomials of degree at most $D$. By Lemma \ref{lem:exceptionalPtsInCPlane},
\begin{equation*}
|\pts_{r_0}(\lines_H)| = O_D(1).
\end{equation*}

Let $\mathcal{W}$ be the set of complex irreducible degree two surfaces $W\subset\CC^3$ that contain at least $8n^{1/2}$ lines from $\lines$. By Lemma \ref{lem:FewRichSurfaces} we have $|\mathcal{W}|\leq n^{1/2}$. Since the lines in an irreducible degree two surface determine at most one 3-rich point, for each $W\in\mathcal{H}$ we have
\begin{equation*}
|\pts_{r_0}(\lines_W)|\leq |\pts_3(\lines_W)| \leq 1.
\end{equation*}

If $L\in\lines$ is not contained in a plane from $\mathcal{H}$ then $L$ intersects each plane from $\mathcal{H}$ at most once. Similarly, if $L\in\lines$ is not contained in a surface from $\mathcal{W}$ then $L$ intersects each surface from $\mathcal{W}$ at most twice. Let $\lines^\prime$ be the set of lines that are not contained in a plane from $\mathcal{H}$ nor a surface from $\mathcal{W}$. Using Theorem \ref{th:Kollar} to control the contribution from $\lines^\prime$, we conclude that
\begin{equation*}
\begin{split}
|\pts_{r_0}(\lines)|&\leq |\pts_{2}(\lines^\prime)| + |\lines|(|\mathcal{H}|+2|\mathcal{W}|) + O_D(|\mathcal{H}|)+|\mathcal{W}|\\
&\leq O(n^{3/2}) + n(n^{1/2} + 2n^{1/2}) + O_D(n^{1/2}) + n^{1/2}\\
& = O_D(n^{3/2}).
\end{split}
\end{equation*}
\end{proof}

A routine random sampling argument allows us to bound the number of lines that are $r$ rich for larger values of $r$. For details, see for example \cite[Section 3]{Zahl20}. 
\begin{corollary}\label{cor:boundRRichPts}
Let $U\subset \RR^6$ be an irreducible variety defined by polynomials of degree at most $D$. 
Let $\lines$ be a set of $n$ complex lines that are contained in $U$ but are not in any complex plane contained in $U$. Then for each $r> D^2$,  
\[ |\pts_r(\lines)| = O_D(n^{3/2}r^{-3/2}). \]
\end{corollary}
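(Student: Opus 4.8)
The plan is to deduce Corollary~\ref{cor:boundRRichPts} from Lemma~\ref{le:RichInHyperFixedR} by the standard probabilistic subsampling argument (as in \cite[Section~3]{Zahl20}). Write $r_0 = D^2+1$ and fix a sufficiently large absolute constant $C$, to be pinned down by the Chernoff estimate below. Since $\pts_r(\lines)=\emptyset$ when $r>n$, we may assume $D^2 < r \le n$. I would first dispose of the bounded range $D^2 < r < C r_0$: here $r = O_D(1)$, and because the number of lines of $\lines$ through a point is an integer we have $|\pts_r(\lines)|\le|\pts_{r_0}(\lines)|$, so Lemma~\ref{le:RichInHyperFixedR} gives $|\pts_r(\lines)| = O_D(n^{3/2}) = O_D(n^{3/2}r^{-3/2})$, using $r^{-3/2}=\Omega_D(1)$ in this range.

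For the main range $C r_0 \le r \le n$, set $p = C r_0/r \in (0,1]$ and form a random subset $\lines'\subseteq\lines$ by including each line independently with probability $p$. Then $\lines'$ is again a set of complex lines contained in $U$ and in no complex plane contained in $U$, with $|\lines'|\le n$, so Lemma~\ref{le:RichInHyperFixedR} applies to $\lines'$ and yields the \emph{pointwise} bound $|\pts_{r_0}(\lines')| = O_D(|\lines'|^{3/2})$. The idea is now to compare $\EE|\pts_{r_0}(\lines')|$ from above and below. For the lower bound: if $q\in\pts_r(\lines)$ then the number of lines of $\lines'$ through $q$ is $\mathrm{Bin}(k,p)$ for some $k\ge r$, which stochastically dominates $\mathrm{Bin}(r,p)$ with mean $Cr_0\ge 2C$; a Chernoff bound makes this at least $r_0$ with probability $\ge 1/2$ once $C$ is large enough, so by linearity of expectation $\EE|\pts_{r_0}(\lines')|\ge\tfrac12|\pts_r(\lines)|$. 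For the upper bound: $|\lines'|\sim\mathrm{Bin}(n,p)$ has mean $\mu=pn=Cr_0n/r\ge Cr_0\ge 1$, hence $\EE|\lines'|^2 = np(1-p)+\mu^2 \le 2\mu^2$, and by concavity of $t\mapsto t^{3/4}$ we get $\EE|\lines'|^{3/2}\le(\EE|\lines'|^2)^{3/4}=O(\mu^{3/2})$.

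Combining the pointwise bound with these two estimates,
\[
|\pts_r(\lines)| \le 2\,\EE|\pts_{r_0}(\lines')| = O_D\big(\EE|\lines'|^{3/2}\big) = O_D(\mu^{3/2}) = O_D\big((Cr_0 n/r)^{3/2}\big) = O_D(n^{3/2}r^{-3/2}),
\]
which is the desired bound.

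I do not expect a genuine obstacle; the argument is routine. The only points that need a little care are (i) the bookkeeping in the bounded range $D^2<r<Cr_0$, which is needed only to ensure $p\le 1$ before sampling, and (ii) bounding $\EE|\lines'|^{3/2}$ from \emph{above} — Jensen's inequality goes the wrong way here, so one uses a second-moment estimate as above, or alternatively restricts to the high-probability event $\{|\lines'|\le 2\mu\}$, on which $|\pts_{r_0}(\lines')| = O_D(\mu^{3/2})$, and absorbs the complementary event into the constant appearing in the Chernoff step.
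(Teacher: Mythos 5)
Your proposal is correct and is exactly the argument the paper intends: the paper proves this corollary only by the remark that a ``routine random sampling argument'' deduces it from Lemma \ref{le:RichInHyperFixedR} (citing \cite[Section 3]{Zahl20}), and your subsampling with $p = Cr_0/r$, the Chernoff lower bound on $\EE|\pts_{r_0}(\lines')|$, and the second-moment upper bound on $\EE|\lines'|^{3/2}$ is a correct execution of precisely that argument.
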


\begin{corollary} \label{cor:DualBootstrapBound}
Let $U\subset \RR^6$ be an irreducible variety defined by polynomials of degree at most $D$.  
Let $\pts$ be a set of $m$ points in $\RR^6$. 
Let $\lines$ be a set of $n$ complex lines that are contained in $U$ but are not in any complex plane contained in $U$.
Then 
\[ I(\pts,\lines) = O_D\left(n^{3/2}+m\right). \]
\end{corollary}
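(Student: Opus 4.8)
The plan is to deduce the incidence bound $I(\pts,\lines) = O_D(n^{3/2}+m)$ from the point-richness bound in Corollary \ref{cor:boundRRichPts} by the standard dyadic decomposition of the incidence count according to the richness of each point. Write $r_0 = D^2+1$. First I would separate the incidences into those coming from points lying on fewer than $r_0$ lines of $\lines$ and those coming from points lying on at least $r_0$ lines. The first group contributes at most $r_0 \cdot m = O_D(m)$ incidences, which is absorbed into the $O_D(m)$ term. For the second group, I would partition the points of $\pts_{r_0}(\lines)$ into dyadic ranges: for $j\ge 0$ with $2^j r_0 \le n$, let $\pts^{(j)}$ be the set of points lying on at least $2^j r_0$ but fewer than $2^{j+1} r_0$ lines of $\lines$. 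Each point of $\pts^{(j)}$ contributes fewer than $2^{j+1} r_0$ incidences, so the total contribution of range $j$ is at most $2^{j+1} r_0 \cdot |\pts^{(j)}| \le 2^{j+1} r_0 \cdot |\pts_{2^j r_0}(\lines)|$.

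Next I would apply Corollary \ref{cor:boundRRichPts} with $r = 2^j r_0 > D^2$, which gives $|\pts_{2^j r_0}(\lines)| = O_D\big(n^{3/2}(2^j r_0)^{-3/2}\big)$. Hence the contribution of dyadic range $j$ is $O_D\big(2^j r_0 \cdot n^{3/2} (2^j r_0)^{-3/2}\big) = O_D\big(n^{3/2} (2^j r_0)^{-1/2}\big) = O_D\big(n^{3/2} 2^{-j/2}\big)$, since $r_0 = O_D(1)$. Summing the geometric series over all valid $j\ge 0$ yields a total of $O_D(n^{3/2})$. Adding the two contributions gives $I(\pts,\lines) = O_D(m + n^{3/2})$, as claimed. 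Note that there is no need to worry about the case $r > n$ or $\pts$ having points on more than $n$ lines, since a point can lie on at most $n$ lines of $\lines$, so the dyadic sum is finite.

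There is no genuine obstacle here; the only point requiring a small amount of care is to ensure that the threshold $r_0 = D^2+1$ is chosen so that every dyadic range $2^j r_0$ satisfies the hypothesis $r > D^2$ of Corollary \ref{cor:boundRRichPts} (which holds automatically since $r_0 > D^2$), and that the sub-$r_0$ incidences are correctly bounded by $r_0 m$. Everything else is a routine geometric-series estimate. The result is stated as a corollary precisely because it follows from Corollary \ref{cor:boundRRichPts} by this mechanical argument; indeed, the reference \cite{Zahl20} cited just above handles exactly this type of passage from a rich-points bound to an incidence bound, so one could alternatively invoke that directly.
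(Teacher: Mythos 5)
Your proposal is correct and is essentially identical to the paper's own proof: both split off the points of richness at most $O_D(1)$ (contributing $O_D(m)$) and then sum dyadically over richness levels $r>D^2$ using Corollary \ref{cor:boundRRichPts}, with the geometric series $\sum 2^{j}\cdot 2^{-3j/2}$ giving $O_D(n^{3/2})$. The only difference is the cosmetic choice of dyadic scale ($2^j r_0$ versus $2^j$ starting from the first power of two exceeding $D^2$).
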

\begin{proof}
Let $j_0$ be the smallest integer such that $2^j > D^2$. By Corollary \ref{cor:boundRRichPts},

\begin{equation*}
\begin{split}
I(\pts,\lines) &\leq 2(D^2+1) m + \sum_{j=j_0}^\infty 2^{j+1}|\pts_{2^j}(\lines)|\\
&\leq 2(D^2+1)m + O_D(n^{3/2}) \sum_{j=0}^\infty 2^j\cdot 2^{-3j/2}\\
& = O_D(n^{3/2}+m).
\end{split}
\end{equation*}
\end{proof}

\subsection{Proof of Proposition \ref{prop:structureHypersurfaces}}

We are now ready to prove Proposition \ref{prop:structureHypersurfaces}. The crucial step is to establish the following incidence result for points and curves in $\RR^3$. The following proof is closely modeled on the arguments in \cite{Zahl20} by the second author, which are in turn based on arguments of Sharir and Zlydenko \cite{SZ20}.

Before stating the next result, it will be helpful to introduce a definition. Let $\pts$ be a set of points and let $\curves$ be a set of curves in $\RR^3$. Let $K\colon\mathbb{N}\to\RR$ be a non-decreasing function. We say that $\pts$ and $\curves$ have $K$-\emph{good incidence geometry inside surfaces} if for every irreducible polynomial $f\in\RR[x,y,z]$, point set $\pts^\prime\subset\pts\cap \vb(f)$, and set $\curves^\prime\subset\curves$ of curves contained in $\vb(f)$, we have
\begin{equation*}
I(\pts,\curves^\prime)\leq K(\deg f)(|\mathcal{Q}^\prime|+|\curves^\prime|).
\end{equation*}
In brief, $\pts$ and $\curves$ have good incidence geometry inside surfaces if there do not exist large subsets of $\pts$ and $\curves$ that cluster into low degree surfaces and generate many incidences therein. 
We think of $K$ as an increasing function, so $K(t)$ is a valid constant when dealing with polynomials of degree at most $t$.

\begin{lemma} \label{le:PointCurveR3Inc}
Let $E,B\geq 1$ and let $K\colon\mathbb{N}\to\RR$ be a function. Let $\pts$ be a set of $n$ points in $\RR^3$ and let $\curves$ be a set of $m$ irreducible curves in $\RR^3$, each defined by polynomials of degree at most $E$. Suppose that $\pts$ and $\curves$ have $K$-good incidence geometry inside surfaces, and that for all set $\pts^\prime\subset\pts$ and $\curves^\prime\subset\curves$, we have 
\begin{equation} \label{eq:SZbootstrappingAssumption}
I(\pts^\prime,\curves^\prime) \leq B (|\pts^\prime|^{3/2}+|\curves^\prime|).
\end{equation}

Then
\begin{equation*}
I(\pts,\curves) \leq C(m^{3/5}n^{3/5}+m+n),
\end{equation*}
where the constant $C$ depends on $B,E,$ and $K(t)$ with $t = O_E(1)$. 
\end{lemma}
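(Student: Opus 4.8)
The plan is to run the standard polynomial-partitioning-on-a-surface argument (in the spirit of Sharir--Zlydenko and Zahl) to bootstrap the weak incidence bound \eqref{eq:SZbootstrappingAssumption} into the stronger bound $I(\pts,\curves) = O(m^{3/5}n^{3/5}+m+n)$. As usual we may assume $m^{1/2} \leq n \leq m^2$, since otherwise one of the trivial bounds $I(\pts,\curves) = O(n + m)$ or $I(\pts,\curves) = O(nm^{1/2})$ already gives the claim (using that each pair of points lies on $O_E(1)$ curves and each pair of curves meets in $O_E(1)$ points). The idea is to pick a real partitioning polynomial $f$ of a degree $D$ to be optimized at the end, apply Corollary \ref{co:partitionCombined} to $\pts$ and $\curves$ simultaneously, and split the incidences into: (i) incidences occurring in the open cells $\RR^3 \setminus \vb(f)$; and (ii) incidences occurring on the boundary $\vb(f)$.

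\textbf{Cell incidences.} First I would bound the contribution of the cells. By Corollary \ref{co:partitionCombined} (with $d=3$, $k=1$) we may choose $f$ of degree at most $D$ so that each connected component $\comp$ of $\RR^3 \setminus \vb(f)$ contains $O(nD^{-3})$ points and is met by $O_E(mD^{-2})$ curves. By Lemma \ref{th:VarietyPartComponents} the number of cells is $O(D^3)$. Summing the weak bound \eqref{eq:SZbootstrappingAssumption} over cells, writing $n_\comp = |\pts \cap \comp|$ and $m_\comp$ for the number of curves meeting $\comp$, the cell contribution is
\begin{equation*}
\sum_\comp I(\pts \cap \comp, \curves_\comp) \leq B\sum_\comp \big(n_\comp^{3/2} + m_\comp\big) \leq B\sum_\comp n_\comp (nD^{-3})^{1/2} + B\sum_\comp m_\comp.
\end{equation*}
Since $\sum_\comp n_\comp \leq n$ and $\sum_\comp m_\comp = O(D^3 \cdot mD^{-2}) = O(mD)$, this is $O_B\big(n^{3/2}D^{-3/2} + mD\big)$.

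\textbf{Boundary incidences.} The main work is bounding incidences on $\vb(f)$. By Remark \ref{rem:coDimOne} we may assume $\vb(f)$ is equidimensional of codimension one; by Lemma \ref{lem:comps} it has $O_D(1)$ irreducible components, so it suffices to handle a single irreducible surface $\vb(g)$ with $\deg g \leq D$. A curve $\gamma \in \curves$ either lies in $\vb(g)$ or meets it in $O_E(D)$ points (via Lemma \ref{th:VarietyPartComponents} applied to $\gamma$). The transversal intersections contribute $O_E(mD)$ incidences in total. For the curves contained in $\vb(g)$ and the points of $\pts$ on $\vb(g)$, the $K$-good incidence geometry hypothesis gives an incidence bound of $K(D)\big(|\pts \cap \vb(g)| + |\curves \cap \vb(g)|\big) = O_{K(D)}(n+m)$ per component, hence $O_{D,K(D)}(n+m)$ over all components. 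Combining, the total boundary contribution is $O_{D,E,K(D)}(n+m)$.

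\textbf{Optimizing $D$.} Collecting the estimates, for every integer $D \geq 1$,
\begin{equation*}
I(\pts,\curves) \leq c\big(n^{3/2}D^{-3/2} + mD + n + m\big),
\end{equation*}
where $c = c(B,E,K(D))$. Choosing $D \asymp n^{3/5}m^{-2/5}$ balances the first two terms and yields $n^{3/2}D^{-3/2} + mD \asymp m^{3/5}n^{3/5}$. We must check that this choice of $D$ is a valid integer $\geq 1$ and that $K(D)$ stays bounded: in the regime $m^{1/2} \leq n$ we have $D = O_E(1)$, hence $K(D) \leq K(t)$ for some $t = O_E(1)$, so $c$ depends only on $B$, $E$, and $K(t)$ with $t = O_E(1)$, as claimed. (If the optimal $D$ would be less than $1$, i.e. $n \ll m^{2/3}$, then $D=1$ gives $I(\pts,\curves) = O(n^{3/2} + m)$, which is dominated by $m^{3/5}n^{3/5}+m+n$ in that range.) This completes the proof.

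\textbf{Main obstacle.} The delicate point is the bookkeeping for incidences on the partition boundary: one must carefully separate the three types of point--curve pairs on $\vb(g)$ --- transversal curve-surface intersections (bounded by Lemma \ref{th:VarietyPartComponents}), curves lying inside $\vb(g)$ together with points on $\vb(g)$ (bounded by the $K$-good hypothesis), and verifying that the degree $D$ we end up choosing is indeed $O_E(1)$ so that all $O_{D,E,K(D)}(\cdot)$ constants collapse to constants depending only on the allowed data. A secondary care point is the reduction to the range $m^{1/2}\le n\le m^2$ at the outset, which is what makes $D$ bounded.
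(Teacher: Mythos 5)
Your partitioning set-up and the cell estimate are fine and match the paper's, but the treatment of the boundary contains a fatal error: the claim that the optimal partitioning degree $D \asymp n^{3/5}m^{-2/5}$ is $O_E(1)$ is false. When $n=m$, for instance, this gives $D \asymp m^{1/5}$, and $D$ grows polynomially in $m$ and $n$ throughout the main range $m \lesssim n^{3/2}$. Indeed $D$ \emph{must} grow there: with bounded $D$ the cell term $n^{3/2}D^{-3/2}$ is of order $n^{3/2}$, which exceeds $m^{3/5}n^{3/5}+m+n$ precisely when $m < n^{3/2}$. Consequently your application of the $K$-good hypothesis to the irreducible components of $\vb(f)$, which can have degree as large as $D$, produces a constant $K(D)$ depending on $n$ and $m$; the lemma only permits dependence on $K(t)$ for $t=O_E(1)$, and in the intended application $K$ is an arbitrary function, so $K(D)$ is not controlled. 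This is exactly the difficulty the argument is designed to overcome, and your proof does not address it.

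The missing ingredients are the ruled-surface dichotomy and an induction on $m$. The paper applies Lemma \ref{lem:tooManyExceptionsImpliesDoublyRuled} to each two-dimensional component $U_j$ of $\vb(f)$: either $U_j$ is doubly ruled by curves of bounded degree, in which case $\deg U_j=O_E(1)$ and the $K$-good hypothesis legitimately applies with a bounded argument; or $U_j$ contains only $O_E(D^2)$ exceptional curves. In the latter case, poor points (incident to at most one curve of $\curves'_j$) contribute $O(n)$, rich points on non-exceptional curves contribute $O(1)$ incidences per curve, and the rich points on the exceptional curves are handled by the induction hypothesis, using that the total number of exceptional curves is $O_E(D^2)\le m/2$ once the constant in the choice of $D$ is taken small. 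Your write-up has no induction and no mechanism for high-degree components, so the boundary estimate does not go through as written. A secondary, minor point: your opening reduction to $m^{1/2}\le n\le m^2$ is not justified by the bounds you cite (Bézout gives $I=O_E(mn^{1/2}+n)$, which does not dispose of $n>m^2$); the correct easy regime is $m^2\gtrsim n^3$, where \eqref{eq:SZbootstrappingAssumption} alone suffices.
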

\begin{proof}
In what follows all implicit constants may depend on $B$ and $E$. We prove the result by induction on $m$. The base case of the induction is $m\leq m_0$, where $m_0$ is a constant specified below.
This base case holds by taking $C$ to be sufficiently large. 

Next, suppose that $m^2 \ge c n^{3}$, where $c$ is a constant specified below. In this case, \eqref{eq:SZbootstrappingAssumption} implies 
\[ I(\pts,\lines) = O\left(n^{3/2}+m\right) = O\left(m^{3/5}n^{3/5}+m\right),\]
where the implicit constant depends on $c$. If $C$ is sufficiently large compared to $c$ and $B$, then the induction closes and we are done. Henceforth we will assume that 
\begin{equation*} 
m_0<m<c^{1/2}n^{3/2}.
\end{equation*}

\parag{Partitioning the space.}
We refer to the case where $m\ge n^{2/3}$ as \emph{Case 1} and to the case where $m< n^{2/3}$ as \emph{Case 2}.
In Case 1 we set $D=\lfloor c n^{3/5}m^{-2/5}\rfloor$ and in Case 2 we set $D=\lfloor c m^{1/2}\rfloor$.
It can be easily verified that $D\le c n^{1/3}$ in both cases. 
At this point we fix $m_0 = \lceil c^{-2}\rceil$; this ensures that $D\geq 1$.  
To recap, in both Case 1 and Case 2 we have
\begin{equation*}
1 < D \le c n^{1/3}. 
\end{equation*} 
In addition, in both Case 1 and Case 2 we have 
\begin{equation*}
D\leq c m^{1/2}.
\end{equation*}

We apply Corollary \ref{co:partitionCombined} to obtain a nonzero polynomial $f\in \RR[x_1,x_2,x_3]$ of degree at most $D$ that satisfies the following. 
Each connected component of $\RR^3\setminus \vb(f)$ intersects $O(mD^{-2})$ varieties from $\curves$ and contains $O(nD^{-3})$ points from $\pts$. As discussed in Remark \ref{rem:coDimOne}, we may suppose that $\vb(f)$ is a surface (that is, $\vb(f)$ is equidimensional and each irreducible component has dimension two). 

Recall that $\RR^3\backslash \vb(f)$ is a union of $O(D^3)$ cells. For each index $j=1,\ldots,O(D^3)$, let $\pts_j$ be the set of points of $\pts$ in the $j$-th cell and let $\curves_j$ be the set of elements of $\curves$ that intersect the $j$-th cell.
We also set $\pts_0 = \pts\cap \vb(f)$, $n_0 = |\pts_0|$, and $n' = n-n_0$.
By definition, for each index $j$ we have $|\pts_j|=O(nD^{-3})$ and $|\curves_j|=O(mD^{-2})$.
Note that $n' = \sum_j |\pts_j|$.

We first bound $I(\pts\setminus \pts_0,\curves)$.
In Case 1, by applying \eqref{eq:SZbootstrappingAssumption} separately in each cell, we obtain
\begin{equation}\label{eq:cellCase1}
\begin{split}
I(\pts\setminus \pts_0,\curves) & = \sum_j O\big(|\pts_j|^{3/2}+|\curves_j|\big) = O\big(D^3\cdot \big(\big(\frac{n}{D^3}\big)^{3/2}+\frac{m}{D^2}\big)\big)\\
&= O\big(\frac{n^{3/2}}{D^{3/2}}+m\cdot D\big)  =O\big(m^{3/5}n^{3/5}\big). 
\end{split}
\end{equation}

In Case 2, the number of elements of $\curves$ that intersect a cell is $O(mD^{-2}) = O(1)$. Since each such cell contains $O(nD^{-3})$ points, we obtain
\begin{equation} \label{eq:cellCase2} 
I(\pts\setminus \pts_0,\curves) \le \sum_j |\pts_j||\curves_j| = \sum_j O\left(|\pts_j|\right) = O(n').
\end{equation}

\parag{Handling curves on the partition.}
It remains to derive an upper bound on $I(\pts_0,\curves)$. As discussed above, we can assume that each irreducible component of $\vb(f)$ has dimension two. Denote these two-dimensional components as $U_1,\ldots,U_{D^\prime}$, where $D^\prime\leq D$.
For each $1\le j \le D'$, let $f_j$ be a minimum degree polynomial satisfying $\vb(f_j)=U_j$.

We set $I(\pts_0,\curves) = I' + I''$ as follows.
Let $p\in \pts_0$ be incident to $\gamma \in \curves$. If there is an index $1\leq j\leq D^\prime$ so that $p\in U_j$ and $\dim(U_j\cap\gamma)=0$, then the incidence $(p,\gamma)$ contributes to $I'$. Otherwise $(p,\gamma)$ contributes to $I''$. 

Since each element of $\curves$ is defined by polynomials of degree at most $E$, if $\gamma\in\curves$ is not contained in $U_j$ then Lemma \ref{th:VarietyPartComponents} implies $|\gamma\cap U_j|=O(D_j)$. In particular, each curve $\gamma\in\Gamma$ can contribute $O(D)$ incidences of the form $I'$. Thus 
\begin{equation}\label{eq:IPrimeBd}
I'=O(Dm)=O(m^{3/5}n^{3/5}).
\end{equation} 

It remains to bound $I''$. Let $\pts'_j$ denote the set of points $p$ of $\pts_0$ such that $p\in U_j$ and $p\not\in U_{j'}$ for each $j'<j$.
Let $\curves'_j$ denote the set of curves $\gamma\in\curves$ for which $\gamma\subset U_j$ and $\gamma\not\subset U_{j'}$ for each $j'<j$. For every incidence $(p,\gamma)$ contributing to $I''$ there is an index $j$ such that $p\in\pts'_j$ and $\gamma\in\curves'_j$.

For each index $j$, applying Lemma \ref{lem:tooManyExceptionsImpliesDoublyRuled} to $U_j$ implies the following. Either $U_j$ contains $O(D^2)$ exceptional curves defined by polynomials of degree $O(1)$, or $U_j$ is doubly ruled by such curves. In the latter case $\deg(f_j) = O(1)$, where the implicit constant depends only on $E$. By re-indexing, we can suppose that $U_1,\ldots,U_{h}$ are doubly ruled by curves defined by polynomials of degree $O(1)$, and $U_{h+1},\ldots,U_{D^\prime}$ are not. (If no $U_i$ is doubly ruled then we set $h = 0$. If all $U_i$ are doubly ruled then we set $h = D^\prime$.) 

Since $\pts$ and $\curves$ have $K$-good incidence geometry inside surfaces, for each index $j=1,\ldots,h$ we have
$$
I(\pts'_j,\curves'_j)= O(|\pts'_j|+|\curves'_j|),
$$
where the implicit constant only depends on $K(t)$ with $t = O_E(1)$. Thus 
\begin{equation}\label{eq:IncidencesIppDoublyRuled}
\sum_{j=1}^h I(\pts'_j,\curves'_j)=O\Big(\sum_{j=1}^h |\pts'_j| + \sum_{j=1}^h |\curves'_j|\Big).
\end{equation}

It remains to control incidences $(p,\gamma)$ where $p\in\pts'_j$ and $\gamma\in\curves'_j$ for some $h+1\leq j\leq D'$. We call a point $p\in\pts'_j$ \emph{rich} if it is incident to at least two curves from $\curves'_j$. Otherwise $p$ is \emph{poor}. For each index $j=h+1,\ldots D'$, let $\pts'_{j,\operatorname{rich}}$ and $\pts'_{j,\operatorname{poor}}$ be the set of rich and poor points of $\pts'_j$, respectively. Define
$$
\pts_{\operatorname{rich}}=\bigcup_{j=h+1}^{D'} \pts'_{j,\operatorname{rich}}\quad \text{ and } \quad
\pts_{\operatorname{poor}}=\bigcup_{j=h+1}^{D'} \pts'_{j,\operatorname{poor}}.
$$

Set $n_{\operatorname{poor}} = |\pts_{\operatorname{poor}}|$ and $n_{\operatorname{rich}} = |\pts_{\operatorname{rich}}|$.
Note that $n_{\operatorname{poor}}+n_{\operatorname{rich}} \le n_0$. We have
\begin{equation}\label{eq:poorPointIncidences}
\sum_{j=h+1}^{D'} I(\pts'_{j,\operatorname{poor}},\curves'_j)\leq 2\sum_{j=h+1}^{D'}|\pts'_{j,\operatorname{poor}}|=2n_{\operatorname{poor}}. 
\end{equation}

In a similar vein, let $\Gamma'_{j,\operatorname{exceptional}}$ be the set of exceptional curves in $\Gamma_j'$, and let $\Gamma'_{j,\operatorname{plebeian}}$ be the set of non-exceptional curves. Define
$$
\curves_{\operatorname{exceptional}}=\bigcup_{j=h+1}^{D'} \curves'_{j,\operatorname{exceptional}}\quad \text{ and }\quad 
\curves_{\operatorname{plebeian}}=\bigcup_{j=h+1}^{D'} \curves'_{j,\operatorname{plebeian}}.
$$
Since each curve $\gamma\in \curves'_{j,\operatorname{plebeian}}$ is incident to $O(1)$ rich points, we have
\begin{equation}\label{eq:plebCurveIncidences}
\sum_{j=h+1}^{D'} I(\pts'_{j,\operatorname{rich}},\curves'_{j,\operatorname{plebeian}})= \sum_{j=h+1}^{D'} O(|\curves'_{j,\operatorname{plebeian}}|).
\end{equation}

Finally, Lemma \ref{lem:tooManyExceptionsImpliesDoublyRuled} implies
$$
|\curves_{\operatorname{exceptional}}| = \sum_{j=h+1}^{D'}O_E( (\deg f_j)^2) = O_E(D^2) = O_E(c^2m).
$$
If the constant $c$ is selected sufficiently small compared to $E$, then
$$
|\curves_{\operatorname{exceptional}}|  \leq m/2.
$$
We can now apply the induction hypothesis to conclude that
\begin{equation}\label{eq:inductionBd}
\begin{split}
I(\pts_{\operatorname{rich}}, \curves_{\operatorname{exceptional}}) & \leq C(|\pts_{\operatorname{rich}}| |\curves_{\operatorname{exceptional}}|^{3/5} + |\pts_{\operatorname{rich}}| +  |\curves_{\operatorname{exceptional}}|)\\
&\leq C(m^{3/5}n^{3/5}/2^{3/5} + n_{\operatorname{rich}} + m/2).
\end{split}
\end{equation}

\parag{Wrapping up.}
In the above, we partitioned the incidences of $\pts\times \curves$ into several cases.
The incidences inside the cells of the partition are bounded in \eqref{eq:cellCase1} and \eqref{eq:cellCase2}.
The number of incidences with points on the variety of the partition was split into $I'$ and $I''$. In \eqref{eq:IPrimeBd} we bounded $I'$. The incidences of $I''$ were further partitioned and bounded in \eqref{eq:IncidencesIppDoublyRuled}, \eqref{eq:poorPointIncidences}, \eqref{eq:plebCurveIncidences}, and \eqref{eq:inductionBd}.
Combining all these bounds gives
\[ I(\pts,\lines) = O\left(m^{3/5}n^{3/5}+m+n'\right) + 2n_{\operatorname{poor}}+ C(m^{3/5}n^{3/5}/2^{3/5} + n_{\operatorname{rich}} + m/2), \]
where the implicit constant depends on $B,E,$ and $K(t)$ with $t = O_E(1)$. 
Recall that $n_\text{rich}+ n_\text{poor} + n' = n$. 
By taking $C$ to be sufficiently large with respect to $B,E,$ and $K(t)$, we obtain 
\[ I(\pts,\lines) \le C\left(m^{3/5}n^{3/5}+m+n\right). \]
This closes the induction and finishes the proof.
\end{proof}

\begin{lemma}\label{lem:incidencesPtsCplxPlanes}
Let $U\subset \RR^6$ be an irreducible variety defined by polynomials of degree at most $E$ that is not almost ruled by complex planes. 
Let $\pts\subset U_{\reg}$ be a set of $m$ points.
Let $\lines \subset \RR^6$ be a set of $n$ complex lines that are contained in $U$ but not in any complex plane contained in $U$.
Then 
\[ I(\pts,\lines) \leq C(m^{3/5}n^{3/5}+m+n). \]
\end{lemma}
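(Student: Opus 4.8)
The plan is to transfer the incidence problem for points and complex lines inside $U\subset\RR^6$ to a point--curve incidence problem in $\RR^3$, and then to invoke Lemma~\ref{le:PointCurveR3Inc}. The main steps are as follows.

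\textbf{Step 1: Build an associated point--curve configuration in $\RR^3$.} Recall that a standard complex line $L\subset U$ corresponds to a point $G(L)\in G(U)\subset\RR^8$, and that each point $p\in U_{\reg}$ corresponds to the curve $\gamma_p = G_p\cap G(U)\subset\RR^8$, which by Lemma~\ref{lem:maxFiniteLinesThroughPt} has dimension at most one (it has dimension two only when $p$ lies in a complex plane contained in $U$, and we discard such points since, as in the proof of Lemma~\ref{lem:incidenceBoundInsideS}, they contribute no incidences once no line of $\lines$ lies in such a plane). Thus an incidence $(p,L)$ with $p\in L$, $L\in\lines$, $p\in\pts$, becomes an incidence between the curve $\gamma_p$ and the point $G(L)$ in $\RR^8$. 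Using Lemma~\ref{lem:goodProjectionSet}, choose a non-degenerate projection $\pi\colon\RR^8\to\RR^3$ applied to the family consisting of $G(U)$, all the $G_p$ for $p\in\pts$, and all the singletons $\{G(L)\}$; since each $\gamma_p$ has dimension $\le 1$, its image $\overline{\pi(\gamma_p)}$ is again a curve of degree $O_E(1)$, and by part (b) of Lemma~\ref{lem:goodProjectionSet} (applied to the $G_p$, which have dimension $<3<8/2$) distinct points $G(L),G(L')$ stay distinct and the pairwise intersection pattern of the curves $\gamma_p$ is preserved. Let $\pts_3 = \{G(L)\colon L\in\lines\}\subset\RR^3$ (a set of $\le n$ points, with multiplicity at most one by the preceding sentence) and $\curves_3 = \{\overline{\pi(\gamma_p)}\colon p\in\pts\}$ (a set of $\le m$ irreducible curves, each of degree $O_E(1)$). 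Every incidence $(p,L)$ we wish to count gives an incidence $(G(L),\overline{\pi(\gamma_p)})$, so $I(\pts,\lines)\le I(\pts_3,\curves_3)+O(m)$, the error term absorbing any loss from curves $\gamma_p$ whose projection degenerates or whose original dimension was $0$.

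\textbf{Step 2: Verify the hypotheses of Lemma~\ref{le:PointCurveR3Inc}.} We need the pairwise bound $I(\pts',\curves')\le B(|\pts'|^{3/2}+|\curves'|)$ and the ``$K$-good incidence geometry inside surfaces'' condition. For the first, given $\pts'\subset\pts_3$ and $\curves'\subset\curves_3$, these correspond to a sub-collection $\lines'\subset\lines$ of lines and a set $\pts'_U\subset\pts$ of points; then $I(\pts',\curves')\le I(\pts'_U,\lines')+O(m)$, wait --- more carefully, $I(\pts',\curves') = I(\pts'_U,\lines')$ up to the bounded bookkeeping, and Corollary~\ref{cor:DualBootstrapBound} gives $I(\pts'_U,\lines') = O_E(|\lines'|^{3/2}+|\pts'_U|) = O_E(|\curves'|^{3/2}+|\pts'|)$; swapping the roles and using $|\pts'|^{3/2}+|\curves'|\ge \tfrac12(|\curves'|^{3/2}+|\pts'|)$ when, say, $|\pts'|\ge|\curves'|$, and the symmetric estimate otherwise, yields \eqref{eq:SZbootstrappingAssumption} with a suitable $B = B(E)$. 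For the ``good incidence geometry inside surfaces'' condition, fix an irreducible $f\in\RR[x,y,z]$ and sub-collections $\pts''\subset\pts_3\cap\vb(f)$, $\curves''\subset\curves_3$ of curves lying in $\vb(f)$. The curves in $\curves''$ are $\overline{\pi(\gamma_p)}$ for $p$ in some $\pts''_U\subset\pts$, and the points of $\pts''$ are $G(L)$ for $L$ in some $\lines''\subset\lines$; that all the $\gamma_p$ lie in $\vb(f)$ means all the corresponding $G_p\cap G(U)$ lie in the pull-back $\pi^{-1}(\vb(f))$, hence in a fixed surface $S\subset\RR^8$ of degree $O_{E,\deg f}(1)$. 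An irreducible component $S_0$ of $S$ containing all (or a positive fraction) of the relevant data gives, via Lemma~\ref{lem:incidenceBoundInsideS}, the bound $I(\pts''_U,\lines'')= O_{E,\deg f}(|\pts''_U|+|\lines''|) = O_{E,\deg f}(|\pts''|+|\curves''|)$; summing over the $O_{E,\deg f}(1)$ components of $S$ (Lemma~\ref{lem:comps}) defines an admissible non-decreasing $K$. One must check the hypothesis of Lemma~\ref{lem:incidenceBoundInsideS} that $\dim(G_p\cap S)=1$ holds for the points we keep (those not on complex planes in $U$); this follows from Lemma~\ref{lem:maxFiniteLinesThroughPt} and the definition of $S$, with the bounded exceptional set folded into $K$.

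\textbf{Step 3: Apply Lemma~\ref{le:PointCurveR3Inc} and conclude.} With $n := |\pts_3|\le n$ points and $m := |\curves_3|\le m$ curves of bounded degree $E' = O_E(1)$, bounded multiplicity $M = O_E(1)$, and the verified hypotheses, Lemma~\ref{le:PointCurveR3Inc} yields $I(\pts_3,\curves_3) = O_E(m^{3/5}n^{3/5}+m+n)$, whence $I(\pts,\lines) = O_E(m^{3/5}n^{3/5}+m+n)$, proving the lemma with $C$ depending only on $E$.

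\textbf{The main obstacle} is Step~2, specifically checking that the ``good incidence geometry inside surfaces'' hypothesis of Lemma~\ref{le:PointCurveR3Inc} is actually satisfied by the projected configuration. A low-degree surface in $\RR^3$ containing many of the projected curves $\overline{\pi(\gamma_p)}$ need not pull back cleanly, and one must argue that the preimage data is nonetheless confined to a bounded-degree real surface $S\subset\RR^8$ inside which Lemma~\ref{lem:incidenceBoundInsideS} applies --- this is where the hypothesis that $U$ is not almost ruled by complex planes, together with the fact that no line of $\lines$ lies in a complex plane contained in $U$, is used to rule out the degenerate ``$S\subset G(U)$ and $S\subset\hair(L)$ for many $L$'' scenario. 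Handling the interaction between the non-degenerate projection and the varying surface degree $\deg f$ (so that $K$ is a genuine function of $\deg f$, not a single constant) requires care but is routine given Lemmas~\ref{lem:comps} and~\ref{le:projDeg}.
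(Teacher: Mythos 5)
Your proposal follows essentially the same route as the paper: dualize so that the lines of $\lines$ become points $G(L)\in\RR^8$ and the points of $\pts$ become curves $\beta_p=G_p\cap G(U)$ (discarding points lying on complex planes in $U$), project non-degenerately to $\RR^3$ via Lemma \ref{lem:goodProjectionSet}, verify the bootstrapping hypothesis \eqref{eq:SZbootstrappingAssumption} with Corollary \ref{cor:DualBootstrapBound} and the good-incidence-geometry hypothesis with Lemma \ref{lem:incidenceBoundInsideS}, and finish with Lemma \ref{le:PointCurveR3Inc}. The only slips are cosmetic: since $|\lines'|=|\pts'|$ and $|\pts'_U|=|\curves'|$, Corollary \ref{cor:DualBootstrapBound} already yields $O_E(|\pts'|^{3/2}+|\curves'|)$ directly (no role-swapping is needed, and the ``symmetric estimate'' you invoke does not exist), and the relevant data is confined to a \emph{two-dimensional} variety of degree $O(\deg f)$ in $\RR^8$ because the non-degenerate projection preserves containment degree, not because it lies in the six-dimensional pull-back $\pi^{-1}(\vb(f))$.
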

\begin{proof}
First, we can assume that for each point $p\in\pts$, the set $G(U)\cap G_p$ has dimension at most one. Indeed, if $G(U)\cap G_p$ has dimension two, then by Lemma \ref{lem:DenseInPlane}, the complex plane $V_pU$ is contained in $U$. If $L$ is a complex line contained in $U$ that is incident to $p$, then $L\subset V_pU$. By hypothesis, such a line $L$ cannot be an element of $\lines$. Thus we can discard any point $p\in\pts$ where $G(U)\cap G_p$ has dimension $\geq 2$, since such points cannot contribute any incidences. 

For each $p\in\pts$, define $\beta_p = G(U)\cap G_p$; this is a variety in $\RR^8$ of dimension at most one. We have 
$$
I(\pts,\lines)=I(\{G(L)\colon L\in\lines\},\{\beta_p\colon p\in\pts\}\}.
$$ 

Let $\mathcal{Y} = \{G(L)\colon L\in\lines\}\cup \{\beta_p\colon p\in\pts\}$ and let 
$$
\mathcal{X} = \big\{\bigcup_{Y\in \mathcal{Y}^\prime} Y\colon \mathcal{Y}^\prime\subset \mathcal{Y}\big\}.
$$
That is, $\mathcal{X}$ is the family consisting of all finite unions of points from $\{G(L)\colon L\in\lines\}$ and curves from $\{\beta_p\colon p\in\pts\}$. 

Use Lemma \ref{lem:goodProjectionSet} to select a projection $\pi\colon\RR^8\to\RR^3$ that is non-degenerate with respect to $\mathcal{X}$. Let $\mathcal{Q}=\{\pi(G(L))\colon L\in\lines\}$ and let $\curves$ be the set of all irreducible curves $\gamma\subset\RR^3$ with $\gamma\subset \overline{\pi(\beta_p)}$ for some $p\in\pts$. By Lemma \ref{lem:comps}, $|\curves|=O(n)$, and each curve $\gamma\in\curves$ is defined by polynomials of degree $O(1)$. 

We claim that
\begin{equation}\label{eq:incidencesPtsCurvesControlsPtsLines}
I(\pts,\lines)\leq I(\mathcal{Q},\curves)+O(m).
\end{equation}
Indeed, if $p\in\pts$ is incident to $L\in\lines$, then either there is an irreducible curve $\gamma\subset \overline{\pi(\beta_p)}$ with $G(L)\in\gamma$, or $G(L)$ is a zero-dimensional component of $\beta_p$. By Lemma \ref{lem:comps}, $\beta_p$ has $O(1)$ irreducible components, so there are $O(m)$ incidences of this type. 

Since the projection $\pi$ non-degenerate, it does not introduce new incidences. By Corollary \ref{cor:DualBootstrapBound}, there is a constant $B$ depending on $E$ such that for all sets $\mathcal{Q}^\prime\subset\mathcal{Q}$ and $\curves^\prime\subset\curves$,
\begin{equation*} 
I(\mathcal{Q},\curves^\prime) \leq B(|\mathcal{Q}^\prime|^{3/2}+|\curves^\prime|).
\end{equation*}

Again, since $\pi$ is non-degenerate, Lemma \ref{lem:incidenceBoundInsideS} implies that $\mathcal{Q}$ and $\Gamma$ have $K$-good incidence geometry inside surfaces, where $K\colon\mathbb{N}\to\RR$ is a function that depends only on $E$. Applying Lemma \ref{le:PointCurveR3Inc} to $\mathcal{Q}$ and $\curves$, we conclude that
\begin{equation}\label{eq:BdOnIQGamma}
I(\mathcal{Q},\curves) = O(m^{3/5}n^{3/5}+m+n).
\end{equation}
The result now follows by combining \eqref{eq:incidencesPtsCurvesControlsPtsLines} and \eqref{eq:BdOnIQGamma}.
\end{proof}

Using Lemma \ref{lem:incidencesPtsCplxPlanes}, we can now prove Proposition \ref{prop:structureHypersurfaces}. We first recall the statement of this proposition.

\begin{propositionStructureHypersurfaces}
Let $U\subset\RR^6$ be an irreducible variety defined by polynomials of degree at most $D$. Then at least one of the following two statements holds.
\begin{itemize}[noitemsep]
\item $U$ is almost ruled by complex planes. 

\item $U$ contains at most $2D^2(D-1)$ complex planes. If $\lines$ is a set of $n$ complex lines that are contained in $U$ but not contained in any of these planes, then for each $r> D^2$ we have
\begin{equation*}
|U_{\operatorname{reg}}\cap \pts_r(\lines)| = O_D(n^{3/2}r^{-5/2}+nr^{-1}).
\end{equation*}
\end{itemize}
\end{propositionStructureHypersurfaces}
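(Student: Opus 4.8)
The plan is to derive the Proposition from Lemma~\ref{lem:incidencesPtsCplxPlanes} by a double-counting argument, after a short case analysis on $\dim U$. First, if $U$ is almost ruled by complex planes then the first bullet holds and we are finished; so assume from now on that $U$ is \emph{not} almost ruled by complex planes. Since $\RR^6$ is trivially almost ruled and a complex plane $\Pi$ is almost ruled (take $\Pi$ itself at every regular point), our $U$ is a proper subvariety of $\RR^6$ of dimension at most $5$ that is not a complex plane. We will establish the second bullet.

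Next I would bound the number of complex planes contained in $U$. If $\dim U = 5$ then $U$ is an irreducible hypersurface, so $\BI(U)$ is a height-one prime ideal in the UFD $\RR[x_1,\dots,x_6]$ and hence principal; writing $\BI(U) = (f)$ and noting that the degree-$\le D$ defining polynomials of $U$ all lie in $(f)$, we get $U = \vb(f)$ with $\deg f \le D$, and Lemma~\ref{lem:varietyRuledPlanesOrFewPlanes} (together with the assumption that $U$ is not almost ruled) shows that $U$ contains at most $2D^2(D-1)$ complex planes. If instead $\dim U \le 4$, then $U$ contains no complex plane at all: a complex plane has real dimension $4$, so a complex plane contained in the irreducible variety $U$ would have to equal $U$, contradicting that $U$ is not a complex plane. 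In either case $U$ contains at most $2D^2(D-1)$ complex planes. Now fix such a set $\lines$ of $n$ complex lines in $U$, none contained in any complex plane inside $U$, fix $r > D^2$, and put $\pts = U_{\reg}\cap\pts_r(\lines)$ and $m = |\pts|$.

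It remains to bound $m$. Let $C_1 = C_1(D)$ be the constant of Lemma~\ref{lem:incidencesPtsCplxPlanes}, which applies because $U$ is not almost ruled and the lines of $\lines$ avoid the complex planes in $U$. If $r \ge 2C_1$, then since every point of $\pts$ lies on at least $r$ lines of $\lines$ we have $rm \le I(\pts,\lines) \le C_1\big(m^{3/5}n^{3/5} + m + n\big)$; using $C_1 m \le \tfrac12 rm$ and rearranging gives $rm \le 2C_1\big(m^{3/5}n^{3/5}+n\big)$, and distinguishing whether $m^{3/5}n^{3/5}$ or $n$ is larger yields $m = O_D(n^{3/2}r^{-5/2} + nr^{-1})$. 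If instead $D^2 < r < 2C_1$, then Corollary~\ref{cor:boundRRichPts} gives $m \le |\pts_r(\lines)| = O_D(n^{3/2}r^{-3/2}) = O_D(n^{3/2})$, and since $r$ is bounded in terms of $D$ this is again $O_D(n^{3/2}r^{-5/2})$; so the claimed bound holds for all $r > D^2$, completing the second bullet.

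The substantive difficulty in this argument has already been absorbed into the earlier lemmas: Lemma~\ref{lem:incidenceBoundInsideS} and Lemma~\ref{le:PointCurveR3Inc} power the reduction, via a non-degenerate projection to $\RR^3$, from complex-line incidences inside a real variety of $\RR^6$ to point-curve incidences in $\RR^3$, which is the content of Lemma~\ref{lem:incidencesPtsCplxPlanes}. Given that input, the only points in the present proof requiring care are (i) ensuring that the case analysis on $\dim U$ genuinely places us in the hypotheses of Lemma~\ref{lem:incidencesPtsCplxPlanes} with a controlled number of complex planes in $U$, and (ii) the fact that the clean double count is only valid once $r$ exceeds the dimension-dependent threshold $2C_1$, so the narrow range $D^2 < r < 2C_1$ must be handled separately using the coarser Corollary~\ref{cor:boundRRichPts}.
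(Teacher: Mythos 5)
Your proof is correct and follows essentially the same route as the paper: the dichotomy via Lemma \ref{lem:varietyRuledPlanesOrFewPlanes}, followed by the double count $rm\le I(\pts,\lines)$ against Lemma \ref{lem:incidencesPtsCplxPlanes} when $r\ge 2C_1$. Your two deviations are improvements in care rather than in substance: you dispatch the remaining range $D^2<r<2C_1$ by citing Corollary \ref{cor:boundRRichPts} directly (where the paper re-runs the Koll\'ar pruning argument), and you justify applying the hypersurface-only Lemma \ref{lem:varietyRuledPlanesOrFewPlanes} via a case analysis on $\dim U$ that the paper glosses over.
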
 
\begin{proof}
If $U$ is almost ruled by complex planes then the first item holds and we are done. Suppose that $U$ is not almost ruled by complex planes. Then by Lemma \ref{lem:varietyRuledPlanesOrFewPlanes}, $U$ contains at most $2D^2(D-1)$ complex planes. Let $\lines$ be a set of complex lines contained in $U$ that are not contained in any complex plane contained in $U$. Let $r>D^2$ and let $C=C(D)$ be the constant from Lemma \ref{lem:incidencesPtsCplxPlanes}. 

We first consider the case of $r<2C$. 
Let $\surfs$ be the set of complex planes that contain at least $2n^{1/2}$ lines from $\lines$. We also insert into $\surfs$ every irreducible degree two surface that contains at least $8n^{1/2}$ lines from $\lines$. Lemma \ref{lem:FewRichSurfaces} implies $|\surfs|= O(n^{1/2})$. By Lemma  \ref{lem:exceptionalPtsInCPlane}, for each plane $S\in\surfs$ the number of $r$-rich points formed by lines in $\lines_S$ is $O_D(1)$. The lines contained in an irreducible degree two surface form at most one 3-rich point. The total contribution from all elements in $\surfs$ is $O_D(n^{1/2})$. A line $L\in \lines$ intersects a surface of $\surfs$ that does not contain $L$ in at most two points. Summing this over all lines of $\lines$ and all surfaces of $\surfs$ leads to $O(n^{3/2})$ intersection points. Thus, removing from $\lines$ all lines that are contained in at least one surface of $\surfs$ decreases the number of $r$-rich points by $O(n^{3/2})$. 

After the above pruning of $\lines$, we can apply Theorem \ref{th:Kollar} on $\lines$. The theorem states that $\pts_{r}(\lines) = O_D(n^{3/2})$. 
By the above, after bringing back the removed lines the number of $r$-rich points remains $O_D(n^{3/2})$.
Since $r<2C$, we have that $O_D(n^{3/2}) = O_D(n^{3/2}/r^{5/2})$.

We move to consider the case of $r\geq 2C$. Set $\pts = U_{\reg}\cap \pts_r(\lines)$. Applying Lemma \ref{lem:incidencesPtsCplxPlanes}, we have
\[ r\cdot |\pts| \le I(\pts,\lines) \leq C\left(|\pts|^{3/5}n^{3/5}+|\pts|+n\right). \]
Since $r\ge 2C$, after subtracting $C|\pts|$ from both sides we obtain
\[ r\cdot |\pts|/2 \leq C\left(|\pts|^{3/5}n^{3/5}+n\right). \]
Rearranging yields the bound in the statement of the proposition. 
\end{proof}

\section{A structure theorem for lines in $\CC^3$} \label{sec:LineIncC3}
In this section we prove Theorem \ref{th:RichC}. As discussed in the introduction, the theorem is proved by induction on the number of lines. We use Theorem \ref{th:partition} to partition $\RR^6$ into open connected cells, and apply the induction hypothesis inside each cell. The main difficulty occurs when many of the $r$-rich points are contained in the boundary $Z(f)$ of the partition. If an $r$-rich point $p$ is contained in $Z(f)$, then either many complex lines incident to $p$ are contained in $Z(f)$, or many such lines properly intersect $Z(f)$. In Section \ref{ssec:complexContainedInReal} we develop tools to understand the former case, and in Section \ref{ssec:complexIntersectReal} we develop tools to understand the latter. Finally, in Section \ref{ssec:proofOfThemRIchC} we use these tools to prove Theorem \ref{th:RichC}.

\subsection{A structure theorem for complex lines inside a real variety}\label{ssec:complexContainedInReal}

\begin{lemma}\label{lem:incidencesInsideU}
Let $U\subset\RR^6$ be an irreducible variety defined by polynomials of degree at most $D$. Let $\mathcal{L}$ be a set of $n$ complex lines that are contained in $U$. Let $r$ be sufficiently large compared to $D$. Then there is a set $\surfs$ of complex planes in $\CC^3$ such that each $S\in\surfs$ contains at least $2rn^{1/2+\eps}$ lines from $\lines$, and 
\begin{equation}\label{eq:complexLinesRRich}
\Big|(U_{\reg}\cap\pts_r(\lines))\backslash \bigcup_{S\in\surfs}\pts_r(\mathcal{L}_{S})\Big|= O_D( n^{3/2+\eps}r^{-2} + nr^{-1}).
\end{equation}
\end{lemma}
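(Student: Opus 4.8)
The strategy is to combine Proposition~\ref{prop:structureHypersurfaces} with an iterative ``peeling off complex planes'' argument, following the same philosophy used elsewhere in the paper to control rich surfaces. First I would dispose of the case where $U$ is almost ruled by complex planes. In that situation, every regular point $p$ of $U$ lies on a unique complex plane $\Pi_p = V_pU \subset U$, and any complex line of $\lines$ through $p$ must lie in $\Pi_p$. Hence every $r$-rich point in $U_{\reg}$ is already $r$-rich with respect to $\lines_{\Pi_p}$ for some complex plane $\Pi_p\subset U$. I would then let $\surfs$ be the set of complex planes contained in $U$ that carry at least $2rn^{1/2+\eps}$ lines of $\lines$; by Lemma~\ref{lem:FewRichSurfaces} applied to the curves $\lines$ inside $\RR^6$ (equivalently, by the $E=1$ bound since complex planes are defined by degree-one polynomials after the identification), $|\surfs|$ is controlled, and the planes carrying \emph{fewer} than $2rn^{1/2+\eps}$ lines contribute few rich points: a standard double-counting shows the number of $r$-rich points from such ``poor'' complex planes, summed up, is $O(n^{3/2+\eps}r^{-2}+nr^{-1})$. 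This handles the almost-ruled case.

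The main case is when $U$ is \emph{not} almost ruled by complex planes. Here Proposition~\ref{prop:structureHypersurfaces} applies directly: $U$ contains at most $2D^2(D-1)$ complex planes, and for the set $\lines'\subset\lines$ of lines not contained in any of these planes, $|U_{\reg}\cap\pts_r(\lines')| = O_D(n^{3/2}r^{-5/2}+nr^{-1})$, which is comfortably within the desired bound once $r$ is large. It remains to handle the lines that \emph{do} lie in one of these $O_D(1)$ complex planes $\Pi\subset U$. For each such plane, I would put $\Pi$ into $\surfs$ if $\lines_\Pi$ has at least $2rn^{1/2+\eps}$ lines; the points of $\pts_r(\lines_\Pi)$ are then accounted for by the exceptional term $\bigcup_{S\in\surfs}\pts_r(\lines_S)$. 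If instead $\lines_\Pi$ has fewer than $2rn^{1/2+\eps}$ lines, then by the complex Szemer\'edi--Trotter theorem (Theorem~\ref{th:complexST}) inside $\Pi\cong\CC^2$, the number of $r$-rich points from $\lines_\Pi$ is $O((rn^{1/2+\eps})^2 r^{-3} + (rn^{1/2+\eps})r^{-1}) = O(n^{1+2\eps}r^{-1} + n^{1/2+\eps})$, and summing over the $O_D(1)$ such planes stays within $O_D(n^{3/2+\eps}r^{-2}+nr^{-1})$ provided $r\le 2n^{1/2}$ (which may be assumed, since for larger $r$ Lemma~\ref{lem:rRichPtsBigR} gives a trivially stronger bound). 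A small subtlety: a rich point could receive incidences from lines in \emph{several} of the exceptional complex planes at once, but since there are only $O_D(1)$ of them, assigning each point to one plane costs only an $O_D(1)$ factor, which is harmless.

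Finally I would assemble the pieces: $\surfs$ consists of the complex planes (contained in $U$) carrying at least $2rn^{1/2+\eps}$ lines of $\lines$, and every $r$-rich point of $U_{\reg}$ not captured by $\bigcup_{S\in\surfs}\pts_r(\lines_S)$ is either a rich point of the non-plane-contained lines (bounded by Proposition~\ref{prop:structureHypersurfaces}), or a rich point inside a ``poor'' complex plane (bounded by complex Szemer\'edi--Trotter), or, in the almost-ruled case, a rich point of a poor complex plane (bounded by double counting). The bound on $|\surfs|$ itself follows from Lemma~\ref{lem:FewRichSurfaces}. The step I expect to be the main obstacle is bookkeeping the interaction between ``poor'' complex planes and the $\lines'$ term in the not-almost-ruled case---one must be careful that a line lying in a poor complex plane is still allowed to contribute to the Proposition~\ref{prop:structureHypersurfaces} bound or is separately accounted for, and that no rich point is double-subtracted; but since the number of exceptional complex planes is $O_D(1)$, all of these overlaps are absorbed into the implicit constant.
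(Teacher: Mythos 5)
Your proposal follows essentially the same route as the paper: split into the cases where $U$ is or is not almost ruled by complex planes, invoke Proposition \ref{prop:structureHypersurfaces} for the lines not lying in any of the $O_D(1)$ complex planes of $U$, place the rich planes into $\surfs$, and control the poor planes by an incidence bound in $\CC^2$. Two quantitative steps are stated too loosely, though both are repaired by the same one-line computation the paper uses. First, in the almost-ruled case, ``standard double-counting'' (pairs of lines meet at most once) only gives $|\pts_r(\lines_S)|=O(|\lines_S|^2r^{-2})$, and summing over poor planes yields $O(n^{3/2+\eps}r^{-1})$, which is \emph{not} within the claimed bound; you genuinely need the complex Szemer\'edi--Trotter bound $O(|\lines_S|^2r^{-3}+|\lines_S|r^{-1})$ here, exactly as in the non-almost-ruled case. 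Second, bounding each poor plane by $O\bigl((rn^{1/2+\eps})^2r^{-3}\bigr)=O(n^{1+2\eps}r^{-1})$ and then using $r\le 2n^{1/2}$ gives $O(n^{3/2+2\eps}r^{-2})$, losing a factor $n^{\eps}$. The correct bookkeeping (and the paper's) is to write $|\lines_S|^2\le 2rn^{1/2+\eps}|\lines_S|$ and sum $|\lines_S|$ over the planes, giving $O\bigl(rn^{1/2+\eps}\cdot nr^{-3}+nr^{-1}\bigr)=O(n^{3/2+\eps}r^{-2}+nr^{-1})$ directly, with no case distinction on $r$ and no need for the $r\le 2n^{1/2}$ hypothesis. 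With these corrections your argument coincides with the paper's proof.
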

\begin{proof}
We first consider the case where $U$ is almost ruled by complex planes. 
In this case, for each $p\in U_{\reg}\cap\pts_r(\lines)$, every line from $\lines$ that contains $p$ is contained in $V_pU$. Let $\surfs$ be the set of planes contained in $U$ that contain at least $2rn^{1/2+\eps}$ lines from $\lines$. Let $\surfs^\prime$ be the set of planes contained in $U$ that contain between $1$ and $2rn^{1/2+\eps}$ lines from $\lines$. By Theorem \ref{th:complexST}, we have
\begin{equation*}
\begin{split}
\sum_{S\in\surfs^\prime}\pts_r(\mathcal{L}_S) & = \sum_{S\in\surfs^\prime}O( |\mathcal{L}_S|^2 r^{-3} + |\mathcal{L}_S|r^{-1})\\
&= O( rn^{1/2+\eps} \cdot \sum_{S\in\surfs^\prime}|\mathcal{L}_S| r^{-3} + \sum_{S\in\surfs^\prime}|\mathcal{L}_S|r^{-1}) =O(n^{3/2+\eps}r^{-2}+nr^{-1}).
\end{split}
\end{equation*}
We conclude that
\begin{equation*}
\begin{split}
\Big|(U_{\reg}\cap\pts_r(\lines))\backslash \bigcup_{S\in\surfs}\pts_r(\mathcal{L}_{S})\Big| & \leq \sum_{S\in\surfs^\prime}\pts_r(\mathcal{L}_S)\\
& = O(n^{3/2+\eps}r^{-2}+nr^{-1}).
\end{split}
\end{equation*}

We now consider the case where $U$ is not almost ruled by complex planes. 
By Proposition \ref{prop:structureHypersurfaces}, the variety $U$ contains at most $2D^2(D-1)$ complex planes. Let $\surfs_0$ be the set of complex planes in $U$. 
Let $\surfs$ be the set of planes in $\surfs_0$ that contain at least $2n^{1/2+\eps}r$ lines from $\lines$.

Consider $p\in U_{\reg}$ that is contained in a plane $S\in \surfs_0$. 
Then every line from $\lines$ that is incident to $p$ is contained in $\lines_S$. 
By repeating the above argument that involves Theorem \ref{th:complexST}, we have
\begin{equation}\label{eq:rRichInPoorSurfaces}
\sum_{S\in\surfs_0\backslash\surfs}|\pts_r(\lines_S)|= O(n^{3/2+\eps}r^{-2}+nr^{-1}).
\end{equation}

Let $\lines^\prime=\lines\backslash\bigcup_{S\in\surfs_0}\lines_S$.
By Proposition \ref{prop:structureHypersurfaces} we have
\begin{equation}\label{eq:rRichOutsideSurfaces}
|\pts_r(\lines^\prime)| = O(n^{3/2}r^{-5/2}+nr^{-1}).
\end{equation}
Combining \eqref{eq:rRichInPoorSurfaces} and \eqref{eq:rRichOutsideSurfaces} yields \eqref{eq:complexLinesRRich} and finishes the proof.
\end{proof}

\subsection{A structure theorem for complex lines intersecting a real variety}\label{ssec:complexIntersectReal}
In this section we analyze the structure of complex lines that properly intersect a real variety $U\subset\RR^6$ and determine many $r$-rich points therein. The basic idea is as follows. Suppose that $\lines$ is a set of complex lines in $\RR^6$ that properly intersect $U$. For each $L\in\mathcal{L}$, the intersection $L\cap U$ is a union of isolated points and real curves in $\RR^6$. Ignoring the isolated points, we can use Lemma \ref{th:RichR} to obtain a structure theorem for the set of curves $\{L\cap U\colon L\in\lines\}$. Lemma \ref{th:RichR} gives us a collection of irreducible real surfaces in $\RR^6$ that cover most of the $r$-rich points inside $U$. However, the surfaces $\surfs$ from Theorem \ref{th:RichC} are not irreducible real surfaces---they are complex surfaces of degree at most two. Lemma \ref{le:RealInComp} shows that for every irreducible real surface $S\subset\RR^6$ that contains many real curves of the form $L\cap U$, there is a complex surface in $\CC^3$ that contains the corresponding lines from $\lines$. 

Before getting to Lemma \ref{le:RealInComp}, we introduce terminology and results concerning the interplay between real and complex varieties. 
We now identify $\RR^{d}$ with the real part of $\CC^{d}$. Concretely, if $p=(p_1,\ldots,p_d)\in \RR^d$, then we define $\iota(p) = (p_1,\ldots,p_d)\in\CC^d$. If $U\subset\RR^d$ is a variety, we define the \emph{complexification} $U^*$ of $U$ to be the smallest variety in $\CC^d$ that contains $U$. This set is precisely the Zariski closure of $\iota(U)$. We have that $\dim_{\CC} U^* = \dim_{\RR} U$. In the opposite direction, if $W\subset\CC^d$ is a variety then we define $W(\RR)=\{ p \in \RR^d\colon \iota(p)\in W\}$ to be the set of real points of $W$. We have $\dim_{\RR} W(\RR)\leq \dim_{\CC}(W),$ and strict inequality is possible.
Further information can be found in \cite{Whitney57}.

If $U\subset \RR^d$ is a variety defined by polynomials of degree at most $D$, then $U^*\subset \CC^d$ is a variety of degree $O_{D,d}(1)$. Similarly, if $W\subset \CC^d$ is a variety of degree $D$, then  $W(\RR)\subset\RR^d$ is defined by polynomials of degree $O_{D,d}(1)$.

As in the previous sections, we also identify $\CC^d$ with $\RR^{2d}$ using the map
\begin{equation*}
(x_1+iy_1,\ldots,x_d+iy_d) \to (x_1,y_1,\ldots,x_d,y_d),
\end{equation*}
where $x_1,y_1,\ldots,x_d,y_d\in \RR$.

\begin{lemma} \label{le:RealInComp}
Let $U\subseteq\RR^{2d}$ be an irreducible variety defined by polynomials of degree at most $D$.
Then there exists an irreducible variety $W\subseteq\CC^d$ defined by polynomials of degree $O_{D,d}(1)$ such that $\dim_{\CC} W \le \dim_{\RR} U$ and $\iota(U) \subseteq W$.
\end{lemma}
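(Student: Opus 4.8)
The plan is to realize $W$ as (a component of) the Zariski closure of the image of the complexification of $U$ under the $\CC$-linear map that implements the identification $\RR^{2d}\cong\CC^{d}$. Let $U^{*}\subseteq\CC^{2d}$ be the complexification of $U$, that is, the Zariski closure of $U$ when $U$ is regarded as a subset of $\CC^{2d}$. By the facts recalled just above, $U^{*}$ is a variety with $\dim_{\CC}U^{*}=\dim_{\RR}U$, and since $U$ is cut out by polynomials of degree at most $D$, $U^{*}$ is cut out by polynomials of degree $O_{D,d}(1)$. Let $\Psi\colon\CC^{2d}\to\CC^{d}$ be the $\CC$-linear map $(\alpha_{1},\beta_{1},\dots,\alpha_{d},\beta_{d})\mapsto(\alpha_{1}+i\beta_{1},\dots,\alpha_{d}+i\beta_{d})$; on real points it agrees with the identification $\iota\colon\RR^{2d}\cong\CC^{d}$, so $\Psi(U)=\iota(U)$ (with $U$ viewed inside $\CC^{2d}$).

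Next I would push $U^{*}$ forward by $\Psi$. Being a surjective linear map, $\Psi$ becomes the orthogonal projection onto a coordinate copy of $\CC^{d}$ after precomposition with a suitable invertible linear automorphism of $\CC^{2d}$, and such an automorphism changes neither the dimension of a variety nor the degrees of the polynomials needed to define it. Thus Lemma~\ref{le:projDeg} applies and produces a variety $W_{0}:=\overline{\Psi(U^{*})}\subseteq\CC^{d}$ with $\dim_{\CC}W_{0}\le\dim_{\CC}U^{*}=\dim_{\RR}U$, defined by polynomials of degree $O_{D,d}(1)$. Since $U\subseteq U^{*}$, we have $\iota(U)=\Psi(U)\subseteq\Psi(U^{*})\subseteq W_{0}$, so $W_{0}$ satisfies all the requirements except, possibly, irreducibility.

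Finally I would pass to an irreducible component. Decompose $W_{0}$ into irreducible components $W_{1},\dots,W_{k}$; by the complex analogue of Lemma~\ref{lem:comps} each $W_{j}$ has degree $O_{D,d}(1)$ and $\dim_{\CC}W_{j}\le\dim_{\RR}U$. The point to verify is that $\iota^{-1}(W_{j})\subseteq\RR^{2d}$ is a real variety: if $W_{j}=\vb(g_{1},\dots,g_{\ell})$ with $g_{t}\in\CC[z_{1},\dots,z_{d}]$, then writing $g_{t}(a_{1}+ib_{1},\dots,a_{d}+ib_{d})=P_{t}+iQ_{t}$ with $P_{t},Q_{t}$ real polynomials exhibits $\iota^{-1}(W_{j})$ as their common real zero set. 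Then $U=\bigcup_{j}\big(U\cap\iota^{-1}(W_{j})\big)$ is a union of real subvarieties of $U$, so irreducibility of $U$ forces $U\subseteq\iota^{-1}(W_{j})$, that is $\iota(U)\subseteq W_{j}$, for some $j$; taking $W=W_{j}$ finishes the proof. (One could instead bypass this paragraph by invoking the fact from \cite{Whitney57} that the complexification of a nonempty irreducible real variety is irreducible, which would make $\Psi(U^{*})$ the image of an irreducible variety and force $W_{0}$ itself to be irreducible.) No single step here looks like a genuine obstacle; the only real content is setting up the ``complexify in $\CC^{2d}$, then linearly project onto $\CC^{d}$'' factorization and checking that $\Psi$ restricts to the coordinate identification on real points — this is exactly what guarantees both $\iota(U)\subseteq W$ and the transfer of the complexity bounds through Lemma~\ref{le:projDeg}.
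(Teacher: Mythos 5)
Your proposal is correct and follows essentially the same route as the paper: complexify $U$ to $U^{*}\subseteq\CC^{2d}$, factor the identification $\RR^{2d}\cong\CC^{d}$ as an invertible $\CC$-linear change of coordinates followed by a coordinate projection so that Lemma \ref{le:projDeg} applies, and then use irreducibility of $U$ to pass to a single component of $\overline{\Psi(U^{*})}$. Your final step is, if anything, slightly more careful than the paper's (you justify via the pullbacks $\iota^{-1}(W_{j})$ why some component must contain all of $\iota(U)$), but the argument is the same in substance.
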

\begin{proof}
We define the linear functions $\tau:\CC^{2d}\to \CC^{2d}$ as 
\begin{align*}
\tau(x_1,y_1,\ldots,x_d,y_d) = (x_1+iy_1, x_2+iy_2,\ldots, x_d+iy_d, x_1-iy_1, x_2-iy_2,\ldots, x_d-iy_d).
\end{align*}

Since the expressions $x_1+iy_1,\ldots, x_d+iy_d, x_1-iy_1,\ldots, x_d-iy_d$ are linearly independent, we may use them as another coordinate system.
Using these coordinates, we consider the projection $\pi:\CC^{2d}\to\CC^d$ defined by
\[ \pi(x_1+iy_1,\ldots, x_d+iy_d, x_1-iy_1,\ldots, x_d-iy_d) = (x_1+iy_1,\ldots, x_d+iy_d). \]

Recall that the complexification $U^*$ is defined by polynomials of degree $O_{D,d}(1)$.
By Lemma \ref{le:projDeg}, there exists a variety $W\subset\CC^n$ defined by polynomials of degree $O_{D,d}(1)$ such that $\pi(\tau(U^*))\subset W$.
It remains to show that $\iota(U) \subseteq W$.
Consider a point $p\in U$ and write
\[ p=(p_1,q_1,\ldots,p_d,q_d) \in \RR^{2d}. \]

Let $p^*$ be the point in $\CC^{2d}$ having the same coordinates as $p$.
Note that
\[ \tau(p^*)=(p_1+iq_1,\ldots,p_d+iq_d,p_1-iq_1,\ldots,p_d-iq_d) \in \CC^{2d}. \]
This implies that
\[ \pi_d(\tau(p^*)) = (p_1+iq_1,\dots,p_d+iq_d) \in \CC^d, \]
so
\[ \pi_d(\tau(p^*)) = \iota(p_1,\ldots,p_d,q_1,\ldots,q_d). \]

Since $\pi(\tau(p^*))=\iota(p)$, we have that $\iota(p)\in W$.
That is, $\iota(U) \subset W$. 
Since $U$ is irreducible, every component of $W$ either contains $\iota(U)$ or intersects $\iota(U)$ in a lower-dimension variety. 
Thus, at least one irreducible component of $W$ contains $\iota(U)$.
To complete the proof, select a component of $W$ that contains $\iota(U)$.
\end{proof}

\begin{lemma}\label{lem:richLinesCurveIntersect}
Let $U\subset\RR^6$ be a variety that is defined by polynomials of degree at most $D$. Let $\mathcal{L}$ be a set of complex lines that are not contained in $U$. Let $r$ be sufficiently large compared to $D$. Then there is a set $\surfs$ of complex planes in $\CC^3$ such that each $S\in\surfs$ contains at least $2rn^{1/2+\eps}$ lines from $\lines$ and 
\begin{equation*}
\Big|U\cap\pts_r(\lines)\backslash \bigcup_{S\in\surfs}\pts_{\frac{4}{5}r}(\mathcal{L}_{S})\Big|= O_{D,\eps}( n^{3/2+\eps}r^{-2}).
\end{equation*}
\end{lemma}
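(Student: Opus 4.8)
The plan is to reduce the statement about complex lines properly intersecting $U$ to an application of Lemma \ref{th:RichR} for curves in $\RR^6$, and then to convert the resulting real surfaces back into complex planes using Lemma \ref{le:RealInComp}. First I would dispose of the lines that meet $U$ in only finitely many points: for each $L\in\mathcal{L}$ not contained in $U$, the intersection $L\cap U$ is a real variety of dimension $\le 1$, so it is a union of $O_D(1)$ isolated points and at most $O_D(1)$ irreducible real curves (Lemma \ref{th:VarietyPartComponents}, or \ref{lem:comps} applied on $L\cong\RR^2$). The isolated points contribute only $O_D(n)$ incidences to points of $U$, hence at most $O_D(n/r) = O_D(nr^{-2}\cdot r) $... more carefully, a point $p\in U\cap\pts_r(\lines)$ that receives fewer than $r/5$ of its lines through genuine curve-intersections must receive $\ge 4r/5$ of them through isolated intersections, and since each line contributes $O_D(1)$ isolated intersection points, summing over lines shows the number of such $p$ is $O_D(n/r) = O_D(n^{3/2+\eps}r^{-2})$ (using $r\le 2n^{1/2}$). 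So we may restrict to points $p$ each incident to $\ge r/5 =: r''$ of the real curves $\{$components of $L\cap U : L\in\mathcal L\}$, all lying on $U$.

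Next I would set up the curve family. Let $\curves$ be the set of irreducible real curves arising as components of $L\cap U$ for $L\in\mathcal L$; then $|\curves| = O_D(n)$, each curve has degree $O_D(1)$, and the multiplicity bound $M$ for two points is $O_D(1)$ (two distinct points of a complex line determine it, and a real curve lying in a complex line is determined by the line). Now apply Lemma \ref{th:RichR} in $\RR^6$ with parameter $\tilde r$ comparable to $r$ (here one must track the passage from $r$-rich points for $\lines$ to $\Omega(r)$-rich points for $\curves$, and note that $\lceil 9\tilde r/10\rceil$ in Lemma \ref{th:RichR} comfortably beats the $\frac45 r$ threshold after accounting for the constant factors lost when splitting into isolated vs. curve intersections and when a line breaks into several curve components — choosing the constants so the final richness for the surface count lands above $\frac45 r$). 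Lemma \ref{th:RichR} produces a set $\surfs_{\RR}$ of real surfaces, each of degree $O_D(1)$, with $|\surfs_{\RR}| = O(n^{1/2-\eps}r^{-1})$, each real \emph{plane} in $\surfs_{\RR}$ containing $\ge r n^{1/2+\eps}$ curves, and covering all but $O_D(n^{3/2+\eps}r^{-2})$ of the $\Omega(r)$-rich points of $\curves$.

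For each irreducible surface $S_{\RR}\in\surfs_{\RR}$ I would invoke Lemma \ref{le:RealInComp}: there is an irreducible complex variety $W\subset\CC^3$ of degree $O_D(1)$ with $\iota(S_{\RR})\subset W$ and $\dim_\CC W \le \dim_\RR S_{\RR} = 2$, so $W$ is a complex surface. Any complex line $L\in\mathcal L$ whose real curve $L\cap U$ lies in $S_{\RR}$ then has $\iota(L\cap U)\subset W$; since $L\cap U$ is a real curve (dimension $1$) inside the complex line $L$, its Zariski closure in $\CC^3$ is all of $L$, hence $L\subset W$. Thus $W$ contains $\ge rn^{1/2+\eps}$ lines of $\mathcal L$ whenever $S_{\RR}$ contained that many curves; if $W$ is a plane we keep it, and otherwise Lemma \ref{lem:tooManyExceptionsImpliesDoublyRuled} (with $D=1$) together with the fact that a non-planar complex surface of bounded degree contains only $O_D(1)$ complex planes... actually here I would instead observe that a bounded-degree complex surface $W$ containing $\ge r n^{1/2+\eps} \gg n^{1/2}$ lines, for $r$ large, is forced by Lemma \ref{lem:FewRichSurfaces}-type counting to decompose its lines among $O_D(1)$ complex \emph{planes} contained in $W$ plus a residual set that is again negligible; replacing each such $W$ by the $O_D(1)$ planes inside it (only those carrying $\ge 2rn^{1/2+\eps}$ lines) keeps $|\surfs| = O(n^{1/2-\eps}r^{-1})$ up to the constant. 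On each kept plane $S$, a point covered by the conclusion of Lemma \ref{th:RichR} as an $\lceil 9\tilde r/10\rceil$-rich point of $\curves_{S_\RR}$ is, via $\iota$, incident to at least that many lines of $\mathcal L_S$, which exceeds $\frac45 r$; the exceptional points not covered number $O_D(n^{3/2+\eps}r^{-2})$, matching the required bound after adding back the earlier $O_D(n^{3/2+\eps}r^{-2})$ term from isolated intersections.

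The main obstacle I anticipate is bookkeeping the richness thresholds through the two lossy reductions (a line may split into several real-curve components, so $r$-richness for $\lines$ only gives $\Omega(r)$-richness for $\curves$ with a constant loss, and the isolated-point pruning costs another constant factor), and ensuring the output of Lemma \ref{th:RichR}'s $r' = \lceil 9r/10\rceil$ still dominates $\frac45 r$ after these losses — this forces a careful choice of the internal parameter fed to Lemma \ref{th:RichR} and of how large "$r$ sufficiently large compared to $D$" must be. A secondary technical point is verifying that $L\cap U$ being a genuine real curve forces $L\subset W$, i.e. that complexification of a real curve in a complex line recovers the whole line, which holds because a real curve is Zariski-dense in the complex line containing it.
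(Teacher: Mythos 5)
Your overall strategy matches the paper's: split each $L\cap U$ into isolated points and real curve components, discard the points that get most of their richness from isolated intersections, apply Lemma \ref{th:RichR} to the resulting curve family in $\RR^6$, and then use Lemma \ref{le:RealInComp} together with Zariski density of a real curve in its complex line to convert the real surfaces into complex surfaces containing the corresponding lines. However, your constants in the isolated/curve split are incompatible with the $9/10$ loss in Lemma \ref{th:RichR}: if you only retain points that are $r/5$-rich for the curve family and feed $\tilde r = r/5$ into that lemma, the surviving points are only guaranteed to be $\lceil 9r/50\rceil$-rich inside the output surfaces, which is far below the required $\tfrac45 r$, and this deficit would also break the downstream use in Theorem \ref{th:RichC}. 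You flag the issue but do not resolve it; the resolution is to make the split very asymmetric (the paper uses $r/100$ for the isolated threshold, so the curve threshold is $\tfrac{99}{100}r$ and $\tfrac{9}{10}\cdot\tfrac{99}{100}r > \tfrac45 r$, while the isolated points still only cost $O_D(nr^{-1})$).

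The more substantive gap is in the pruning of the complexified surfaces down to rich planes. A non-planar complex surface of bounded degree containing many lines need not contain any complex plane at all --- an irreducible quadric contains infinitely many lines and no plane --- so ``decompose its lines among $O_D(1)$ complex planes contained in $W$'' is not available. Moreover, after discarding from $\surfs^{\prime\prime}$ the planes with fewer than $2rn^{1/2+\eps}$ lines and all non-planar surfaces, you still owe a bound on the number of $\tfrac45 r$-rich points determined by the lines inside the discarded surfaces; nothing in your write-up supplies it. The paper does this with two separate facts: for a plane with fewer than $2rn^{1/2+\eps}$ lines, Theorem \ref{th:complexST} gives $O(n^{1+2\eps}r^{-1})$ rich points, which summed over the $O(n^{1/2-\eps}r^{-1})$ surfaces is $O(n^{3/2+\eps}r^{-2})$; and a non-planar surface of bounded degree containing that many lines is ruled, and the lines of a non-planar ruled surface determine at most one $3$-rich point, so those surfaces contribute only $O(n^{1/2-\eps}r^{-1})$ in total. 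Some argument of this kind is needed to finish the proof.
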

\begin{proof}
Define $\curves$ to be the set of all irreducible curves $\gamma\subset\RR^6$ such that $\gamma\subset L\cap U$ for some $L\in\mathcal{L}$. By Lemma \ref{lem:comps}, $|\curves|=O_D(n)$, and each curve in $\curves$ is defined by polynomials of degree $O_D(1)$. 

Let $\pts_0$ be the set of points incident to at least $r/100$ lines from $L$ at an isolated point of $U\cap L$.
Using Lemma \ref{lem:comps} again, for each $L\in\lines$, the intersection $L\cap U$ has $O_D(1)$ isolated points. This implies that $|\pts_0|=O(nr^{-1})$. Note that 
\begin{equation*}
U\cap\pts_r(\lines)\subset \pts_{\frac{99}{100}r}(\Gamma)\cup \pts_0.
\end{equation*}

We apply Lemma \ref{th:RichR} to $\Gamma$ with $\frac{99}{100}r$ in place of $r$, and let $\surfs^\prime$ be the resulting set of irreducible real surfaces of degree $O_{D,\eps}(1)$ in $\RR^6$. 
We have
\begin{equation*}
\Big|\pts_{\frac{99}{100}r}(\Gamma)\backslash \bigcup_{S^\prime\in\surfs^\prime}\pts_{\frac{4}{5}r}(\Gamma_{S^\prime})\Big|=O_{D,\eps}(n^{3/2+\eps}r^{-2}).
\end{equation*}

Apply Lemma \ref{le:RealInComp} to each surface $S\in\surfs^\prime$, to obtain an irreducible complex variety of dimension at most two. Let $\surfs^{\prime\prime}$ be the set of the resulting irreducible complex varieties. Note that $|\surfs^{\prime\prime}| = O_{D,\eps}(n^{1/2-\eps}r^{-1})$. Each variety of $\surfs^{\prime\prime}$ has an infinite intersection with a complex line and is thus two-dimensional.  
A surface of $\surfs^{\prime\prime}$ contains each complex lines that it has an infinite intersection with.
Thus, when $p\in\pts_{\frac{4}{5}r}(\Gamma_{S^\prime})$ for some $S^\prime\in\surfs^\prime$, there is a complex surface $S^{\prime\prime}\in\surfs^{\prime\prime}$ such that $p\in\pts_{\frac{4}{5}r}(\lines_{S^\prime})$. 
This implies 
\begin{equation*}
\Big|U\cap\pts_r(\lines)\backslash \bigcup_{S\in\surfs^{\prime\prime}}\pts_{\frac{4}{5}r}(\mathcal{L}_{r^\prime})\Big|= O_{D,\eps}( n^{3/2+\eps}r^{-2}).
\end{equation*}

Let $\surfs$ be the set of surfaces $S\in\surfs^{\prime\prime}$ that are complex planes containing at least $2n^{1/2+\eps}r$ lines from $\lines$. We claim that
\begin{equation}\label{eq:wonkySurfacesArentImportant}
\sum_{S\in\surfs^{\prime\prime}\backslash\surfs}|\pts_{\frac{4}{5}r}(\mathcal{L}_{S})|=O_{D,\eps}( n^{3/2+\eps}r^{-2}+nr^{-1}).
\end{equation}
Indeed, if $S\in \surfs^{\prime\prime}$ is a plane containing fewer than $2n^{1/2+\eps}r$ lines, then by Theorem \ref{th:complexST}, 
$$
|\pts_{\frac{4}{5}r}(\mathcal{L}_{S})|=O(n^{1+2\eps}r^{-1})
$$
Since there are $O(n^{1/2-\eps}r^{-1})$ such planes, their total contribution is $O(n^{3/2+\eps}r^{-2})$. 

Consider $S\in \surfs^{\prime\prime}$ that is not a plane.
Since $S$ contains at least $2n^{1/2+\eps}r$ lines, it must be a ruled surface. 
The lines in a ruled surface that is not a plane form at most one 3-rich point. (For these claims about ruled surfaces in $\CC^3$, see for example \cite{Kollar15}. In particular, see the part titled ``Special ruled surfaces''.)   
The total contribution from surfaces of this type is $O(n^{1/2-\eps}r^{-1})=O(n^{3/2}r^{-2})$. This establishes \eqref{eq:wonkySurfacesArentImportant}, which in turn completes the proof of the lemma. 
\end{proof}

\subsection{Proof of Theorem \ref{th:RichC}}\label{ssec:proofOfThemRIchC}
Armed with Lemmas \ref{lem:incidencesInsideU} and \ref{lem:richLinesCurveIntersect}, we are now ready to prove Theorem \ref{th:RichC}. For the reader's convenience we first recall the statement of the theorem. 

\begin{thmRichC}
For every $\eps>0$, there is a constant $C$ such that the following holds. Let $\lines$ be a set of $n$ lines in $\CC^3$, let $2\leq r\leq 2n^{1/2}$ and let $r^\prime = \max(2,  r/3)$. Then there exists a set $\surfs$ of algebraic surfaces in $\CC^3$ with the following properties.
\begin{itemize}[noitemsep]
\item If $r\geq 3$ then every surface in $\surfs$ is a plane. If $r=2$ then every surface in $\surfs$ is irreducible and has degree at most two.
\item Every plane $W\in \surfs$ contains at least $rn^{1/2+\eps}$ lines of $\lines$. 
\item $|\surfs|\le 2n^{1/2-\eps}r^{-1}$.
\item $|\pts_r(\lines)\setminus \bigcup_{W\in \surfs}\pts_{r^\prime}(\lines_W)|\leq C n^{3/2+\eps}r^{-2}$.
\end{itemize}
\end{thmRichC}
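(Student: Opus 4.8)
The plan is to prove the theorem by induction on $n$, mimicking Guth's proof of Theorem \ref{th:RichRGuth} but carrying out the polynomial partitioning in $\RR^6$ rather than in $\RR^3$. As in Section \ref{sec:cplxGeom} we identify $\CC^3$ with $\RR^6$, so the lines of $\lines$ become $n$ two-dimensional real varieties (``complex lines'') defined by polynomials of degree one. Throughout we take $\surfs$ to be the set of complex planes containing at least $rn^{1/2+\eps}$ lines of $\lines$; then Lemma \ref{lem:FewRichSurfaces} (with $E=1$) gives $|\surfs|\le 2n^{1/2-\eps}r^{-1}$ and the required richness for free, so the entire content of the theorem is the bound on the ``bad set'' $\pts^\ast:=\pts_r(\lines)\setminus\bigcup_{W\in\surfs}\pts_{r^\prime}(\lines_W)$. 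Several regimes are disposed of immediately. If $n=O_\eps(1)$ the claim is trivial. If $r\in\{2,3\}$ it is Section \ref{sec:KollarBd}. If $4\le r\le r_0$ for a suitable constant $r_0=r_0(\eps)$, a direct argument in the style of Section \ref{sec:KollarBd} --- apply Theorem \ref{th:Kollar} to the lines lying in no rich plane or quadric, use Lemma \ref{lem:FewRichSurfaces} to count the rich planes and quadrics, and Theorem \ref{th:complexST} to bound the $r^\prime$-rich points inside each rich plane not in $\surfs$ --- gives the claim, with the factor $r^{-2}=\Theta_\eps(1)$ absorbed into the constant. Hence we may assume $r> r_0$, where $r_0$ is chosen to exceed all the ``$r$ sufficiently large'' thresholds of Proposition \ref{prop:structureHypersurfaces} and Lemmas \ref{lem:incidencesInsideU} and \ref{lem:richLinesCurveIntersect} for varieties of the $O_\eps(1)$ degree that will occur.

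For the inductive step, I would apply Theorem \ref{th:partition} (with $k=2$, $E=1$) with a large constant degree $D=D(\eps)$ to obtain $f\in\RR[x_1,\dots,x_6]$ of degree at most $D$ so that each of the $O(D^6)$ cells of $\RR^6\setminus\vb(f)$ meets at most $O(nD^{-4})$ complex lines; by Remark \ref{rem:coDimOne} we may assume $\vb(f)$ is a hypersurface. A complex line is a real $2$-plane, and its intersection with $\vb(f)$ is a plane curve of degree $\le D$, so by Lemma \ref{th:VarietyPartComponents} each complex line meets only $O(D^2)$ cells; thus $\sum_\Delta n_\Delta=O(D^2 n)$ while $n_\Delta=O(nD^{-4})$ in every cell $\Delta$. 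Applying the induction hypothesis in each cell (and Lemma \ref{lem:rRichPtsBigR} in the few cells where $r>2n_\Delta^{1/2}$) and then recombining is done just as in Guth's proof of Theorem \ref{th:RichRGuth}, the only changes being that the partition lives in $\RR^6$ and that the complex Szemer\'edi--Trotter theorem (Theorem \ref{th:complexST}) replaces its real counterpart: the in-cell bad sets sum to $O\big((\max_\Delta n_\Delta)^{1/2+\eps}\sum_\Delta n_\Delta\big)r^{-2}=O(n^{3/2+\eps}D^{-4\eps}r^{-2})$, a small fraction of $Cn^{3/2+\eps}r^{-2}$ once $D$ is large in terms of $\eps$; and a point of $\pts^\ast$ sitting in a cell but not in its in-cell bad set must be $r^\prime$-rich in a plane that is rich in that cell but, because the point lies in $\pts^\ast$, has fewer than $rn^{1/2+\eps}$ lines of $\lines$, and the total contribution of these ``medium'' planes is again $O(n^{3/2+\eps}r^{-2})$ by the counting in \cite{Guth15a} (bound their number via coplanar pairs, and apply Theorem \ref{th:complexST} inside each). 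None of this uses anything about complex lines beyond the properties recorded in Section \ref{sec:GuthStructureTheorem}, so I will not reproduce it.

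The genuinely new step --- and the one I expect to be the main obstacle --- is bounding the points of $\pts^\ast$ that lie on the partition $\vb(f)$. The difficulty is that $\vb(f)$ is a \emph{real} variety, so its components cannot be added to $\surfs$; instead we invoke the complex incidence geometry from Sections \ref{sec:cplxGeom} and \ref{sec:LineIncC3}. Write $\vb(f)=U_1\cup\cdots\cup U_s$ with $s=O_D(1)$ irreducible components, each of degree $O_D(1)$. Fix $j$ and a point $p\in\pts^\ast\cap U_{j,\reg}$; it lies on $\ge r$ complex lines, and either $\ge\lceil r/2\rceil$ of them are contained in $U_j$ or $\ge\lceil r/2\rceil$ of them meet $U_j$ in a proper subvariety. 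In the first case, Lemma \ref{lem:incidencesInsideU} applied to $U_j$ with the lines of $\lines$ contained in $U_j$ and with $\lceil r/2\rceil$ in place of $r$ produces complex planes that cover all but $O_D(n^{3/2+\eps}r^{-2}+nr^{-1})$ of these points; those covering planes with at least $rn^{1/2+\eps}$ lines of $\lines$ already belong to $\surfs$ (and $\lceil r/2\rceil\ge r^\prime$, so $p$ is genuinely $r^\prime$-rich there), while the rest are medium and are absorbed as above --- cleanly this time, since $s=O_D(1)$ bounds the number of medium planes arising on the boundary. In the second case, Lemma \ref{lem:richLinesCurveIntersect} applied to $U_j$ with the lines of $\lines$ not contained in $U_j$ plays the same role, using $\tfrac45\cdot\tfrac r2\ge r^\prime$. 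Finally, points of $U_{j,\sing}$ are handled by a short secondary induction on dimension: $U_{j,\sing}$ is a variety of dimension $\le 4$ and degree $O_D(1)$, to which the same trichotomy (contained / crossing / singular) applies, and since the number of components stays $O_D(1)$ while the dimension strictly decreases, after $O(1)$ rounds one reaches a union of flats, where the bound is immediate. Summing the $O_D(1)$ contributions over $j$ and over all regimes, and choosing $C$ large in terms of $\eps$, gives $|\pts^\ast|=O_\eps(n^{3/2+\eps}r^{-2})$ and closes the induction. The points needing the most care are the verifications that the ``sufficiently large $r$'' hypotheses are met with room to spare, that $\lceil r/2\rceil$ and $\tfrac25 r$ both dominate $r^\prime$, and that the singular-locus recursion nests cleanly inside the main induction on $n$.
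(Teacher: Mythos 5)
Your proposal is correct and follows essentially the same route as the paper's proof: induction on $n$ with polynomial partitioning in $\RR^6$, the Koll\'ar-based argument for small $r$, and the dichotomy on the boundary $\vb(f)$ handled by Lemmas \ref{lem:incidencesInsideU} and \ref{lem:richLinesCurveIntersect} together with the iterated singular-locus decomposition (your ``secondary induction on dimension'' is exactly the paper's chain $U_1\supset U_2\supset\cdots\supset U_6$). The only differences are bookkeeping ones — you fix $\surfs$ globally as all $rn^{1/2+\eps}$-rich planes and prune afterwards, whereas the paper assembles $\surfs$ from the sub-problems and the boundary and then discards the poor planes via Theorem \ref{th:complexST} — and these do not affect the argument.
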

\begin{proof}
Recall that when $r=2$ or $r=3$, the result was proved in Section \ref{sec:KollarBd}. By taking $C$ to be sufficiently large and using the bound from the $r=3$ case, we obtain the result for any constant $r$. We may thus assume that $r\geq r_{\eps}$ for a sufficiently large $r_{\eps}$ depending on $\eps$. In particular, we can assume that $r^\prime = r/3$. 

With $\eps$ and $r$ fixed, we prove the result by induction on $n$. By selecting the constant $C$ sufficiently large, we can suppose that $n\geq n_{\eps}$ for a fixed value $n_{\eps}$ of our choosing. Suppose now that the result has been proved for all sets of lines of cardinality smaller than $n$, and let $\lines$ be a set of complex lines of cardinality $n$.

Since each complex line $L\in\lines$ is also a two-dimensional real variety in $\RR^6$, we can apply Theorem \ref{th:partition} to $\lines$ with a value of $D = D(\eps)$ to be specified below. We obtain a polynomial $f\in \RR[x_1,\ldots,x_6]$ of degree at most $D$ such that each connected component of $\RR^6\setminus \vb(f)$ intersects $O(nD^{-4})$ complex lines from $\lines$. We denote these open connected components as $\Omega_1,\ldots,\Omega_s,$ with $s = O(D^6)$. Let $U = \vb(f)$, and for each index $j$ let $\lines_j$ be the set of lines from $\lines$ that intersect $\Omega_j$. 

For each index $j$ with $r> 2n_j^{1/2}$, define $\surfs_j=\emptyset$. Applying Lemma \ref{lem:rRichPtsBigR}, and recalling the assumption $r \le 2n^{1/2}$, we obtain the estimate
\begin{equation} \label{eq:LargeROne}
 |\pts_r(\lines') \cap \Omega_j| \le |\pts_r(\lines'_j)|\le 2n_jr^{-1} < 2nr^{-1} \le 4n^{3/2}r^{-2}.
 \end{equation}
Recall that $D$ depends only on $\eps$. If $n_0$ is selected sufficiently large compared to $\eps$ then 
\begin{equation} \label{eq:LargeRTwo}
4n^{3/2}r^{-2}\leq Cn^{3/2+\eps}r^{-2}D^{-6-4\eps}.
\end{equation}

For each index $j$ with $r\leq  2n_j^{1/2}$, apply the induction hypothesis to $\lines_j$ with the same values for $\eps$ and $r$. We obtain a set $\surfs_j$ of complex planes, such that 
\[ |\surfs_j|\le 2n_j^{1/2-\eps} = O(D^{-2+4\eps}n^{1/2-\eps}).\]

Define
$$
\surfs' = \bigcup_{j=1}^{s}\surfs_j.
$$
Note that $|\surfs'| = O_{D}\left(n^{1/2-\eps}\right)$.
Since $\pts_r(\lines')\cap \Omega_j \subset \pts_r(\lines'_j)$, we have
\begin{equation} \label{eq:SmallR}
|(\pts_r(\lines)\cap \Omega_j)\setminus \cup_{S\in \surfs_j}\pts_{r^\prime}(\lines_S)|\le C n_j^{3/2+\eps}r^{-2} = O(Cn^{3/2+\eps}D^{-6-4\eps}r^{-2}).
\end{equation}

Combining \eqref{eq:LargeROne}, \eqref{eq:LargeRTwo}, and  \eqref{eq:SmallR} and taking $D$ to be sufficiently large compared to $\eps$ gives
\begin{equation}
\begin{split}\label{eq:CellsBound}
\Big|(\pts_r(\lines)\backslash U)\setminus \bigcup_{S\in \surfs^\prime}\pts_{r^\prime}(\lines_S)\Big| &= O(D^6 \cdot Cn^{3/2+\eps}D^{-6-4\eps}r^{-2}) \\
&= O(kn^{3/2+\eps}D^{-4\eps}r^{-2})\\
& \le \frac{C}{4} n^{3/2+\eps} r^{-2}.
\end{split}
\end{equation}

Let $\surfs^{\prime\prime}$ be the set of complex planes $S\in\surfs^\prime$ that contain at least $2n^{1/2+\eps}r$ lines from $\lines$. We repeat the last part of the proof of Lemma \ref{lem:richLinesCurveIntersect}, which involved Theorem \ref{th:complexST} and ruled surfaces.
By the same argument
\begin{equation}\label{eq:throwAwayPoorSurfs}
\begin{split}
\sum_{S\in\surfs^{\prime}\backslash\surfs^{\prime\prime}}|\pts_{r^\prime}(\lines_S)|&\leq \frac{C}{4} n^{3/2+\eps} r^{-2}.
\end{split}
\end{equation}

It remains to derive an upper bound on the size of $\pts_r(\lines) \cap U$. Let $U_1 = U$, and for each $j=2,\ldots,6$ let $U_j = (U_{j-1})_{\sing}.$ By Lemma \ref{le:singular} each of the sets $U_1,\ldots,U_6$ are defined by polynomials of degree $O_{D}(1)$; $U_6$ is finite (possibly empty), and $U =\bigcup_{j=1}^6 (U_j)_{\reg}$. In particular, if $p\in\pts_r(\lines)\cap U$ then there is an index $j$ such that $p\in (U_j)_{\reg}$. Such a point $p$ is either incident to at least $r/2$ lines $L\in\lines$ that are contained in $U_j$, or to at least $r/2$ lines $L\in\lines$ that are not contained in $U_j$ (or both). 

For each index $j=1,\ldots,6$, we apply Lemma \ref{lem:incidencesInsideU} and Lemma \ref{lem:richLinesCurveIntersect} to $U_j$; we obtain sets $\surfs_j,\surfs_j^\prime$ of complex planes contained in $U_j$, with $|\lines_S|\geq 2n^{1/2+\eps}r$ for each $S\in\surfs_j\cup\surfs_j^\prime$. For each index $j$ we have
\begin{equation}\label{eq:linesInsideUi}
\begin{split}
\Big|(U_j)_{\reg}\cap\pts_{r/2}(\lines_{U_j})\backslash \bigcup_{S\in\surfs_j}\pts_{r/2}(\mathcal{L}_{S})\Big|= O_D( n^{3/2+\eps}r^{-2}+nr^{-1})\leq \frac{C}{4}n^{3/2+\eps}r^{-2},
\end{split}
\end{equation}
and 
\begin{equation}\label{eq:linesOutsideUi}
\Big|(U_j)_{\reg}\cap\pts_{r/2}(\lines\backslash \lines_{U_j})\backslash \bigcup_{S\in\surfs_j^\prime}\pts_{\frac{4}{5}\cdot\frac{r}{2}}(\mathcal{L}_{S})\Big|= O_D( n^{3/2+\eps}r^{-2}+nr^{-1})\leq \frac{C}{4}n^{3/2+\eps}r^{-2}.
\end{equation}

Let 
$$
\surfs = \surfs^{\prime\prime}\cup\bigcup_{j=1}^6 (\surfs_j\cup\surfs_j^\prime).
$$
Each plane $S\in\surfs$ contains at least $2n^{1/2+\eps}r$ lines from $\lines$, so by Lemma \ref{lem:FewRichSurfaces} we have $|\surfs|\leq n^{1/2-\eps}r^{-1}$. Combining \eqref{eq:CellsBound}, \eqref{eq:throwAwayPoorSurfs}, \eqref{eq:linesInsideUi}, and \eqref{eq:linesOutsideUi}, we obtain
\begin{equation*}
\Big|\pts_r(\lines)\setminus \bigcup_{S\in \surfs}\pts_{r/3}(\lines_S)\Big| \leq C n^{3/2+\eps} r^{-2}.
\end{equation*}
This closes the induction and completes the proof.
\end{proof}

Theorem \ref{th:RichC} can be used to obtain the following analogue of Theorem \ref{th:numberOfRRichPoints}
\begin{corollary}\label{cor:complexRRichPts}
Let $\lines$ be a set of at most $n$ complex lines in $\CC^3$. Suppose that at most $n^{1/2}$ lines from $\lines$ can be contained in a common plane or degree-two surface. Then for each $\eps>0$ there is a constant $C_{\eps}$ so that for all $2\leq r\leq 2n^{1/2}$,
\begin{equation*}
|\pts_r(\lines)|\leq C_{\eps} n^{3/2+\eps}r^{-2}
\end{equation*}
\end{corollary}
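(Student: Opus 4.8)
The plan is to read the corollary off from Theorem~\ref{th:RichC}, using the hypothesis on planes and quadrics to force the exceptional family $\surfs$ produced by that theorem to be empty. First I would reduce to the case $|\lines| = n$. If $|\lines| = n' < n$, add $n-n'$ lines in general position to obtain a set $\hat\lines$ of exactly $n$ lines, exactly as in the $r=2$ case of Theorem~\ref{th:RichC}: a line chosen generically lies on none of the finitely many planes containing two lines of the current configuration and on none of the finitely many irreducible degree-two surfaces containing three of them, so after all the additions every plane contains at most $\max(\sqrt{n'},2)\le\sqrt n$ lines of $\hat\lines$ and every irreducible degree-two surface contains at most $\max(\sqrt{n'},3)\le\sqrt n$ of them (this uses $n\ge 9$; the finitely many smaller values of $n$ are harmless since $|\pts_r(\lines)|\le\binom n2$ is then bounded, and $C_\eps$ may be taken large). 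Since $\pts_r(\lines)\subseteq\pts_r(\hat\lines)$ it suffices to bound $|\pts_r(\hat\lines)|$, and now $2\le r\le 2n^{1/2}=2|\hat\lines|^{1/2}$, so Theorem~\ref{th:RichC} applies.

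For $r=2$ I would simply invoke Theorem~\ref{th:Kollar}: the hypothesis on planes and degree-two surfaces gives $|\pts_2(\hat\lines)|=O(n^{3/2})$, which is at most $C_\eps n^{3/2+\eps}r^{-2}$ once $C_\eps$ is large enough. For $r\ge 3$, apply Theorem~\ref{th:RichC} to $\hat\lines$ with this value of $r$ and $r^\prime=\max(2,r/3)$, obtaining a set $\surfs$ of surfaces. Since $r\ge 3$, every surface in $\surfs$ is a plane, and by the second bullet each such plane contains at least $r n^{1/2+\eps}\ge 3n^{1/2}>n^{1/2}$ lines of $\hat\lines$; this contradicts the standing hypothesis unless $\surfs=\emptyset$. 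Hence $\surfs=\emptyset$, and the fourth bullet of Theorem~\ref{th:RichC} reads $|\pts_r(\hat\lines)|\le C n^{3/2+\eps}r^{-2}$. Taking $C_\eps$ to be the maximum of $C$ and the constants used in the $r=2$ and small-$n$ cases completes the argument.

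There is essentially no hard step here; the only point needing care is the general-position padding, i.e.\ checking that the hypothesis ``at most $n^{1/2}$ lines on a common plane or quadric'' survives the addition of lines, and the elementary observation that $r n^{1/2+\eps}>n^{1/2}$ for every $r\ge 3$, which is exactly what makes $\surfs$ empty. As an alternative to concluding $\surfs=\emptyset$, one could instead keep $\surfs$ and bound $\sum_{W\in\surfs}|\pts_{r^\prime}(\lines_W)|$ directly: each plane $W\in\surfs$ has $|\lines_W|\le\sqrt n$ by hypothesis, so Theorem~\ref{th:complexST} gives $|\pts_{r^\prime}(\lines_W)|=O(n r^{-3}+n^{1/2}r^{-1})$, and summing over $|\surfs|\le 2n^{1/2-\eps}r^{-1}$ planes yields $O(n^{3/2-\eps}r^{-4}+n^{1-\eps}r^{-2})=O(n^{3/2+\eps}r^{-2})$; this works but is not needed.
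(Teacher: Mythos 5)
Your proof is correct and is essentially the derivation the authors intend: the paper states Corollary \ref{cor:complexRRichPts} without proof, and the argument is exactly to observe that for $r\geq 3$ every plane in $\surfs$ would contain $rn^{1/2+\eps}>n^{1/2}$ lines, forcing $\surfs=\emptyset$, with $r=2$ handled by Koll\'ar's bound and the shortfall $|\lines|<n$ handled by generic padding (the same device the paper uses in its $r=2$ argument). The only imprecision is the phrase ``finitely many irreducible degree-two surfaces containing three of them''---three concurrent lines can lie on a positive-dimensional family of quadric cones---but the padding still works because the lines contained in such quadrics form a proper subvariety of the Grassmannian, so this does not affect correctness.
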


\section{The distinct distances problem} \label{sec:DDinC3}

We now study distinct distances in $\CC^2$. In \cite{GK15}, Guth and Katz used the Elekes-Sharir-Guth-Katz framework to convert an upper bound for incidences of lines in $\RR^3$ into a lower bound for distinct distances in $\RR^2$. In \cite{RR15}, Roche-Newton and Rudnev used a similar strategy to obtain a lower bound for the number of distinct ``Minkowski distances'' spanned by a set of points in $\RR^2$. If $p$ and $q$ are points in $\RR^2$, then the square of their Minkowski distance is the signed area of the rectangle with oppose corners $p$ and $q$. In contrast to the situation with Euclidean distances, it is possible for a pair of distinct points to have Minkowski distance zero. Roche-Newton and Rudnev introduced new arguments to tackle this situation. We will use similar ideas in order to use the incidence bound from Theorem \ref{th:RichC} to prove Theorem \ref{th:DDinC2}.

\subsection{The ESGK framework: from distinct distances to line intersections}
The first step in the ESGK framework is to reduce the problem of counting distinct distances to that of counting quadruples $a,b,c,d\in\CC^4$ with $\Delta(a,b)=\Delta(c,d)$. If $\pts\subset\CC^2$ is a finite set of points, we define
\[ Q(\pts) = \left\{(a,b,c,d)\in \pts^4 :\, \Delta(a,b)=\Delta(c,d)\neq 0\ \text{ and }\ (a,b)\neq (c,d) \right\}. \]
\begin{lemma}\label{lem:fromPtsToQuadruples}
Let $\pts$ be a set of $n$ points in $\CC^2$, at most third contained in a common isotropic line. Then
\begin{equation*}
|\Delta(\pts)|= \Omega(n^4 |Q(\pts)|^{-1}).
\end{equation*}
\end{lemma}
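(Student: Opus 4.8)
The plan is to run the standard Cauchy–Schwarz argument from the Elekes–Sharir–Guth–Katz framework, being careful about the isotropic degeneracy. Write $N$ for the number of ordered pairs $(a,b)\in\pts^2$ with $\Delta(a,b)\neq 0$; equivalently, $N = n^2 - n - 2P$, where $P$ is the number of unordered pairs $\{a,b\}\subset\pts$ lying on a common isotropic line (so $\Delta(a,b)=0$). For each nonzero value $\delta\in\Delta(\pts)$, let $m_\delta$ be the number of ordered pairs $(a,b)\in\pts^2$ with $\Delta(a,b)=\delta$, so that $\sum_{\delta}m_\delta = N$. Counting ordered quadruples with equal nonzero distance and applying Cauchy–Schwarz gives
\begin{equation*}
|Q(\pts)| + N = \sum_{\delta\in\Delta(\pts)} m_\delta^2 \geq \frac{\big(\sum_\delta m_\delta\big)^2}{|\Delta(\pts)|} = \frac{N^2}{|\Delta(\pts)|},
\end{equation*}
where the extra $+N$ on the left accounts for the diagonal quadruples $(a,b)=(c,d)$ that are excluded from $Q(\pts)$. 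Rearranging, $|\Delta(\pts)| \geq N^2/(|Q(\pts)|+N)$.

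The remaining task is to show $N = \Omega(n^2)$, which is where the isotropic hypothesis enters. Since $\pts$ has at most $n/3$ points on any single isotropic line, I would first bound $P$. Each isotropic line has one of two fixed slopes ($+i$ or $-i$), so the isotropic lines through points of $\pts$ split into two parallel families. For a family with line-classes of sizes $k_1,k_2,\ldots$ (with $\sum_j k_j \le n$ and each $k_j \le n/3$), the number of collinear pairs is $\sum_j \binom{k_j}{2} \le \tfrac12(\max_j k_j)\sum_j k_j \le n^2/6$. Summing over the two families gives $P \le n^2/3$, hence $N = n^2 - n - 2P \ge n^2 - n - 2n^2/3 = n^2/3 - n = \Omega(n^2)$ (using $n$ large; small $n$ is handled by adjusting the implied constant). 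Plugging $N = \Omega(n^2)$ into the displayed inequality, and noting $N \le n^2 \le |Q(\pts)|$ whenever $Q(\pts)$ is nonempty (and the statement is trivial when $Q(\pts)$ is empty, as then $\Delta(\pts)$ has the full count), yields $|\Delta(\pts)| = \Omega(n^4/|Q(\pts)|)$.

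I expect the only real subtlety to be the bookkeeping around degenerate pairs: correctly excluding pairs with $\Delta(a,b)=0$ from the count $N$, and verifying that the hypothesis ``at most a third on an isotropic line'' is exactly what is needed to keep $N$ a constant fraction of $n^2$. (If instead all but a bounded number of points lay on one isotropic line, $N$ could be as small as $O(n)$ and the conclusion would fail, so the fraction hypothesis is essential.) Everything else—the Cauchy–Schwarz step and the accounting of diagonal quadruples—is routine.
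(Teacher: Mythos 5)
Your argument is essentially the paper's: both use Cauchy--Schwarz over the multiplicities of nonzero distances, with the ``at most a third on an isotropic line'' hypothesis guaranteeing that $\Omega(n^2)$ ordered pairs have nonzero distance (the paper argues pointwise---each point has nonzero distance to at least $n/3$ others---while you sum over the two parallel families of isotropic lines; both are fine). The only flaw is your closing justification ``$N \le n^2 \le |Q(\pts)|$ whenever $Q(\pts)$ is nonempty,'' which is false as stated; what you actually need is $|Q(\pts)|+N = O(|Q(\pts)|)$, and this follows because each multiplicity $m_\delta$ counts \emph{ordered} pairs, hence is even and at least $2$, so $|Q(\pts)| = \sum_\delta m_\delta(m_\delta-1) \ge \sum_\delta m_\delta = N$.
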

\begin{proof}
Recall that two points $p,q\in \CC^2$ satisfy $\Delta(p,q)=0$ if and only if there exists an isotropic line that contains both.
Indeed, if $p=(p_x,p_y)$ and $q=(q_x,q_y)$, then $\Delta(p,q)=(p_x-q_x)^2 + (p_y-q_y)^2$, so $\Delta(p,q)=0$ if and only if $(p_x-q_x)^2=-(p_y-q_y)^2$. This can occur if and only if $(p_x-q_x) = \pm i(p_y-q_y)$. 

Let $\Delta(\pts)\backslash\{0\}= \{\delta_1,\ldots,\delta_t\}$.
For every $1\le j \le t$, we set $N_j = |\{(a,b)\in \pts^2 :\, \Delta(a,b)=\delta_t\}|$.
Since every isotropic line contains at most $n/3$ points of $\pts$, every point $p\in \pts$ determines a nonzero distance with at least $n/3$ points of $\pts$. 
Since every ordered pair $(a,b)\in \pts^2$ with $\Delta(a,b)\neq 0$ contributes to exactly one $N_j$, we get that $\sum_{j=1}^t N_j = \Theta(n^2)$.

By Cauchy--Schwarz, we have
\begin{equation*}
|Q| = 2 \sum_{j=1}^t \binom{N_j}{2} \ge \frac{1}{2}\sum_{j=1}^t N_j^2 \ge \frac{\left(\sum_{j=1}^t N_j\right)^2}{2t} = \Omega\left(\frac{n^4}{|\Delta(\pts)|}\right).
\end{equation*}
\end{proof}

The second step in the ESKG framework is to reduce the problem of counting quadruples in $\mathcal{Q}(\pts)$ to that of counting line-line intersections in $\CC^3$. Given two distinct points $a=(a_x,a_y)$ and $c=(c_x,c_y)$ in $\CC^2$, we denote by $\ell_{a,c}$ the line in $\CC^3$ that is defined by the equations
\begin{align}
2x &= (a_x+c_x) +(a_y-c_y)z, \nonumber \\
2y &= (a_y+c_y) + (c_x-a_x)z. \label{eq:ComplexLineDef}
\end{align}

\begin{lemma}\label{lem:fromQuadruplesToLineIntersection}
Let $a,b,c,d\in\CC^2$. Then $\Delta(a,b)=\Delta(c,d)$ if and only if the lines $\ell_{a,c}$ and $\ell_{b,d}$ are coplanar.
\end{lemma}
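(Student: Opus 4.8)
The plan is to recast coplanarity of $\ell_{a,c}$ and $\ell_{b,d}$ as the vanishing of a single $3\times 3$ determinant, and then to evaluate that determinant and observe that it equals $\Delta(c,d)-\Delta(a,b)$.

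First I would extract from the defining equations \eqref{eq:ComplexLineDef} a point and a direction vector for each line. Taking $z$ as the parameter, the line $\ell_{a,c}$ passes through $q_{a,c}=\tfrac12(a_x+c_x,\ a_y+c_y,\ 0)$ with direction $v_{a,c}=(a_y-c_y,\ c_x-a_x,\ 2)$; likewise $\ell_{b,d}$ passes through $q_{b,d}=\tfrac12(b_x+d_x,\ b_y+d_y,\ 0)$ with direction $v_{b,d}=(b_y-d_y,\ d_x-b_x,\ 2)$. Since $x$ and $y$ are affine functions of $z$, each equation set indeed defines a line in $\CC^3$, even in degenerate cases such as $a=c$.

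Next I would use the elementary fact, valid over any field, that two lines $q_1+\CC v_1$ and $q_2+\CC v_2$ in $\CC^3$ lie in a common plane if and only if the vectors $v_1$, $v_2$, $q_1-q_2$ are linearly dependent, i.e. if and only if the $3\times3$ matrix with these rows has zero determinant. Indeed, if the two lines lie in a plane $p_0+W$ with $\dim W=2$, then all three of these vectors lie in $W$; conversely, if the three vectors span a subspace of dimension at most $2$, enlarging it to a $2$-dimensional subspace $W$ gives a plane $q_1+W$ containing both lines. This formulation conveniently handles the parallel and coincident cases at once, which is the notion of "coplanar" needed in the ESGK reduction.

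Finally I would compute
\[
\det\!\begin{pmatrix} q_{a,c}-q_{b,d}\\ v_{a,c}\\ v_{b,d}\end{pmatrix}.
\]
Expanding along the first row (whose third entry is $0$), the factor $\tfrac12$ appearing in $q_{a,c}-q_{b,d}$ cancels against the $2$'s in the last column of the two surviving $2\times2$ minors, and the determinant reduces to
\[
\big((a_x-b_x)+(c_x-d_x)\big)\big((c_x-d_x)-(a_x-b_x)\big)-\big((a_y-b_y)+(c_y-d_y)\big)\big((a_y-b_y)-(c_y-d_y)\big),
\]
which telescopes to
\[
\big((c_x-d_x)^2+(c_y-d_y)^2\big)-\big((a_x-b_x)^2+(a_y-b_y)^2\big)=\Delta(c,d)-\Delta(a,b).
\]
Hence the two lines are coplanar if and only if $\Delta(a,b)=\Delta(c,d)$. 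The whole argument is a direct computation; there is no serious obstacle, the only point requiring a little care being the correct statement of the coplanarity criterion over $\CC$, which the linear-dependence reformulation above dispatches.
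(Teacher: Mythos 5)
Your proof is correct, and the computation checks out: with $q_{a,c}-q_{b,d}=\tfrac12\bigl((a_x-b_x)+(c_x-d_x),\,(a_y-b_y)+(c_y-d_y),\,0\bigr)$ and the stated direction vectors, expansion along the first row indeed yields $\Delta(c,d)-\Delta(a,b)$, and the linear-dependence formulation of coplanarity is the right one over $\CC$ (it correctly absorbs the parallel, intersecting, and coincident cases, including the degenerate case $a=c$ where the direction is $(0,0,2)$). The paper itself does not write out a proof: it cites Lemma 4.2 of Guth's paper and asserts that the same argument works over $\CC$. Your argument is the natural self-contained version of that claim, and it has the merit of being purely algebraic --- a single determinant identity valid over any field --- so it avoids any appeal to real-specific language (rigid motions, rotations) that sometimes accompanies this step in the ESGK framework and would need reinterpretation over $\CC$. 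In short: correct, complete, and a reasonable substitute for the citation.
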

Lemma \ref{lem:fromQuadruplesToLineIntersection} is proved in \cite[Lemma 4.2]{Guth15a} for the case of points in $\RR^2$. An identical proof works for points in $\CC^2$.

We say that a plane $\Pi\subset\CC^3$ is \emph{bad} if $\Pi = \vb(y\pm ix + k)$ for some $k\in \CC$. The next lemma shows that there is a correspondence between bad planes in $\CC^3$ and isotropic lines in $\CC^2$.

\begin{lemma} \label{cl:IsotropicChar}
Let $\ell^*\subset \CC^2$ be an isotropic line defined by $y=\pm ix + k$, where $k\in \CC$.
Let $\Pi\subset \CC^3$ be the plane defined by $y=\pm ix +k$ (with both $\pm$ signs representing the same symbol).
Then $\ell_{a,c} \subset \Pi$ if and only if $a,c\in \ell^*$.
\end{lemma}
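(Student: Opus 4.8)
The plan is to reduce the claim to a short linear-algebra computation: a line in $\CC^3$ is contained in a plane if and only if its parametrization satisfies the plane's equation identically. First I would note that the defining equations \eqref{eq:ComplexLineDef} of $\ell_{a,c}$ solve for $x$ and $y$ as affine functions of $z$, so $\ell_{a,c}$ is parametrized by $z \mapsto (x(z), y(z), z)$ with
\[
x(z) = \tfrac12\big[(a_x+c_x) + (a_y-c_y)\,z\big], \qquad
y(z) = \tfrac12\big[(a_y+c_y) + (c_x-a_x)\,z\big].
\]
Writing the bad plane as $\Pi = \vb(y - \varepsilon i x - k)$ with $\varepsilon \in \{+1,-1\}$ (the single sign appearing in the statement), I would substitute this parametrization into $y - \varepsilon i x - k$. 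This yields a polynomial of degree at most one in $z$, and $\ell_{a,c} \subset \Pi$ holds if and only if both of its coefficients vanish.

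The next step is to read off and simplify these two equations. The constant ($z^0$) coefficient gives $(a_y + c_y) - \varepsilon i (a_x + c_x) = 2k$, and the linear ($z^1$) coefficient gives $(c_x - a_x) - \varepsilon i (a_y - c_y) = 0$. Using $\varepsilon^2 = 1$ and $i^2 = -1$, the second equation is equivalent to $a_y - c_y = \varepsilon i (a_x - c_x)$. Adding this difference relation to the sum relation from the constant term produces $a_y - \varepsilon i a_x = k$, and subtracting it produces $c_y - \varepsilon i c_x = k$; together these say exactly that $a, c \in \ell^*$. For the converse, if $a, c \in \ell^*$ then subtracting the two defining equations of $\ell^*$ gives $a_y - c_y = \varepsilon i (a_x - c_x)$ and adding them gives $(a_y + c_y) - \varepsilon i(a_x + c_x) = 2k$, which are precisely the two coefficient conditions, so $\ell_{a,c} \subset \Pi$.

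I do not expect a genuine obstacle here. The only point requiring care is the sign bookkeeping: one must keep the symbol $\varepsilon$ consistent between the isotropic line and the plane (as the statement stipulates), and the cancellation in the $z^1$-coefficient relies on $-\varepsilon^2 i^2 = 1$, which is exactly the identity forcing the slope of the projection of $\ell_{a,c}$ to match that of $\ell^*$. I would also remark that $a \neq c$ is assumed throughout, since $\ell_{a,c}$ was only defined for distinct points, and that the two equations \eqref{eq:ComplexLineDef} are linearly independent, so $\ell_{a,c}$ is genuinely a line.
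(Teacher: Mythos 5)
Your proof is correct and follows essentially the same route as the paper's: substitute the parametrization of $\ell_{a,c}$ by $z$ into the plane's equation and extract the two coefficient conditions (the paper obtains the constant-term condition by evaluating at $z=0$ and the slope condition via a short case analysis that your uniform coefficient computation avoids). The only small difference is that the paper also treats the degenerate case $a=c$ explicitly, which your computation would handle without change since the $z^1$-coefficient then vanishes automatically.
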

\begin{proof}
We replace $\pm$ with a plus sign. 
The case of a minus sign is handled symmetrically. 
We first assume that $a,c\in \ell^*$ and prove that $\ell_{a,c} \subset \Pi$.
Write $a = (a_x,ia_x+k)$ and $c=(c_x,ic_x+k)$ for $a_x,c_x\in \CC$.
By \eqref{eq:ComplexLineDef}, the line $\ell_{a,c}$ is defined by
\begin{align*}
2x = (a_x+c_x) +iz(a_x-c_x) \quad \text{ and }\quad 2y = i(a_x+c_x)+2k + (c_x-a_x)z. 
\end{align*}

Combining these two equations leads to
\[ 2x+2iy = (a_x+c_x) +iz(a_x-c_x) + i\left(i(a_x+c_x) +2k + (c_x-a_x)z\right) = 2ik\]
Tidying up gives $iy=ik-x$ and multiplying by $-i$ gives $y=ix+k$. 
Thus, the line $\ell_{a,c}$ is contained in $\Pi$.

Next, suppose that $\ell_{a,c} \subset \Pi$.
If $a=c$ then $\ell_{a,c}$ is defined by $x=a_x$ and $y=a_y$.
Since $\Pi$ is defined by $y = ix+k$, we get that $a_y=ia_x+k$.
Thus, $a,c\in \ell^*$. 
It remains to consider the case where $a\neq c$. 

The line $\ell_{a,c}$ has a point satisfying $z=0$.
Plugging this point into 
\eqref{eq:ComplexLineDef}, we have 
\[ 2x = a_x+c_x \quad \text{ and } \quad 2y = a_y+c_y. \]
Since this point must also satisfy $y = ix+k$, we get
\begin{equation} \label{eq:IsotropicClaim} 
a_y+c_y = i(a_x+c_x)+2k.
\end{equation}

Assume for contradiction that $a_x = c_x$. 
Since $a\neq c$, we have $a_y\neq b_y$. 
By inspecting \eqref{eq:ComplexLineDef}, we note that the line $\ell_{a,c}$ has a constant $y$-coordinate and a non-constant $x$-coordinate. 
Since $\Pi$ does not contain any such lines, we obtain a contradiction.
We may thus assume that $a_x\neq c_x$.
A symmetric argument implies $a_y\neq c_y$.

Since $a_x\neq c_x$ and $a_y\neq c_y$, we may rewrite \eqref{eq:ComplexLineDef} as
\begin{align*}
z &= (2x-a_x-c_x)/(a_y-c_y), \\
z &= (2y-a_y-c_y)/(c_x-a_x). 
\end{align*}

Combining these equations leads to
\begin{equation} \label{eq:pointIsotroLineSlope} 
(2x-a_x-c_x)(c_x-a_x) = (2y-a_y-c_y)(a_y-c_y), \quad \text{ or } \quad y = x\cdot \frac{c_x-a_x}{a_y-c_y} + E, 
\end{equation}
where $E$ depends on $a_x,a_y,c_x,c_y$.
Since $\Pi$ is defined by $y = ix+k$ and $\ell_{a,c} \subset \Pi$, we obtain that 
\[ \frac{c_x-a_x}{a_y-c_y} = i \quad \text{ or equivalently } \quad a_x + ia_y = c_x+ic_y. \]

Combining this with \eqref{eq:IsotropicClaim} leads to $a_y=ia_x+k$ and $c_y=ic_x+k$.
That is, $a,c\in \ell^*$.
\end{proof}

If $\pts\subset\CC^2$ is a finite set of points, we define
\[\lines(\pts) = \{\ell_{a,c} :\, (a,c)\in \pts^2 \ \}.\]
\begin{lemma} \label{le:LinesRestrictionsComplex}
Let $\pts$ be a set of $n$ points in $\CC^2$. 
Then every point in $\CC^3$ is incident to at most $n$ lines from $\lines(\pts)$ and every irreducible surface of degree 2 contains at most $6n$ lines from $\lines(\pts)$. 
When no point of $\pts$ is on the lines $y=\pm ix$, every non-bad plane contains at most $2n$ lines from $\lines(\pts)$. 
\end{lemma}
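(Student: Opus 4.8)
The lemma has three assertions about the line set $\lines(\pts)=\{\ell_{a,c}:(a,c)\in\pts^2\}$, and I would prove each one by exploiting the explicit parametrization \eqref{eq:ComplexLineDef}. The point-incidence bound is the cleanest: I would fix a point $(x_0,y_0,z_0)\in\CC^3$ and observe that, given $z_0$, the two equations in \eqref{eq:ComplexLineDef} become a linear system in the unknowns $a_x+c_x,\ a_y+c_y,\ a_x-c_x,\ a_y-c_y$. When $z_0^2\neq -1$ this system has a unique solution, so the pair $\{a,c\}$ (hence the ordered pair, up to swapping) is determined; thus at most $n$ lines $\ell_{a,c}$ with $a,c\in\pts$ pass through the point, because $a$ can be chosen in at most $n$ ways and then $c$ is forced. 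When $z_0^2=-1$, i.e. $z_0=\pm i$, the line $\ell_{a,c}$ passes through a point with that $z$-coordinate only under a degenerate condition; here I would argue directly (or note that in this case the defining equations force $a_x+ia_y$ or $a_x-ia_y$ to take a fixed value, again pinning down $c$ from $a$), so the bound $n$ still holds. Either way the count is $\le n$.

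**The quadric bound.** For an irreducible degree-two surface $S=\vb(g)\subset\CC^3$, I would substitute the parametrization of $\ell_{a,c}$ — which is affine-linear in the parameter $z$ with coefficients that are affine-linear in $(a_x,a_y,c_x,c_y)$ — into $g$, obtaining a polynomial in $z$ of degree $\le 2$ whose coefficients $g_0,g_1,g_2$ are polynomials of degree $\le 2$ in $(a_x,a_y,c_x,c_y)$. The line lies in $S$ iff $g_0=g_1=g_2=0$. I then fix $a\in\pts$ (at most $n$ choices) and ask how many $c\in\CC^2$ make all three vanish. Generically this is a zero-dimensional system; by Bézout, $\vb(g_0,g_1)$ in the $c$-plane has $O(1)$ points unless $g_0,g_1$ share a common factor. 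If for every $a$ the system degenerates to a curve of $c$'s, that curve swept over $a$ would force $S$ to contain a two-parameter family of the lines $\ell_{a,c}$, which (because these lines form a $2$-dimensional family inside the $4$-dimensional Grassmannian and any irreducible quadric carries at most a $1$-parameter family of lines, or at most two rulings) is impossible for an irreducible quadric unless $S$ has a special structure; a careful bookkeeping of the possible rulings of each type of irreducible quadric surface yields the explicit constant $6n$. I would handle this by the standard case analysis over the classification of irreducible quadrics in $\CC^3$ (cone, smooth quadric with its two rulings, cylinder over a conic), counting in each case how many lines of the family $\lines(\pts)$ can lie in it; the "$6$" comes from summing these contributions.

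**The plane bound and the rôle of the hypothesis.** For a plane $\Pi=\vb(\alpha x+\beta y+\gamma z+\delta)$, substituting \eqref{eq:ComplexLineDef} and clearing denominators gives a polynomial of degree $\le 1$ in $z$ whose two coefficients are affine-linear in $(a_x,a_y,c_x,c_y)$; containment $\ell_{a,c}\subset\Pi$ is two linear conditions on the four-tuple. Fixing $a$, this leaves (generically) a single linear condition on $c\in\CC^2$, hence a line's worth of $c$'s — too many. The resolution, and where the hypothesis enters, is Lemma~\ref{cl:IsotropicChar}: I would show that the only planes for which, for some $a$, \emph{every} $c$ yields $\ell_{a,c}\subset\Pi$ are exactly the bad planes, and that for a non-bad plane the two linear conditions on $(a_x,a_y,c_x,c_y)$ are "generically independent in $c$," so that for each $a$ at most \emph{two} choices of $c$ survive (one from each of the two ruling-directions that a plane carries for this family of lines — concretely, the map $(a,c)\mapsto\ell_{a,c}$ restricted to a plane behaves like two pencils). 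Combined with the hypothesis that no point of $\pts$ lies on $y=\pm ix$ — which guarantees we never fall into the degenerate case analyzed in the proof of Lemma~\ref{cl:IsotropicChar} where an entire pencil of lines $\ell_{a,c}$ through a fixed $a$ lands in a non-bad plane — this gives at most $2n$ lines.

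\medskip
\noindent\textbf{Main obstacle.} The delicate point is the plane bound: one must rule out the possibility that a \emph{non-bad} plane contains a one-parameter family $\{\ell_{a,c_0(t)}\}$ for a fixed $a$, and then show the leftover count per $a$ is exactly $2$ rather than some larger $O(1)$. This is precisely the content hiding inside the proof of Lemma~\ref{cl:IsotropicChar} — the slope computation \eqref{eq:pointIsotroLineSlope} showing that $\ell_{a,c}\subset\Pi$ pins down $a_x+ia_y$ (or $a_x-ia_y$) in terms of the plane, which forces $a$ onto an isotropic line unless the plane is bad — so I expect the bulk of the work to be a careful reuse of that computation, keeping track of both $\pm$ cases simultaneously to get the constant $2$. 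The quadric case is also somewhat laborious because of the quadric classification, but it is routine; the plane case is where the hypothesis is essential and the argument is most subtle.
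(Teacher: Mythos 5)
Your overall strategy --- substitute the parametrization \eqref{eq:ComplexLineDef}, fix $a$, and count the admissible $c$ --- is the same as the paper's, and your treatment of the generic cases is fine: for a point with $z_0^2\neq -1$ the $2\times 2$ system in $(c_x,c_y)$ has determinant $1+z_0^2\neq 0$, so $c$ is determined by $a$, and for a plane $\alpha x+\beta y+\gamma z+\delta=0$ with $\alpha^2+\beta^2\neq 0$ the two linear conditions on $c$ are independent and give one $c$ per $a$. The quadric sketch is also structurally close to the paper's, though the paper obtains the dichotomy (``at most four choices of $c$, or the surface is ruled by lines $\ell_{a,\cdot}$'') via the tangency polynomial $g_a=V_a\cdot\nabla f$ and Corollary \ref{co:BezoutLines}; you would still need to justify that at most two values of $a$ can be degenerate (two degenerate $a$'s give two distinct rulings because a line $\ell_{a,c}$ determines the ordered pair $(a,c)$).

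The genuine gaps are in the two degenerate cases, and your proposed resolutions do not work there. For a point $p$ with $p_z=-i$, the incidence $p\in\ell_{a,c}$ is equivalent to the \emph{decoupled} pair of conditions $a_x-ia_y=p_x-ip_y$ and $c_x+ic_y=p_x+ip_y$: the point pins $a$ to one isotropic line and $c$ to another, independently, so $c$ is \emph{not} pinned down by $a$, and the number of lines through $p$ is the product $|\pts\cap\ell_1|\cdot|\pts\cap\ell_2|$, which your argument does not bound by $n$. The same decoupling occurs for planes: for a non-bad plane of the form $x+iy+\gamma z+\delta=0$ with $\gamma\neq 0$, the two linear conditions on $c$ become proportional, and $\ell_{a,c}\subset\Pi$ is equivalent to $a_x+ia_y=-\delta-i\gamma$ together with $c_x+ic_y=-\delta+i\gamma$; that is, $a$ and $c$ range over two prescribed parallel isotropic lines that depend on $\Pi$ and are in general \emph{not} $y=\pm ix$, so the hypothesis you invoke does not exclude this case and the count is again a product of two line-counts rather than two lines per $a$. (Concretely, $\vb(x+iy+z)$ contains $\ell_{a,c}$ for \emph{every} $a$ with $a_x+ia_y=-i$ and \emph{every} $c$ with $c_x+ic_y=i$.) These product-type configurations are exactly where the difficulty lies, and you cannot simply defer to Lemma \ref{cl:IsotropicChar}: note that the paper's own proof is at its most fragile at the same two spots (extracting $a_x$ as ``the real part'' of a complex coordinate, and the claim that a nonzero $z^2$-coefficient of $f_a$ forces a nonzero constant coefficient), so a correct write-up must confront these configurations head-on rather than wave at them.
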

\begin{proof}
Let $a,c,c' \in \CC^2$ be three distinct points.
By Lemma \ref{lem:fromQuadruplesToLineIntersection}, the lines $\ell_{a,c}$ and $\ell_{a,c'}$ are coplanar if and only if $\Delta(c,c')=0$.
That is, if and only if $c$ and $c'$ are on the same isotropic line. 

Fix a point $a\in \CC^2$ and an isotropic line $\ell^*$. We now show that all lines $\ell_{a,c}$ with $c\in \ell^*$ have a common intersection point. 
Assume that $\ell^*$ is defined by $y=ix+ k$ for some $k\in \CC$. 
By writing $c=(c_x,ic_x+k)$ and inspecting \eqref{eq:ComplexLineDef}, we note that $\ell_{a,c}$ is incident to the point 
\[ p=(a_x-ia_y+ik,ia_x+a_y+k,-i). \]

The coordinates of $p$ do not depend on $c$ and uniquely determine $a_x,a_y$, and $k$. 
For example, $a_x$ is the real part of the $x$-coordinate of $p$.
Thus, all lines $\ell_{a,c}$ with fixed $a$ and with $c$ on an isotropic line $\ell^*$ intersect at the same point.
The common intersection point $p$ has a $z$-coordinate equal to $-i$. 
(If $\ell^*$ is instead defined as $y=-ix+ k$, then the $z$-coordinate becomes $i$.)
A line $\ell_{b,d}$ with $a\neq b$ or with $d$ not on $\ell^*$ intersects $z=\pm i$ at a different point.

Let $p=(p_x,p_y,p_z)\in \CC^3$ satisfy $p_z\neq \pm i$. 
By the above, for every $a\in \pts$ there exists at most one $c\in \CC^3$ such that $p\in \ell_{a,c}$.
Thus, at most $n$ lines from $\lines(\pts)$ are incident to $p$.

Let $p=(p_x,p_y,p_z)\in \CC^3$ satisfy $p_z= \pm i$. 
By the above, there exists a unique $a\in \CC^3$ such that some lines of the form $\ell_{a,c}$ are incident to $p$.
Once again, at most $n$ lines from $\lines(\pts)$ are incident to $p$.

\parag{Planes.}
Let $\Pi\subset \CC^3$ be a plane that is not bad.
Fix $a\in \pts$ and consider the set $\lines_a = \{\ell_{ac}\ :\ c\in \pts \}$.
By inspecting \eqref{eq:ComplexLineDef}, we note that no two lines in $\lines_a$ are parallel.
Thus, every pair of lines of $\lines_a$ that are in $\Pi$ intersect. 
By the above, for lines $\ell_{a,c}$ and $\ell_{a,c'}$ to intersect, the points $c$ and $c'$ must lie on a common exceptional line.
That is, there exists an exceptional line $\ell^*$ such that every $\ell_{a,c}\subset \Pi$ satisfies $c\in \ell^*$.

Let $a,\Pi$, and $\ell^*$ be fixed as in the preceding paragraph. 
Let $\ell^*$ be defined by $y=ix+k$ (the case of $y=-ix+k$ is handled symmetrically).
Then for every $c\in \ell^*$, the line $\ell_{a,c}$ is defined as
\begin{align*}
2x &= (a_x+c_x) +(a_y-ic_x-k)z, \nonumber \\
2y &= (a_y+ic_x+k) + (c_x-a_x)z. 
\end{align*}
We rewrite these equations as
\begin{align*}
c_x(1-iz) &= 2x -a_x -a_yz+kz, \nonumber \\
c_x(i+z) &= 2y - a_y-k + a_xz. 
\end{align*}

Combining the above equations leads to
\[ (i+z)(2x -a_x -a_yz+kz) = (1-iz)(2y - a_y-k + a_xz).\]
Rearranging yields
\begin{equation}\label{eq:QuadraticLinesSpecialFamily}
z^2\cdot (k-a_y+ia_x)+2z\cdot(x-ia_y-a_x+iy)+ (2ix-a_xi-2y+a_y+k)=0.
\end{equation}

Denote the left side of \eqref{eq:QuadraticLinesSpecialFamily} as $f_a\in\CC[x,y,z]$.
Note that $\vb(f_a)$ is the Zariski closure of the union of the lines $\ell_{a,c}$ with $c\in \ell^*$.
If $f$ is irreducible, then Corollary \ref{co:BezoutLines} implies that $\Pi\cap \vb(f_a)$ contains at most two lines. 
That is, $\Pi$ contains at most two lines of the form $\ell_{a,c}$ with $c\in \ell^*$.
We conclude that, if $f_a$ is irreducible then $\Pi$ contains at most two lines from $\lines_a$.

Consider the case where the coefficient of $z^2$ in $f_a$ is zero. 
That is, $k-a_y+ia_x=0$.
In this case, $a$ is also on the exceptional line $\ell^*$. 
Lemma \ref{cl:IsotropicChar} implies that all lines of the form $\ell_{a,c}$ with $c\in \ell^*$ lie on a specific bad plane $\Pi^*$.
Since $\Pi$ is not bad, $\Pi \cap \Pi^*$ contains at most one line.
Thus, in this case $\Pi$ contains at most one line from $\lines_a$.

Finally, assume that $f_a$ is reducible and has a nonzero coefficient for $z^2$. 
In this case $k-a_y+ia_x\neq0$, which implies that $f_a$ has a nonzero constant coefficient and a nonzero coefficient for $z$.
By inspecting \eqref{eq:QuadraticLinesSpecialFamily}, we note that $f_a(x,y,z)=(A+Bx+Cy+Dz)(E+Fz)$ for nonzero $A,B,C,D,E,F\in \CC$. 
Since $EB$ is the coefficient of $x$ in $f_a$, we have that $EB=2i$.
Similarly, we obtain
\begin{align*} 
EC &= -2, \quad FB=2, \quad FC=2i,\quad FD = k-a_y+ia_x, \\[2mm]
EA&= -a_xi+a_y+k,\quad FA+ED = -ia_y-a_x.
\end{align*}

We rewrite some of the above as
\begin{align*} 
C&=-2/E, \quad F=2i/C=-iE, \quad A=(-a_xi+a_y+k)/E, \\[2mm]
D&=(k-a_y+ia_x)/F = (ki-ia_y-a_x)/E.  
\end{align*}
Combining this with the above expression for $FA+ED$ leads to
\[ -ia_y-a_x = FA+ED = (-iE) \cdot \frac{-a_xi+a_y+k}{E} + E\cdot \frac{ki-ia_y-a_x}{E} = -2a_x-2ia_y. \]

Tidying up gives that $a_y=ia_x$.
Plugging this into \eqref{eq:QuadraticLinesSpecialFamily}, we get
\[ f_a(x,y,z)=z^2k+2z\cdot(x+iy)+ (2ix-2y+k)=(k+2ix-2y+kiz)(1-iz). \]

By combining the three cases above, we obtain the following. 
When no point of $\pts$ is on the lines $y=\pm ix$, for every $a\in \pts$ the plane $\Pi$ contains at most two lines from $\lines_a$.
Thus, $\Pi$ contains at most $2n$ lines of $\lines(\pts)$. 

\parag{Quadratic surfaces.}
For a point $a\in \CC^3$, consider the vector field
\begin{align*} 
V_a(x,y,z) &= \Bigg( 2y-a_y-a_xz + z(2x-a_x+a_yz)-a_y(z^2+1), \\
&\hspace{11mm} a_x(z^2+1)-2x -a_x+a_yz - z(2y-a_y-a_xz),2(z^2+1)\Bigg).
\end{align*}
By repeating a proof from \cite{GK15}, we obtain the following property. 
For every $p=(p_x,p_y,p_z)\in \CC^3$ with $p_z\neq \pm i$, the direction of the unique line of the form $\ell_{a,c}$ that is incident to $p$ is $V_a(p)$.

Let $U\subset \CC^3$ be an irreducible quadratic surface.
Let $f\in \CC[x,y,z]$ be a polynomial of degree 2 satisfying $\vb(f)=U$.
We define $g_a\in \CC[x,y,z]$ as the dot product $g_a(p) = V_a(p) \cdot \nabla f(p)$.
Consider a line $\ell_{a,c}$ that is contained in $U$.
Then at every point $p\in \ell_{a,c}$, we have that $V_a(p)$ is tangent to $U$. 
Thus, $g_a$ vanishes on every line of the form $\ell_{a,c}$ that is contained in $U$.

Consider $a\in \CC^3$ such that $U$ contains at least five lines of the form $\ell_{a,c}$.
Then these lines are contained in $U\cap \vb(g_a)$.
Since $f$ is of degree 2 and $V_a$ is linear, we get that $g_a$ is of degree at most two. 
Thus, Corollary \ref{co:BezoutLines} implies that $U$ and $\vb(g_a)$ have a common component. 
Since $U$ is irreducible, we have $U=\vb(g_a)$.
This in turn implies that $U$ is ruled by lines of the form $\ell_{a,c}$.

Excluding planes, every irreducible surface in $\CC^3$ has at most two different rulings. 
Thus, there are at most two points $a\in\pts$ such that lines of the form $\ell_{a,c}$ rule $U$.
For every such $a$, at most $n$ lines of the form $\ell_{a,c}$ are in $\lines(\pts)$.
For every other value of $a\in \pts$, at most four lines of the form $\ell_{a,c}$ are contained in $U$.
We conclude that $U$ contains fewer than $6n$ lines from $\lines(\pts)$ 
\end{proof}

\begin{lemma}\label{lem:FromQToIntersections}
Let $\pts$ be a finite set of points in $\CC^2$, and let $\lines=\lines(\pts)$. 
\begin{equation*}
|\mathcal{Q}(\pts)| \le |\{(L,L^\prime)\in\lines^2\colon L\ \textrm{and}\ L^\prime\ \textrm{are contained in a common non-bad plane}\}|.
\end{equation*}
\end{lemma}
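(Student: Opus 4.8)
The plan is to exhibit an injection from $\mathcal{Q}(\pts)$ into the set of ordered pairs of lines of $\lines=\lines(\pts)$ that lie in a common non-bad plane. Given a quadruple $(a,b,c,d)\in\mathcal{Q}(\pts)$, the natural candidate is the pair $(\ell_{a,c},\ell_{b,d})$. First I would recall that the map $(a,c)\mapsto \ell_{a,c}$ from $\pts^2$ to $\lines$ is well-defined, and that by the definition of $\mathcal{Q}(\pts)$ we have $\Delta(a,b)=\Delta(c,d)\neq 0$ and $(a,b)\neq(c,d)$. By Lemma \ref{lem:fromQuadruplesToLineIntersection}, since $\Delta(a,b)=\Delta(c,d)$, the lines $\ell_{a,c}$ and $\ell_{b,d}$ are coplanar, so they lie in a common plane $\Pi$. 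It remains to check two things: that this correspondence can be made injective, and that the plane $\Pi$ can be taken to be non-bad.

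For the non-badness, I would argue by contradiction using Lemma \ref{cl:IsotropicChar}. Suppose every plane containing $\ell_{a,c}$ and $\ell_{b,d}$ is bad, i.e. of the form $\vb(y\pm ix+k)$. If the two lines span a unique plane $\Pi$ and $\Pi$ is bad, then $\ell_{a,c}\subset\Pi$ forces, by Lemma \ref{cl:IsotropicChar}, that $a$ and $c$ lie on a common isotropic line $\ell^*$, hence $\Delta(a,c)=0$; similarly $\ell_{b,d}\subset\Pi$ gives $\Delta(b,d)=0$. But $\ell_{a,c}$ and $\ell_{b,d}$ lie in the \emph{same} bad plane $\Pi=\vb(y+\eta ix+k)$ for a fixed sign $\eta$ and constant $k$, so in fact $a,c,b,d$ all lie on the single isotropic line $\ell^*=\{y=\eta ix+k\}$ in $\CC^2$ (here I use that the restriction $2x=(a_x+c_x)+(a_y-c_y)z$, $2y=(a_y+c_y)+(c_x-a_x)z$ of $\ell_{a,c}$ passes through the midpoint $\tfrac12(a+c)$ at $z=0$, which pins down $k$ from the midpoint and $\eta$ from the slope). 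Then $\Delta(a,b)=0$, contradicting $\Delta(a,b)\neq 0$. If instead $\ell_{a,c}=\ell_{b,d}$ as lines (the degenerate case where "coplanar" is witnessed by infinitely many planes), I would note that $\ell_{a,c}=\ell_{b,d}$ forces $\{a,c\}=\{b,d\}$ — reading off the midpoint $\tfrac12(a+c)=\tfrac12(b+d)$ from the $z=0$ point and the difference $a-c=\pm(b-d)$ from the direction vector — and one then checks this is incompatible with $(a,b)\neq(c,d)$ together with $\Delta(a,b)\neq 0$ (if $(a,c)=(b,d)$ then $(a,b)=(c,d)$, excluded; if $(a,c)=(d,b)$ then $a=d,c=b$, so $\Delta(a,b)=\Delta(d,c)=\Delta(c,d)$, fine, but then $a,b$ with $a=d$ lie on... actually here I'd need $\Delta(a,b)\neq0$ to rule it out only if it also forces badness — more carefully, $\ell_{a,c}=\ell_{a,c}$ trivially lies in non-bad planes unless $\ell_{a,c}$ itself lies in every plane, impossible, so whenever $\ell_{a,c}$ is not contained in a bad plane we are already done, and if it is, the previous isotropic-line argument applies). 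So in all cases $\ell_{a,c}$ and $\ell_{b,d}$ lie in a common non-bad plane.

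For injectivity, the point is that the quadruple $(a,b,c,d)$ is recoverable from the ordered pair $(\ell_{a,c},\ell_{b,d})$ — but a single line $\ell_{a,c}$ does \emph{not} determine the ordered pair $(a,c)$, so the map $(a,b,c,d)\mapsto(\ell_{a,c},\ell_{b,d})$ is generally two-to-one or four-to-one onto its image. This only helps the inequality: since the left side counts quadruples and the right side counts ordered pairs of lines, a bounded-to-one map into a subset still gives $|\mathcal{Q}(\pts)|\le(\text{number of such pairs})$ up to a constant, but the statement asks for the clean inequality without a constant. To get this I would instead observe that the map $(a,b,c,d)\mapsto\big((a,c),(b,d)\big)\in(\pts^2)^2$ is literally a bijection onto its image (it is the identity after reindexing), so $|\mathcal{Q}(\pts)|$ equals the number of pairs $\big((a,c),(b,d)\big)$ with $\Delta(a,b)=\Delta(c,d)\neq0$ and $(a,b)\neq(c,d)$; for each such pair the associated ordered pair of lines $(\ell_{a,c},\ell_{b,d})\in\lines^2$ lies in a common non-bad plane by the above. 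This realizes $\mathcal{Q}(\pts)$ as a subset of $(\pts^2)^2$ mapping \emph{into} the target set, but two distinct pairs $\big((a,c),(b,d)\big)\neq\big((a',c'),(b',d')\big)$ may yield the same line pair. The resolution — which I expect to be the only genuinely delicate point — is that the statement of the lemma is an inequality whose right-hand side is a set of line pairs, so it suffices to note that the composite $\mathcal{Q}(\pts)\hookrightarrow(\pts^2)^2\to\lines^2$ is injective when restricted appropriately, or simply to invoke that each element of $\mathcal{Q}(\pts)$ produces \emph{at least one} valid line pair and that distinct quadruples sharing a line pair still let the count go through because the inequality is one-directional with the larger (line-pair) set on the right. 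The cleanest route: note $(a,b,c,d)\mapsto(\ell_{a,c},\ell_{b,d})$, and show it is injective by arguing that from $\ell_{a,c}$ one recovers the unordered pair $\{a,c\}$, from $\ell_{b,d}$ the unordered pair $\{b,d\}$, and the constraint $\Delta(a,b)=\Delta(c,d)\neq 0$ pins down which element of $\{a,c\}$ is $a$ relative to the choice in $\{b,d\}$ up to the simultaneous swap $(a,b,c,d)\leftrightarrow(c,d,a,b)$ — and this last swap is harmless since it produces the \emph{same} ordered pair $(\ell_{a,c},\ell_{b,d})=(\ell_{c,a},\ell_{d,b})$, so at worst the map is two-to-one; since the lemma's inequality tolerates this (it bounds $|\mathcal{Q}(\pts)|$ by a set that is itself closed under the corresponding symmetry), I would present the cleanest formulation making the bookkeeping transparent. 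The main obstacle is precisely nailing down this multiplicity bookkeeping so the inequality is exactly as stated, rather than off by a constant.
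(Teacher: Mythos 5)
Your geometric core is exactly the paper's argument: send $(a,b,c,d)\in\mathcal{Q}(\pts)$ to $(\ell_{a,c},\ell_{b,d})$, obtain coplanarity from Lemma \ref{lem:fromQuadruplesToLineIntersection}, and rule out the common plane being bad via Lemma \ref{cl:IsotropicChar} (if both lines lay in one bad plane, all four points would lie on a single isotropic line, forcing $\Delta(a,b)=0$). That part is correct.

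The gap is in your injectivity bookkeeping, and it rests on a false premise: you assert that $\ell_{a,c}$ determines only the unordered pair $\{a,c\}$ and that $(\ell_{a,c},\ell_{b,d})=(\ell_{c,a},\ell_{d,b})$, and you end by leaving the resulting ``multiplicity'' unresolved. In fact $\ell_{a,c}\neq\ell_{c,a}$ whenever $a\neq c$. From \eqref{eq:ComplexLineDef}, the point of $\ell_{a,c}$ with $z=0$ is $\bigl(\tfrac{a_x+c_x}{2},\tfrac{a_y+c_y}{2},0\bigr)$ and the direction vector normalized to have $z$-coordinate $1$ is $\bigl(\tfrac{a_y-c_y}{2},\tfrac{c_x-a_x}{2},1\bigr)$; these two data recover the \emph{ordered} pair $(a,c)$. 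Swapping $a$ and $c$ negates the first two coordinates of the direction while fixing the third, which gives a genuinely different line unless $a=c$. Hence $(a,c)\mapsto\ell_{a,c}$ is injective on $\pts^2$, the map $(a,b,c,d)\mapsto(\ell_{a,c},\ell_{b,d})$ is injective on $\mathcal{Q}(\pts)$, and the inequality holds exactly, with no constant loss. (Your worry about the degenerate case $\ell_{a,c}=\ell_{b,d}$ also evaporates: a line is contained in at most two bad planes, hence in some non-bad plane, so the diagonal pair still lies in the right-hand set.) With this one observation your argument closes and coincides with the paper's proof.
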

\begin{proof}
Consider a quadruple $(a,b,c,d)\in \pts^4$ with $(a,b)\neq (c,d)$. 
To prove the lemma, we show that for every $(a,b,c,d)\in \mathcal{Q}(\pts)$, the lines $\ell_{a,c}$ and $\ell_{b,d}$ are contained in a common non-bad plane.

Recall that $(a,b,c,d)\in \mathcal{Q}(\pts)$ if and only if $\Delta(a,b)=\Delta(c,d)\neq 0$. 
By Lemma \ref{lem:fromQuadruplesToLineIntersection}, the lines $\ell_{a,c}$ and $\ell_{b,d}$ are coplanar if and only if $\Delta(a,b)=\Delta(c,d)$.
It remains to show that, if $\Delta(a,b)=\Delta(c,d)\neq 0$ then $\ell_{a,c}$ and $\ell_{b,d}$ are not contained in the same bad plane.

Assume that $\ell_{a,c}$ and $\ell_{b,d}$ are contained in the same bad plane. Then Lemma \ref{cl:IsotropicChar} implies that $a,b,c,d$ are on a common isotropic line.
This in turn implies the contradiction $\Delta(a,b)=\Delta(c,d)=0$.
This is the contrapositive of what we need to show, so it concludes the proof. 
\end{proof}

\begin{lemma}\label{lem:removeParallelLines}
Let $\pts$ be a set of $n$ points in $\CC^2$. There are $O(n^3)$ pairs of parallel lines in $\lines(\pts)$.
\end{lemma}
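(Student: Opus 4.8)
The plan is to translate the relation ``parallel'' into a purely additive condition on the point set $\pts$, and then bound the number of such configurations by the additive energy of $\pts$. First I would read off from \eqref{eq:ComplexLineDef} that the line $\ell_{a,c}$ passes through the point $\big(\tfrac{a_x+c_x}{2}, \tfrac{a_y+c_y}{2}, 0\big)$ and has direction vector $(a_y - c_y,\ c_x - a_x,\ 2)$ (parametrize by $z$). The key observation is that the third coordinate of this direction vector is the constant $2$, independent of $a$ and $c$.

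Consequently, two lines $\ell_{a,c}$ and $\ell_{b,d}$ in $\lines(\pts)$ are parallel if and only if their direction vectors are scalar multiples of each other; comparing third coordinates forces the scalar to equal $1$. Hence $\ell_{a,c}$ is parallel to $\ell_{b,d}$ exactly when $a_y - c_y = b_y - d_y$ and $c_x - a_x = d_x - b_x$, i.e. when $a - c = b - d$ as vectors in $\CC^2$, equivalently $a + d = b + c$. The map $(a,b,c,d)\mapsto (\ell_{a,c},\ell_{b,d})$ sends the set of quadruples in $\pts^4$ with $a+d=b+c$ onto the set of ordered pairs of parallel lines in $\lines(\pts)$, so the number of (ordered, hence also unordered) pairs of parallel lines is at most $N := |\{(a,b,c,d)\in\pts^4 : a+d=b+c\}|$.

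It remains to bound $N$. Writing $r(s) = |\{(a,d)\in\pts^2 : a+d = s\}|$ for $s\in\CC^2$, we have $N = \sum_s r(s)^2$ and $\sum_s r(s) = n^2$. Since for a fixed $s$ the point $d$ is determined by $a$, we have $r(s)\le n$ for every $s$, and therefore $N = \sum_s r(s)^2 \le n\sum_s r(s) = n^3$. This yields the claimed bound of $O(n^3)$.

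The argument is elementary and I do not anticipate a serious obstacle; the only point requiring a little care is the reduction from a pair of parallel lines to a quadruple of points—one should note that $(a,c)\mapsto\ell_{a,c}$ need not be injective and a given pair of lines may be realized by several quadruples—but since we only seek an upper bound, this multiplicity works in our favour.
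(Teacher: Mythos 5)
Your proof is correct and follows essentially the same route as the paper: extract from \eqref{eq:ComplexLineDef} that parallelism of $\ell_{a,c}$ and $\ell_{b,d}$ forces $a_y-c_y=b_y-d_y$ and $a_x-c_x=b_x-d_x$, then count the resulting quadruples, which is at most $n^3$. Your additive-energy phrasing $\sum_s r(s)^2\le n\sum_s r(s)$ is just a repackaging of the paper's observation that fixing $a,b,c$ determines $d$.
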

\begin{proof}
If $\ell_{a,c}$ and $\ell_{b,d}$ are parallel, then \eqref{eq:ComplexLineDef} implies that $a_y-c_y = b_y-d_y$ and $a_x-c_x = b_x-d_x$.
If we fix $a,b,c\in \pts$, there is at most one $d\in \pts$ that satisfies these two equations.
Thus, at most $n^3$ pairs of $(\lines(\pts))^2$ are parallel.
\end{proof}

\subsection{Controlling line-line intersections}
To summarize our progress so far, Lemma \ref{lem:fromPtsToQuadruples} reduced the problem of lower-bounding $|\Delta(\pts)|$ to the problem of upper-bounding $|Q(\pts)|$. Lemmas \ref{lem:FromQToIntersections} and \ref{lem:removeParallelLines} reduced the problem of upper-bounding $|Q(\pts)|$ to the problem of bounding the number of pairs of lines from $\lines(\pts)$ that intersect and do not span a bad plane. In this section we use Theorem \ref{th:RichC} to bound this quantity.

\begin{lemma}\label{lem:numberOfGoodQuadruples}
Let $\lines$ be a set of at most $n$ complex lines in $\CC^3$. Suppose that at most $n^{1/2}$ lines from $\lines$ are contained in a common irreducible degree-two surface and at most $n^{1/2}$ lines pass through a common point. Let $\surfs$ be the set of complex planes that contain at least $2n^{1/2}$ lines from $\lines$. Then for each $\eps>0$, there is a constant $C_\eps$ such that
\begin{equation}\label{eq:numberOfLLpPairs}
|\{(L, L^\prime)\in\lines^2\colon L\cap L^\prime\neq\emptyset;\ L\ \textrm{and}\ L^\prime\ \textrm{do not span a plane from}\ \surfs\}| \leq C_{\eps}n^{3/2+\eps}.
\end{equation}
\end{lemma}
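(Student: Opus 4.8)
The plan is to rewrite \eqref{eq:numberOfLLpPairs} as a sum over intersection points, split the lines according to whether they lie in a plane of $\surfs$, and handle the two resulting regimes using the algebraic bound of Section~\ref{sec:KollarBd} and the structure theorem (Theorem~\ref{th:RichC}). Write $d(p)$ for the number of lines of $\lines$ through a point $p\in\CC^3$ and $d_\Pi(p)$ for the number of them contained in a plane $\Pi$. Since two distinct intersecting lines span a unique plane, the quantity in \eqref{eq:numberOfLLpPairs} is, up to the harmless factor of $2$, equal to $\sum_p\big(\binom{d(p)}{2}-\sum_{\Pi\in\surfs,\,\Pi\ni p}\binom{d_\Pi(p)}{2}\big)$, the sum being over all points lying on at least two lines of $\lines$. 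First I would dispose of pairs with \emph{both} lines avoiding $\surfs$: set $\lines_0=\lines\setminus\bigcup_{\Pi\in\surfs}\lines_\Pi$. Every complex plane, and every irreducible quadric, contains at most $2n^{1/2}$ lines of $\lines_0$, so Corollary~\ref{cor:complexRRichPts} (after inflating $n$ by a constant to absorb the factor of two) gives $|\pts_r(\lines_0)|=O_\eps(n^{3/2+\eps}r^{-2})$ for $2\le r\le 2n^{1/2}$; summing $\sum_p\binom{d_0(p)}{2}$ dyadically over $d_0(p)\in[r,2r)$ — recall $d_0(p)\le n^{1/2}$ by hypothesis — bounds the contribution of such pairs by $O_\eps(n^{3/2+\eps}\log n)$, which is acceptable after shrinking $\eps$.

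It remains to bound the number of intersecting pairs $(L,L')$ whose span $\Pi$ is not in $\surfs$ but with at least one of $L,L'$ contained in some plane of $\surfs$. Here I would decompose dyadically by the richness of the intersection point $p$, say $d(p)\in[r,2r)$ with $2\le r\le 2n^{1/2}$, and for each scale apply Theorem~\ref{th:RichC} to $\lines$ with this $r$ and $r'=r/3$ (when $r$ is below a constant $r_\eps$ one uses instead the $r=3$ case of Theorem~\ref{th:RichC} together with Theorem~\ref{th:Kollar}). This produces a family $\surfs_r$ of complex planes, each containing at least $rn^{1/2+\eps}\ge 2n^{1/2}$ lines — hence $\surfs_r\subseteq\surfs$ — with $|\surfs_r|\le 2n^{1/2-\eps}r^{-1}$ and with all but $O(n^{3/2+\eps}r^{-2})$ of the $r$-rich points $r'$-rich with respect to $\lines_W$ for some $W\in\surfs_r$. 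Each such ``leftover'' point contributes $O(r^2)$ pairs, for a total of $O(n^{3/2+\eps})$ per scale. For a point $p$ that is $r'$-rich in some $W\in\surfs_r$, at least $r/3$ lines through $p$ lie in the single plane $W$, so every pair through $p$ that is not already excluded uses a line not contained in $W$; thus $p$ contributes at most $d(p)\,(d(p)-d_W(p))=O(r\cdot e_W(p))$ pairs, where $e_W(p)$ is the number of lines of $\lines$ through $p$ lying outside $W$. Since a complex line not contained in $W$ meets $W$ in a single point, each such line accounts for an ``outside'' incidence at only one point of $W$; combining this with the complex Szemer\'edi--Trotter theorem (Theorem~\ref{th:complexST}) applied inside $W\cong\CC^2$ and with $|\surfs_r|\le 2n^{1/2-\eps}r^{-1}$ is how one would control $\sum_{W\in\surfs_r}\sum_p e_W(p)$, hence the contribution of the scale.

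I expect this last bound to be the main obstacle. Applying Szemer\'edi--Trotter plane-by-plane and multiplying by $|\surfs_r|$ is too wasteful for small $r$, since a single plane of $\surfs$ may contain as many as $n$ lines. The way around this is to count, for each $W\in\surfs_r$, not the $r'$-rich points of $\lines_W$ but the incidences those points carry with lines of $\lines$ outside $W$: each outside line contributes at most $\min(d(p),n^{1/2})\le 2r$ such incidences, at its unique intersection point with $W$, and — crucially — a given line of $\lines$ lies in at most $O(n^{1/2})$ of the planes of $\surfs$, because two planes of $\surfs$ through a common line meet exactly in that line, so their remaining lines are pairwise disjoint and total at most $n$. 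Assembling these observations so that the estimate is uniform across all $O(\log n)$ dyadic scales is where the bulk of the work lies; the remaining ingredients are direct invocations of Theorem~\ref{th:RichC}, Theorem~\ref{th:complexST}, Corollary~\ref{cor:complexRRichPts}, and Theorem~\ref{th:Kollar}.
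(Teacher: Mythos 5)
Your proposal is correct and follows essentially the same route as the paper: a dyadic decomposition over the richness of the intersection point, an application of Theorem \ref{th:RichC} at each scale with the leftover points contributing $O(r^2)$ pairs each, and for the covered points the key observation that each line $L\not\subset W$ meets $W$ in a single point and there contributes fewer than $2r$ cross pairs, which summed against $|\surfs_r|\le 2n^{1/2-\eps}r^{-1}$ gives $O(n^{3/2})$ per scale. Your preliminary step handling pairs with both lines outside $\bigcup_{\Pi\in\surfs}\lines_\Pi$ via Corollary \ref{cor:complexRRichPts} is correct but redundant, since the main dyadic argument already covers all pairs.
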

\begin{proof}
For each $r\geq 2$, define 
$$
\pts_{\sim r}(\lines)=\pts_{r}(\lines)\backslash\pts_{2r}(\lines).
$$ 
For each dyadic $r$ between $2$ and $n^{1/2}$, we apply Theorem \ref{th:RichC} to $\lines$. For $r^\prime = \max(2,  r/3)$, we obtain a set $\surfs_r \subset\surfs$ such that $|\surfs_r|\leq n^{1/2}r^{-1}$ and 
\begin{equation}\label{eq:consequenceOfThNumberOfRRichPoints}
|\pts_{\sim r}(\lines)\backslash \bigcup_{S\in\surfs}(\pts_{r^\prime}(\lines_S))| = O( n^{3/2+\eps/2}r^{-2}).
\end{equation}

We define 
\begin{align*}
\pts_{\sim r}^{(1)}(\lines)&=\pts_{\sim r}(\lines)\backslash \bigcup_{S\in\surfs}\pts_{r^\prime}(\lines_S), \\[2mm]
\pts_{\sim r}^{(2)}(\lines)&=\pts_{\sim r}(\lines)\backslash \pts_{\sim r}^{(1)}(\lines).
\end{align*}
By \eqref{eq:consequenceOfThNumberOfRRichPoints}, we have
\begin{equation*}
|\{(L,L^\prime)\in \lines^2\colon L\cap L^\prime= p\in \pts_{\sim r}^{(1)}(\lines)\}|=O(n^{3/2+\eps/2}). 
\end{equation*}

Fix $p\in \pts_{\sim r}^{(2)}(\lines)$.
Then there is a plane $S_p\in\surfs$ with $p\in\pts_{r^\prime}(\lines_{S_p})$. Since $r^\prime=\Theta(r)$, we have
\begin{align}
|\{(L,L^\prime)\in \lines^2 & \colon L\cap L^\prime= p,\ L\ \textrm{and}\ L^\prime\ \textrm{do not span a plane from}\ \surfs_r\}|\nonumber \\
& \hspace{20mm} = O(|\{(L,L^\prime)\in \lines^2 \colon L\cap L^\prime= p,\ L\not\subset S_p,\ L^\prime\subset S_p\}|).\label{eq:crossTermsDominated}
\end{align}

Fix $L\in \lines$. 
An important observation is that if $L$ is not contained in a plane $S\in\surfs_r$, then $L\cap S$ is a point.
That is, at most one point $p\in L\cap \pts_{\sim r}^{(2)}(\lines)$ satisfies $S_p=S$. By definition, there are fewer than $2r$ lines $L^\prime\in\lines_S$ with $L\cap L^\prime=p$. In other words, 
\begin{equation}\label{eq:fewLinesInPlaneHitL}
\sum_{\substack{p\in \pts_{\sim r}^{(2)}(\lines)\\ S_p = S}}|\{L^\prime\in \lines_{S} \colon  L\cap L^\prime= p\}|\leq 2r.
\end{equation}
(Since the sum has at most one nonzero element.)
Combining \eqref{eq:crossTermsDominated} and \eqref{eq:fewLinesInPlaneHitL}, we have
\begin{equation*}
\begin{split}
|\{(L,L^\prime)\in \lines^2& \colon L\cap L^\prime\in \pts_{\sim r}^{(2)}(\lines),\ \ L\ \textrm{and}\ L^\prime\ \textrm{do not span a plane from}\ \surfs_r\}|\\
& =O(|\{(L,L^\prime)\in \lines^2 \colon L\cap L^\prime= p\in \pts_{\sim r}^{(2)}(\lines),\ L\not\subset S_p,\ L^\prime\subset S_p\}|)\\
&=O( \sum_{S\in \surfs_r}\ \ \sum_{\substack{L\in\lines\\ L\not\subset S}}\ \ \sum_{\substack{p\in \pts_{\sim r}^{(2)}(\lines)\\ S_p = S}}|\{L^\prime\in \lines_{S} \colon  L\cap L^\prime= p\}|)\\
&=O( \sum_{S\in \surfs_r}\ \ \sum_{\substack{L\in\lines\\ L\not\subset S}}2r)\\
&=O( n^{1/2-\eps}r^{-1} \cdot n\cdot 2r) =O(n^{3/2}).
\end{split}
\end{equation*}

Finally, let $k_0$ be the smallest integer with $2^{k_0}\geq n^{1/2}$. We have
\begin{equation*}
\begin{split}
|\{(L, L^\prime)& \in\lines^2\colon L\cap L^\prime\neq\emptyset;\ L\ \textrm{and}\ L^\prime\ \textrm{do not span a plane from}\ \surfs\}|\\
&\leq \sum_{k=1}^{k_0} |\{(L,L^\prime)\in \lines^2\colon L\cap L^\prime\in \pts_{\sim 2^k}^{(1)}(\lines)\}| \\
&\ \ \ + \sum_{k=1}^{k_0} |\{(L,L^\prime)\in \lines^2 \colon L\cap L^\prime\in \pts_{\sim r}^{(2)}(\lines), \ L, L^\prime\ \textrm{do not span plane from}\ \surfs_r\}|\\
&=O(( n^{3/2+\eps/2} + n^{3/2})\log n).
\end{split}
\end{equation*}
By selecting a sufficiently large $C_{\eps}$, we obtain \eqref{eq:numberOfLLpPairs}.
\end{proof}

\subsection{Proof of Theorem \ref{th:DDinC2}}
We are now ready to prove Theorem \ref{th:DDinC2}. We first recall the statement of this theorem.

\begin{thmDDinC2}
For every $\eps>0$, there is a positive constant $c$ such that the following holds. Let $\pts$ be a set of $n$ points in $\CC^2$, not all on the same isotropic line. Then 
\begin{equation*}
|\Delta(\mathcal{P})|\geq cn^{1-\eps}.
\end{equation*}
\end{thmDDinC2}
\begin{proof}
Fix $\eps>0$. Suppose that there exists an isotropic line $\ell \subset \CC^2$ that contains at least $n/2$ points of $\pts$.
By assumption, there exists at least one point $p\in \pts$ that is not on $\ell$.
For any nonzero distance $\delta\in\Delta(\pts)$, at most two points $q\in \ell$ satisfy $\Delta(p,q)=\delta$. Indeed, any such point must be contained in the intersection of $\ell$ with the circle $\{(x,y)\in\CC^2\colon (x-p_x)^2 + (y-p_y)^2 = \delta\}$.
No point on $\ell$ determines distance 0 with $p$. 
This implies that the number of distinct distances determined by pairs of $\{p\} \times (\pts \cap \ell)$ is at least $|\pts \cap \ell|/2 = \Theta(n)$.
We may thus assume that every isotropic line contains at most $n/3$. By Lemma \ref{lem:fromPtsToQuadruples} we have
\begin{equation}\label{eq:deltaVsQ} 
|\Delta(\pts)|=\Omega( n^4|Q(\pts)|^{-1}).
\end{equation}

By Lemma \ref{lem:FromQToIntersections}, $|Q|$ is at most the number of pairs of distinct lines from $\lines(\pts)$ that are coplananr and do not span a bad plane. Write $Q = Q_1\cup Q_2$, where $Q_1$ corresponds to pairs of parallel lines, and $Q_2$ corresponds to pairs of lines that intersect. By Lemma \ref{lem:removeParallelLines} we have 
\begin{equation}\label{eq:BdOnQ1}
|Q_1| = O(n^3).
\end{equation} 
By Lemma \ref{le:LinesRestrictionsComplex}, each non-bad plane and each irreducible degree-two surface contains at most $6n$ lines from $\lines(\pts)$. In particular if we define $\surfs$ to be the set of surfaces contain at least $6n$ lines from $\lines(\pts)$, then $\surfs$ consists entirely of bad planes. Thus by Lemma \ref{lem:numberOfGoodQuadruples}, we have that
\begin{equation}\label{eq:BdonQ2}
|Q_2|\leq C_{\eps}n^{3+\eps}.
\end{equation}  
Combining \eqref{eq:deltaVsQ}, \eqref{eq:BdOnQ1} and \eqref{eq:BdonQ2} completes the proof.
\end{proof}

\end{document}